\newtheorem{theorem}{Theorem}[section]
\newtheorem{lemma}[theorem]{Lemma}
\newtheorem{remark}[theorem]{Remark}
\renewcommand \theequation {%
\ifnum \c@section>\z@ \@arabic\c@section.%
\fi\@arabic\c@equation} \@addtoreset{equation}{section}
\providecommand{\ud}[1]{\mathrm{d}{#1}}
\providecommand{\abs}[1]{\left\vert#1\right\vert}
\providecommand{\nm}[1]{\left\Vert#1\right\Vert}
\providecommand{\br}[1]{\langle #1 \rangle}
\providecommand{\tm}[2]{\left\Vert#1\right\Vert_{L^2(#2)}}
\providecommand{\im}[2]{\left\Vert#1\right\Vert_{L^{\infty}(#2)}}
\providecommand{\lnnm}[1]{{\left\Vert#1\right\Vert}_{L^{\infty}L^{\infty}}}
\providecommand{\lnm}[1]{\left\Vert#1\right\Vert_{L^{\infty}}}
\providecommand{\tnm}[1]{\left\Vert#1\right\Vert_{L^{2}}}
\providecommand{\tnnm}[1]{{\left\Vert#1\right\Vert}_{L^{2}L^2}}
\providecommand{\ltnm}[1]{{\left\Vert#1\right\Vert}_{L^{\infty}L^{2}}}
\def\p{\partial}
\def\u{U^{\e}}
\def\ub{\mathscr{U}^{\e}}
\def\bu{\bar U^{\e}}
\def\bub{\bar{\mathscr{U}}^{\e}}
\def\uc{U}
\def\ubc{\mathscr{U}}
\def\buc{\bar U}
\def\bubc{\bar{\mathscr{U}}}
\def\half{\frac{1}{2}}
\def\e{\epsilon}
\def\s{\mathcal{S}}
\def\vx{\vec x}
\def\vw{\vec w}
\def\nx{\nabla_{x}}
\def\t{\mathcal{T}}
\def\a{\mathcal{A}}
\def\px{\p_{\eta}}
\def\ps{\p_{\theta}}
\def\rt{\rightarrow}
\def\l{\lambda}
\def\ll{\mathcal{L}}
\def\pp{\mathcal{P}}
\def\r{\mathbb{R}}
\def\f{f}
\def\k{\kappa}
\def\q{Q}
\def\qb{\mathscr{Q}}
\def\v{\mathscr{V}}
\begin{document}
\title{Geometric Correction for Diffusive Expansion of
Steady Neutron Transport Equation}
\author{Lei Wu and Yan Guo}
\address{
Division of Applied Mathematics\\
Brown University \\
182 George Street, Providence, RI 02912, USA } \email[L.
Wu]{Lei\_Wu@brown.edu}
\address{
Division of Applied Mathematics\\
Brown University \\
182 George Street, Providence, RI 02912, USA } \email[Y.
Guo]{Yan\_Guo@brown.edu} \subjclass[2000]{35L65, 82B40, 34E05}

\begin{abstract}
We revisit the diffusive limit of a steady neutron transport
equation in a $2$-D unit disk $\Omega=\{\vx=(x_1,x_2):\ \abs{\vx}\leq 1\}$
with one-speed velocity $\Sigma=\{\vw=(w_1,w_2):\ \vw\in\s^1\}$ as
\begin{eqnarray}
\left\{
\begin{array}{rcl}
\e \vw\cdot\nabla_x u^{\e}+u^{\e}-\bar u^{\e}&=&0\label{transport}\
\ \ \text{in}\ \ \Omega,\\\rule{0ex}{1.0em}
u^{\e}(\vx_0,\vw)&=&g(\vx_0,\vw)\ \ \text{for}\ \ \vw\cdot\vec n<0\
\ \text{and}\ \ \vx_0\in\p\Omega,
\end{array}
\right.
\end{eqnarray}
where
\begin{eqnarray}\label{average 1}
\bar u^{\e}(\vx)=\frac{1}{2\pi}\int_{\s^1}u^{\e}(\vx,\vw)\ud{\vw}.
\end{eqnarray}
and $\vec n$ is the outward normal vector on $\p\Omega$, with the
Knudsen number $0<\e<<1$. A classical result in \cite{book1} states
that
\begin{eqnarray}\label{wrong}
\lnm{u^\e-\uc_0-\ubc_0}=O(\e)
\end{eqnarray}
where $\ubc_0$ is the Knudsen layer solution to the Milne problem
(\ref{classical temp 1}) while $\uc_0$ is the corresponding interior
solution to the Laplace equation (\ref{classical temp 2}). We
observe that the construction of the first order Knudsen layer fails
in \cite{book1}, due to the intrinsic singularity in the Milne
problem. Instead, we are able to establish
\begin{eqnarray}
\lnm{u^\e-\u_0-\ub_0}=O(\e)
\end{eqnarray}
where $\ub_0$ is the solution to the $\e$-Milne problem
(\ref{expansion temp 9}) while $\u_0$ is the corresponding interior
solution to the Laplace equation (\ref{expansion temp 8}).
Consequently, we deduce that
\begin{eqnarray}
\lnm{u^\e-\uc_0-\ubc_0}=O(1)
\end{eqnarray}
for some data, and the classical Knudsen layer theory (\ref{wrong})
is invalid in
$L^\infty$.\\
\textbf{Keywords:} $\e$-Milne problem, Knudsen layer solution,
Geometric correction.
\end{abstract}

\maketitle

\pagestyle{myheadings} \thispagestyle{plain} \markboth{LEI WU AND
YAN GUO}{GEOMETRIC CORRECTION FOR DIFFUSIVE EXPANSION OF NEUTRON
TRANSPORT EQUATION}

\section{Introduction and Notation}

The diffusive limit $\e\rt0$ of the neutron transport equation
(\ref{transport}) is a classical problem in kinetic theory. In the domain
$\Omega\times\Sigma$, the neutron density $u^{\e}(\vx,\vw)$
satisfies the equation (\ref{transport}).

Based on the flow direction, we can divide the boundary
$\Gamma=\{(\vx,\vw):\ \vx\in\p\Omega\}$ into the in-flow boundary
$\Gamma^-$, the out-flow boundary $\Gamma^+$, and the grazing set
$\Gamma^0$ as
\begin{eqnarray}
\Gamma^{-}&=&\{(\vx,\vw):\ \vx\in\p\Omega,\ \vw\cdot\vec n<0\},\\
\Gamma^{+}&=&\{(\vx,\vw):\ \vx\in\p\Omega,\ \vw\cdot\vec n>0\},\\
\Gamma^{0}&=&\{(\vx,\vw):\ \vx\in\p\Omega,\ \vw\cdot\vec n=0\}.
\end{eqnarray}
It is easy to see $\Gamma=\Gamma^+\cup\Gamma^-\cup\Gamma^0$. Hence,
the boundary condition is only given on $\Gamma^{-}$.

\subsection{Interior Expansion}

We define the interior expansion as follows:
\begin{eqnarray}\label{interior expansion}
\uc(\vx,\vw)\sim\sum_{k=0}^{\infty}\e^k\uc_k(\vx,\vw),
\end{eqnarray}
where $\u_k$ can be defined by comparing the order of $\e$ via
plugging (\ref{interior expansion}) into the equation
(\ref{transport}). Thus, we have
\begin{eqnarray}
\uc_0-\buc_0&=&0,\label{expansion temp 1}\\
\uc_1-\buc_1&=&-\vw\cdot\nx\uc_0,\label{expansion temp 2}\\
\uc_2-\buc_2&=&-\vw\cdot\nx\uc_1,\label{expansion temp 3}\\
\ldots\nonumber\\
\uc_k-\buc_k&=&-\vw\cdot\nx\uc_{k-1}.
\end{eqnarray}
\ \\
The following analysis reveals the equation satisfied by
$\uc_k$:\\
Plugging (\ref{expansion temp 1}) into (\ref{expansion temp 2}), we
obtain
\begin{eqnarray}
\uc_1=\buc_1-\vw\cdot\nx\buc_0.\label{expansion temp 4}
\end{eqnarray}
Plugging (\ref{expansion temp 4}) into (\ref{expansion temp 3}), we
get
\begin{eqnarray}\label{expansion temp 13}
\uc_2-\buc_2=-\vw\cdot\nx(\buc_1-\vw\cdot\nx\buc_0)=-\vw\cdot\nx\buc_1+\vw^2\Delta_x\buc_0+2w_1w_2\p_{x_1x_2}\buc_0.
\end{eqnarray}
Integrating (\ref{expansion temp 13}) over $\vw\in\s^1$, we achieve
the final form
\begin{eqnarray}
\Delta_x\buc_0=0,
\end{eqnarray}
which further implies $\uc_0(\vx,\vw)$ satisfies the equation
\begin{eqnarray}\label{interior 1}
\left\{
\begin{array}{rcl}
\uc_0&=&\buc_0\\
\Delta_x\uc_0&=&0
\end{array}
\right.
\end{eqnarray}
Similarly, we can derive $\uc_k(\vx,\vw)$ for $k\geq1$ satisfies
\begin{eqnarray}\label{interior 2}
\left\{
\begin{array}{rcl}
\uc_k&=&\buc_k-\vw\cdot\nx\uc_{k-1}\\
\Delta_x\buc_k&=&0
\end{array}
\right.
\end{eqnarray}

\subsection{Milne Expansion}

In order to determine the boundary condition fir $\uc_k$, it is
well-known that we need to define the boundary layer expansion. Hence, we need several substitutions:\\
\ \\
Substitution 1:\\
We consider the substitution into quasi-polar coordinates
$u^{\e}(x_1,x_2,w_1,w_2)\rt u^{\e}(\mu,\theta,w_1,w_2)$ with
$(\mu,\theta,w_1,w_2)\in [0,1)\times[-\pi,\pi)\times\s^1$ defined as
\begin{eqnarray}\label{substitution 1}
\left\{
\begin{array}{rcl}
x_1&=&(1-\mu)\cos\theta,\\
x_2&=&(1-\mu)\sin\theta,\\
w_1&=&w_1,\\
w_2&=&w_2.
\end{array}
\right.
\end{eqnarray}
Here $\mu$ denotes the distance to the boundary $\p\Omega$ and
$\theta$ is the space angular variable. In these new variables,
equation (\ref{transport}) can be rewritten as
\begin{eqnarray}
\left\{ \begin{array}{l}\displaystyle
-\e\bigg(w_1\cos\theta+w_2\sin\theta\bigg)\frac{\p
u^{\e}}{\p\mu}-\frac{\e}{1-\mu}\bigg(w_1\sin\theta-w_2\cos\theta\bigg)\frac{\p
u^{\e}}{\p\theta}+u^{\e}-\frac{1}{2\pi}\int_{\s^1}u^{\e}\ud{\vw}=0,\\\rule{0ex}{1.0em}
u^{\e}(0,\theta,w_1,w_2)=g(\theta,w_1,w_2)\ \ \text{for}\ \
w_1\cos\theta+w_2\sin\theta<0.
\end{array}
\right.
\end{eqnarray}
\ \\
Substitution 2:\\
We further define the stretched variable $\eta$ by making the
scaling transform for $u^{\e}(\mu,\theta,w_1,w_2)\rt
u^{\e}(\eta,\theta,w_1,w_2)$ with $(\eta,\theta,w_1,w_2)\in
[0,1/\e)\times[-\pi,\pi)\times\s^1$ as
\begin{eqnarray}\label{substitution 2}
\left\{
\begin{array}{rcl}
\eta&=&\mu/\e,\\
\theta&=&\theta,\\
w_1&=&w_1,\\
w_2&=&w_2,
\end{array}
\right.
\end{eqnarray}
which implies
\begin{eqnarray}
\frac{\p u^{\e}}{\p\mu}=\frac{1}{\e}\frac{\p u^{\e}}{\p\eta}.
\end{eqnarray}
Then equation (\ref{transport}) is transformed into
\begin{eqnarray}
\left\{ \begin{array}{l}\displaystyle
-\bigg(w_1\cos\theta+w_2\sin\theta\bigg)\frac{\p
u^{\e}}{\p\eta}-\frac{\e}{1-\e\eta}\bigg(w_1\sin\theta-w_2\cos\theta\bigg)\frac{\p
u^{\e}}{\p\theta}+u^{\e}-\frac{1}{2\pi}\int_{\s^1}u^{\e}\ud{\vw}=0,\\\rule{0ex}{1.0em}
u^{\e}(0,\theta,w_1,w_2)=g(\theta,w_1,w_2)\ \ \text{for}\ \
w_1\cos\theta+w_2\sin\theta<0.
\end{array}
\right.
\end{eqnarray}
\ \\
Substitution 3:\\
Define the velocity substitution for $u^{\e}(\eta,\theta,w_1,w_2)\rt
u^{\e}(\eta,\theta,\xi)$ with $(\eta,\theta,\xi)\in
[0,1/\e)\times[-\pi,\pi)\times[-\pi,\pi)$ as
\begin{eqnarray}\label{substitution 3}
\left\{
\begin{array}{rcl}
\eta&=&\eta,\\
\theta&=&\theta,\\
w_1&=&-\sin\xi,\\
w_2&=&-\cos\xi.
\end{array}
\right.
\end{eqnarray}
Here $\xi$ denotes the velocity angular variable. We have the
succinct form for (\ref{transport}) as
\begin{eqnarray}\label{classical temp}
\left\{ \begin{array}{l}\displaystyle \sin(\theta+\xi)\frac{\p
u^{\e}}{\p\eta}-\frac{\e}{1-\e\eta}\cos(\theta+\xi)\frac{\p
u^{\e}}{\p\theta}+u^{\e}-\frac{1}{2\pi}\int_{-\pi}^{\pi}u^{\e}\ud{\xi}=0,\\\rule{0ex}{1.0em}
u^{\e}(0,\theta,\xi)=g(\theta,\xi)\ \ \text{for}\ \
\sin(\theta+\xi)>0.
\end{array}
\right.
\end{eqnarray}
\ \\
We now define the Milne expansion of boundary layer as follows:
\begin{eqnarray}\label{classical expansion}
\ubc(\eta,\theta,\phi)\sim\sum_{k=0}^{\infty}\e^k\ubc_k(\eta,\theta,\phi),
\end{eqnarray}
where $\ubc_k$ can be determined by comparing the order of $\e$ via
plugging (\ref{classical expansion}) into the equation
(\ref{classical temp}). Thus, in a neighborhood of the boundary, we
have
\begin{eqnarray}
\sin(\theta+\xi)\frac{\p
\ubc_0}{\p\eta}+\ubc_0-\bubc_0&=&0,\label{cexpansion temp 5}\\
\sin(\theta+\xi)\frac{\p
\ubc_1}{\p\eta}+\ubc_1-\bubc_1&=&\frac{1}{1-\e\eta}\cos(\theta+\xi)\frac{\p
\ubc_0}{\p\theta},\label{cexpansion temp 6}\\
\ldots\nonumber\\
\sin(\theta+\xi)\frac{\p
\ubc_k}{\p\eta}+\ubc_k-\bubc_k&=&\frac{1}{1-\e\eta}\cos(\theta+\xi)\frac{\p
\ubc_{k-1}}{\p\theta},
\end{eqnarray}
where
\begin{eqnarray}
\bar
\ubc_k(\eta,\theta)=\frac{1}{2\pi}\int_{-\pi}^{\pi}\ubc_k(\eta,\theta,\xi)\ud{\xi}.
\end{eqnarray}
The construction of $\uc_k$ and $\ubc_k$ in \cite{book1} can be
summarized as follows:\\
\ \\
Step 1: Construction of $\ubc_0$ and $\uc_0$.\\
Assume the cut-off function $\psi$ and $\psi_0$ are defined as
\begin{eqnarray}\label{cut-off 1}
\psi(\mu)=\left\{
\begin{array}{ll}
1&0\leq\mu\leq1/2,\\
0&3/4\leq\mu\leq\infty.
\end{array}
\right.
\end{eqnarray}
\begin{eqnarray}\label{cut-off 2}
\psi_0(\mu)=\left\{
\begin{array}{ll}
1&0\leq\mu\leq1/4,\\
0&3/8\leq\mu\leq\infty.
\end{array}
\right.
\end{eqnarray}
Then the zeroth order boundary layer solution is defined as
\begin{eqnarray}\label{classical temp 1}
\left\{
\begin{array}{rcl}
\ubc_0(\eta,\theta,\xi)&=&\psi_0(\e\eta)\bigg(\f_0(\eta,\theta,\xi)-f_0(\infty,\theta)\bigg),\\
\sin(\theta+\xi)\dfrac{\p \f_0}{\p\eta}+\f_0-\bar \f_0&=&0,\\
\f_0(0,\theta,\xi)&=&g(\theta,\xi)\ \ \text{for}\ \
\sin(\theta+\xi)>0,\\\rule{0ex}{1em}
\lim_{\eta\rt\infty}\f_0(\eta,\theta,\xi)&=&f_0(\infty,\theta).
\end{array}
\right.
\end{eqnarray}
Assuming $g\in L^{\infty}(\Gamma^-)$, by Theorem \ref{Milne theorem
1}, we can show there exists a unique solution
$\f_0(\eta,\theta,\xi)\in
L^{\infty}([0,\infty)\times[-\pi,\pi)\times[-\pi,\pi))$. Hence,
$\ubc_0$ is well-defined. Then we can define the zeroth order
interior solution as
\begin{eqnarray}\label{classical temp 2}
\left\{
\begin{array}{rcl}
\uc_0&=&\buc_0,\\\rule{0ex}{1em} \Delta_x\buc_0&=&0\ \ \text{in}\ \
\Omega,\\\rule{0ex}{1em} \buc_0&=&f_0(\infty,\theta)\ \ \text{on}\ \
\p\Omega.
\end{array}
\right.
\end{eqnarray}
\ \\
Step 2: Construction of $\ubc_1$ and $\uc_1$. \\
Define the first order boundary layer solution as
\begin{eqnarray}\label{classical temp 3}
\left\{
\begin{array}{rcl}
\ubc_1(\eta,\theta,\xi)&=&\psi_0(\e\eta)\bigg(\f_1(\eta,\theta,\xi)-f_1(\infty,\theta)\bigg),\\
\sin(\theta+\xi)\dfrac{\p \f_1}{\p\eta}+\f_1-\bar
\f_1&=&\cos(\theta+\xi)\dfrac{\psi(\e\eta)}{1-\e\eta}\dfrac{\p
\ubc_0}{\p\theta},\\\rule{0ex}{1em}
\f_1(0,\theta,\xi)&=&\vw\cdot\nx\uc_0(\vx_0,\vw)\ \ \text{for}\ \
\sin(\theta+\xi)>0,\\\rule{0ex}{1em}
\lim_{\eta\rt\infty}\f_1(\eta,\theta,\xi)&=&f_1(\infty,\theta).
\end{array}
\right.
\end{eqnarray}
where $(\vx_0,\vw)$ is the same point as $(0,\theta,\xi)$. Define
the first order interior solution as
\begin{eqnarray}\label{classical temp 5}
\left\{
\begin{array}{rcl}
\uc_1&=&\buc_1-\vw\cdot\nx\uc_0,\\\rule{0ex}{1em}
\Delta_x\buc_1&=&0\ \ \text{in}\ \ \Omega,\\\rule{0ex}{1em}
\buc_1&=&f_1(\infty,\theta)\ \ \text{on}\ \ \p\Omega.
\end{array}
\right.
\end{eqnarray}
\ \\
Step 3: Generalization to arbitrary $k$.\\
Similar to above procedure, we can define the $k^{th}$ order
boundary layer solution as
\begin{eqnarray}
\left\{
\begin{array}{rcl}
\ubc_k(\eta,\theta,\xi)&=&\psi_0(\e\eta)\bigg(\f_k(\eta,\theta,\xi)-f_k(\infty,\theta)\bigg),\\
\sin(\theta+\xi)\dfrac{\p \f_k}{\p\eta}+\f_k-\bar
\f_k&=&\cos(\theta+\xi)\dfrac{\psi(\e\eta)}{1-\e\eta}\dfrac{\p
\ubc_{k-1}}{\p\theta},\\\rule{0ex}{1em}
\f_k(0,\theta,\xi)&=&\vw\cdot\nx\uc_{k-1}(\vx_0,\vw)\ \ \text{for}\
\ \sin(\theta+\xi)>0,\\\rule{0ex}{1em}
\lim_{\eta\rt\infty}\f_k(\eta,\theta,\xi)&=&f_k(\infty,\theta).
\end{array}
\right.
\end{eqnarray}
Define the $k^{th}$ order interior solution as
\begin{eqnarray}
\left\{
\begin{array}{rcl}
\uc_k&=&\buc_k-\vw\cdot\nx\uc_{k-1},\\\rule{0ex}{1em}
\Delta_x\buc_k&=&0\ \ \text{in}\ \ \Omega,\\\rule{0ex}{1em}
\buc_k&=&f_k(\infty,\theta)\ \ \text{on}\ \ \p\Omega.
\end{array}
\right.
\end{eqnarray}
In \cite[pp.136]{book1}, the author proved the following result:
\begin{theorem}\label{main fake 1}
Assume $g(\vx_0,\vw)$ is sufficiently smooth. Then for the steady
neutron transport equation (\ref{transport}), the unique solution
$u^{\e}(\vx,\vw)\in L^{\infty}(\Omega\times\s^1)$ satisfies
\begin{eqnarray}\label{main fake theorem 1}
\lnm{u^{\e}-\uc_0-\ubc_0}=O(\e).
\end{eqnarray}
\end{theorem}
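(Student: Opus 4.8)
The plan is to run the standard matched–asymptotics stability argument: build an approximate solution from the interior and Milne expansions carried to sufficiently high order, bound the residual it leaves in the equation, and then invoke an a priori estimate for the steady transport operator to control the difference from the genuine solution $u^\e$. Throughout one assumes $g$ smooth, so that the Milne theory (Theorem \ref{Milne theorem 1}) and elliptic regularity supply all the regularity needed for the construction.

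\textbf{Construction and residual.} For a fixed integer $N\geq 2$ set
\[
\uc_{\mathrm{app}}^{\e}=\sum_{k=0}^{N}\e^{k}\big(\uc_k+\ubc_k\big),
\]
with $\uc_k$ the solutions of the Laplace problems (\ref{classical temp 2}), (\ref{classical temp 5}) and their higher–order analogues, and $\ubc_k=\psi_0(\e\eta)\big(\f_k-f_k(\infty,\theta)\big)$ the cut-off Milne correctors. The boundary matching is arranged so that $\uc_{\mathrm{app}}^{\e}=g$ on $\Gamma^-$ up to an error of size $O(\e^{N+1})$. Inserting $\uc_{\mathrm{app}}^{\e}$ into (\ref{transport}) and using the hierarchies (\ref{expansion temp 1})--(\ref{interior 2}) and (\ref{cexpansion temp 5})--(\ref{cexpansion temp 6}), all terms through order $\e^{N}$ cancel; the remaining residual $R^{\e}:=\e\vw\cdot\nx\uc_{\mathrm{app}}^{\e}+\uc_{\mathrm{app}}^{\e}-\bar\uc_{\mathrm{app}}^{\e}$ then consists of: (i) the genuine next-order interior term $\e^{N+1}\vw\cdot\nx\uc_N=O(\e^{N+1})$; (ii) commutators between the cut-offs $\psi_0(\e\eta),\psi(\e\eta)$ and the transport operator, supported in $\eta\gtrsim\e^{-1}$ where each $\f_k-f_k(\infty,\theta)$ is exponentially small; and (iii) the error from expanding the geometric coefficient $\tfrac{1}{1-\mu}=\tfrac{1}{1-\e\eta}$, again $O(\e^{N+1})$ on the cut-off supports. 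Hence $\lnm{R^{\e}}\ls\e^{N+1}$, with an analogous $L^2$ bound.

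\textbf{A priori estimate and conclusion.} Let $W^{\e}=u^{\e}-\uc_{\mathrm{app}}^{\e}$; it solves (\ref{transport}) with interior source $-R^{\e}$ and inflow data of size $O(\e^{N+1})$. The analytic core is a stability estimate for the steady problem: testing the equation against $W^{\e}$ and using that $h\mapsto h-\bar h$ is nonnegative with kernel the constants in $\vw$ controls $\nm{W^{\e}-\bar W^{\e}}_{L^2}$, and a diffusive/averaging (Poincaré-type) argument upgrades this to control of $\bar W^{\e}$ at the cost of a fixed negative power of $\e$, giving $\nm{W^{\e}}_{L^2}\ls\e^{-a}\big(\nm{R^{\e}}_{L^2}+\text{boundary}\big)$. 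A bootstrap to $L^{\infty}$ through the mild (characteristic) formulation then yields $\lnm{W^{\e}}\ls\e^{-b}\big(\lnm{R^{\e}}+\cdots\big)$. Choosing $N>b$ gives $\lnm{W^{\e}}=O(\e)$, and since $\lnm{\uc_{\mathrm{app}}^{\e}-\uc_0-\ubc_0}=O(\e)$ by construction, the triangle inequality delivers (\ref{main fake theorem 1}).

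\textbf{Main obstacle.} The delicate point is not the stability estimate but the construction itself: for $\f_1$ to exist in $L^{\infty}$ the forcing $\cos(\theta+\xi)\,\tfrac{\psi(\e\eta)}{1-\e\eta}\,\p_{\theta}\ubc_0$ must be admissible for the Milne problem, which requires $\p_{\theta}\f_0$ to be bounded. But $\f_0$ inherits only limited tangential regularity near the grazing set $\sin(\theta+\xi)=0$, so $\p_{\theta}\ubc_0$ need not lie in $L^{\infty}$; deciding whether this term can be controlled — or recognizing that it cannot — is exactly where the argument is subtle, and, as the abstract signals, where the classical construction in fact breaks down, which is what forces the $\e$-Milne replacement.
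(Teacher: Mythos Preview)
Your instinct about the obstacle is exactly right, and in fact it is decisive: this theorem is not proved in the paper at all. It is quoted from \cite{book1} precisely so that the authors can dismantle it. The paper neither supplies nor attempts a proof of (\ref{main fake theorem 1}); on the contrary, its main contribution (Theorem \ref{main 1}) shows that for $g(\theta,\phi)=\cos\phi$ one has $\lnm{u^{\e}-\uc_0-\ubc_0}\geq C>0$ for all small $\e$, so the statement is \emph{false} in $L^{\infty}$, not merely unproven.

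Your sketch reproduces the classical argument from \cite{book1} faithfully, and you have correctly located where it fails: the well-posedness of $\f_1$ in (\ref{classical temp 3}) requires the source $\cos(\theta+\xi)\frac{\psi}{1-\e\eta}\p_{\theta}\ubc_0\in L^{\infty}$, hence $\p_{\theta}\f_0\in L^{\infty}$, and differentiating (\ref{classical temp 1}) in $\theta$ forces one to control $\cos(\theta+\xi)\,\p_{\eta}\f_0$. Lemma \ref{counter theorem 1} shows this quantity is genuinely unbounded for suitable $g$, so item (iii) in your residual analysis---``again $O(\e^{N+1})$''---simply cannot be justified. The construction of $\ubc_1$, and with it your approximate solution $\uc_{\mathrm{app}}^{\e}$ for any $N\geq 1$, collapses at that step; there is no residual to estimate because the corrector does not exist in the space you need.

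So the gap you flagged in your final paragraph is not a subtlety to be overcome but the actual content of the paper: the theorem as stated is wrong, and the fix is to replace the flat Milne problem (\ref{classical temp 1}) by the $\e$-Milne problem (\ref{expansion temp 9}) with the geometric correction $F(\e;\eta)\cos\phi\,\p_{\phi}$, which restores $\theta$-invariance and makes $\p_{\theta}\ub_0$ controllable.
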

\ \\
The goal of our paper is to reexamine the validity of Theorem
\ref{main fake 1}. Our work begins with a crucial observation that
based on Remark \ref{Milne remark}, the existence of solution $\f_1$
requires the source term
\begin{eqnarray}
\cos(\theta+\xi)\frac{\psi}{1-\e\eta}\frac{\p \ubc_0}{\p\theta}\in
L^{\infty}([0,\infty)\times[-\pi,\pi)\times[-\pi,\pi)).
\end{eqnarray}
Since the support of $\psi(\e\eta)$ depends on $\e$, by
(\ref{classical temp 1}), this in turn requires
\begin{eqnarray}
\frac{\p
}{\p\theta}\bigg(\f_0(\eta,\theta,\xi)-f_0(\infty,\theta)\bigg)\in
L^{\infty}([0,\infty)\times[-\pi,\pi)\times[-\pi,\pi))
\end{eqnarray}
Note that $Z=\ps (\f_0-f_0(\infty,\theta))$ satisfies the equation
\begin{eqnarray}
\left\{
\begin{array}{rcl}
\sin(\theta+\xi)\dfrac{\p Z}{\p\eta}+Z-\bar
Z&=&-\cos(\theta+\xi)\dfrac{\p \f_0}{\p\eta},\\\rule{0ex}{2em}
Z(0,\theta,\xi)&=&\dfrac{\p g(\theta,\xi)}{\p\theta}-\dfrac{\p
f_0(\infty,\theta)}{\p\theta}\ \ \text{for}\ \
\sin(\theta+\xi)>0,\\\rule{0ex}{1.5em}
\lim_{\eta\rt\infty}Z(\eta,\theta,\xi)&=& Z(\infty,\theta).
\end{array}
\right.
\end{eqnarray}
In order for $Z\in
L^{\infty}([0,\infty)\times[-\pi,\pi)\times[-\pi,\pi))$, assuming
the boundary data $\ps g\in L^{\infty}(\Gamma^-)$, we require the
source term
\begin{eqnarray}
-\cos(\theta+\xi)\frac{\p \f_0}{\p\eta}\in
L^{\infty}([0,\infty)\times[-\pi,\pi)\times[-\pi,\pi)).
\end{eqnarray}
On the other hand, as shown by Lemma \ref{counter theorem 1}, we can
show for specific $g$, it holds that $\px\f_0\notin
L^{\infty}([0,\infty)\times[-\pi,\pi)\times[-\pi,\pi))$. Due to
intrinsic singularity for (\ref{classical temp 1}), the construction
in \cite{book1} breaks down.

In fact, in general geometry with curved boundary, we need to
control the normal derivative of the boundary layer solution for the
Milne expansion.

\subsection{$\e$-Milne Expansion with Geometric Correction}

Our main goal is to overcome the difficulty in estimating
\begin{eqnarray}\label{problematic}
\cos(\theta+\xi)\frac{\psi}{1-\e\eta}\frac{\p \ubc_k}{\p\theta}.
\end{eqnarray}
We introduce one more substitution to decompose the term (\ref{problematic}). \\
\ \\
Substitution 4:\\
We make the rotation substitution for $u^{\e}(\eta,\theta,\xi)\rt
u^{\e}(\eta,\theta,\phi)$ with $(\eta,\theta,\phi)\in
[0,1/\e)\times[-\pi,\pi)\times[-\pi,\pi)$ as
\begin{eqnarray}\label{substitution 4}
\left\{
\begin{array}{rcl}
\eta&=&\eta,\\
\theta&=&\theta,\\
\phi&=&\theta+\xi,
\end{array}
\right.
\end{eqnarray}
and transform the equation (\ref{transport}) into
\begin{eqnarray}\label{transport temp}
\left\{ \begin{array}{l}\displaystyle \sin\phi\frac{\p
u^{\e}}{\p\eta}-\frac{\e}{1-\e\eta}\cos\phi\bigg(\frac{\p
u^{\e}}{\p\phi}+\frac{\p
u^{\e}}{\p\theta}\bigg)+u^{\e}-\frac{1}{2\pi}\int_{-\pi}^{\pi}u^{\e}\ud{\phi}=0,\\\rule{0ex}{1.0em}
u^{\e}(0,\theta,\phi)=g(\theta,\phi)\ \ \text{for}\ \ \sin\phi>0.
\end{array}
\right.
\end{eqnarray}
Inspired by \cite{book8}, \cite{book7} and \cite{book10}, the most
important idea is to include the most singular term
\begin{eqnarray}
-\frac{\e}{1-\e\eta}\cos\phi\frac{\p u^{\e}}{\p\phi}
\end{eqnarray}
in the Milne problem.

We define the $\e$-Milne expansion with geometric correction of
boundary layer as follows:
\begin{eqnarray}\label{boundary layer expansion}
\ub(\eta,\theta,\phi)\sim\sum_{k=0}^{\infty}\e^k\ub_k(\eta,\theta,\phi),
\end{eqnarray}
where $\ub_k$ can be determined by comparing the order of $\e$ via
plugging (\ref{boundary layer expansion}) into the equation
(\ref{transport temp}). Thus, in a neighborhood of the boundary, we
have
\begin{eqnarray}
\sin\phi\frac{\p \ub_0}{\p\eta}+\frac{\e}{1-\e\eta}\cos\phi\frac{\p
\ub_0}{\p\phi}+\ub_0-\bub_0&=&0,\label{expansion temp 5}\\
\sin\phi\frac{\p \ub_1}{\p\eta}+\frac{\e}{1-\e\eta}\cos\phi\frac{\p
\ub_1}{\p\phi}+\ub_1-\bub_1&=&\frac{1}{1-\e\eta}\cos\phi\frac{\p
\ub_0}{\p\theta},\label{expansion temp 6}\\
\ldots\nonumber\\
\sin\phi\frac{\p \ub_k}{\p\eta}+\frac{\e}{1-\e\eta}\cos\phi\frac{\p
\ub_k}{\p\phi}+\ub_k-\bub_k&=&\frac{1}{1-\e\eta}\cos\phi\frac{\p
\ub_{k-1}}{\p\theta}.
\end{eqnarray}
where
\begin{eqnarray}
\bub_k(\eta,\theta)=\frac{1}{2\pi}\int_{-\pi}^{\pi}\ub_k(\eta,\theta,\phi)\ud{\phi}.
\end{eqnarray}
It is important to note the solution $\ub_k$ depends on $\e$.

We refer to the cut-off function $\psi$ and $\psi_0$ as
(\ref{cut-off 1}) and (\ref{cut-off 2}), and define the force as
\begin{eqnarray}\label{force}
F(\e;\eta)=-\frac{\e\psi(\e\eta)}{1-\e\eta},
\end{eqnarray}
Define the interior expansion as follows:
\begin{eqnarray}
\u(\vx,\vw)\sim\sum_{k=0}^{\infty}\e^k\u_k(\vx,\vw)
\end{eqnarray}
where $\u_k$ satisfies the same equations as $\uc_k$ in (\ref{interior 1}) and (\ref{interior 2}).
Here, to highlight its dependence on $\e$ via the $\e$-Milne problem and boundary data, we add the
superscript $\e$.

The bridge between the interior solution and the boundary layer
solution is the boundary condition of (\ref{transport}), so we first
consider the boundary condition expansion
\begin{eqnarray}
\u_0+\ub_0&=&g,\\
\u_1+\ub_1&=&0,\\
\ldots\nonumber\\
\u_k+\ub_k&=&0.
\end{eqnarray}
The construction of $\u_k$ and $\ub_k$ are as follows:\\
\ \\
Step 1: Construction of $\ub_0$ and $\u_0$.\\
Define the zeroth order boundary layer solution as
\begin{eqnarray}\label{expansion temp 9}
\left\{
\begin{array}{rcl}
\ub_0(\eta,\theta,\phi)&=&\psi_0(\e\eta)\bigg(f_0^{\e}(\eta,\theta,\phi)-f^{\e}_0(\infty,\theta)\bigg),\\
\sin\phi\dfrac{\p f_0^{\e}}{\p\eta}+F(\e;\eta)\cos\phi\dfrac{\p
f_0^{\e}}{\p\phi}+f_0^{\e}-\bar f_0^{\e}&=&0,\\
f_0^{\e}(0,\theta,\phi)&=&g(\theta,\phi)\ \ \text{for}\ \
\sin\phi>0,\\\rule{0ex}{1em}
\lim_{\eta\rt\infty}f_0^{\e}(\eta,\theta,\phi)&=&f^{\e}_0(\infty,\theta).
\end{array}
\right.
\end{eqnarray}
In contrast to the classical Milne problem (\ref{classical temp 1}),
the key advantage is, due to the geometry, $\dfrac{\p
F(\e;\eta)}{\p\theta}=0$, such that (\ref{expansion temp 9}) is
invariant in $\theta$.

Then we define the zeroth order interior solution $\u_0(\vx)$ as
\begin{eqnarray}\label{expansion temp 8}
\left\{
\begin{array}{rcl}
\u_0&=&\bu_0,\\\rule{0ex}{1em} \Delta_x\bu_0&=&0\ \ \text{in}\ \
\Omega,\\\rule{0ex}{1em} \bu_0&=&f_0^{\e}(\infty,\theta)\ \
\text{on}\ \ \p\Omega.
\end{array}
\right.
\end{eqnarray}
\ \\
Step 2: Estimates of $\dfrac{\p\ub_0}{\p\theta}$.\\
By Theorem \ref{Milne theorem 2}, we can easily see $f_0^{\e}$ is
well-defined in $L^{\infty}(\Omega\times\s^1)$ and approaches
$f_0^{\e}(\infty)$ exponentially fast as $\eta\rt\infty$. Then we
can naturally derive $Z=\ps (f_0^{\e}-f^{\e}_0(\infty))$ also
satisfies the same type of $\e$-Milne problem
\begin{eqnarray}
\left\{ \begin{array}{rcl}\displaystyle \sin\phi\frac{\p
Z}{\p\eta}+F(\e;\eta)\cos\phi\frac{\p
Z}{\p\phi}+Z-\bar Z&=&0,\\
Z(0,\theta,\phi)&=&\dfrac{\p g}{\p\theta}-\dfrac{\p
f_0^{\e}(\infty)}{\p\theta}\ \ \text{for}\ \
\sin\phi>0,\\\rule{0ex}{1em} \lim_{\eta\rt\infty}Z(\eta,\phi)&=&C.
\end{array}
\right.
\end{eqnarray}
By Theorem \ref{Milne theorem 2}, we can see $Z\rt C$ exponentially
fast as $\eta\rt\infty$. It is natural to obtain this constant $C$
must be zero. Hence, if $g\in C^r(\Gamma^-)$, it is obvious to check
$f_0^{\e}(\infty)\in C^r(\p\Omega)$. By the standard elliptic
estimate in (\ref{expansion temp 8}), there exists a unique solution
$\bu_0\in W^{r,p}(\Omega)$ for arbitrary $p\geq2$ satisfying
\begin{eqnarray}
\nm{\bu_0}_{W^{r,p}(\Omega)}\leq
C(\Omega)\nm{f_0^{\e}(\infty)}_{W^{r-1/p,p}(\p\Omega)},
\end{eqnarray}
which implies $\nx\bu_0\in W^{r-1,p}(\Omega)$, $\nx\bu_0\in
W^{r-1-1/p,p}(\p\Omega)$ and $\bu_0\in C^{r-1,1-2/p}(\Omega)$.\\
\ \\
Step 3: Construction of $\ub_1$ and $\u_1$.\\
Define the first order boundary layer solution as
\begin{eqnarray}\label{expansion temp 11}
\left\{
\begin{array}{rcl}
\ub_1(\eta,\theta,\phi)&=&\psi_0(\e\eta)\bigg(f_1^{\e}(\eta,\theta,\phi)-f_{1}^{\e}(\infty,\theta)\bigg),\\
\sin\phi\dfrac{\p f_1^{\e}}{\p\eta}+F(\e;\eta)\cos\phi\dfrac{\p
f_1^{\e}}{\p\phi}+f_1^{\e}-\bar
f_1^{\e}&=&\dfrac{\psi(\e\eta)}{1-\e\eta}\cos\phi\dfrac{\p
\ub_0}{\p\theta},\\\rule{0ex}{1em}
f_1^{\e}(0,\theta,\phi)&=&\vw\cdot\nx\u_0(\vx_0,\vw)\ \ \text{for}\
\ \sin\phi>0,\\\rule{0ex}{1em}
\lim_{\eta\rt\infty}f_1^{\e}(\eta,\theta,\phi)&=&f_{1}^{\e}(\infty,\theta).
\end{array}
\right.
\end{eqnarray}
where $(\vx_0,\vw)$ is the same point as $(0,\theta,\phi)$. Then we
define the first order interior solution $\u_1(\vx)$ as
\begin{eqnarray}\label{expansion temp 10}
\left\{
\begin{array}{rcl}
\u_1&=&\bu_1-\vw\cdot\nx\u_0,\\\rule{0ex}{1em} \Delta_x\bu_1&=&0\ \
\text{in}\ \ \Omega,\\\rule{0ex}{1em}
\bu_1&=&f^{\e}_{1}(\infty,\theta)\ \ \text{on}\ \ \p\Omega.
\end{array}
\right.
\end{eqnarray}
\ \\
Step 4: Estimates of $\dfrac{\p\ub_1}{\p\theta}$.\\
By Theorem \ref{Milne theorem 2}, we can easily see $f_1^{\e}$ is
well-defined in $L^{\infty}(\Omega\times\s^1)$ and approaches
$f_{1}^{\e}(\infty)$ exponentially fast as $\eta\rt\infty$. Also,
since $\vw\cdot\nx\u_0\in W^{r-1-1/p,p}(\p\Omega)$, $\ps f_1^{\e}$
is well-defined and decays exponentially fast. Hence,
$f^{\e}_{1}(\infty,\theta)\in W^{r-1-1/p,p}(\p\Omega)$. By the
standard elliptic estimate in (\ref{expansion temp 10}), there
exists a unique solution $\bu_1\in W^{r-1,p}(\Omega)$ and satisfies
\begin{eqnarray}
\nm{\bu_1}_{W^{r-1,p}(\Omega)}\leq
C(\Omega)\nm{f^{\e}_{1}(\infty)}_{W^{r-1-1/p,p}(\p\Omega)},
\end{eqnarray}
which implies $\nx\bu_1\in W^{r-2,p}(\Omega)$, $\nx\bu_1\in
W^{r-2-1/p,p}(\p\Omega)$ and $\bu_1\in C^{r-2,1-2/p}(\Omega)$.\\
\ \\
Step 5: Generalization to arbitrary $k$.\\
In a similar fashion, as long as $g$ is sufficiently smooth, above
process can go on. We construct the $k^{th}$ order boundary layer
solution as
\begin{eqnarray}
\left\{
\begin{array}{rcl}
\ub_k(\eta,\theta,\phi)&=&\psi_0(\e\eta)\bigg(f_k^{\e}(\eta,\theta,\phi)-f_{k}^{\e}(\infty,\theta)\bigg),\\
\sin\phi\dfrac{\p f_k^{\e}}{\p\eta}+F(\e;\eta)\cos\phi\dfrac{\p
f_k^{\e}}{\p\phi}+f_k^{\e}-\bar
f_k^{\e}&=&\dfrac{\psi(\e\eta)}{1-\e\eta}\cos\phi\dfrac{\p
\ub_{k-1}}{\p\theta},\\\rule{0ex}{1em}
f_k^{\e}(0,\theta,\phi)&=&\vw\cdot\nx\u_{k-1}(\vx_0,\vw)\ \
\text{for}\ \ \sin\phi>0,\\\rule{0ex}{1em}
\lim_{\eta\rt\infty}f_k^{\e}(\eta,\theta,\phi)&=&f_{k}^{\e}(\infty,\theta).
\end{array}
\right.
\end{eqnarray}
Then we define the $k^{th}$ order interior solution as
\begin{eqnarray}
\left\{
\begin{array}{rcl}
\u_k&=&\bu_k-\vw\cdot\nx\u_{k-1},\\\rule{0ex}{1em}
\Delta_x\bu_k&=&0\ \ \text{in}\ \ \Omega,\\\rule{0ex}{1em}
\bu_k&=&f^{\e}_{k}(\infty,\theta)\ \ \text{on}\ \ \p\Omega.
\end{array}
\right.
\end{eqnarray}
For $g\in C^{k+1}(\Gamma^-)$, the interior solution and boundary
layer solution can be well-defined up to $k^{th}$ order, i.e. up to
$\u_k$ and $\ub_k$.

\subsection{Main Results}

\begin{theorem}\label{main 1}
Assume $g(\vx_0,\vw)\in C^3(\Gamma^-)$. Then for the steady neutron
transport equation (\ref{transport}), the unique solution
$u^{\e}(\vx,\vw)\in L^{\infty}(\Omega\times\s^1)$ satisfies
\begin{eqnarray}\label{main theorem 1}
\lnm{u^{\e}-\u_0-\ub_0}=O(\e)
\end{eqnarray}
where $\u_0$ and $\ub_0$ are defined in (\ref{expansion temp 8}) and
(\ref{expansion temp 9}). Moreover, if $g(\theta,\phi)=\cos\phi$,
then there exists a $C>0$ such that
\begin{eqnarray}\label{main theorem 2}
\lnm{u^{\e}-\uc_0-\ubc_0}\geq C>0
\end{eqnarray}
when $\e$ is sufficiently small, where $\uc_0$ and $\ubc_0$ are
defined in (\ref{classical temp 2}) and (\ref{classical temp 1}).
\end{theorem}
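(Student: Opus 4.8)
The plan is to establish the two assertions separately, both relying on the well-posedness and regularity theory for the $\e$-Milne problem (Theorem~\ref{Milne theorem 2}) and on a quantitative $L^{\infty}$ a priori estimate for the steady transport equation (\ref{transport}). For (\ref{main theorem 1}) I would first build the second-order approximation $u^{\e}_{\mathrm{app}}=\sum_{k=0}^{2}\e^{k}\big(\u_{k}+\ub_{k}\big)$; since $g\in C^{3}(\Gamma^{-})$, all of $\u_{0},\u_{1},\u_{2}$ and $\ub_{0},\ub_{1},\ub_{2}$ are well defined, with bounds uniform in $\e$ coming from the elliptic estimates for (\ref{expansion temp 8}) and its higher-order analogues and from Theorem~\ref{Milne theorem 2}. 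The decisive structural point is that $\ps F(\e;\eta)=0$, so the $\e$-Milne problems are invariant in $\theta$ and hence $\ps\ub_{k}\in L^{\infty}$ uniformly in $\e$ --- precisely the bound that fails for the classical layers by Lemma~\ref{counter theorem 1}, and exactly what makes the source terms $\tfrac{\psi(\e\eta)}{1-\e\eta}\cos\phi\,\ps\ub_{k-1}$ admissible.

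Substituting $u^{\e}_{\mathrm{app}}$ into (\ref{transport}) and using the equations each term solves, the interior pieces telescope to $\e^{3}\vw\cdot\nx\u_{2}$, the boundary-layer pieces telescope up to an order-$\e^{3}$ term of the form $\tfrac{1}{1-\e\eta}\cos\phi\,\ps\ub_{2}$, and the only remaining errors are (i) commutators of the transport operator with the cut-off $\psi_{0}(\e\eta)$, supported on $\eta\gtrsim 1/\e$ and therefore exponentially small by the decay in Theorem~\ref{Milne theorem 2}, and (ii) lower-order contributions from the coordinate changes (\ref{substitution 1})--(\ref{substitution 4}) and the factors $\tfrac{1}{1-\e\eta}$. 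This produces a residual $R^{\e}$ with $\lnm{R^{\e}}=O(\e^{3})$, while by construction $u^{\e}_{\mathrm{app}}$ matches the inflow data exactly on $\Gamma^{-}$, so $q:=u^{\e}-u^{\e}_{\mathrm{app}}$ solves $\e\vw\cdot\nx q+q-\bar q=R^{\e}$ with vanishing data on $\Gamma^{-}$. I would close the estimate by the standard $L^{2}$--$L^{\infty}$ scheme: an energy estimate controls $\tnm{q-\bar q}$ and the outflow trace; a test-function argument (solving an auxiliary Poisson problem with datum $\bar q$) controls $\tnm{\bar q}$ with a loss of at most $\e^{-1}$; a double-Duhamel iteration along characteristics then upgrades this to $\lnm{q}\ls\e^{-2}\lnm{R^{\e}}=O(\e)$. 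Since $\sum_{k=1}^{2}\e^{k}(\u_{k}+\ub_{k})=O(\e)$ in $L^{\infty}$, (\ref{main theorem 1}) follows. I expect the main obstacle here to be the a priori estimate with the correct powers of $\e$, together with verifying that the curvature terms are genuinely one order smaller --- exactly the effect the geometric correction is built to produce.

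For the lower bound (\ref{main theorem 2}) with $g(\theta,\phi)=\cos\phi$, by (\ref{main theorem 1}) it suffices to show $\lnm{(\u_{0}+\ub_{0})-(\uc_{0}+\ubc_{0})}\ge C>0$ for small $\e$. I would first exploit the reflection $\phi\mapsto\pi-\phi$, which leaves both the classical Milne problem (\ref{classical temp 1}) and the $\e$-Milne problem (\ref{expansion temp 9}) invariant --- note that $\cos\phi\,\pf$ is invariant and $F(\e;\eta)$ does not depend on $\phi$ --- while it sends the datum $\cos\phi$ to $-\cos\phi$. By uniqueness, both $\f_{0}$ and $f_{0}^{\e}$ are odd under this reflection, so $\bar\f_{0}\equiv\bar f_{0}^{\e}\equiv 0$ and $\f_{0}(\infty)=f_{0}^{\e}(\infty)=0$; consequently $\uc_{0}\equiv\u_{0}\equiv 0$, and along characteristics both Milne equations collapse to pure damping (the unknown decays like $e^{-s}$). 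For the classical problem this yields the explicit solution $\f_{0}(\eta,\phi)=\cos\phi\,e^{-\eta/\sin\phi}$ on $\{\sin\phi>0\}$ and $\f_{0}\equiv 0$ on $\{\sin\phi\le 0\}$, so $\ubc_{0}=\psi_{0}(\e\eta)\f_{0}$ vanishes identically on the outflow region $\{\sin\phi<0\}$.

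It then remains to exhibit a set of positive measure on which $\ub_{0}=\psi_{0}(\e\eta)f_{0}^{\e}$ is bounded below. Near the grazing set $\{\phi=0\}$ one has $F(\e;\eta)\approx-\e$, so the $\e$-Milne characteristics $\dot\eta=\sin\phi$, $\dot\phi=F(\e;\eta)\cos\phi$ issuing from an inflow point $(\eta,\phi)=(0,\phi_{0})$ with $\phi_{0}=\e/2$ obey, to leading order, $\phi(s)=\phi_{0}-\e s$ and $\eta(s)=\phi_{0}s-\tfrac{\e}{2}s^{2}$: the curve rises only to height $\eta_{\max}=\phi_{0}^{2}/(2\e)=O(\e)$ and returns to the boundary at the \emph{outflow} angle $\phi=-\phi_{0}$ at time $s=2\phi_{0}/\e=O(1)$. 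Along the whole arc $f_{0}^{\e}=\cos\phi_{0}\,e^{-s}\gs e^{-1}$, while $\e\eta\le O(\e^{2})$ keeps $\psi_{0}(\e\eta)=1$. Hence on a small (positive-measure) tube around the descending portion of the arc --- which lies in the near-grazing outflow set $\{\sin\phi<0,\ \eta>0\}$ where $\ubc_{0}=\uc_{0}=\u_{0}=0$ --- we get $|\ub_{0}-\ubc_{0}|=|f_{0}^{\e}|\gs e^{-1}$, and combining with (\ref{main theorem 1}) yields $\lnm{u^{\e}-\uc_{0}-\ubc_{0}}\ge e^{-1}-O(\e)\ge C>0$ for small $\e$. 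The main obstacle in this part is making the turnaround analysis rigorous uniformly up to the singular set $\{\sin\phi=0\}$: one must control the characteristic ODE simultaneously with the decay of $f_{0}^{\e}$ along it and check that the leading-order picture survives the genuine, variable coefficient $F(\e;\eta)$.
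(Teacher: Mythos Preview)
Your plan for \eqref{main theorem 1} matches the paper's strategy exactly: build the approximation $\sum_{k\le N}\e^k(\u_k+\ub_k)$, compute the residual, and close with the $L^\infty$ a~priori estimate for \eqref{neutron}. One quantitative point: the paper's Theorem~\ref{LI estimate} loses $\e^{-3}$ (the $L^2$ estimate costs $\e^{-2}$ and the double--Duhamel bootstrap adds another $\e^{-1}$ through the Jacobian), so the paper takes $N=3$, not $N=2$, to get an $O(\e^4)$ residual and hence $\lnm{R_3}=O(\e)$. Your $\e^{-2}$ loss with $N=2$ would not close with the estimate as stated; either push the expansion one order higher (which is what the paper does) or sharpen the a~priori bound. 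You already flagged this as the main obstacle, so this is a calibration issue rather than a gap.

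For \eqref{main theorem 2} your route is genuinely different and, in fact, cleaner than the paper's. The paper argues by contradiction: assuming $\lnm{(\uc_0+\ubc_0)-(\u_0+\ub_0)}\to 0$, it evaluates both Milne solutions at the specific point $(\eta,\phi)=(n\e,\e)$ via the characteristic formulas, proves continuity of $\bar u,\bar U$ at $\eta=0$, uses the maximum principle to pin $\bar u(0),\bar U(0)\in[3/2,5/2]$, and then observes that the decay factors $e^{-n}$ (classical) and $e^{1-\sqrt{1+2n}}$ ($\e$-Milne) differ, forcing an $O(1)$ discrepancy. Your reflection $\phi\mapsto\pi-\phi$ (under which $\sin\phi$ and $\cos\phi\,\pf$ are invariant and $\cos\phi$ flips sign) gives $\bar\f_0\equiv\bar f_0^{\e}\equiv 0$ at once, so both problems collapse to damped transport along characteristics; the classical layer is then explicit and vanishes on $\{\sin\phi<0\}$, while the curved $\e$-characteristics issuing from near-grazing inflow angles turn around at height $O(\e)$ and re-enter the outflow region carrying an $O(1)$ value. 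This yields the lower bound directly, without the contradiction or the maximum-principle bracket. What the paper's approach buys is that it avoids any ODE analysis near the singular set $\{\sin\phi=0\}$---everything is an explicit one-line integral. What your approach buys is a transparent geometric picture of \emph{why} the two layers differ (curved versus straight characteristics near grazing), and it gives the constant essentially for free. The rigor point you identify---controlling the turnaround uniformly in the true, variable $F(\e;\eta)$---is real but mild: on $\eta\in[0,1/(4\e)]$ one has $F(\e;\eta)=-\e/(1-\e\eta)\in[-4\e,-\e]$, so the leading-order parabola is stable under a Gronwall argument.
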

For the diffusive boundary case, the zeroth order classical Knudsen
layer is always absent. Our method leads to the invalidity of the
classical Knudsen layer expansion at first order. See Theorem
\ref{main 2}.

Our results demonstrates that the classical Knudsen layer expansion
in Theorem \ref{main fake 1} breaks down in a unit disk in
$L^{\infty}$. Even though the equation (\ref{expansion temp 9}) only
modifies the equation (\ref{classical temp 1}) with an $\epsilon$
order term, the difference between the solutions can be order $1$ via
a contradiction argument in Step 5 of Section 4. We remark that
$L^{\infty}$ is a natural space to characterize boundary layer
contributions, whose $L^p$ norm is $O(\e^{1/p})$ for $1\leq
p<\infty$. Unfortunately, our new expansion can only be established
in a disk, because the $\theta$ derivative can be controlled due to
the constant curvature along the boundary. However, a new
mathematical theory is needed to characterize such a diffusive limit
in a general domain. Our analysis is based on a careful study of the
$\e$-Milne problem with geometric correction. Our paper is
self-contained and without any use of probabilistic techniques as in
\cite{book1}.

Throughout this paper, $C>0$ denotes a constant that only depends on
the parameter $\Omega$, but does not depend on the data. It is
referred as universal and can change from one inequality to another.
When we write $C(z)$, it means a certain positive constant depending
on the quantity $z$.

Our paper is organized as follows: in Section 2, we first establish
the $L^{\infty}$ well-posedness of the equation (\ref{transport});
in Section 3, we give a complete analysis of the $\e$-Milne problem
with geometric correction; in Section 4, we give the detailed proof
of Theorem \ref{main 1} and finally, in Section 5, we discuss the
case of diffusive boundary.

\section{Well-posedness of Steady Neutron Transport Equation}

In this section, we consider the well-posedness of the
steady neutron transport equation
\begin{eqnarray}
\left\{ \begin{array}{rcl} \epsilon\vec w\cdot\nabla_xu+u-\bar
u&=&f(\vx,\vw)\ \ \text{in}\ \ \Omega\label{neutron},\\\rule{0ex}{1.0em} u(\vec x_0,\vec
w)&=&g(\vec x_0,\vec w)\ \ \text{for}\ \ \vec x_0\in\p\Omega\ \
\text{and}\ \vw\cdot\vec n<0.
\end{array}
\right.
\end{eqnarray}
We define the $L^2$ and $L^{\infty}$ norms in $\Omega\times\s^1$ as
usual:
\begin{eqnarray}
\nm{f}_{L^2(\Omega\times\s^1)}&=&\bigg(\int_{\Omega}\int_{\s^1}\abs{f(\vx,\vw)}^2\ud{\vw}\ud{\vx}\bigg)^{1/2},\\
\nm{f}_{L^{\infty}(\Omega\times\s^1)}&=&\sup_{(\vx,\vw)\in\Omega\times\s^1}\abs{f(\vx,\vw)}.
\end{eqnarray}
Define the $L^2$ and $L^{\infty}$ norms on the boundary as follows:
\begin{eqnarray}
\nm{f}_{L^2(\Gamma)}&=&\bigg(\iint_{\Gamma}\abs{f(\vx,\vw)}^2\abs{\vw\cdot\vec n}\ud{\vw}\ud{\vx}\bigg)^{1/2},\\
\nm{f}_{L^2(\Gamma^{\pm})}&=&\bigg(\iint_{\Gamma^{\pm}}\abs{f(\vx,\vw)}^2\abs{\vw\cdot\vec
n}\ud{\vw}\ud{\vx}\bigg)^{1/2},\\
\nm{f}_{L^{\infty}(\Gamma)}&=&\sup_{(\vx,\vw)\in\Gamma}\abs{f(\vx,\vw)},\\
\nm{f}_{L^{\infty}(\Gamma^{\pm})}&=&\sup_{(\vx,\vw)\in\Gamma^{\pm}}\abs{f(\vx,\vw)}.
\end{eqnarray}

\subsection{Preliminaries}

In order to show the $L^2$ and $L^{\infty}$ well-posedness of the
equation (\ref{neutron}), we start with some preparations with the
penalized neutron transport equation.
\begin{lemma}\label{well-posedness lemma 1}
Assume $f(\vx,\vw)\in L^{\infty}(\Omega\times\s^1)$ and
$g(x_0,\vw)\in L^{\infty}(\Gamma^-)$. Then for the penalized
transport equation
\begin{eqnarray}\label{penalty equation}
\left\{
\begin{array}{rcl}
\lambda u_{\l}+\epsilon\vec
w\cdot\nabla_xu_{\l}+u_{\l}&=&f(\vx,\vw)\ \ \text{in}\ \ \Omega,\\\rule{0ex}{1.0em} u_{\l}(\vec x_0,\vec w)&=&g(\vec
x_0,\vec w)\ \ \text{for}\ \ \vec x_0\in\p\Omega\ \ \text{and}\
\vw\cdot\vec n<0.
\end{array}
\right.
\end{eqnarray}
with $\l>0$ as a penalty parameter, there exists a solution
$u_{\l}(\vx,\vw)\in L^{\infty}(\Omega\times\s^1)$ satisfying
\begin{eqnarray}
\im{u_{\l}}{\Omega\times\s^1}\leq
\im{f}{\Omega\times\s^1}+\im{g}{\Gamma^-}.
\end{eqnarray}
\end{lemma}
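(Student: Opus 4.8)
The plan is to solve the penalized equation \eqref{penalty equation} by the method of characteristics combined with a fixed point argument, since the penalization $\lambda u_\lambda$ removes the coupling through $\bar u_\lambda$ and makes the equation a pure transport equation along the characteristics of $\vw\cdot\nabla_x$. First I would note that along the characteristic line starting from a boundary point $\vx_0\in\p\Omega$ with $\vw\cdot\vec n<0$, the equation reads $(\lambda+1)u_\lambda + \e\,\frac{d}{ds}u_\lambda(\vx_0+s\vw,\vw) = f(\vx_0+s\vw,\vw)$, where $s\geq 0$ is the arclength parameter along the ray. Because $\Omega$ is a bounded convex domain (the unit disk), every characteristic through an interior point can be traced backward to a unique entry point on $\Gamma^-$ in finite backward time, so the representation formula
\begin{eqnarray*}
u_\lambda(\vx,\vw) = g(\vx_0,\vw)e^{-\frac{(\lambda+1)}{\e}t_b} + \frac{1}{\e}\int_0^{t_b}e^{-\frac{(\lambda+1)}{\e}s}f(\vx-s\vw,\vw)\,\ud s
\end{eqnarray*}
is well-defined, where $t_b=t_b(\vx,\vw)$ is the backward exit time and $\vx_0=\vx-t_b\vw$. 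One checks directly that this formula defines an $L^\infty$ function satisfying the equation in the appropriate (mild/distributional) sense and the boundary condition on $\Gamma^-$.

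The $L^\infty$ bound then follows by taking absolute values inside the representation formula: the boundary term is bounded by $\im{g}{\Gamma^-}$ since $e^{-(\lambda+1)t_b/\e}\leq 1$, and the integral term is bounded by
\begin{eqnarray*}
\im{f}{\Omega\times\s^1}\cdot\frac{1}{\e}\int_0^{\infty}e^{-\frac{(\lambda+1)}{\e}s}\,\ud s = \im{f}{\Omega\times\s^1}\cdot\frac{1}{\lambda+1}\leq \im{f}{\Omega\times\s^1}.
\end{eqnarray*}
Adding the two contributions gives exactly $\im{u_\lambda}{\Omega\times\s^1}\leq \im{f}{\Omega\times\s^1}+\im{g}{\Gamma^-}$, which is the claimed estimate. (The slightly sharper constant $\frac{1}{\lambda+1}$ on the source term is not needed but comes for free.)

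Alternatively, if one prefers to avoid the explicit characteristic integral and instead set up an abstract existence argument, I would define the solution operator for the transport part and use the contraction/monotonicity of $u\mapsto f-u$ absorbed into the streaming operator; but in this linear penalized setting the characteristic formula is cleanest and simultaneously yields existence, uniqueness, and the bound. The main technical point to be careful about — the only real obstacle — is the regularity of the backward exit time $t_b(\vx,\vw)$ and the behavior near the grazing set $\Gamma^0$, where $t_b$ may fail to be continuous; however, since we only need $u_\lambda\in L^\infty$ (not continuity), and the grazing set has measure zero in $\Gamma$, this causes no difficulty for the stated conclusion. I would also verify measurability of $u_\lambda$ via the measurability of $(\vx,\vw)\mapsto t_b(\vx,\vw)$, which is standard for convex domains. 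Everything else is a routine estimate on the exponential kernel.
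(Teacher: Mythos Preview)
Your proposal is correct and follows essentially the same approach as the paper: both solve the equation explicitly along the backward characteristics and bound the boundary and source contributions separately, obtaining the sharper factor $\frac{1}{1+\lambda}$ on $\im{f}{\Omega\times\s^1}$. The only cosmetic difference is that the paper parametrizes the characteristics with the scaled time $t$ (so that $X(t)=\vx+\e t\vw$ and the exponential weight is $e^{-(1+\lambda)t_b}$) rather than arclength $s$, which is equivalent to your formula after the substitution $s=\e t$; your remarks on the grazing set and measurability are reasonable extra commentary the paper omits.
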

\begin{proof}
The characteristics $(X(s),W(s))$ of the equation (\ref{penalty
equation}) which goes through $(\vx,\vw)$ is defined by
\begin{eqnarray}\label{character}
\left\{
\begin{array}{rcl}
(X(0),W(0))&=&(\vx,\vw)\\\rule{0ex}{2.0em}
\dfrac{\ud{X(s)}}{\ud{s}}&=&\e W(s),\\\rule{0ex}{2.0em}
\dfrac{\ud{W(s)}}{\ud{s}}&=&0.
\end{array}
\right.
\end{eqnarray}
which implies
\begin{eqnarray}
\left\{
\begin{array}{rcl}
X(s)&=&\vx+(\e\vw)s\\
W(s)&=&\vw
\end{array}
\right.
\end{eqnarray}
Hence, we can rewrite the equation (\ref{penalty equation}) along the
characteristics as
\begin{eqnarray}\label{well-posedness temp 31}
u_{\l}(\vx,\vw)&=& g(\vx-\e
t_b\vw,\vw)e^{-(1+\l)t_b}+\int_{0}^{t_b}f(\vx-\e(t_b-s)\vw,\vw)e^{-(1+\l)(t_b-s)}\ud{s},
\end{eqnarray}
where the backward exit time $t_b$ is defined as
\begin{equation}\label{exit time}
t_b(\vx,\vw)=\inf\{t\geq0: (\vx-\e t\vw,\vw)\in\Gamma^-\}.
\end{equation}
Then we can naturally estimate
\begin{eqnarray}
\im{u_{\l}}{\Omega\times\s^1}&\leq&e^{-(1+\l)t_b}\im{g}{\Gamma^-}+\frac{1-e^{(1+\l)t_b}}{1+\l}\im{f}{\Omega\times\s^1}\\
&\leq&\im{g}{\Gamma^-}+\im{f}{\Omega\times\s^1}\nonumber.
\end{eqnarray}
Since $u_{\l}$ can be explicitly traced back to the boundary data,
the existence naturally follows from above estimate.
\end{proof}
\begin{lemma}\label{well-posedness lemma 2}
Assume $f(\vx,\vw)\in L^{\infty}(\Omega\times\s^1)$ and
$g(x_0,\vw)\in L^{\infty}(\Gamma^-)$. Then for the penalized neutron
transport equation
\begin{eqnarray}
\left\{ \begin{array}{rcl} \lambda u_{\l}+\epsilon\vec
w\cdot\nabla_xu_{\l}+u_{\l}-\bar u_{\l}&=& f(\vx,\vw)\ \ \text{in}\ \ \Omega\label{well-posedness penalty
equation},\\\rule{0ex}{1.0em} u_{\l}(\vec x_0,\vec w)&=&g(\vec
x_0,\vec w)\ \ \text{for}\ \ \vec x_0\in\p\Omega\ \ \text{and}\
\vw\cdot\vec n<0.
\end{array}
\right.
\end{eqnarray}
with $\l>0$, there exists a solution $u_{\l}(\vx,\vw)\in
L^{\infty}(\Omega\times\s^1)$ satisfying
\begin{eqnarray}
\im{u_{\l}}{\Omega\times\s^1}\leq \frac{1+\l}{\l}\bigg(
\im{f}{\Omega\times\s^1}+\im{g}{\Gamma^-}\bigg).
\end{eqnarray}
\end{lemma}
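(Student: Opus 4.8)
The plan is to upgrade the $L^{\infty}$ bound of Lemma \ref{well-posedness lemma 1} to the equation with the averaging term, by iterating on the scattering operator and exploiting the smallness of the collision frequency relative to the penalty. First I would rewrite (\ref{well-posedness penalty equation}) as a fixed-point problem: given $u_{\l}$, solve $\l v+\e\vw\cdot\nx v+v=f+\bar u_{\l}$ by Lemma \ref{well-posedness lemma 1}, and call the resulting map $v=\mathcal{K}u_{\l}$. Since the right-hand side source in this auxiliary problem is $f+\bar u_{\l}$ and the boundary data is $g$, Lemma \ref{well-posedness lemma 1} applied to the \emph{rescaled} equation (dividing through by $1+\l$, so the zeroth-order coefficient becomes $1$ and the effective penalty is $\l/(1+\l)$... actually it is cleaner to keep the form as is and note the decay factor is $e^{-(1+\l)t_b}\leq 1$ and the source integral contributes $\frac{1}{1+\l}(1-e^{-(1+\l)t_b})\leq\frac{1}{1+\l}$) gives
\begin{eqnarray}
\im{\mathcal{K}u_{\l}}{\Omega\times\s^1}\leq \im{g}{\Gamma^-}+\frac{1}{1+\l}\bigg(\im{f}{\Omega\times\s^1}+\im{\bar u_{\l}}{\Omega\times\s^1}\bigg).
\end{eqnarray}
Because averaging over $\s^1$ is an $L^{\infty}$-contraction, $\im{\bar u_{\l}}{\Omega\times\s^1}\leq\im{u_{\l}}{\Omega\times\s^1}$, so $\mathcal{K}$ maps the ball of radius $R$ into itself provided $\im{g}{\Gamma^-}+\frac{1}{1+\l}(\im{f}{\Omega\times\s^1}+R)\leq R$, i.e. $R\geq\frac{1+\l}{\l}(\im{f}{\Omega\times\s^1}+\im{g}{\Gamma^-})$ after absorbing the $g$ term into the $\frac{1+\l}{\l}$ factor.

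Next I would show $\mathcal{K}$ is a contraction, which simultaneously yields existence, uniqueness, and the stated bound. For the difference of two iterates the boundary data and the source $f$ cancel, so $\mathcal{K}u-\mathcal{K}u'$ solves the same penalized equation with zero boundary data and source $\overline{u-u'}$; Lemma \ref{well-posedness lemma 1} then gives $\im{\mathcal{K}u-\mathcal{K}u'}{\Omega\times\s^1}\leq\frac{1}{1+\l}\im{u-u'}{\Omega\times\s^1}$, and $\frac{1}{1+\l}<1$ for $\l>0$. By the Banach fixed-point theorem there is a unique $u_{\l}$ in $L^{\infty}(\Omega\times\s^1)$ with $\mathcal{K}u_{\l}=u_{\l}$, and this fixed point is exactly the solution of (\ref{well-posedness penalty equation}). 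The invariant-ball computation above then forces
\begin{eqnarray}
\im{u_{\l}}{\Omega\times\s^1}\leq\frac{1+\l}{\l}\bigg(\im{f}{\Omega\times\s^1}+\im{g}{\Gamma^-}\bigg),
\end{eqnarray}
which is the claim.

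The only slightly delicate point is bookkeeping the constant: the naive iteration bound gives a geometric series $\sum_{n\geq0}(1+\l)^{-n}=\frac{1+\l}{\l}$, and one has to check the boundary contribution $\im{g}{\Gamma^-}$ does not get multiplied by more than $\frac{1+\l}{\l}$ — it does not, since each application of $\mathcal{K}$ reintroduces $g$ at full strength but it is the source term that accumulates. Summing $\im{g}{\Gamma^-}\sum_{n\geq 0}(1+\l)^{-n}$ still gives the factor $\frac{1+\l}{\l}$, so the stated estimate holds with room to spare. I expect the main obstacle to be purely notational: making sure the decay/source constants coming out of (\ref{well-posedness temp 31}) are tracked correctly (the factor is $\frac{1}{1+\l}$, not $\frac{1}{\l}$, which is what makes the contraction work for every $\l>0$ rather than only large $\l$), and confirming that $\|\bar u\|_{L^\infty}\le\|u\|_{L^\infty}$ is applied with the correct normalization $\frac{1}{2\pi}\int_{\s^1}$. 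No new analytic input beyond Lemma \ref{well-posedness lemma 1} and the contraction mapping principle is needed.
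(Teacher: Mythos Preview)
Your proposal is correct and essentially identical to the paper's proof: the paper builds the Picard iteration $u_\l^0=0$, $u_\l^k=\mathcal{K}u_\l^{k-1}$ explicitly, shows $\im{v^{k+1}}{\Omega\times\s^1}\le\frac{1}{1+\l}\im{v^{k}}{\Omega\times\s^1}$ for the differences via the characteristic formula (\ref{well-posedness temp 31}), and sums the geometric series to get $\frac{1+\l}{\l}\im{u_\l^1}{\Omega\times\s^1}$. Your Banach fixed-point formulation is just the abstract restatement of the same iteration with the same contraction factor and the same final constant.
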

\begin{proof}
We define an approximating sequence $\{u_{\l}^k\}_{k=0}^{\infty}$,
where $u_{\l}^0=0$ and
\begin{eqnarray}\label{penalty temp 1}
\left\{
\begin{array}{rcl}
\lambda u_{\l}^{k}+\epsilon\vec w\cdot\nabla_xu_{\l}^k+u_{\l}^k-\bar
u_{\l}^{k-1}&=&f(\vx,\vw)\ \ \text{in}\ \ \Omega,\\\rule{0ex}{1.0em}
u_{\l}^k(\vec x_0,\vec w)&=&g(\vec x_0,\vec w)\ \ \text{for}\ \ \vec
x_0\in\p\Omega\ \ \text{and}\ \vw\cdot\vec n<0.
\end{array}
\right.
\end{eqnarray}
By Lemma \ref{well-posedness lemma 1}, this sequence is well-defined
and $\im{u_{\l}^k}{\Omega\times\s^1}<\infty$.

The characteristics and the backward exit time are defined as
(\ref{character}) and (\ref{exit time}), so we rewrite equation
(\ref{penalty temp 1}) along the characteristics as
\begin{eqnarray}
u_{\l}^k(\vx,\vw)&=& g(\vx-\e
t_b\vw,\vw)e^{-(1+\l)t_b}+\int_{0}^{t_b}(f+\bar
u_{\l}^{k-1})(\vx-\e(t_b-s)\vw,\vw)e^{-(1+\l)(t_b-s)}\ud{s}.
\end{eqnarray}
We define the difference $v^k=u_{\l}^{k}-u_{\l}^{k-1}$ for $k\geq1$.
Recall (\ref{average 1}) for $\bar v^k$, then $v^k$ satisfies
\begin{eqnarray}
v^{k+1}(\vx,\vw)&=&\int_{0}^{t_b}\bar
v^{k}(\vx-\e(t_b-s)\vw,\vw)e^{-(1+\l)(t_b-s)}\ud{s}.\nonumber
\end{eqnarray}
Since $\im{\bar
v^k}{\Omega\times\s^1}\leq\im{v^k}{\Omega\times\s^1}$, we can
directly estimate
\begin{eqnarray}
\im{v^{k+1}}{\Omega\times\s^1}&\leq&\im{v^{k}}{\Omega\times\s^1}\int_0^{t_b}e^{-(1+\l)(t_b-s)}\ud{s}
\leq\frac{1-e^{-(1+\l)t_b}}{1+\l}\im{v^{k}}{\Omega\times\s^1}.
\end{eqnarray}
Hence, we naturally have
\begin{eqnarray}
\im{v^{k+1}}{\Omega\times\s^1}&\leq&\frac{1}{1+\l}\im{v^{k}}{\Omega\times\s^1}.
\end{eqnarray}
Thus, this is a contraction sequence for $\l>0$. Considering
$v^1=u_{\l}^1$, we have
\begin{eqnarray}
\im{v^{k}}{\Omega\times\s^1}\leq\bigg(\frac{1}{1+\l}\bigg)^{k-1}\im{u^{1}_{\l}}{\Omega\times\s^1},
\end{eqnarray}
for $k\geq1$. Therefore, $u_{\l}^k$ converges strongly in
$L^{\infty}$ to a limit solution $u_{\l}$ satisfying
\begin{eqnarray}\label{well-posedness temp 1}
\im{u_{\l}}{\Omega\times\s^1}\leq\sum_{k=1}^{\infty}\im{v^{k}}{\Omega\times\s^1}\leq\frac{1+\l}{\l}\im{u_{\l}^1}{\Omega\times\s^1}.
\end{eqnarray}
Since $u_{\l}^1$ can be rewritten along the characteristics as
\begin{eqnarray}
u_{\l}^1(\vx,\vw)&=&g(\vx-\e
t_b\vw,\vw)e^{-(1+\l)t_b}+\int_{0}^{t_b}f(\vx-\e(t_b-s)\vw,\vw)e^{-(1+\l)(t_b-s)}\ud{s},
\end{eqnarray}
based on Lemma \ref{well-posedness lemma 1}, we can directly
estimate
\begin{eqnarray}\label{well-posedness temp 2}
\im{u_{\l}^1}{\Omega\times\s^1}\leq
\im{f}{\Omega\times\s^1}+\im{g}{\Gamma^-}.
\end{eqnarray}
Combining (\ref{well-posedness temp 1}) and (\ref{well-posedness
temp 2}), we can easily deduce the lemma.
\end{proof}

\subsection{$L^2$ Estimate}

It is easy to see when $\l\rt0$, the estimate in Lemma
\ref{well-posedness lemma 2} blows up. Hence, we need to show a
uniform estimate of the solution to the penalized neutron transport
equation (\ref{well-posedness penalty equation}).
\begin{lemma}(Green's Identity)\label{well-posedness lemma 3}
Assume $f(\vx,\vw),\ g(\vx,\vw)\in L^2(\Omega\times\s^1)$ and
$\vw\cdot\nx f,\ \vw\cdot\nx g\in L^2(\Omega\times\s^1)$ with $f,\
g\in L^2(\Gamma)$. Then
\begin{eqnarray}
\iint_{\Omega\times\s^1}\bigg((\vw\cdot\nx f)g+(\vw\cdot\nx
g)f\bigg)\ud{\vx}\ud{\vw}=\int_{\Gamma}fg\ud{\gamma},
\end{eqnarray}
where $\ud{\gamma}=(\vw\cdot\vec n)\ud{s}$ on the boundary.
\end{lemma}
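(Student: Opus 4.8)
The plan is to reduce the identity to the classical divergence theorem in the spatial variable for each fixed velocity, and then integrate over $\s^1$. For fixed $\vw\in\s^1$ the vector $\vw$ is constant with respect to $\nx$, so we have the pointwise product rule $(\vw\cdot\nx f)g+(\vw\cdot\nx g)f=\vw\cdot\nx(fg)=\nx\cdot(\vw\, fg)$. If $f,g\in C^1(\overline\Omega\times\s^1)$, the divergence theorem on $\Omega$ gives $\int_{\Omega}\nx\cdot(\vw\,fg)\ud{\vx}=\int_{\p\Omega}(\vw\cdot\vec n)fg\ud{s}$; integrating in $\vw$ over $\s^1$ and recalling $\ud{\gamma}=(\vw\cdot\vec n)\ud{s}$ yields the claimed formula. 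So the first step is simply to record this smooth case.

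The second step is a density argument to remove the smoothness assumption. The natural approximation here is not plain mollification in $\vx$, which does not control traces on $\Gamma$, but a flow-adapted regularization: translate along characteristics by $\vx\mapsto\vx-\delta\vw$ to push the relevant sets slightly into the interior, then mollify, obtaining $f_\delta,g_\delta\in C^1$ with $f_\delta\to f$ and $\vw\cdot\nx f_\delta\to\vw\cdot\nx f$ in $L^2(\Omega\times\s^1)$, and $f_\delta\to f$ in $L^2(\Gamma)$ (and likewise for $g$). The point is that the hypotheses $f\in L^2(\Omega\times\s^1)$, $\vw\cdot\nx f\in L^2(\Omega\times\s^1)$, $f\in L^2(\Gamma)$ are exactly what make the trace of $f$ on $\Gamma$ well-defined and stable under this scheme. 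Passing to the limit in the smooth identity, the interior integral converges by the $L^2(\Omega\times\s^1)$ convergences and Cauchy--Schwarz, and the boundary integral $\int_{\Gamma}f_\delta g_\delta\ud{\gamma}$ converges to $\int_{\Gamma}fg\ud{\gamma}$ by the $L^2(\Gamma)$ convergences; this gives the general statement.

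The main obstacle is the behavior near the grazing set $\Gamma^0$, where $\vw\cdot\vec n$ vanishes and the characteristic flow is tangent to $\p\Omega$, so the translation/mollification must be arranged so that no mass is lost or spuriously created there. This is controlled by the weight $\abs{\vw\cdot\vec n}$ built into the $L^2(\Gamma)$ norm, which suppresses the grazing contribution and makes the boundary convergence legitimate. I would also note that a fully self-contained alternative, consistent with the characteristics-based framework of Section~2, is available: using the backward exit-time parametrization of Lemma~\ref{well-posedness lemma 1}, foliate $\Omega\times\s^1$ into characteristic segments, reduce the spatial integral on each segment to a one-dimensional integration by parts in the variable along $\vw$, and sum; the endpoint terms assemble precisely into $\int_{\Gamma}fg\ud{\gamma}$ with the correct weight, and no external trace theorem is invoked.
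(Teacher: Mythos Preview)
Your argument is correct and is essentially the standard route: divergence theorem for smooth $f,g$ at fixed $\vw$, then a density argument using regularization along characteristics together with the weighted $L^2(\Gamma)$ trace hypothesis to control the grazing set. The paper, however, does not give a proof at all: it simply cites \cite[Chapter 9]{book6} (Cercignani--Illner--Pulvirenti). What you have written is, in effect, a sketch of the proof one finds in that reference, so your approach is not different from the paper's in any meaningful sense---just more explicit.
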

\begin{proof}
See \cite[Chapter 9]{book6}.
\end{proof}
\begin{lemma}\label{well-posedness lemma 4}
The solution $u_{\l}$ to the equation (\ref{well-posedness penalty
equation}) satisfies the uniform estimate
\begin{eqnarray}\label{well-posedness temp 3}
\e\tm{\bar u_{\l}}{\Omega\times\s^1}\leq C(\Omega)\bigg(
\tm{u_{\l}-\bar
u_{\l}}{\Omega\times\s^1}+\tm{f}{\Omega\times\s^1}+\e\tm{u_{\l}}{\Gamma^{+}}+\e\tm{g}{\Gamma^-}\bigg),
\end{eqnarray}
for $0\leq\l<<1$ and $0<\e<<1$.
\end{lemma}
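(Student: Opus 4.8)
The plan is to use the standard kinetic trick of testing the equation against a carefully chosen test function that gives control of $\bar u_\l$ by solving an auxiliary elliptic problem. Write $u=u_\l$ and $\bar u=\bar u_\l$ for brevity, so that $u$ solves $\l u+\e\vw\cdot\nx u+u-\bar u=f$. First I would record the two obvious energy identities obtained from Green's identity (Lemma \ref{well-posedness lemma 3}): testing the equation against $u$ itself gives $\l\tnm{u}^2+\tnm{u-\bar u}^2+\tfrac\e2\int_\Gamma u^2\,\ud\gamma=\iint fu$, which controls $\tnm{u-\bar u}$ and the outflow trace in terms of the data; this is the ``coercive part'' and is routine. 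The real content is extracting a bound on $\bar u$ itself, since the collision operator $u\mapsto u-\bar u$ is degenerate on constants in $\vw$.

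For that, I would introduce $\xi(\vx)$ as the solution of the Poisson problem
\begin{eqnarray*}
-\Delta_x\xi=\bar u\ \ \text{in}\ \ \Omega,\qquad \xi=0\ \ \text{on}\ \ \p\Omega,
\end{eqnarray*}
so that elliptic regularity gives $\nm{\xi}_{H^2(\Omega)}\le C(\Omega)\tnm{\bar u}$. Then I would test the transport equation against the test function $\vw\cdot\nx\xi$. The key term is
\begin{eqnarray*}
\e\iint_{\Omega\times\s^1}(\vw\cdot\nx u)(\vw\cdot\nx\xi)\,\ud\vx\,\ud\vw,
\end{eqnarray*}
which after integrating by parts in $\vx$ (again via Lemma \ref{well-posedness lemma 3}) and using $\int_{\s^1}w_iw_j\,\ud\vw=\pi\delta_{ij}$ produces, at leading order, a term proportional to $\e\iint \bar u\,(-\Delta_x\xi)=\e\pi\tnm{\bar u}^2$ (the off-diagonal $w_1w_2$ contributions integrate to zero, and the $u-\bar u$ piece of $u$ only contributes a term controlled by $\tnm{u-\bar u}\nm{\xi}_{H^2}$). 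The remaining terms are handled as follows: $\l u$ against $\vw\cdot\nx\xi$ is $O(\l)\tnm{u}\nm{\xi}_{H^1}$; the term $(u-\bar u)$ against $\vw\cdot\nx\xi$ is bounded by $\tnm{u-\bar u}\nm{\xi}_{H^1}$; $\bar u$ against $\vw\cdot\nx\xi$ vanishes because $\int_{\s^1}\vw\,\ud\vw=0$; the source $f$ against $\vw\cdot\nx\xi$ is bounded by $\tnm{f}\nm{\xi}_{H^1}$; and the boundary term from the integration by parts is $\e\int_\Gamma u\,(\vw\cdot\nx\xi)\,\ud\gamma$, which is controlled by $\e\big(\tnm{u}_{\Gamma^+}+\tnm{g}_{\Gamma^-}\big)\nm{\nx\xi}_{L^2(\p\Omega)}$, and the trace theorem bounds $\nm{\nx\xi}_{L^2(\p\Omega)}\le C\nm{\xi}_{H^2(\Omega)}\le C\tnm{\bar u}$.

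Collecting everything, I would arrive at an inequality of the shape
\begin{eqnarray*}
\e\tnm{\bar u}^2\le C(\Omega)\Big(\tnm{u-\bar u}+\tnm{f}+\l\tnm{u}+\e\tnm{u}_{\Gamma^+}+\e\tnm{g}_{\Gamma^-}\Big)\tnm{\bar u},
\end{eqnarray*}
and dividing through by $\tnm{\bar u}$ gives the claimed estimate once the $O(\l)\tnm{u}$ term is absorbed, using $\l\ll1$ together with the first energy identity to replace $\l\tnm{u}$ by a constant times the data (or simply folding it into the right-hand side under the stated smallness hypothesis $0\le\l\ll1$). The main obstacle — and the step deserving the most care — is the integration-by-parts computation of the $\e$-weighted quadratic term: one must track both the boundary contribution (which is exactly where the $\e\tnm{u}_{\Gamma^+}$ and $\e\tnm{g}_{\Gamma^-}$ terms on the right-hand side come from) and the precise cancellation of the $w_1w_2$ cross terms against $\p_{x_1x_2}\xi$, so that the surviving bulk term is genuinely $\e\pi\tnm{\bar u}^2$ rather than something weaker; a secondary subtlety is justifying that $u_\l$ has enough regularity ($\vw\cdot\nx u_\l\in L^2$, traces in $L^2(\Gamma)$) for Green's identity to apply, which follows from the explicit characteristic representation in Lemma \ref{well-posedness lemma 2}.
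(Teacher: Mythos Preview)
Your approach is essentially the same as the paper's: solve an auxiliary Dirichlet--Poisson problem for $\bar u_\l$, test the weak formulation against $\vw\cdot\nx\xi$, and exploit $\int_{\s^1}w_iw_j\,\ud\vw=\pi\delta_{ij}$ to extract $\e\pi\tnm{\bar u_\l}^2$ from the transport term. The decomposition, the elliptic estimate, the trace bound for the boundary term, and the overall structure all match.

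There is one small gap worth flagging: your treatment of the $\l u$ term. You bound $\l\iint u\,(\vw\cdot\nx\xi)$ by $O(\l)\tnm{u}\nm{\xi}_{H^1}$ and then propose to absorb $\l\tnm{u}$ at the end via the energy identity or smallness of $\l$. Neither works cleanly: the energy identity only controls $\l\tnm{u}^2$, not $\l\tnm{u}$, and absorbing $C\l\tnm{\bar u}$ into the left side $\e\tnm{\bar u}$ would require $\l\ll\e$, which is not assumed. The fix is the very observation you already recorded one line later---``$\bar u$ against $\vw\cdot\nx\xi$ vanishes because $\int_{\s^1}\vw\,\ud\vw=0$''---applied to the $\l$ term itself. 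Splitting $u=\bar u+(u-\bar u)$ in $\l\iint u\,(\vw\cdot\nx\xi)$ kills the $\bar u$ contribution and leaves only $C\l\tnm{u-\bar u}\tnm{\bar u}$, which after dividing by $\tnm{\bar u}$ folds harmlessly into the $\tnm{u-\bar u}$ term on the right for $\l<1$. This is exactly what the paper does, and with that one-line adjustment your argument is complete.
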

\begin{proof}
Applying Lemma \ref{well-posedness lemma 3} to the solution of the
equation (\ref{well-posedness penalty equation}). Then for any
$\phi\in L^2(\Omega\times\s^1)$ satisfying $\vw\cdot\nx\phi\in
L^2(\Omega\times\s^1)$ and $\phi\in L^2(\Gamma)$, we have
\begin{eqnarray}\label{well-posedness temp 4}
\l\iint_{\Omega\times\s^1}u_{\l}\phi+\e\int_{\Gamma}u_{\l}\phi\ud{\gamma}
-\e\iint_{\Omega\times\s^1}(\vw\cdot\nx\phi)u_{\l}+\iint_{\Omega\times\s^1}(u_{\l}-\bar
u_{\l})\phi=\iint_{\Omega\times\s^1}f\phi.
\end{eqnarray}
Our goal is to choose a particular test function $\phi$. We first
construct an auxiliary function $\zeta$. Since $u_{\l}\in
L^{\infty}(\Omega\times\s^1)$, it naturally implies $\bar u_{\l}\in
L^{\infty}(\Omega)$ which further leads to $\bar u_{\l}\in
L^2(\Omega)$. We define $\zeta(\vx)$ on $\Omega$ satisfying
\begin{eqnarray}\label{test temp 1}
\left\{
\begin{array}{rcl}
\Delta \zeta&=&\bar u_{\l}\ \ \text{in}\ \
\Omega,\\\rule{0ex}{1.0em} \zeta&=&0\ \ \text{on}\ \ \p\Omega.
\end{array}
\right.
\end{eqnarray}
In the bounded domain $\Omega$, based on the standard elliptic
estimate, we have
\begin{eqnarray}\label{test temp 3}
\nm{\zeta}_{H^2(\Omega)}\leq C(\Omega)\nm{\bar
u_{\l}}_{L^2(\Omega)}.
\end{eqnarray}
We plug the test function
\begin{eqnarray}\label{test temp 2}
\phi=-\vw\cdot\nx\zeta
\end{eqnarray}
into the weak formulation (\ref{well-posedness temp 4}) and estimate
each term there. Naturally, we have
\begin{eqnarray}\label{test temp 4}
\nm{\phi}_{L^2(\Omega)}\leq C\nm{\zeta}_{H^1(\Omega)}\leq
C(\Omega)\nm{\bar u_{\l}}_{L^2(\Omega)}.
\end{eqnarray}
Easily we can decompose
\begin{eqnarray}\label{test temp 5}
-\e\iint_{\Omega\times\s^1}(\vw\cdot\nx\phi)u_{\l}&=&-\e\iint_{\Omega\times\s^1}(\vw\cdot\nx\phi)\bar
u_{\l}-\e\iint_{\Omega\times\s^1}(\vw\cdot\nx\phi)(u_{\l}-\bar
u_{\l}).
\end{eqnarray}
We estimate the two term on the right-hand side separately. By
(\ref{test temp 1}) and (\ref{test temp 2}), we have
\begin{eqnarray}\label{wellposed temp 1}
-\e\iint_{\Omega\times\s^1}(\vw\cdot\nx\phi)\bar
u_{\l}&=&\e\iint_{\Omega\times\s^1}\bar
u_{\l}\bigg(w_1(w_1\p_{11}\zeta+w_2\p_{12}\zeta)+w_2(w_1\p_{12}\zeta+w_2\p_{22}\zeta)\bigg)\\
&=&\e\iint_{\Omega\times\s^1}\bar
u_{\l}\bigg(w_1^2\p_{11}\zeta+w_2^2\p_{22}\zeta\bigg)\nonumber\\
&=&\e\pi\int_{\Omega}\bar u_{\l}(\p_{11}\zeta+\p_{22}\zeta)\nonumber\\
&=&\e\pi\nm{\bar u_{\l}}_{L^2(\Omega)}^2\nonumber\\
&=&\half\e\nm{\bar u_{\l}}_{L^2(\Omega\times\s^1)}^2\nonumber.
\end{eqnarray}
In the second equality, above cross terms vanish due to the symmetry
of the integral over $\s^1$. On the other hand, for the second term
in (\ref{test temp 5}), H\"older's inequality and the elliptic
estimate imply
\begin{eqnarray}\label{wellposed temp 2}
-\e\iint_{\Omega\times\s^1}(\vw\cdot\nx\phi)(u_{\l}-\bar
u_{\l})&\leq&C(\Omega)\e\nm{u_{\l}-\bar u_{\l}}_{L^2(\Omega\times\s^1)}\nm{\zeta}_{H^2(\Omega)}\\
&\leq&C(\Omega)\e\nm{u_{\l}-\bar
u_{\l}}_{L^2(\Omega\times\s^1)}\nm{\bar
u_{\l}}_{L^2(\Omega\times\s^1)}\nonumber.
\end{eqnarray}
Based on (\ref{test temp 3}), (\ref{test temp 4}), the boundary
condition of the penalized neutron transport equation
(\ref{well-posedness penalty equation}), the trace theorem,
H\"older's inequality and the elliptic estimate, we have
\begin{eqnarray}\label{wellposed temp 3}
\e\int_{\Gamma}u_{\l}\phi\ud{\gamma}&=&\e\int_{\Gamma^+}u_{\l}\phi\ud{\gamma}+\e\int_{\Gamma^-}u_{\l}\phi\ud{\gamma}\\
&\leq&C(\Omega)\bigg(\e\nm{u_{\l}}_{L^2(\Gamma^+)}\nm{\bar
u_{\l}}_{L^2(\Omega\times\s^1)}+\e\tm{g}{\Gamma^-}\nm{\bar
u_{\l}}_{L^2(\Omega\times\s^1)}\bigg)\nonumber,
\end{eqnarray}
\begin{eqnarray}\label{wellposed temp 4}
\l\iint_{\Omega\times\s^1}u_{\l}\phi&=&\l\iint_{\Omega\times\s^1}\bar
u_{\l}\phi+\l\iint_{\Omega\times\s^1}(u_{\l}-\bar u_{\l})\phi=\l\iint_{\Omega\times\s^1}(u_{\l}-\bar u_{\l})\phi\\
&\leq&C(\Omega)\l\nm{\bar
u_{\l}}_{L^2(\Omega\times\s^1)}\nm{u_{\l}-\bar
u_{\l}}_{L^2(\Omega\times\s^1)}\nonumber,
\end{eqnarray}
\begin{eqnarray}\label{wellposed temp 5}
\iint_{\Omega\times\s^1}(u_{\l}-\bar u_{\l})\phi\leq
C(\Omega)\nm{\bar u_{\l}}_{L^2(\Omega\times\s^1)}\nm{u_{\l}-\bar
u_{\l}}_{L^2(\Omega\times\s^1)},
\end{eqnarray}
\begin{eqnarray}\label{wellposed temp 6}
\iint_{\Omega\times\s^1}f\phi\leq C(\Omega)\nm{\bar
u_{\l}}_{L^2(\Omega\times\s^1)}\nm{f}_{L^2(\Omega\times\s^1)}.
\end{eqnarray}
Collecting terms in (\ref{wellposed temp 1}), (\ref{wellposed temp
2}), (\ref{wellposed temp 3}), (\ref{wellposed temp 4}),
(\ref{wellposed temp 5}) and (\ref{wellposed temp 6}), we obtain
\begin{eqnarray}
\\
\e\nm{\bar u_{\l}}_{L^2(\Omega\times\s^1)}\leq
C(\Omega)\bigg((1+\e+\l)\nm{u_{\l}-\bar
u_{\l}}_{L^2(\Omega\times\s^1)}+\e\nm{u_{\l}}_{L^2(\Gamma^+)}+\nm{f}_{L^2(\Omega\times\s^1)}+\e\tm{g}{\Gamma^-}\bigg)\nonumber,
\end{eqnarray}
When $0\leq\l<1$ and $0<\e<1$, we get the desired uniform estimate
with respect to $\lambda$.
\end{proof}
\begin{theorem}\label{LT estimate}
Assume $f(\vx,\vw)\in L^{\infty}(\Omega\times\s^1)$ and
$g(x_0,\vw)\in L^{\infty}(\Gamma^-)$. Then for the steady neutron
transport equation (\ref{neutron}), there exists a unique solution
$u(\vx,\vw)\in L^2(\Omega\times\s^1)$ satisfying
\begin{eqnarray}
\tm{u}{\Omega\times\s^1}\leq C(\Omega)\bigg(
\frac{1}{\e^2}\tm{f}{\Omega\times\s^1}+\frac{1}{\e^{1/2}}\tm{g}{\Gamma^-}\bigg).
\end{eqnarray}
\end{theorem}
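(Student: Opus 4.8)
The plan is to obtain the $L^2$ bound for the penalized equation (\ref{well-posedness penalty equation}) \emph{uniformly in $\l$}, and then pass to the limit $\l\rt0$. The starting point is the basic energy identity: multiply (\ref{well-posedness penalty equation}) by $u_{\l}$ and integrate over $\Omega\times\s^1$, invoking Green's identity (Lemma \ref{well-posedness lemma 3}) for the transport term. Since $\vw\cdot\nx$ is skew-adjoint up to the boundary flux, this yields
\begin{eqnarray}
\l\tm{u_{\l}}{\Omega\times\s^1}^2+\tm{u_{\l}-\bar u_{\l}}{\Omega\times\s^1}^2+\frac{\e}{2}\int_{\Gamma^+}\abs{u_{\l}}^2\ud{\gamma}\leq C\bigg(\tm{f}{\Omega\times\s^1}\tm{u_{\l}}{\Omega\times\s^1}+\e\int_{\Gamma^-}\abs{g}^2\ud{\gamma}\bigg),
\end{eqnarray}
where I have used the orthogonality $\iint(u_{\l}-\bar u_{\l})\bar u_{\l}=0$ so that $\iint(u_{\l}-\bar u_{\l})u_{\l}=\tm{u_{\l}-\bar u_{\l}}{}^2$, and Cauchy--Schwarz with a small constant to absorb the boundary term $\e\int_{\Gamma^+}u_{\l}g$. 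This controls the ``microscopic'' part $u_{\l}-\bar u_{\l}$ and the outflow trace, but \emph{not} $\bar u_{\l}$ itself, which is the whole difficulty in the diffusive regime.

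The next step is to feed this into Lemma \ref{well-posedness lemma 4}, which is precisely the estimate that recovers $\bar u_{\l}$: it gives
\begin{eqnarray}
\e\tm{\bar u_{\l}}{\Omega\times\s^1}\leq C(\Omega)\bigg(\tm{u_{\l}-\bar u_{\l}}{\Omega\times\s^1}+\tm{f}{\Omega\times\s^1}+\e\tm{u_{\l}}{\Gamma^+}+\e\tm{g}{\Gamma^-}\bigg).
\end{eqnarray}
Combining the two displays: the energy identity bounds $\tm{u_{\l}-\bar u_{\l}}{}$ and $\e^{1/2}\tm{u_{\l}}{\Gamma^+}$ by (roughly) $\tm{f}{}^{1/2}\tm{u_{\l}}{}^{1/2}+\e^{1/2}\tm{g}{\Gamma^-}$, hence $\e\tm{u_{\l}}{\Gamma^+}\ls \e^{1/2}\big(\tm{f}{}^{1/2}\tm{u_{\l}}{}^{1/2}+\e^{1/2}\tm{g}{\Gamma^-}\big)$. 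Plugging these into the Lemma \ref{well-posedness lemma 4} bound and using $\tm{u_{\l}}{}\leq\tm{u_{\l}-\bar u_{\l}}{}+\tm{\bar u_{\l}}{}$ gives a closed inequality for $\tm{u_{\l}}{}$. Tracking the powers of $\e$ — the factor $1/\e$ in front of $\bar u_{\l}$, and a further $1/\e$ or $\e^{-1/2}$ lost in estimating $\tm{u_{\l}}{\Gamma^+}$ and in the Young's-inequality absorption of the $\tm{f}{}^{1/2}\tm{u_{\l}}{}^{1/2}$ cross term — one arrives at
\begin{eqnarray}
\tm{u_{\l}}{\Omega\times\s^1}\leq C(\Omega)\bigg(\frac{1}{\e^2}\tm{f}{\Omega\times\s^1}+\frac{1}{\e^{1/2}}\tm{g}{\Gamma^-}\bigg),
\end{eqnarray}
uniformly for $0\leq\l<1$.

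Finally, with this uniform bound in hand, a routine weak-compactness argument finishes the job: extract a weak-$L^2$ limit $u$ of $u_{\l}$ as $\l\rt0$ along a subsequence; the linear equation and boundary condition pass to the limit (the trace terms are controlled by the energy identity), so $u$ solves (\ref{neutron}) and inherits the same estimate by weak lower semicontinuity of the norm. Uniqueness follows by applying the estimate to the difference of two solutions with $f=0$, $g=0$, which forces $u=0$. The main obstacle is the bookkeeping in the combined estimate: one must be careful that the absorption of $\tm{u_{\l}}{}$ into the left side and the handling of the boundary trace $\tm{u_{\l}}{\Gamma^+}$ do not cost more than the claimed $\e^{-2}$ and $\e^{-1/2}$ powers, since a naive application of Young's inequality at the wrong scale degrades the exponent. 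Choosing the Young's-inequality weights proportionally to the correct powers of $\e$ at each occurrence is what makes the constants close.
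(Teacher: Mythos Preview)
Your proposal is correct and follows essentially the same route as the paper: energy identity with test function $u_{\l}$, invoke Lemma \ref{well-posedness lemma 4} to recover $\bar u_{\l}$, combine the two (the paper squares Lemma \ref{well-posedness lemma 4} and adds a small multiple to the energy identity, which is the same manipulation you describe), apply Young's inequality to $\iint fu_{\l}$ with $\e$-weighted constants, and pass to the limit $\l\to0$ by weak compactness. One small slip: there is no cross term $\e\int_{\Gamma^+}u_{\l}g$ to absorb, since the boundary condition $u_{\l}=g$ on $\Gamma^-$ makes $\tfrac{\e}{2}\int_{\Gamma}|u_{\l}|^2\ud{\gamma}=\tfrac{\e}{2}\nm{u_{\l}}_{L^2(\Gamma^+)}^2-\tfrac{\e}{2}\nm{g}_{L^2(\Gamma^-)}^2$ split cleanly.
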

\begin{proof}
In the weak formulation (\ref{well-posedness temp 4}), we may take
the test function $\phi=u_{\l}$ to get the energy estimate
\begin{eqnarray}
\l\nm{u_{\l}}_{L^2(\Omega\times\s^1)}^2+\half\e\int_{\Gamma}\abs{u_{\l}}^2\ud{\gamma}+\nm{u_{\l}-\bar
u_{\l}}_{L^2(\Omega\times\s^1)}^2=\iint_{\Omega\times\s^1}fu_{\l}.
\end{eqnarray}
Hence, this naturally implies
\begin{eqnarray}\label{well-posedness temp 5}
\half\e\nm{u_{\l}}_{L^2(\Gamma^+)}^2+\nm{u_{\l}-\bar
u_{\l}}_{L^2(\Omega\times\s^1)}^2\leq\iint_{\Omega\times\s^1}fu_{\l}+\half\e\nm{g}_{L^2(\Gamma^-)}^2.
\end{eqnarray}
On the other hand, we can square on both sides of
(\ref{well-posedness temp 3}) to obtain
\begin{eqnarray}\label{well-posedness temp 6}
\\
\e^2\tm{\bar u_{\l}}{\Omega\times\s^1}^2\leq C(\Omega)\bigg(
\tm{u_{\l}-\bar
u_{\l}}{\Omega\times\s^1}^2+\tm{f}{\Omega\times\s^1}^2+\e^2\tm{u_{\l}}{\Gamma^{+}}^2+\e^2\tm{g}{\Gamma^-}^2\bigg).\nonumber
\end{eqnarray}
Multiplying a sufficiently small constant on both sides of
(\ref{well-posedness temp 6}) and adding it to (\ref{well-posedness
temp 5}) to absorb $\nm{u_{\l}}_{L^2(\Gamma^+)}^2$ and
$\nm{u_{\l}-\bar u_{\l}}_{L^2(\Omega\times\s^1)}^2$, we deduce
\begin{eqnarray}
&&\e\nm{u_{\l}}_{L^2(\Gamma^+)}^2+\e^2\nm{\bar
u_{\l}}_{L^2(\Omega\times\s^1)}^2+\nm{u_{\l}-\bar
u_{\l}}_{L^2(\Omega\times\s^1)}^2\\&&\qquad\qquad\qquad\leq
C(\Omega)\bigg(\tm{f}{\Omega\times\s^1}^2+
\iint_{\Omega\times\s^1}fu_{\l}+\e\nm{g}_{L^2(\Gamma^-)}^2\bigg).\nonumber
\end{eqnarray}
Hence, we have
\begin{eqnarray}\label{well-posedness temp 7}
\e\nm{u_{\l}}_{L^2(\Gamma^+)}^2+\e^2\nm{u_{\l}}_{L^2(\Omega\times\s^1)}^2\leq
C(\Omega)\bigg(\tm{f}{\Omega\times\s^1}^2+
\iint_{\Omega\times\s^1}fu_{\l}+\e\nm{g}_{L^2(\Gamma^-)}^2\bigg).
\end{eqnarray}
A simple application of Cauchy's inequality leads to
\begin{eqnarray}
\iint_{\Omega\times\s^1}fu_{\l}\leq\frac{1}{4C\e^2}\tm{f}{\Omega\times\s^1}^2+C\e^2\tm{u_{\l}}{\Omega\times\s^1}^2.
\end{eqnarray}
Taking $C$ sufficiently small, we can divide (\ref{well-posedness
temp 7}) by $\e^2$ to obtain
\begin{eqnarray}\label{well-posedness temp 21}
\frac{1}{\e}\nm{u_{\l}}_{L^2(\Gamma^+)}^2+\nm{u_{\l}}_{L^2(\Omega\times\s^2)}^2\leq
C(\Omega)\bigg(
\frac{1}{\e^4}\nm{f}_{L^2(\Omega\times\s^2)}^2+\frac{1}{\e}\nm{g}_{L^2(\Gamma^-)}^2\bigg).
\end{eqnarray}
Since above estimate does not depend on $\l$, it gives a uniform
estimate for the penalized neutron transport equation
(\ref{well-posedness penalty equation}). Thus, we can extract a
weakly convergent subsequence $u_{\l}\rt u$ as $\l\rt0$. The weak
lower semi-continuity of norms $\nm{\cdot}_{L^2(\Omega\times\s^2)}$
and $\nm{\cdot}_{L^2(\Gamma^+)}$ implies $u$ also satisfies the
estimate (\ref{well-posedness temp 21}). Hence, in the weak
formulation (\ref{well-posedness temp 4}), we can take $\l\rt0$ to
deduce that $u$ satisfies equation (\ref{neutron}). Also $u_{\l}-u$
satisfies the equation
\begin{eqnarray}
\left\{
\begin{array}{rcl}
\epsilon\vec w\cdot\nabla_x(u_{\l}-u)+(u_{\l}-u)-(\bar u_{\l}-\bar
u)&=&-\l u_{\l}\ \ \text{in}\ \
\Omega\label{remainder},\\\rule{0ex}{1.0em} (u_{\l}-u)(\vec x_0,\vec
w)&=&0\ \ \text{for}\ \ \vec x_0\in\p\Omega\ \ and\ \vw\cdot\vec
n<0.
\end{array}
\right.
\end{eqnarray}
By a similar argument as above, we can achieve
\begin{eqnarray}
\nm{u_{\l}-u}_{L^2(\Omega\times\s^2)}^2\leq
C(\Omega)\bigg(\frac{\l}{\e^4}\nm{u_{\l}}_{L^2(\Omega\times\s^2)}^2\bigg).
\end{eqnarray}
When $\l\rt0$, the right-hand side approaches zero, which implies
the convergence is actually in the strong sense. The uniqueness
easily follows from the energy estimates.
\end{proof}

\subsection{$L^{\infty}$ Estimate}

\begin{theorem}\label{LI estimate}
Assume $f(\vx,\vw)\in L^{\infty}(\Omega\times\s^1)$ and
$g(x_0,\vw)\in L^{\infty}(\Gamma^-)$. Then for the neutron transport
equation (\ref{neutron}), there exists a unique solution
$u(\vx,\vw)\in L^{\infty}(\Omega\times\s^1)$ satisfying
\begin{eqnarray}
\im{u}{\Omega\times\s^1}\leq C(\Omega)\bigg(
\frac{1}{\e^{3}}\im{f}{\Omega\times\s^1}+\frac{1}{\e^{3/2}}\im{g}{\Gamma^-}\bigg).
\end{eqnarray}
\end{theorem}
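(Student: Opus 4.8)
The plan is to prove an a priori $L^\infty$ estimate for the penalized problem (\ref{well-posedness penalty equation}), uniform in $\l\in(0,1)$, and then let $\l\rt0$. For each fixed $\l>0$ the solution $u_\l$ lies in $L^\infty(\Omega\times\s^1)$ by Lemma \ref{well-posedness lemma 2}, and the proof of Theorem \ref{LT estimate} already provides $u_\l\rt u$ in $L^2(\Omega\times\s^1)$; hence a bound on $\im{u_\l}{\Omega\times\s^1}$ independent of $\l$ passes to $u$ (along an a.e.\ convergent subsequence), and uniqueness in $L^\infty$ follows from uniqueness in $L^2$.

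To obtain the uniform bound I would start from the mild formulation of (\ref{well-posedness penalty equation}) along the characteristics (\ref{character}),
\[
u_\l(\vx,\vw)=\mathcal{G}(\vx,\vw)+\int_0^{t_b}\bar u_\l\big(\vx-\e(t_b-s)\vw\big)\,e^{-(1+\l)(t_b-s)}\,\ud s,
\]
where $\mathcal{G}$ collects the boundary and source contributions and satisfies $\abs{\mathcal{G}}\le\im{g}{\Gamma^-}+\im{f}{\Omega\times\s^1}$ because $\int_0^{t_b}e^{-(1+\l)(t_b-s)}\ud s\le1$. Substituting this formula into $\bar u_\l(\vec y)=\frac{1}{2\pi}\int_{\s^1}u_\l(\vec y,\vec w')\,\ud{\vec w'}$, reparametrizing the Duhamel integral by the elapsed time $r\in[0,t_b(\vec y,\vec w')]$, and using that on the unit disk---by convexity---the backward-ray map $(\vec w',r)\mapsto\vec z=\vec y-\e r\,\vec w'$ is almost everywhere a bijection onto $\Omega$ with Jacobian $\e^2 r=\e\abs{\vec y-\vec z}$ and $\e r=\abs{\vec y-\vec z}\le 2$, one arrives at the closed inequality
\[
\im{\bar u_\l}{\Omega}\le\im{g}{\Gamma^-}+\im{f}{\Omega\times\s^1}+\frac{1}{2\pi\e}\sup_{\vec y\in\Omega}\int_\Omega\frac{e^{-\abs{\vec y-\vec z}/\e}}{\abs{\vec y-\vec z}}\,\abs{\bar u_\l(\vec z)}\,\ud{\vec z}.
\]

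The decisive point is that $\abs{\vec y-\vec z}^{-1}$ is just outside $L^2(\Omega)$ in two dimensions, so I would split the last integral at $\abs{\vec y-\vec z}=\e\delta$ for a small \emph{universal} $\delta>0$. On $\{\abs{\vec y-\vec z}\le\e\delta\}$ the elapsed-time integral has length $\le\delta$, so its contribution is at most $\delta\,\im{\bar u_\l}{\Omega}\le\delta\,\im{u_\l}{\Omega\times\s^1}$. On $\{\abs{\vec y-\vec z}\ge\e\delta\}$, Cauchy--Schwarz together with the rescaling $\vec v=\e^{-1}(\vec y-\vec z)$ bounds the contribution by $\tfrac{C}{\e}\tm{\bar u_\l}{\Omega}\big(\int_{\abs{\vec v}\ge\delta}\abs{\vec v}^{-2}e^{-2\abs{\vec v}}\ud{\vec v}\big)^{1/2}$, where the remaining integral is finite (growing only like $\abs{\ln\delta}$), hence this contribution is $\le\tfrac{C(\Omega)}{\e}\tm{u_\l}{\Omega\times\s^1}$ once $\delta$ is fixed. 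Feeding in the trivial bound $\im{u_\l}{\Omega\times\s^1}\le\im{g}{\Gamma^-}+\im{f}{\Omega\times\s^1}+\im{\bar u_\l}{\Omega}$ from the mild formulation and choosing $\delta$ small enough to absorb $\delta\,\im{u_\l}{\Omega\times\s^1}$, I obtain
\[
\im{u_\l}{\Omega\times\s^1}\le C(\Omega)\Big(\im{g}{\Gamma^-}+\im{f}{\Omega\times\s^1}+\tfrac{1}{\e}\tm{u_\l}{\Omega\times\s^1}\Big).
\]

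Finally I would insert the $\l$-uniform $L^2$ bound from the proof of Theorem \ref{LT estimate} (inequality (\ref{well-posedness temp 21})), together with $\tm{f}{\Omega\times\s^1}\le C(\Omega)\im{f}{\Omega\times\s^1}$ and $\tm{g}{\Gamma^-}\le C(\Omega)\im{g}{\Gamma^-}$ (valid since $\Omega$ is bounded), so that $\tfrac{1}{\e}\tm{u_\l}{\Omega\times\s^1}$ is dominated by $C(\Omega)\big(\e^{-3}\im{f}{\Omega\times\s^1}+\e^{-3/2}\im{g}{\Gamma^-}\big)$; since $\e<1$ the remaining terms are absorbed and the claimed estimate follows. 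The main obstacle is precisely the near-diagonal singularity of the kernel generated by the change of variables: without the $\delta$-cutoff the bootstrap for $\im{\bar u_\l}{\Omega}$ does not close, and the argument also relies on the clean geometry of the disk---injectivity of the backward-ray map and the uniform bound $t_b\le 2/\e$---which is where convexity and the explicit domain enter.
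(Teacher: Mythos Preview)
Your proposal is correct and follows essentially the same strategy as the paper: bootstrap from the $L^2$ estimate of Theorem~\ref{LT estimate} to $L^\infty$ via the mild formulation, the change of variables $(\vec w',r)\mapsto \vec z=\vec y-\e r\vec w'$ with Jacobian $\e^2 r$, a $\delta$-splitting to isolate the near-diagonal singularity, and absorption of the small piece. The only organizational difference is that you close the inequality on $\bar u_\l$ after a single substitution, whereas the paper iterates Duhamel twice on $u$ itself before changing variables; your version also carries the penalty $\l$ throughout so that the absorption step is a priori justified (since $u_\l\in L^\infty$ by Lemma~\ref{well-posedness lemma 2}), which is a clean touch, but the two arguments are otherwise the same.
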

\begin{proof}
We divide the proof into several steps to bootstrap an $L^2$ solution to an $L^{\infty}$ solution:\\
\ \\
Step 1: Double Duhamel iterations.\\
The characteristics of the equation (\ref{neutron}) is given by
(\ref{character}). Hence, we can rewrite the equation
(\ref{neutron}) along the characteristics as
\begin{eqnarray}
u(\vx,\vw)&=&g(\vx-\e t_b\vw,\vw)e^{-t_b}+\int_{0}^{t_b}f(\vx-\e(t_b-s)\vw,\vw)e^{-(t_b-s)}\ud{s}\\
&&+\frac{1}{2\pi}\int_{0}^{t_b}\bigg(\int_{\s^1}u(\vx-\e(t_b-s)\vw,\vw_t)\ud{\vw_t}\bigg)e^{-(t_b-s)}\ud{s}\nonumber.
\end{eqnarray}
where the backward exit time $t_b$ is defined as (\ref{exit time}). Note we have replaced $\bar u$ by the integral of $u$ over the dummy velocity
variable $\vw_t$. For the last term in this formulation, we apply
the Duhamel's principle again to
$u(\vx-\e(t_b-s)\vw,\vw_t)$ and obtain
\begin{eqnarray}\label{well-posedness temp 8}
\\
u(\vx,\vw)&=&g(\vx-\e t_b\vw,\vw)e^{-t_b}+\int_{0}^{t_b}f(\vx-\e(t_b-s)\vw,\vw)e^{-(t_b-s)}\ud{s}\nonumber\\
&&+\frac{1}{2\pi}\int_{0}^{t_b}\int_{\s^1}g(\vx-\e(t_b-s)\vw-\e
s_b\vw_t,\vw_t)e^{-s_b}\ud{\vw_t}e^{-(t_b-s)}\ud{s}\nonumber\\
&&+\frac{1}{2\pi}\int_{0}^{t_b}\int_{\s^1}\bigg(\int_{0}^{s_b}f(\vx-\e(t_b-s)\vw-\e
(s_b-r)\vw_t,\vw_t)e^{-(s_b-r)}\ud{r}\bigg)\ud{\vw_t}e^{-(t_b-s)}\ud{s}\nonumber\\
&&+\bigg(\frac{1}{2\pi}\bigg)^2\int_{0}^{t_b}\int_{\s^1}e^{-(t_b-s)}\bigg(\int_{0}^{s_b}\int_{\s^1}e^{-(s_b-r)}u(\vx-\e(t_b-s)\vw-\e
(s_b-r)\vw_t,\vw_s)\ud{\vw_s}\ud{r}\bigg)\ud{\vw_t}\ud{s}\nonumber,
\end{eqnarray}
where we introduce another dummy velocity variable $\vw_s$ and
\begin{eqnarray}
s_b(\vx,\vw,s,\vw_t)=\inf\{r\geq0: (\vx-\e(t_b-s)\vw-\e
r\vw_t,\vw_t)\in\Gamma^-\}.
\end{eqnarray}
\ \\
Step 2: Estimates of all but the last term in (\ref{well-posedness temp 8}).\\
We can directly estimate as follows:
\begin{eqnarray}\label{im temp 1}
\abs{g(\vx-\e t_b\vw,\vw)e^{-t_b}}\leq\im{g}{\Gamma^-},
\end{eqnarray}
\begin{eqnarray}\label{im temp 2}
\abs{\frac{1}{2\pi}\int_{0}^{t_b}\int_{\s^1}g(\vx-\e(t_b-s)\vw-\e
s_b\vw_t,\vw_t)e^{-s_b}\ud{\vw_t}e^{-(t_b-s)}\ud{s}} \leq
\im{g}{\Gamma^-},
\end{eqnarray}
\begin{eqnarray}\label{im temp 3}
\abs{\int_{0}^{t_b}f(\vx-\e(t_b-s)\vw,\vw)e^{-(t_b-s)}\ud{s}}\leq
\im{f}{\Omega\times\s^1},
\end{eqnarray}
\begin{eqnarray}\label{im temp 4}
\\
\abs{\frac{1}{2\pi}\int_{0}^{t_b}\int_{\s^1}\bigg(\int_{0}^{s_b}f(\vx-\e(t_b-s)\vw-\e
(s_b-r)\vw_t,\vw_t)e^{-(s_b-r)}\ud{r}\bigg)\ud{\vw_t}e^{-(t_b-s)}\ud{s}}
\leq \im{f}{\Omega\times\s^1}\nonumber.
\end{eqnarray}
\ \\
Step 3: Estimates of the last term in (\ref{well-posedness temp 8}).\\
Now we decompose the last term in (\ref{well-posedness temp 8}) as
\begin{eqnarray}
\int_{0}^{t_b}\int_{\s^1}\int_0^{s_b}\int_{\s^1}=\int_{0}^{t_b}\int_{\s^1}\int_{s_b-r\leq\delta}\int_{\s^1}+
\int_{0}^{t_b}\int_{\s^1}\int_{s_b-r\geq\delta}\int_{\s^1}=I_1+I_2,
\end{eqnarray}
for some $\delta>0$. We can estimate $I_1$ directly as
\begin{eqnarray}\label{im temp 5}
I_1
&\leq&\int_{0}^{t_b}e^{-(t_b-s)}\bigg(\int_{\max(0,s_b-\delta)}^{s_b}\im{u}{\Omega\times\s^1}\ud{r}\bigg)\ud{s}\leq\delta\im{u}{\Omega\times\s^1}.
\end{eqnarray}
Then we can bound $I_2$ as
\begin{eqnarray}
I_2&\leq&C\int_{0}^{t_b}\int_{\s^1}\int_{0}^{\max(0,s_b-\delta)}\int_{\s^1}\abs{u(\vx-\e(t_b-s)\vw-\e
(s_b-r)\vw_t,\vw_s)}e^{-(t_b-s)}\ud{\vw_s}\ud{r}\ud{\vw_t}\ud{s}.
\end{eqnarray}
By the definition of $t_b$ and $s_b$, we always have
$\vx-\e(t_b-s)\vw-\e (s_b-r)\vw_t\in\bar\Omega$. Hence, we may
interchange the order of integration and apply H\"older's inequality
to obtain
\begin{eqnarray}\label{well-posedness temp 22}
I_2&\leq&C\int_{0}^{t_b}\int_{0}^{\max(0,s_b-\delta)}\int_{\s^1}\int_{\s^1}{\bf{1}}_{\Omega}(\vx-\e(t_b-s)\vw-\e
(s_b-r)\vw_t)\\
&&\abs{u(\vx-\e(t_b-s)\vw-\e
(s_b-r)\vw_t,\vw_s)}e^{-(t_b-s)}\ud{\vw_t}\ud{\vw_s}\ud{r}\ud{s}\nonumber\\
&\leq&C\int_{0}^{t_b}\int_{\s^1}\bigg(\int_{0}^{\max(0,s_b-\delta)}\int_{\s^1}{\bf{1}}_{\Omega}(\vx-\e(t_b-s)\vw-\e
(s_b-r)\vw_t)\nonumber\\
&&\abs{u(\vx-\e(t_b-s)\vw-\e
(s_b-r)\vw_t,\vw_s)}^2\ud{\vw_t}\ud{r}\bigg)^{1/2}e^{-(t_b-s)}\ud{\vw_s}\ud{s}\nonumber.
\end{eqnarray}
Note $\vw_t\in\s^1$, which is essentially a one-dimensional
variable. Thus, we may write it in a new variable $\psi$ as
$\vw_t=(\cos\psi,\sin\psi)$. Then we define the change of variable
$[-\pi,\pi)\times\r\rt \Omega: (\psi,r)\rt(y_1,y_2)=\vec
y=\vx-\e(t_b-s)\vw-\e (s_b-r)\vw_t$, i.e.
\begin{eqnarray}
\left\{
\begin{array}{rcl}
y_1&=&x_1-\e(t_b-s)w_1-\e (s_b-r)\cos\psi,\\
y_2&=&x_2-\e(t_b-s)w_2-\e (s_b-r)\sin\psi.
\end{array}
\right.
\end{eqnarray}
Therefore, for $s_b-r\geq\delta$, we can directly compute the Jacobian
\begin{eqnarray}
\abs{\frac{\p{(y_1,y_2)}}{\p{(\psi,r)}}}=\abs{\abs{\begin{array}{rc}
-\e(s_b-r)\sin\psi&\e\cos\psi\\
\e(s_b-r)\cos\psi&\e\sin\psi
\end{array}}}=\e^2(s_b-r)\geq\e^2\delta.
\end{eqnarray}
Hence, we may simplify (\ref{well-posedness temp 22}) as
\begin{eqnarray}
I_2&\leq&\frac{C}{\e\sqrt{\delta}}\int_{0}^{t_b}\int_{\s^1}\bigg(\int_{\Omega}\abs{u(\vec
y,\vw_s)}^2\ud{\vec y}\bigg)^{1/2}e^{-(t_b-s)}\ud{\vw_s}\ud{s}.
\end{eqnarray}
Then we may further utilize Cauchy's inequality and the $L^2$
estimate of $u$ in Theorem \ref{LT estimate} to obtain
\begin{eqnarray}\label{im temp 6}
I_2&\leq&\frac{C}{\e\sqrt{\delta}}\int_{0}^{t_b}\bigg(\int_{\s^1}\int_{\Omega}\abs{u(\vec
y,\vw_s)}^2\ud{\vec y}\ud{\vw_s}\bigg)^{1/2}e^{-(t_b-s)}\ud{s}\\
&=&\frac{C}{\e\sqrt{\delta}}\int_{0}^{t_b}e^{-(t_b-s)}\tm{u}{\Omega\times\s^1}\ud{s}\nonumber\\
&\leq&\frac{C}{\e\sqrt{\delta}}\tm{u}{\Omega\times\s^1}\nonumber\\
&\leq&\frac{C(\Omega)}{\sqrt{\delta}}\bigg(\frac{1}{\e^{3}}\tm{f}{\Omega\times\s^1}+\frac{1}{\e^{3/2}}\tm{g}{\Gamma^-}\bigg)\nonumber\\
&\leq&\frac{C(\Omega)}{\sqrt{\delta}}\bigg(\frac{1}{\e^{3}}\im{f}{\Omega\times\s^1}+\frac{1}{\e^{3/2}}\im{g}{\Gamma^-}\bigg)\nonumber.
\end{eqnarray}
\ \\
In summary, collecting (\ref{im temp 1}), (\ref{im temp 2}),
(\ref{im temp 3}), (\ref{im temp 4}), (\ref{im temp 5}) and (\ref{im
temp 6}), for fixed $0<\delta<1$, we have
\begin{eqnarray}
\abs{u(\vx,\vw)}\leq \delta
\im{u}{\Omega\times\s^1}+\frac{C(\Omega)}{\sqrt{\delta}}
\bigg(\frac{1}{\e^{3}}\im{f}{\Omega\times\s^1}+\frac{1}{\e^{3/2}}\im{g}{\Gamma^-}\bigg).
\end{eqnarray}
Then we may take $0<\delta\leq1/2$ to obtain
\begin{eqnarray}
\abs{u(\vx,\vw)}\leq
\half\im{u}{\Omega\times\s^1}+\frac{C(\Omega)}{\sqrt{\delta}}
\bigg(\frac{1}{\e^{3}}\im{f}{\Omega\times\s^1}+\frac{1}{\e^{3/2}}\im{g}{\Gamma^-}\bigg).
\end{eqnarray}
Taking supremum of $u$ over all $(\vx,\vw)$, we have
\begin{eqnarray}
\im{u}{\Omega\times\s^1}\leq
\half\im{u}{\Omega\times\s^1}+\frac{C(\Omega)}{\sqrt{\delta}}
\bigg(\frac{1}{\e^{3}}\im{f}{\Omega\times\s^1}+\frac{1}{\e^{3/2}}\im{g}{\Gamma^-}\bigg).
\end{eqnarray}
Finally, absorbing $\im{u}{\Omega\times\s^1}$, for fixed $0<\delta\leq1/2$, we get
\begin{eqnarray}
\im{u}{\Omega\times\s^1}\leq C(\Omega)\bigg(
\frac{1}{\e^{3}}\im{f}{\Omega\times\s^1}+\frac{1}{\e^{3/2}}\im{g}{\Gamma^-}\bigg).
\end{eqnarray}

\end{proof}

\subsection{Well-posedness of Transport Equation}

\begin{theorem}\label{well-posedness 2}
Assume $g(x_0,\vw)\in L^{\infty}(\Gamma^-)$. Then for the steady
neutron transport equation (\ref{transport}), there exists a unique
solution $u^{\e}(\vx,\vw)\in L^{\infty}(\Omega\times\s^1)$
satisfying
\begin{eqnarray}
\im{u^{\e}}{\Omega\times\s^1}\leq
C(\Omega)\frac{1}{\e^{3/2}}\im{g}{\Gamma^-}.
\end{eqnarray}
\end{theorem}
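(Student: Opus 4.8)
The plan is to recognize that the steady neutron transport equation (\ref{transport}) is exactly the special case $f\equiv 0$ of the general equation (\ref{neutron}) analyzed in the preceding subsections, so that no new work is needed beyond a direct appeal to the well-posedness theory already in place.

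Concretely, the only step is to apply Theorem \ref{LI estimate} with $f=0$. Since the zero function lies in $L^{\infty}(\Omega\times\s^1)$ and $g\in L^{\infty}(\Gamma^-)$ by assumption, that theorem furnishes a unique solution $u^{\e}\in L^{\infty}(\Omega\times\s^1)$ of (\ref{transport}) together with the bound in which the $\e^{-3}\im{f}{\Omega\times\s^1}$ contribution simply drops out, leaving $\im{u^{\e}}{\Omega\times\s^1}\le C(\Omega)\e^{-3/2}\im{g}{\Gamma^-}$. This is precisely the asserted estimate, and uniqueness is inherited from Theorem \ref{LT estimate} and Theorem \ref{LI estimate}.

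If instead one wishes the argument to be self-contained here, one replays the proof of Theorem \ref{LI estimate} with the source term switched off: the double-Duhamel identity (\ref{well-posedness temp 8}) loses every $f$-term, so only the two boundary terms (each bounded by $\im{g}{\Gamma^-}$) and the doubly iterated velocity average of $u^{\e}$ survive. That last term is split, as before, into a near-diagonal part bounded by $\delta\,\im{u^{\e}}{\Omega\times\s^1}$ and a far part which, after the $(\psi,r)$ change of variables with Jacobian bounded below by $\e^2\delta$, is controlled by $\e^{-1}\delta^{-1/2}\tm{u^{\e}}{\Omega\times\s^1}$; here one uses the $L^2$ estimate of Theorem \ref{LT estimate} with $f=0$, namely $\tm{u^{\e}}{\Omega\times\s^1}\le C(\Omega)\e^{-1/2}\tm{g}{\Gamma^-}\le C(\Omega)\e^{-1/2}\im{g}{\Gamma^-}$. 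Taking $\delta=1/2$ and absorbing $\half\im{u^{\e}}{\Omega\times\s^1}$ on the left closes the estimate.

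Because the genuine difficulties --- the uniform-in-$\l$ $L^2$ energy estimate via the elliptic test function $\phi=-\vw\cdot\nx\zeta$ in Lemma \ref{well-posedness lemma 4}, and the regularity gain in the second Duhamel iteration in Theorem \ref{LI estimate} --- have already been disposed of, this theorem carries no further obstacle; it is in effect a corollary, recorded separately because (\ref{transport}) is the equation of interest for the subsequent diffusive-limit analysis.
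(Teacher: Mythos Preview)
Your proposal is correct and matches the paper's own proof exactly: the paper simply says ``We can apply Theorem \ref{LI estimate} to the equation (\ref{transport}). The result naturally follows.'' Your additional remarks about replaying the double-Duhamel argument with $f=0$ are accurate but unnecessary, since the theorem is indeed recorded purely as a corollary.
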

\begin{proof}
We can apply Theorem \ref{LI estimate} to the equation
(\ref{transport}). The result naturally follows.
\end{proof}

\section{$\e$-Milne Problem}

We consider the $\e$-Milne problem for $f^{\e}(\eta,\theta,\phi)$ in
the domain
$(\eta,\theta,\phi)\in[0,\infty)\times[-\pi,\pi)\times[-\pi,\pi)$
\begin{eqnarray}\label{Milne problem}
\left\{
\begin{array}{rcl}\displaystyle
\sin\phi\frac{\p f^{\e}}{\p\eta}+F(\e;\eta)\cos\phi\frac{\p
f^{\e}}{\p\phi}+f^{\e}-\bar f^{\e}&=&S^{\e}(\eta,\theta,\phi),\\
f^{\e}(0,\theta,\phi)&=&h^{\e}(\theta,\phi)\ \ \text{for}\ \ \sin\phi>0,\\
\lim_{\eta\rt\infty}f^{\e}(\eta,\theta,\phi)&=&f^{\e}_{\infty}(\theta),
\end{array}
\right.
\end{eqnarray}
where
\begin{eqnarray}\label{Milne average}
\bar f^{\e}(\eta,\theta)=\frac{1}{2\pi}\int_{-\pi}^{\pi}f^{\e}(\eta,\theta,\phi)\ud{\phi},
\end{eqnarray}
$F(\e;\eta)$ is defined as (\ref{force}),
\begin{eqnarray}\label{Milne bounded}
\abs{h^{\e}(\theta,\phi)}\leq M,
\end{eqnarray}
and
\begin{eqnarray}\label{Milne decay}
\abs{S^{\e}(\eta,\theta,\phi)}\leq Ce^{-K\eta},
\end{eqnarray}
for $M$ and $K$ uniform in $\e$ and $\theta$.

We may further define a potential function $V(\e;\eta)$ satisfying
$V(\e;0)=0$ and $F(\e;\eta)=-\p_{\eta}V(\e,\eta)$. The following
lemma illustrates the main properties of $F$ and $V$.
\begin{lemma}\label{Milne force}
$V(\e;\eta)$ is monotonically increasing with respect to
$\eta$ and satisfies
\begin{eqnarray}\label{force temp 1}
0\leq V(\e;\eta)\leq \ln4.
\end{eqnarray}
Define
\begin{eqnarray}
V_{\infty}(\e)=\lim_{\eta\rt\infty}V(\e;\eta).
\end{eqnarray}
Then $V_{\infty}(\e)=V_{\infty}$ independent of
$\e$.
For any $0\leq y\leq z\leq\infty$
\begin{eqnarray}\label{force temp 2}
-\ln4\leq\int_{y}^{z}F(\e;\eta)\ud{\eta}\leq 0.
\end{eqnarray}
For any $\sigma>0$ and $\eta\geq0$, we have
\begin{eqnarray}\label{force temp 6}
e^{V(\eta+\sigma)-V(\eta)}\leq 1+4\e\sigma.
\end{eqnarray}
Moreover, we have
\begin{eqnarray}\label{force temp 3}
\int_0^{\infty}\int_{\eta}^{\infty}\abs{F(y)}^2\ud{y}\ud{\eta}\leq
3-\ln4.
\end{eqnarray}
\begin{eqnarray}\label{force temp 5}
\int_0^{\infty}\abs{F(\e;\eta)}^2\ud{\eta}\leq 3\e.
\end{eqnarray}
\begin{eqnarray}\label{force temp 4}
\lnm{F(\e;\eta)}&\leq& 4\e.
\end{eqnarray}
\end{lemma}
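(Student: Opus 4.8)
The plan is to reduce every assertion to an elementary one-variable computation starting from the explicit expressions
\begin{eqnarray*}
F(\e;\eta)=-\frac{\e\psi(\e\eta)}{1-\e\eta},\qquad
V(\e;\eta)=-\int_0^\eta F(\e;s)\,\ud s=\int_0^\eta\frac{\e\psi(\e s)}{1-\e s}\,\ud s,
\end{eqnarray*}
together with two structural observations. First, the integrand above is nonnegative, so $V(\e;\eta)\ge0$ and $V$ is automatically nondecreasing in $\eta$. Second, $F(\e;\cdot)$ is supported in $\{\eta:\e\eta\le 3/4\}$, since $\psi(\e\eta)=0$ for $\e\eta\ge 3/4$, and on that set $1-\e\eta\ge 1/4$. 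The $L^\infty$ bound (\ref{force temp 4}) is then immediate: $\abs{F(\e;\eta)}\le \e/(1-\e\eta)\le 4\e$ on the support and $F=0$ elsewhere. For (\ref{force temp 1}) and for the upper bound on $V_\infty$ I would bound $\psi\le 1$ and substitute $u=\e s$:
\begin{eqnarray*}
0\le V(\e;\eta)\le\int_0^\infty\frac{\e\psi(\e s)}{1-\e s}\,\ud s\le\int_0^{3/4}\frac{\ud u}{1-u}=\ln 4,
\end{eqnarray*}
which simultaneously gives monotonicity, $0\le V\le\ln4$, and $V_\infty(\e)\le\ln4$.

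For (\ref{force temp 2}): since $F\le0$ pointwise, $\int_y^z F(\e;\eta)\,\ud\eta\le0$ for all $0\le y\le z\le\infty$, and enlarging the interval only lowers the integral, so $\int_y^z F(\e;\eta)\,\ud\eta\ge\int_0^\infty F(\e;\eta)\,\ud\eta=-V_\infty(\e)\ge-\ln4$ by the previous display. The $\e$-independence of $V_\infty$ follows from the same change of variables $u=\e s$, which turns $V_\infty(\e)=\int_0^\infty\e\psi(\e s)(1-\e s)^{-1}\,\ud s$ into $\int_0^\infty\psi(u)(1-u)^{-1}\,\ud u$, a quantity with no $\e$ in it.

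The one estimate that genuinely requires care is (\ref{force temp 6}); here the naive bound $V(\eta+\sigma)-V(\eta)\le\int_\eta^{\eta+\sigma}\abs{F}\le4\e\sigma$ gives $e^{V(\eta+\sigma)-V(\eta)}\le e^{4\e\sigma}$, which points the \emph{wrong} way since $e^{4\e\sigma}\ge 1+4\e\sigma$. Instead I would keep the logarithm exact. If $\eta\ge 3/(4\e)$ the left side of (\ref{force temp 6}) equals $1$ and there is nothing to prove, so assume $\eta<3/(4\e)$ and put $\eta^*=\min(\eta+\sigma,\,3/(4\e))$. Bounding $\psi\le1$,
\begin{eqnarray*}
V(\eta+\sigma)-V(\eta)=\int_\eta^{\eta^*}\frac{\e\psi(\e s)}{1-\e s}\,\ud s\le\int_\eta^{\eta^*}\frac{\e\,\ud s}{1-\e s}=\ln\frac{1-\e\eta}{1-\e\eta^*},
\end{eqnarray*}
whence $e^{V(\eta+\sigma)-V(\eta)}\le\dfrac{1-\e\eta}{1-\e\eta^*}=1+\dfrac{\e(\eta^*-\eta)}{1-\e\eta^*}\le1+4\e\sigma$, using $\eta^*-\eta\le\sigma$ and $1-\e\eta^*\ge1/4$. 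This is the main (and essentially the only) obstacle; the rest is bookkeeping.

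Finally, (\ref{force temp 5}) and (\ref{force temp 3}) are direct integrations with $\psi\le1$ and $u=\e\eta$. For (\ref{force temp 5}),
\begin{eqnarray*}
\int_0^\infty\abs{F(\e;\eta)}^2\,\ud\eta\le\int_0^{3/(4\e)}\frac{\e^2\,\ud\eta}{(1-\e\eta)^2}=\e\Big[\frac{1}{1-u}\Big]_0^{3/4}=3\e.
\end{eqnarray*}
For (\ref{force temp 3}), Fubini converts the iterated integral into $\int_0^\infty y\,\abs{F(\e;y)}^2\,\ud y$, and then
\begin{eqnarray*}
\int_0^\infty y\,\abs{F(\e;y)}^2\,\ud y\le\int_0^{3/4}\frac{u}{(1-u)^2}\,\ud u=\Big[\ln(1-u)+\frac{1}{1-u}\Big]_0^{3/4}=3-\ln4.
\end{eqnarray*}
Collecting these bounds establishes all the claimed inequalities.
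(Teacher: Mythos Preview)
Your proof is correct and follows essentially the same path as the paper: the same support observation, the same change of variables $u=\e\eta$, and the same exact-logarithm computation for (\ref{force temp 6}). The only cosmetic differences are that you streamline the three-case analysis in (\ref{force temp 6}) into a single $\eta^*=\min(\eta+\sigma,3/(4\e))$ argument, and you use Fubini to reduce (\ref{force temp 3}) to $\int_0^\infty y\abs{F(y)}^2\,\ud y$ rather than computing the inner integral first as the paper does.
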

\begin{proof}
Since $F(\e;\eta)\leq0$, by definition, we know $V(\e;\eta)$ is
monotonically increasing with respect to $\eta$. Since $V(\e;0)=0$,
$V(\e;\eta)\geq0$. Then based on (\ref{cut-off 1}), we can direct
estimate
\begin{eqnarray}
V(\e;\eta)&=&V(\e;0)-\int_0^{\eta}F(\e;y)\ud{y}=\int_0^{\eta}\frac{\e\psi(\e y)}{1-\e y}\ud{y}\\
&\leq&\int_0^{\frac{3}{4\e}}\frac{\e}{1-\e y}\ud{y}=-\ln(1-\e
y)\bigg|_{y=0}^{y=\frac{3}{4\e}}=\ln4\nonumber.
\end{eqnarray}
This verifies (\ref{force temp 1}).

Also, letting $\mu=\e\eta$, we have
\begin{eqnarray}
V_{\infty}(\e)&=&-\int_0^{\infty}\frac{\e\psi(\e\eta)}{1-\e\eta}\ud{\eta}=-\int_0^{\infty}\frac{\psi(\e\eta)}{1-\e\eta}\ud{(\e\eta)}=
-\int_0^{\infty}\frac{\psi(\mu)}{1-\mu}\ud{\mu}\leq \ln4.
\end{eqnarray}
Hence, we have $V_{\infty}(\e)=V_{\infty}$ independent of $\e$.
Since
\begin{eqnarray}
\int_{y}^{z}F(\e;\eta)\ud{\eta}=V(\e;y)-V(\e;z),
\end{eqnarray}
we can naturally obtain (\ref{force temp 2}).

Moreover, we have
\begin{eqnarray}
V(\eta+\sigma)-V(\eta)&=&-\int_{\eta}^{\eta+\sigma}F(\e;y)\ud{y}=\int_{\eta}^{\eta+\sigma}\frac{\e\psi(\e y)}{1-\e y}\ud{y}\\
&\leq&\int_{\eta}^{\min\{(\eta+\sigma),\frac{3}{4\e}\}}\frac{\e\psi(\e
y)}{1-\e y}\ud{y}\leq-\ln(1-\e
y)\bigg|_{y=\eta}^{y=\min\{(\eta+\sigma),\frac{3}{4\e}\}}.
\end{eqnarray}
If $\eta+\sigma\leq\frac{3}{4\e}$, we have
\begin{eqnarray}
e^{V(\eta+\sigma)-V(\eta)}&\leq&\frac{1-\e\eta}{1-\e(\eta+\sigma)}=1+\frac{\e\sigma}{1-\e(\eta+\sigma)}\leq
1+4\e\sigma.
\end{eqnarray}
On the other hand, if $\eta+\sigma\geq\frac{3}{4\e}$ and
$\eta\leq\frac{3}{4\e}$, we define $\sigma'=\frac{3}{4\e}-\eta$ to
obtain
\begin{eqnarray}
e^{V(\eta+\sigma)-V(\eta)}&\leq&e^{V(\eta+\sigma')-V(\eta)}\leq
1+4\e\sigma'\leq1+4\e\sigma.
\end{eqnarray}
Finally, if $\eta+\sigma\geq\frac{3}{4\e}$ and
$\eta\geq\frac{3}{4\e}$, we have
\begin{eqnarray}
e^{V(\eta+\sigma)-V(\eta)}=e^0=1.
\end{eqnarray}
Therefore, (\ref{force temp 6}) follows.

Furthermore, considering the cut-off function (\ref{cut-off 1}), we
have
\begin{eqnarray}
\int_0^{\infty}\int_{\eta}^{\infty}\abs{F(y)}^2\ud{y}\ud{\eta}&\leq&\int_0^{\infty}\int_{\eta}^{\infty}\abs{\frac{\e\psi(\e
y)}{1-\e y}}^2\ud{y}\ud{\eta}\leq\int_0^{\frac{3}{4\e}}\int_{\eta}^{\frac{3}{4\e}}\frac{\e^2}{(1-\e y)^2}\ud{y}\ud{\eta}\\
&=&\int_0^{\frac{3}{4\e}}\bigg(\frac{\e}{1-\e
y}\bigg|_{y=\eta}^{y=\frac{3}{4\e}}\bigg)\ud{\eta}=
\int_0^{\frac{3}{4\e}}\bigg(4\e-\frac{\e}{1-\e\eta}\bigg)\ud{\eta}\nonumber\\
&=&\bigg(4\e\eta+\ln(1-\e\eta)\bigg)\bigg|_{\eta=0}^{\eta=\frac{3}{4\e}}=3-\ln4\nonumber.
\end{eqnarray}
Also, we can directly estimate
\begin{eqnarray}
\int_0^{\infty}\abs{F(\e;\eta)}^2\ud{\eta}&=&\int_0^{\infty}\frac{\e^2\psi^2(\e\eta)}{(1-\e\eta)^2}\ud{\eta}
\leq\int_0^{\frac{3}{4\e}}\frac{\e^2}{(1-\e\eta)^2}\ud{\eta}\\
&=&\frac{\e}{1-\e\eta}\bigg|_{\eta=0}^{\eta=\frac{3}{4\e}}=3\e\nonumber.
\end{eqnarray}
Finally, based on the definition of $F$ and $\psi$, we obtain
\begin{eqnarray}
\abs{F(\e;\eta)}=\abs{\frac{\e\psi(\e\eta)}{1-\e\eta}}\leq
\frac{\e}{1-\e\frac{3}{4\e}}=4\e.
\end{eqnarray}
\end{proof}
For notational simplicity, we omit $\e$ and $\theta$ dependence in
$f^{\e}$ in this section. The same convention also applies to
$F(\e;\eta)$, $V(\e;\eta)$, $S^{\e}(\eta,\theta,\phi)$ and
$h^{\e}(\theta,\phi)$. However, our estimates are independent of
$\e$ and $\theta$.

In this section, we introduce some special notations to describe the
norms in the space $(\eta,\phi)\in[0,\infty)\times[-\pi,\pi)$.
Define the $L^2$ norm as follows:
\begin{eqnarray}
\tnm{f(\eta)}&=&\bigg(\int_{-\pi}^{\pi}\abs{f(\eta,\phi)}^2\ud{\phi}\bigg)^{1/2},\\
\tnnm{f}&=&\bigg(\int_0^{\infty}\int_{-\pi}^{\pi}\abs{f(\eta,\phi)}^2\ud{\phi}\ud{\eta}\bigg)^{1/2}.
\end{eqnarray}
Define the inner product in $\phi$ space
\begin{eqnarray}
\br{f,g}_{\phi}(\eta)=\int_{-\pi}^{\pi}f(\eta,\phi)g(\eta,\phi)\ud{\phi}.
\end{eqnarray}
Define the $L^{\infty}$ norm as follows:
\begin{eqnarray}
\lnm{f(\eta)}&=&\sup_{\phi\in[-\pi,\pi)}\abs{f(\eta,\phi)},\\
\lnnm{f}&=&\sup_{(\eta,\phi)\in[0,\infty)\times[-\pi,\pi)}\abs{f(\eta,\phi)},\\
\ltnm{f}&=&\sup_{\eta\in[0,\infty)}\bigg(\int_{-\pi}^{\pi}\abs{f(\eta,\phi)}^2\ud{\phi}\bigg)^{1/2}.
\end{eqnarray}
Since the boundary data $h(\phi)$ is only defined on $\sin\phi>0$,
we naturally extend above definitions on this half-domain as
follows:
\begin{eqnarray}
\tnm{h}&=&\bigg(\int_{\sin\phi>0}\abs{h(\phi)}^2\ud{\phi}\bigg)^{1/2},\\
\lnm{h}&=&\sup_{\sin\phi>0}\abs{h(\phi)}.
\end{eqnarray}
\begin{lemma}\label{Milne data}
We have
\begin{eqnarray}
\tnm{h}&\leq&C\lnm{h}\leq CM\\
\tnnm{S}&\leq&C\frac{M}{K}\\
\ltnm{S}&\leq&C\lnnm{S}\leq CM
\end{eqnarray}
\end{lemma}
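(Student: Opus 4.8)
The plan is to obtain all three bounds directly from the two standing hypotheses (\ref{Milne bounded}) and (\ref{Milne decay}) on the data of the $\e$-Milne problem, using nothing more than the fact that the $\phi$-domain $[-\pi,\pi)$ and the in-flow set $\{\sin\phi>0\}$ have finite Lebesgue measure; this is precisely what lets one trade $L^{\infty}$ control in $\phi$ for $L^{2}$ control at the cost of a universal constant.

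First I would treat $\tnm{h}$. Since $h$ is defined only on $\{\sin\phi>0\}$, a set of measure $\pi$, H\"older's inequality on that interval gives $\tnm{h}\leq\pi^{1/2}\lnm{h}$, and (\ref{Milne bounded}) bounds $\lnm{h}$ by $M$; this is the first line. The third line is identical in spirit: for each fixed $\eta$ one has $\tnm{S(\eta)}\leq(2\pi)^{1/2}\lnm{S(\eta)}$ by the same finite-measure estimate over $\phi\in[-\pi,\pi)$, so taking the supremum over $\eta\geq0$ yields $\ltnm{S}\leq(2\pi)^{1/2}\lnnm{S}$, and (\ref{Milne decay}) then gives $\lnnm{S}=\sup_{\eta,\phi}\abs{S}\leq CM$ because the exponential weight is largest at $\eta=0$. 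For the middle line the only new ingredient is an integration in $\eta$: squaring the pointwise bound (\ref{Milne decay}) and integrating first in $\phi$ over an interval of length $2\pi$ and then in $\eta$,
\[
\tnnm{S}^{2}=\int_{0}^{\infty}\int_{-\pi}^{\pi}\abs{S(\eta,\phi)}^{2}\ud\phi\,\ud\eta\;\leq\;CM^{2}\int_{0}^{\infty}e^{-2K\eta}\ud\eta\;=\;\frac{CM^{2}}{2K},
\]
so that $\tnnm{S}\leq CM\,K^{-1/2}$; since the decay rate $K$ in (\ref{Milne decay}) may be taken in $(0,1]$ without loss of generality (a smaller $K$ only weakens the hypothesis and enlarges the bound), we have $K^{-1/2}\leq K^{-1}$ and recover the stated $CM/K$.

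I do not anticipate any genuine obstacle: the statement is essentially bookkeeping, converting the hypotheses (\ref{Milne bounded})--(\ref{Milne decay}) into the $(\eta,\phi)$-norms defined just above the lemma. The only points that need even a moment's care are noting that both $\phi$-domains involved are of finite measure (so the $L^{\infty}\to L^{2}$ passage in $\phi$ is free) and that the $e^{-K\eta}$ decay in (\ref{Milne decay}) makes the $\eta$-integral in the $\tnnm{S}$ estimate convergent.
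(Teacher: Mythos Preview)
Your argument is correct and is exactly the ``direct computation'' the paper alludes to; the paper itself omits the proof entirely. The one wrinkle you flag---that the raw computation yields $\tnnm{S}\le C M K^{-1/2}$ rather than $CM/K$---is real, and your WLOG reduction to $K\le 1$ is a perfectly acceptable way to absorb it (indeed the paper uses these bounds only in the regime where $K$ is a fixed $O(1)$ decay rate, so the distinction is immaterial).
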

\begin{proof}
They can be verified via direct computation, so we omit the proofs here.
\end{proof}

\subsection{$L^2$ Estimates}

\subsubsection{Finite Slab with $\bar S=0$}

Consider the $\e$-Milne problem for $f^{L}(\eta,\phi)$ in a finite slab $(\eta,\phi)\in[0,L]\times[-\pi,\pi)$
\begin{eqnarray}\label{Milne finite problem LT}
\left\{
\begin{array}{rcl}\displaystyle
\sin\phi\frac{\p f^{L}}{\p\eta}+F(\eta)\cos\phi\frac{\p
f^{L}}{\p\phi}+f^{L}-\bar f^{L}&=&S(\eta,\phi),\\
f^{L}(0,\phi)&=&h(\phi)\ \ \text{for}\ \ \sin\phi<0,\\
f^{L}(L,\phi)&=&f^{L}(L,R\phi),
\end{array}
\right.
\end{eqnarray}
where $R\phi=-\phi$ and $S$ satisfies $\bar S(\eta)=0$ for any
$\eta$. We may decompose the solution
\begin{eqnarray}
f^{L}(\eta,\phi)=q_f^{L}(\eta)+r_f^{L}(\eta,\phi),
\end{eqnarray}
where the hydrodynamical part $q_f^{L}$ is in the null space of the
operator $f^{L}-\bar f^{L}$, and the microscopic part $r_f^{L}$ is
the orthogonal complement, i.e.
\begin{eqnarray}\label{hydro}
q_f^{L}(\eta)=\frac{1}{2\pi}\int_{-\pi}^{\pi}f^{L}(\eta,\phi)\ud{\phi}\quad
r_f^{L}(\eta,\phi)=f^{L}(\eta,\phi)-q_f^{L}(\eta).
\end{eqnarray}
In the following, when there is no confusion, we simply write
$f^{L}=q^{L}+r^{L}$.
\begin{lemma}\label{Milne finite LT}
Assume $\bar S(\eta)=0$ for any $\eta\in[0,L]$ with (\ref{Milne bounded}) and (\ref{Milne decay}). Then there exists a
solution $f(\eta,\phi)$ to the finite slab problem (\ref{Milne
finite problem LT}) satisfying
\begin{eqnarray}
\label{Milne temp 1}
\int_0^L\tnm{r^{L}(\eta)}^2\ud{\eta}&\leq&C\bigg(M+\frac{M}{K}\bigg)^2<\infty,\\
\tnm{q^{L}(\eta)}^2&\leq&
C\bigg(1+M+\frac{M}{K}\bigg)^2\bigg(1+\eta^{1/2}+\tnm{r^{L}(\eta)}\bigg)\label{Milne
temp 2},\\
\br{\sin\phi,r^{L}}_{\phi}(\eta)&=&0,\label{Milne temp 3}
\end{eqnarray}
for arbitrary $\eta\in[0,L]$.
\end{lemma}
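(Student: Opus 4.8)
The plan is to obtain existence by a vanishing--penalization argument and then to establish the three bounds as a priori estimates with constants independent of the penalty parameter and of $L$, proving the orthogonality relation first since it feeds the other two. For existence I would add a damping term $\l f^{L}$ to (\ref{Milne finite problem LT}) and solve the resulting equation exactly as in Lemma \ref{well-posedness lemma 2}: the characteristic curves of $\sin\phi\,\p_{\eta}+F(\eta)\cos\phi\,\p_{\phi}$ stay in the finite strip $[0,L]\times[-\pi,\pi)$, so freezing $\bar f^{L}$ produces an approximating sequence whose iterates are explicit along characteristics, and the associated map is an $L^{\infty}$--contraction with rate $(1+\l)^{-1}$. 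This yields $f^{L}_{\l}\in L^{\infty}$; one derives the estimates below for $f^{L}_{\l}$ (the conservation identities picking up harmless $O(\l)$ corrections), and passes to a weak limit $\l\to0$, weak lower semicontinuity transferring the bounds to $f^{L}$.

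For the orthogonality relation (\ref{Milne temp 3}), I would integrate (\ref{Milne finite problem LT}) over $\phi\in[-\pi,\pi)$ and use $2\pi$--periodicity to rewrite $\int F\cos\phi\,\p_{\phi}f^{L}\,\ud\phi=F\br{\sin\phi,f^{L}}_{\phi}$; together with $\int(f^{L}-\bar f^{L})\,\ud\phi=0$ and the hypothesis $\bar S\equiv0$ this gives the linear ODE $\p_{\eta}\br{\sin\phi,f^{L}}_{\phi}=-F(\eta)\br{\sin\phi,f^{L}}_{\phi}$. The specular condition $f^{L}(L,\phi)=f^{L}(L,-\phi)$ makes $f^{L}(L,\cdot)$ even in $\phi$, so $\br{\sin\phi,f^{L}}_{\phi}(L)=0$, and solving the ODE forces $\br{\sin\phi,f^{L}}_{\phi}\equiv0$ on $[0,L]$. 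Since $q^{L}$ is independent of $\phi$, $\br{\sin\phi,q^{L}}_{\phi}=0$, and (\ref{Milne temp 3}) follows.

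For the microscopic bound (\ref{Milne temp 1}) I would test (\ref{Milne finite problem LT}) against $f^{L}$ and integrate over $[0,L]\times[-\pi,\pi)$. The collision term gives $\int_{0}^{L}\tnm{r^{L}(\eta)}^{2}\ud\eta$; the streaming term gives the boundary flux $\tfrac12\big(\int\sin\phi|f^{L}(L,\cdot)|^{2}-\int\sin\phi|f^{L}(0,\cdot)|^{2}\big)$ plus $\tfrac12\int_{0}^{L}F(\eta)\br{\sin\phi,(f^{L})^{2}}_{\phi}\,\ud\eta$. By (\ref{Milne temp 3}) the $q^{L}$--part of $\br{\sin\phi,(f^{L})^{2}}_{\phi}$ drops out, so this term is $\ls\e\int_{0}^{L}\tnm{r^{L}}^{2}$ by Lemma \ref{Milne force} and is absorbed. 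At $\eta=L$ the flux vanishes by evenness; at $\eta=0$ the flux splits into a part carried by the data (bounded by $\lnm{h}^{2}\ls M^{2}$) and an outgoing part entering with the sign favorable to the estimate. The source term equals $\int_{0}^{L}\br{S,r^{L}}_{\phi}\,\ud\eta$ since $\bar S=0$, and is $\le\tfrac14\tnnm{r^{L}}^{2}+C\tnnm{S}^{2}$ with $\tnnm{S}\ls M/K$ by Lemma \ref{Milne data}. Collecting terms yields (\ref{Milne temp 1}).

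The hydrodynamic bound (\ref{Milne temp 2}) is the step I expect to be the main obstacle. Testing (\ref{Milne finite problem LT}) against $\sin\phi$, using periodicity for the force term and (\ref{Milne temp 3}) to kill the $q^{L}$--contribution to the collision term, one gets $\p_{\eta}\big(\pi q^{L}(\eta)+\br{\sin^{2}\phi,r^{L}}_{\phi}(\eta)\big)=F(\eta)\br{\cos2\phi,r^{L}}_{\phi}(\eta)+\br{\sin\phi,S}_{\phi}(\eta)$. Integrating in $\eta$, the terms $\br{\sin^{2}\phi,r^{L}}_{\phi}(\eta)$ are $\ls\tnm{r^{L}(\eta)}$; the term $\int_{0}^{\eta}F\br{\cos2\phi,r^{L}}_{\phi}$ is $\ls\big(\int_{0}^{\infty}|F|^{2}\big)^{1/2}\tnnm{r^{L}}\ls\e^{1/2}(M+M/K)$ by Lemma \ref{Milne force} and (\ref{Milne temp 1}); and $\big|\int_{0}^{\eta}\br{\sin\phi,S}_{\phi}\,\ud\eta'\big|\le\eta^{1/2}\tnnm{S}\ls\eta^{1/2}M/K$ by Cauchy--Schwarz in $\eta$, which is the source of the $\eta^{1/2}$ growth. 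The delicate point is controlling the $\eta=0$ traces $q^{L}(0)$ and $\br{\sin^{2}\phi,r^{L}}_{\phi}(0)$: the part of $f^{L}(0,\cdot)$ fixed by the data is $\ls M$, and the outgoing part is controlled in a weighted $L^{2}$ sense by the energy identity above, but passing from that to the pointwise bound needed here requires care near the grazing set $\sin\phi=0$ (e.g.\ via a cutoff or a limiting argument). Squaring the resulting estimate for $q^{L}(\eta)$ gives (\ref{Milne temp 2}); throughout I would track all constants to confirm they do not depend on $L$, so that the estimates survive the later passage $L\to\infty$.
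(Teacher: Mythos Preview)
Your strategy (penalize, derive $\l$-uniform estimates, pass to a weak limit) matches the paper's, as do your arguments for (\ref{Milne temp 3}) and the $\sin\phi$-moment identity behind (\ref{Milne temp 2}). There is, however, a circularity in your proof of (\ref{Milne temp 1}). You absorb the force contribution $\tfrac12\int_{0}^{L}F\br{\sin\phi,(f^{L})^{2}}_{\phi}\,\ud\eta$ by invoking (\ref{Milne temp 3}) to drop the $q^{L}$ part, but the energy estimate must be run at the penalized level (otherwise $f^{L}$ is not yet known to exist), and for $f^{L}_{\l}$ orthogonality fails: integrating the penalized equation over $\phi$ gives $\p_{\eta}\br{\sin\phi,f^{L}_{\l}}_{\phi}+F\br{\sin\phi,f^{L}_{\l}}_{\phi}=-2\pi\l\,q^{L}_{\l}$, so the leftover cross term $\int_{0}^{L}F\,q^{L}_{\l}\br{\sin\phi,r^{L}_{\l}}_{\phi}\,\ud\eta$ cannot be absorbed without a uniform bound on $q^{L}_{\l}$, which itself rests on (\ref{Milne temp 1}). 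The paper sidesteps this by treating $F\alpha$ as part of a linear ODE rather than a small perturbation: with $\alpha(\eta)=\tfrac12\br{\sin\phi\,f^{L}_{\l},f^{L}_{\l}}_{\phi}$ one has $\alpha'+F\alpha=-\l\tnm{f^{L}_{\l}}^{2}-\tnm{r^{L}_{\l}}^{2}+\br{S,r^{L}_{\l}}_{\phi}$, and the integrating factor $e^{-V}$ together with $\alpha(L)=0$ (specular reflection) and $e^{-V}\geq\tfrac14$ from Lemma~\ref{Milne force} yields (\ref{Milne temp 1}) with no reference to $q^{L}_{\l}$ or to orthogonality.

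Your concern about the grazing set in controlling the $\eta=0$ trace for (\ref{Milne temp 2}) is unnecessary. The object to bound is $\beta^{L}_{\l}(0)=\br{\sin^{2}\phi,f^{L}_{\l}}_{\phi}(0)$, and Cauchy--Schwarz with weight $|\sin\phi|$ gives $\beta^{L}_{\l}(0)^{2}\leq C\br{|\sin\phi|\,f^{L}_{\l},f^{L}_{\l}}_{\phi}(0)$. Its incoming half is $\int_{\sin\phi>0}h^{2}\sin\phi$, and its outgoing half $-\int_{\sin\phi<0}(f^{L}_{\l})^{2}\sin\phi$ is bounded via the lower bound on $\alpha(0)$ that the same ODE (with $\alpha(L)=0$) provides. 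No pointwise control near $\sin\phi=0$ is needed; the $|\sin\phi|$ weight handles it automatically.
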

\begin{proof}
We divide the proof into several steps:\\
\ \\
Step 1: Assume $\lnnm{H}<\infty$ and $\lnm{h}<\infty$, then the
solution $f_{\l}(\eta,\phi)$ to the penalized $\e$-transport
equation
\begin{eqnarray}\label{Milne finite problem LT penalty.}
\left\{
\begin{array}{rcl}\displaystyle
\l f_{\l}+\sin\phi\frac{\p f_{\l}}{\p\eta}+F(\eta)\cos\phi\frac{\p
f_{\l}}{\p\phi}+f_{\l}&=&H(\eta,\phi),\\
f_{\l}(0,\phi)&=&h(\phi)\ \ \text{for}\ \ \sin\phi<0,\\
f_{\l}(L,\phi)&=&f_{\l}(L,R\phi).
\end{array}
\right.
\end{eqnarray}
satisfies
\begin{eqnarray}\label{Milne t 01}
\lnnm{f_{\l}}\leq \lnm{h}+\lnnm{H}.
\end{eqnarray}
\emph{The proof of (\ref{Milne t 01}):} To construct the solution of
(\ref{Milne finite problem LT penalty.}),  we define the energy as
\begin{eqnarray}
E(\eta,\phi)=\cos\phi e^{-V(\eta)}.
\end{eqnarray}
This curve with constant energy is the characteristics of the
equation (\ref{Milne finite problem LT penalty.}). Hence, on this
curve the equation can be simplified as follows:
\begin{eqnarray}
\l f_{\l}+\sin\phi\frac{\ud{f_{\l}}}{\ud{\eta}}+f_{\l}=H.
\end{eqnarray}
An implicit function
$\eta^+(\eta,\phi)$ can be determined through
\begin{eqnarray}
\abs{E(\eta,\phi)}=e^{-V(\eta^+)}.
\end{eqnarray}
which means $(\eta^+,\phi_0)$ with $\sin\phi_0=0$ is on the same characteristics as $(\eta,\phi)$.
Define the quantities for $0\leq\eta'\leq\eta^+$ as follows:
\begin{eqnarray}
\phi'(\phi,\eta,\eta')&=&\cos^{-1}(\cos\phi e^{V(\eta')-V(\eta)}),\\
R\phi'(\phi,\eta,\eta')&=&-\cos^{-1}(\cos\phi e^{V(\eta')-V(\eta)})=-\phi'(\phi,\eta,\eta'),
\end{eqnarray}
where the inverse trigonometric function can be defined
single-valued in the domain $[0,\pi)$ and the quantities are always well-defined due to the monotonicity of $V$.
Finally we put
\begin{eqnarray}
G_{\eta,\eta'}^{\l}(\phi)&=&\int_{\eta'}^{\eta}\frac{1+\l}{\sin(\phi'(\phi,\eta,\xi))}\ud{\xi}.
\end{eqnarray}
We can define the solution to (\ref{Milne finite problem LT
penalty.}) along the characteristics
as follows:\\
Case I:\\
For $\sin\phi>0$,
\begin{eqnarray}
f_{\l}(\eta,\phi)&=&h(\phi'(\phi,\eta,0))\exp(-G^{\l}_{\eta,0})
+\int_0^{\eta}\frac{H(\eta',\phi'(\phi,\eta,\eta'))}{\sin(\phi'(\phi,\eta,\eta'))}\exp(-G^{\l}_{\eta,\eta'})\ud{\eta'}.
\end{eqnarray}
Case II:\\
For $\sin\phi<0$ and $\abs{E(\eta,\phi)}\leq e^{-V(L)}$,
\begin{eqnarray}
\\
f_{\l}(\eta,\phi)&=&h(\phi'(\phi,\eta,0))\exp(-G^{\l}_{L,0}-G^{\l}_{L,\eta})\nonumber\\
&+&\bigg(\int_0^{L}\frac{H(\eta',\phi'(\phi,\eta,\eta'))}{\sin(\phi'(\phi,\eta,\eta'))}
\exp(-G^{\l}_{L,\eta'}-G^{\l}_{L,\eta})\ud{\eta'}
+\int_{\eta}^{L}\frac{H(\eta',R\phi'(\phi,\eta,\eta'))}{\sin(\phi'(\phi,\eta,\eta'))}\exp(G^{\l}_{\eta,\eta'})\ud{\eta'}\bigg)\nonumber.
\end{eqnarray}
Case III:\\
For $\sin\phi<0$ and $\abs{E(\eta,\phi)}\geq e^{-V(L)}$,
\begin{eqnarray}
\\
f_{\l}(\eta,\phi)&=&h(\phi'(\phi,\eta,0))\exp(-G^{\l}_{\eta^+,0}-G^{\l}_{\eta^+,\eta})\nonumber\\
&+&\bigg(\int_0^{\eta^+}\frac{H(\eta',\phi'(\phi,\eta,\eta'))}{\sin(\phi'(\phi,\eta,\eta'))}
\exp(-G^{\l}_{\eta^+,\eta'}-G^{\l}_{\eta^+,\eta})\ud{\eta'}
+\int_{\eta}^{\eta^+}\frac{H(\eta',R\phi'(\phi,\eta,\eta'))}{\sin(\phi'(\phi,\eta,\eta'))}\exp(G^{\l}_{\eta,\eta'})\ud{\eta'}\bigg)\nonumber.
\end{eqnarray}
Note the fact
\begin{eqnarray}
\frac{\ud{}}{\ud{\eta'}}G_{\eta,\eta'}^{\l}(\phi)=-\frac{1+\l}{\sin(\phi'(\phi,\eta,\eta'))}.
\end{eqnarray}
Hence,
we can directly estimate as follows:\\
In Case I:
\begin{eqnarray}
\abs{f_{\l}(\eta,\phi)}&\leq&\exp(-G^{\l}_{\eta,0})\lnm{h}+\lnnm{H}\int_0^{\eta}
\frac{1}{\sin(\phi'(\phi,\eta,\eta'))}\exp(-G^{\l}_{\eta,\eta'})\ud{\eta'}\\
&=&\exp(-G^{\l}_{\eta,0})\lnm{h}+\frac{1}{1+\l}\lnnm{H}\exp(-G^{\l}_{\eta,\eta'})\bigg|_0^{\eta}\nonumber\\
&=&\exp(-G^{\l}_{\eta,0})\lnm{h}+\frac{1}{1+\l}\bigg(1-\exp(-G^{\l}_{\eta,0})\bigg)\lnnm{H}\leq
\lnm{h}+\lnnm{H}.\nonumber
\end{eqnarray}
In Case II:
\begin{eqnarray}
\\
\abs{f_{\l}(\eta,\phi)}&\leq&\exp(-G^{\l}_{L,0}-G^{\l}_{L,\eta})\lnm{h}\nonumber\\
&&+\frac{1}{1+\l}\bigg\{\exp(-G^{\l}_{L,\eta})\bigg(1-\exp(-G^{\l}_{L,0})\bigg)\lnnm{H}+\bigg(1-\exp(-G^{\l}_{L,\eta})\bigg)\lnnm{H}\bigg\}\nonumber\\
&\leq&\lnm{h}+\lnnm{H}\nonumber.
\end{eqnarray}
In Case III:
\begin{eqnarray}
\\
\abs{f_{\l}(\eta,\phi)}&\leq&\exp(-G^{\l}_{\eta^+,0}-G^{\l}_{\eta^+,\eta})\lnm{h}\nonumber\\
&&+\frac{1}{1+\l}\bigg\{\exp(-G^{\l}_{\eta^+,\eta})\bigg(1-\exp(-G^{\l}_{\eta^+,0})\bigg)\lnnm{H}
+\bigg(1-\exp(-G^{\l}_{\eta^+,\eta})\bigg)\lnnm{H}\bigg\}\nonumber\\
&\leq&\lnm{h}+\lnnm{H}\nonumber.
\end{eqnarray}
This completes the proof of (\ref{Milne t 01}).\\
\ \\
Step 2: Assume $\lnnm{S}<\infty$ and $\lnm{h}<\infty$, then the
solution $f_{\l}^{L}(\eta,\phi)$ to the penalized $\e$-Milne
equation
\begin{eqnarray}\label{Milne finite problem LT penalty}
\left\{
\begin{array}{rcl}\displaystyle
\l f_{\l}^{L}+\sin\phi\frac{\p f_{\l}^{L}}{\p\eta}+F(\eta)\cos\phi\frac{\p
f_{\l}^{L}}{\p\phi}+f_{\l}^{L}-\bar f_{\l}^{L}&=&S(\eta,\phi),\\
f_{\l}^{L}(0,\phi)&=&h(\phi)\ \ \text{for}\ \ \sin\phi<0,\\
f_{\l}^{L}(L,\phi)&=&f_{\l}^{L}(L,R\phi).
\end{array}
\right.
\end{eqnarray}
where $\bar f_{\l}^{L}$ is defined as (\ref{Milne average}), satisfies
\begin{eqnarray}\label{Milne t 02}
\lnnm{f_{\l}^{L}}\leq \frac{1+\l}{\l}\bigg(\lnm{h}+\lnnm{S}\bigg).
\end{eqnarray}
\emph{The proof of (\ref{Milne t 02}):} In order to construct the
solution of (\ref{Milne finite problem LT penalty}), we iteratively
define the sequence $\{f_{m}^{L}\}_{m=0}^{\infty}$ as $f_{0}^{L}=0$
and
\begin{eqnarray}
\left\{
\begin{array}{rcl}\displaystyle
\l f_{m}^{L}+\sin\phi\frac{\p
f_{m}^{L}}{\p\eta}+F(\eta)\cos\phi\frac{\p
f_{m}^{L}}{\p\phi}+f_{m}^{L}-\bar f_{m-1}^{L}&=&S(\eta,\phi),\\
f_{m}^{L}(0,\phi)&=&h(\phi)\ \ \text{for}\ \ \sin\phi<0,\\
f_{m}^{L}(L,\phi)&=&f_{m}^{L}(L,R\phi).
\end{array}
\right.
\end{eqnarray}
Based on the analysis in Step 1 with $H=S+\bar f_{m-1}^{L}$, we know $f_{m}^{L}$ is well-defined
and $\lnnm{f_{m}^{L}}<\infty$. We further define
$g_{m}^{L}=f_{m}^{L}-f_{m-1}^{L}$ for $m\geq1$. Then $g_{m}^{L}$ can
be rewritten along the characteristics as follows:
Case I:\\
For $\sin\phi>0$,
\begin{eqnarray}
g_{m+1}^{L}(\eta,\phi)&=&\int_0^{\eta}\frac{\bar
g_{m}^{L}(\eta')}{\sin(\phi'(\phi,\eta,\eta'))}\exp(-G_{\eta,\eta'})\ud{\eta'}.
\end{eqnarray}
Case II:\\
For $\sin\phi<0$ and $\abs{E(\eta,\phi)}\leq e^{-V(L)}$,
\begin{eqnarray}
\\
g_{m+1}^{L}(\eta,\phi)&=& \bigg(\int_0^{L}\frac{\bar
g_{m}^{L}(\eta')}{\sin(\phi'(\phi,\eta,\eta'))}
\exp(-G_{L,\eta'}-G_{L,\eta})\ud{\eta'}+\int_{\eta}^{L}\frac{\bar
g_{m}^{L}(\eta')}{\sin(\phi'(\phi,\eta,\eta'))}\exp(G_{\eta,\eta'})\ud{\eta'}\bigg)\nonumber.
\end{eqnarray}
Case III:\\
For $\sin\phi<0$ and $\abs{E(\eta,\phi)}\geq e^{-V(L)}$,
\begin{eqnarray}
\\
g_{m+1}^{L}(\eta,\phi)&=&\bigg(\int_0^{\eta^+}\frac{\bar
g_{m}^{L}(\eta')}{\sin(\phi'(\phi,\eta,\eta'))}
\exp(-G_{\eta^+,\eta'}-G_{\eta^+,\eta})\ud{\eta'}
+\int_{\eta}^{\eta^+}\frac{\bar
g_{m}^{L}(\eta')}{\sin(\phi'(\phi,\eta,\eta'))}\exp(G_{\eta,\eta'})\ud{\eta'}\bigg)\nonumber.
\end{eqnarray}
In all three cases, we can always obtain
\begin{eqnarray}
\lnnm{g_{m+1}^{L}}\leq \frac{1}{1+\l}\lnnm{g_{m}^{L}}.
\end{eqnarray}
For $\l>0$, this is a contraction sequence. Also, we have
\begin{eqnarray}
\lnnm{g_{1}^{L}}=\lnnm{f_{1}^{L}}\leq \lnm{h}+\lnnm{H}.
\end{eqnarray}
Hence, $f_{m}^{L}$ converges strongly in
$L^{\infty}([0,L]\times[-\pi,\pi))$ to $f_{\l}^{L}$ which satisfies
(\ref{Milne finite problem LT penalty}). Also, $f_{\l}^{L}$ satisfies
\begin{eqnarray}
\lnnm{f_{\l}^{L}}\leq \frac{1+\l}{\l}\bigg(\lnm{h}+\lnnm{S}\bigg).
\end{eqnarray}
Naturally, we obtain $f_{\l}^{L}\in L^2([0,L]\times[-\pi,\pi))$.

However, we can see the estimate blows up when $\l\rt0$. Therefore,
we need to show the uniform estimate of $f_{\l}^{L}$ with respect to
$\l$. Similarly, we can define $r_{\l}^{L}$ and $q_{\l}^{L}$ for $f_{\l}^{L}$ as in (\ref{hydro}).\\
\ \\
Step 3: $r_{\l}^{L}$ satisfies
\begin{eqnarray}\label{Milne temp 37}
\int_{0}^L\tnm{r_{\l}^{L}(\eta)}^2\ud{\eta}&\leq&
4\tnm{h}^2+8\int_{0}^L\tnm{S(\eta)}^2\ud{\eta}.
\end{eqnarray}
\emph{The proof of (\ref{Milne temp 37}):} Multiplying $f_{\l}^{L}$
on both sides of (\ref{Milne finite problem LT penalty}) and
integrating over $\phi\in[-\pi,\pi)$, we get the energy estimate
\begin{eqnarray}\label{Milne temp 31}
\\
\half\frac{\ud{}}{\ud{\eta}}\br{f_{\l}^{L}
,f_{\l}^{L}\sin\phi}_{\phi}(\eta)=-\l\tnm{f_{\l}^{L}(\eta)}^2-\tnm{r_{\l}^{L}(\eta)}^2-F(\eta)\br{\frac{\p
f_{\l}^{L}}{\p\phi},f_{\l}^{L}\cos\phi}_{\phi}(\eta)+\br{S,f_{\l}^{L}}_{\phi}(\eta).\nonumber
\end{eqnarray}
A further integration by parts reveals
\begin{eqnarray}
-F(\eta)\br{\frac{\p
f_{\l}^{L}}{\p\phi},f_{\l}^{L}\cos\phi}_{\phi}(\eta)&=&-\half
F(\eta)\br{f_{\l}^{L},f_{\l}^{L}\sin\phi}_{\phi}(\eta).
\end{eqnarray}
Also, the assumption $\bar S(\eta)=0$ leads to
\begin{eqnarray}
\br{S,f_{\l}^{L}}_{\phi}(\eta)&=&\br{S,q_{\l}^{L}}_{\phi}(\eta)+\br{S,r_{\l}^{L}}_{\phi}(\eta)=\br{S,r_{\l}^{L}}_{\phi}(\eta).
\end{eqnarray}
Hence, we have the simplified form of (\ref{Milne temp 31}) as
follows:
\begin{eqnarray}\label{Milne temp 32}
\\
\half\frac{\ud{}}{\ud{\eta}}\br{
f_{\l}^{L},f_{\l}^{L}\sin\phi}_{\phi}(\eta)=-\l\tnm{f_{\l}^{L}(\eta)}^2-\tnm{r_{\l}^{L}(\eta)}^2-\half
F(\eta)\br{
f_{\l}^{L},f_{\l}^{L}\sin\phi}_{\phi}(\eta)+\br{S,r_{\l}^{L}}_{\phi}(\eta).\nonumber
\end{eqnarray}
Define
\begin{eqnarray}
\alpha(\eta)=\half\br{f_{\l}^{L},f_{\l}^{L}\sin\phi }_{\phi}(\eta).
\end{eqnarray}
Then (\ref{Milne temp 32}) can be rewritten as follows:
\begin{eqnarray}
\frac{\ud{\alpha}}{\ud{\eta}}=-\l\tnm{f_{\l}^{L}(\eta)}^2-\tnm{r_{\l}^{L}(\eta)}^2-
F(\eta)\alpha(\eta)+\br{S,r_{\l}^{L}}_{\phi}(\eta).
\end{eqnarray}
We can integrate above on $[\eta,L]$ and $[0,\eta]$ respectively to obtain
\begin{eqnarray}
\label{Milne temp 4}
\alpha(\eta)&=&\alpha(L)\exp\bigg(\int_{\eta}^LF(y)\ud(y)\bigg)\\
&&+\int_{\eta}^L\exp\bigg(\int_{\eta}^yF(z)\ud{z}\bigg)
\bigg(\l\tnm{f_{\l}^{L}(y)}^2+\tnm{r_{\l}^{L}(y)}^2-\br{S,r_{\l}^{L}}_{\phi}(y)\bigg)\ud{y},\nonumber\\
\label{Milne temp 5}
\alpha(\eta)&=&\alpha(0)\exp\bigg(-\int_{0}^{\eta}F(y)\ud(y)\bigg)\\
&&+\int_{0}^{\eta}\exp\bigg(-\int_{y}^{\eta}F(z)\ud{z}\bigg)
\bigg(-\l\tnm{f_{\l}^{L}(y)}^2-\tnm{r_{\l}^{L}(y)}^2+\br{S,r_{\l}^{L}}_{\phi}(y)\bigg)\ud{y}.\nonumber
\end{eqnarray}
The specular reflexive boundary $f_{\l}^{L}(L,\phi)=f_{\l}^{L}(L,R\phi)$
ensures $\alpha(L)=0$. Hence, based on (\ref{Milne temp 4}), we have
\begin{eqnarray}\label{Milne temp 36}
\alpha(\eta)\geq\int_{\eta}^L\exp\bigg(\int_{\eta}^yF(z)\ud{z}\bigg)\bigg(-\br{S,r_{\l}^{L}}_{\phi}(y)\bigg)\ud{y}.
\end{eqnarray}
Also, (\ref{Milne temp 5}) implies
\begin{eqnarray}
\alpha(\eta)&\leq&\alpha(0)\exp[V(\eta)]+\int_{0}^{\eta}\exp\bigg(-\int_{y}^{\eta}F(z)\ud{z}\bigg)
\bigg(\br{S,r_{\l}^{L}}_{\phi}(y)\bigg)\ud{y}\\
&\leq&2\tnm{h}^2+\int_{0}^{\eta}\exp\bigg(-\int_{y}^{\eta}F(z)\ud{z}\bigg)
\bigg(\br{S,r_{\l}^{L}}_{\phi}(y)\bigg)\ud{y}\nonumber,
\end{eqnarray}
due to the fact
\begin{eqnarray}
\alpha(0)=\half\br{\sin\phi
f^{L}_{\l},f^{L}_{\l}}_{\phi}(0)\leq\half\bigg(\int_{\sin\phi>0}h(\phi)^2\sin\phi
\ud{\phi}\bigg)\leq \half\tnm{h}^2.
\end{eqnarray}
Then in (\ref{Milne temp 5}) taking $\eta=L$, from $\alpha(L)=0$, we have
\begin{eqnarray}\label{Milne temp 6}
\int_{0}^L\exp\bigg(\int_{0}^yF(z)\ud{z}\bigg)\tnm{r_{\l}^{L}(y)}^2\ud{y}
&\leq&\alpha(0)+\int_{0}^L\exp\bigg(\int_{0}^yF(z)\ud{z}\bigg)\br{S,r_{\l}^{L}}_{\phi}(y)\ud{y}\\
&\leq&
\half\tnm{h}^2+\int_{0}^L\exp\bigg(\int_{0}^yF(z)\ud{z}\bigg)\br{S,r_{\l}^{L}}_{\phi}(y)\ud{y}\nonumber.
\end{eqnarray}
On the other hand, by (\ref{force temp 2}), we can directly
estimate as follows:
\begin{eqnarray}\label{Milne temp 7}
\int_{0}^L\exp\bigg(\int_{0}^yF(z)\ud{z}\bigg)\tnm{r_{\l}^{L}(y)}^2\ud{y}\geq
\frac{1}{4}\int_{0}^L\tnm{r_{\l}^{L}(y)}^2\ud{y}.
\end{eqnarray}
Combining (\ref{Milne temp 6}) and (\ref{Milne temp 7}) yields
\begin{eqnarray}\label{Milne temp 33}
\int_{0}^L\tnm{r_{\l}^{L}(\eta)}^2\ud{\eta}&\leq&
2\tnm{h}^2+4\int_{0}^L\exp\bigg(\int_{0}^yF(z)\ud{z}\bigg)\br{S,r_{\l}^{L}}_{\phi}(y)\ud{y}.
\end{eqnarray}
By Cauchy's inequality and (\ref{force temp 2}), we have
\begin{eqnarray}\label{Milne temp 34}
\abs{\int_{0}^L\exp\bigg(\int_{0}^yF(z)\ud{z}\bigg)\br{S,r_{\l}^{L}}_{\phi}(y)\ud{y}}&\leq&\abs{\int_0^L\br{S,r_{\l}^{L}}_{\phi}(y)\ud{y}}\\
&\leq&
\frac{1}{8}\int_{0}^L\tnm{r_{\l}^{L}(\eta)}^2\ud{\eta}+2\int_{0}^L\tnm{S(\eta)}^2\ud{\eta}\nonumber.
\end{eqnarray}
Therefore, summarizing (\ref{Milne temp 33}) and (\ref{Milne temp
34}), we deduce (\ref{Milne temp 37}).\\
\ \\
Step 4: $q_{\l}^{L}$ satisfies
\begin{eqnarray}\label{Milne temp 38}
\\
\tnm{q^{L}_{\l}(\eta)}^2&\leq&
256\pi^2(1+\l)\bigg(1+\eta^{1/2}+\tnm{r_{\l}^{L}(\eta)}\bigg)\bigg(1+\tnm{h}^2+\int_{0}^L\tnm{S(\eta)}^2\ud{\eta}+\int_0^{\eta}
\lnm{S(y)}\ud{y}\bigg)\nonumber.
\end{eqnarray}
\emph{The proof of (\ref{Milne temp 38}):} Multiplying $\sin\phi$ on
both sides of (\ref{Milne finite problem LT penalty}) and
integrating over $\phi\in[-\pi,\pi)$ lead to
\begin{eqnarray}
\label{Milne temp 18}\\
\frac{\ud{}}{\ud{\eta}}\br{\sin^2\phi,f_{\l}^{L}}_{\phi}(\eta)=-\l\br{\sin\phi,
f_{\l}^{L}}_{\phi}(\eta)
-\br{\sin\phi,r_{\l}^{L}}_{\phi}(\eta)-F(\eta)\br{\sin\phi\cos\phi,\frac{\p
f_{\l}^{L}}{\p\phi}}_{\phi}(\eta)+\br{\sin\phi,S}_{\phi}(\eta)\nonumber.
\end{eqnarray}
We can further integrate by parts as follows:
\begin{eqnarray}
-F(\eta)\br{\sin\phi\cos\phi,\frac{\p
f_{\l}^{L}}{\p\phi}}_{\phi}(\eta)=F(\eta)\br{\cos(2\phi),f_{\l}^{L}}_{\phi}(\eta)=F(\eta)\br{\cos(2\phi),r_{\l}^{L}}_{\phi}(\eta),
\end{eqnarray}
to obtain
\begin{eqnarray}
\frac{\ud{}}{\ud{\eta}}\br{\sin^2\phi,f_{\l}^{L}}_{\phi}(\eta)=
-\br{\sin\phi,r_{\l}^{L}}_{\phi}(\eta)F(\eta)\br{\cos(2\phi),r_{\l}^{L}}_{\phi}(\eta)+\br{\sin\phi,S}_{\phi}(\eta).
\end{eqnarray}
Define
\begin{eqnarray}\label{Milne temp 81}
\beta_{\l}^{L}(\eta)=\br{\sin^2\phi,f_{\l}^{L}}_{\phi}(\eta).
\end{eqnarray}
Then we can simplify (\ref{Milne temp 18}) as follows:
\begin{eqnarray}\label{Milne temp 35}
\frac{\ud{\beta^{L}_{\l}}}{\ud{\eta}}=D^{L}_{\l}(\eta,\phi),
\end{eqnarray}
where
\begin{eqnarray}\label{Milne temp 82}
D^{L}_{\l}(\eta,\phi)=-\l\br{\sin\phi,
f_{\l}^{L}}_{\phi}(\eta)-\br{\sin\phi,r_{\l}^{L}}_{\phi}(\eta)+F(\eta)\br{\cos(2\phi),r_{\l}^{L}}_{\phi}(\eta)+\br{\sin\phi,S}_{\phi}(\eta).
\end{eqnarray}
Since
\begin{eqnarray}
-\l\br{\sin\phi, f_{\l}^{L}}_{\phi}(\eta)&=&-\l\br{\sin\phi,
r_{\l}^{L}}_{\phi}(\eta)-\l\br{\sin\phi,
q_{\l}^{L}}_{\phi}(\eta)=-\l\br{\sin\phi, r_{\l}^{L}}_{\phi}(\eta).
\end{eqnarray}
we can further get
\begin{eqnarray}
D^{L}_{\l}(\eta,\phi)=-\l\br{\sin\phi,
r_{\l}^{L}}_{\phi}(\eta)-\br{\sin\phi,r_{\l}^{L}}_{\phi}(\eta)+F(\eta)\br{\cos(2\phi),r_{\l}^{L}}_{\phi}(\eta)+\br{\sin\phi,S}_{\phi}(\eta).
\end{eqnarray}
We can integrate over $[0,\eta]$ in (\ref{Milne temp 35}) to obtain
\begin{eqnarray}\label{Milne t 03}
\beta^{L}_{\l}(\eta)=\beta^{L}_{\l}(0)+\int_0^{\eta}D^{L}_{\l}(y)\ud{y}.
\end{eqnarray}
It is important to note that $D^{L}_{\l}$ only depends on
$r_{\l}^{L}$ and is independent of $q_{\l}^{L}$. Then we can
directly estimate
\begin{eqnarray}\label{Milne t 04}
\\
\lnm{D^{L}_{\l}(\eta)}&\leq&
2\pi\bigg(1+\l+\abs{F(\eta)}\bigg)\tnm{r_{\l}^{L}(\eta)}+\lnm{S(\eta)}\leq4\pi(1+\l)\tnm{r_{\l}^{L}(\eta)}+\lnm{S(\eta)}.\nonumber
\end{eqnarray}
Also, for the initial data
\begin{eqnarray}
\\
\beta^{L}_{\l}(0)=\br{\sin^2\phi,f_{\l}^{L}}_{\phi}(0)\leq \bigg(\br{
f_{\l}^{L},f_{\l}^{L}\abs{\sin\phi}}_{\phi}(0)\bigg)^{1/2}\tnm{\sin\phi}^{3/2}\leq
8\bigg(\br{ f_{\l}^{L},f_{\l}^{L}\abs{\sin\phi}}_{\phi}(0)\bigg)^{1/2}.\nonumber
\end{eqnarray}
Obviously, we have
\begin{eqnarray}
\br{f_{\l}^{L}, f_{\l}^{L}\abs{\sin\phi}
}_{\phi}(0)=\int_{\sin\phi>0}h^2(\phi)\sin\phi
\ud{\phi}-\int_{\sin\phi<0}\bigg(f_{\l}^{L}(0,\phi)\bigg)^2\sin\phi\ud{\phi}.
\end{eqnarray}
However, based on the definition of $\alpha(\eta)$ and (\ref{Milne temp 36}), we can obtain
\begin{eqnarray}
\\
\int_{\sin\phi>0} h^2(\phi)\sin\phi\ud{\phi}+\int_{\sin\phi<0}
\bigg(f_{\l}^{L}(0,\phi)\bigg)^2\sin\phi\ud{\phi}&=&2\alpha(0)\nonumber\\
&\geq&
2\int_{0}^L\exp\bigg(\int_{0}^yF(z)\ud{z}\bigg)\bigg(-\br{S,r_{\l}^{L}}_{\phi}(y)\bigg)\ud{y}\nonumber\\
&\geq&-\half\int_0^L\br{S,r_{\l}^{L}}_{\phi}(y)\ud{y}\nonumber.
\end{eqnarray}
Hence, we can deduce
\begin{eqnarray}
-\int_{\sin\phi<0}
\bigg(f_{\l}^{L}(0,\phi)\bigg)^2\sin\phi\ud{\phi}&\leq&\int_{\sin\phi>0}
h^2(\phi)\sin\phi\ud{\phi}+\half\int_{0}^L\br{S,r_{\l}^{L}}_{\phi}(y)\ud{y}\\
&\leq&
\tnm{h}^2+\frac{1}{4}\int_{0}^L\tnm{r_{\l}^{L}(\eta)}^2\ud{\eta}+\frac{1}{4}\int_{0}^L\tnm{S(\eta)}^2\ud{\eta}\nonumber.
\end{eqnarray}
From (\ref{Milne temp 37}), we can deduce
\begin{eqnarray}\label{Milne t 05}
\beta^{L}_{\l}(0)^2&\leq& 64\br{f_{\l}^{L},f_{\l}^{L}\abs{\sin\phi}}_{\phi}(0)\leq
128\tnm{h}^2+16\int_{0}^L\tnm{r_{\l}^{L}(\eta)}^2\ud{\eta}+16\int_{0}^L\tnm{S(\eta)}^2\ud{\eta}\\
&\leq&192\tnm{h}^2+192\int_{0}^L\tnm{S(\eta)}^2\ud{\eta}\nonumber.
\end{eqnarray}
From (\ref{Milne temp 37}), (\ref{Milne t 03}), (\ref{Milne t 04}) and (\ref{Milne t 05}), we have
\begin{eqnarray}
&&\lnm{\beta^{L}_{\l}(\eta)}\\
&\leq&8+
192\tnm{h}^2+192\int_{0}^L\tnm{S(\eta)}^2\ud{\eta}+
2\pi(1+\l)\int_0^{\eta}\tnm{r_{\l}^{L}(y)}\ud{y}+\int_0^{\eta}\lnm{S(y)}\ud{y}\nonumber\\
&\leq&8+
192\tnm{h}^2+192\int_{0}^L\tnm{S(\eta)}^2\ud{\eta}+\int_0^{\eta}\lnm{S(y)}\ud{y}+
2\pi(1+\l)\eta^{1/2}\bigg(\int_0^{\eta}\tnm{r^{L}_{\l}(y)}^2\ud{y}\bigg)^{1/2}\nonumber\\
&\leq
&64\pi(1+\l)(1+\eta^{1/2})\bigg(\tnm{h}^2+\int_{0}^L\tnm{S(\eta)}^2\ud{\eta}+\int_0^{\eta}\lnm{S(y)}\ud{y}\bigg)\nonumber.
\end{eqnarray}
By (\ref{Milne temp 81}) this implies
\begin{eqnarray}
\tnm{q_{\l}^{L}(\eta)}^2&\leq&
2\tnm{\beta^{L}_{\l}(\eta)}+2\tnm{r_{\l}^{L}(\eta)}\leq
4\pi\lnm{\beta^{L}_{\l}(\eta)}+2\tnm{r_{\l}^{L}(\eta)}
\end{eqnarray}
which completes the proof of (\ref{Milne temp 38}).\\
\ \\
Step 5: Passing to the limit.\\
Since estimates (\ref{Milne temp 37}) and (\ref{Milne temp 38}) are
uniform in $\l$, we can take weakly convergent subsequence
$f_{\l}^{L}\rt f^{L}\in L^2([0,L]\times[-\pi,\pi))$ as $\l\rt0$.
Hence, $f^{L}$ is the solution of (\ref{Milne finite problem LT})
and satisfies the estimates (\ref{Milne temp 1}) and
(\ref{Milne temp 2}).\\
\ \\
Step 6: Orthogonality relation (\ref{Milne temp 3}).\\
A direct integration over $\phi\in[-\pi,\pi)$ in
(\ref{Milne finite problem LT}) implies
\begin{eqnarray}
\frac{\ud{}}{\ud{\eta}}\br{\sin\phi,f^{L}}_{\phi}(\eta)=-F\br{\cos\phi,\frac{\ud{f^{L}}}{\ud{\phi}}}_{\phi}(\eta)
+\bar S(\eta)=-F\br{\sin\phi,f^{L}}_{\phi}(\eta).
\end{eqnarray}
thanks to $\bar S=0$. The specular reflexive boundary
$f^{L}(L,\phi)=f^{L}(L,R\phi)$ implies
$\br{\sin\phi,f^{L}}_{\phi}(L)=0$. Then we have
\begin{eqnarray}
\br{\sin\phi,f^{L}}_{\phi}(\eta)=0.
\end{eqnarray}
It is easy to see
\begin{eqnarray}
\br{\sin\phi,q^{L}}_{\phi}(\eta)=0.
\end{eqnarray}
Hence, we may derive
\begin{eqnarray}
\br{\sin\phi,r^{L}}_{\phi}(\eta)=0.
\end{eqnarray}
This leads (\ref{Milne temp 3}) and completes the proof of
(\ref{Milne finite LT}).
\end{proof}

\subsubsection{Infinite Slab with $\bar S=0$}

We turn to the $\e$-Milne problem for $f(\eta,\phi)$ in the infinite slab $(\eta,\phi)\in[0,\infty)\times[-\pi,\pi)$
\begin{eqnarray}\label{Milne infinite problem LT}
\left\{
\begin{array}{rcl}\displaystyle
\sin\phi\frac{\p f}{\p\eta}+F(\eta)\cos\phi\frac{\p
f}{\p\phi}+f-\bar f&=&S(\eta,\phi),\\
f(0,\phi)&=&h(\phi)\ \ \text{for}\ \ \sin\phi>0,\\
\lim_{\eta\rt\infty}f(\eta,\phi)&=&f_{\infty}.
\end{array}
\right.
\end{eqnarray}
Define $r$ and $q$ for $f$ as $r^{L}$ and $q^{L}$ for $f^{L}$.
\begin{lemma}\label{Milne infinite LT}
Assume $\bar S(\eta)=0$ for any $\eta\in[0,\infty)$ with (\ref{Milne bounded}) and (\ref{Milne decay}). Then there
exists a solution $f(\eta,\phi)$ of the infinite slab problem
(\ref{Milne infinite problem LT}), satisfying
\begin{eqnarray}
\tnnm{r}&\leq&C\bigg(M+\frac{M}{K}\bigg)^2<\infty\label{Milne temp 8},\\
\br{\sin\phi,r}_{\phi}(\eta)&=&0\label{Milne temp 19},
\end{eqnarray}
for any $\eta\in[0,\infty)$. Also there exists a constant
$q_{\infty}=f_{\infty}\in\r$ such that the following estimates hold,
\begin{eqnarray}
\abs{q_{\infty}}&\leq&C\bigg(1+M+\frac{M}{K}\bigg)^2<\infty\label{Milne temp 17},\\
\tnm{q(\eta)-q_{\infty}}&\leq&C\bigg(\tnm{r(\eta)}+\int_{\eta}^{\infty}\abs{F(y)}\tnm{r(y)}\ud{y}+\int_{\eta}^{\infty}\lnm{S(y)}\ud{y}
\bigg)\label{Milne temp 9},\\
\tnnm{q-q_{\infty}}&\leq&C\bigg(M+\frac{M}{K}\bigg)^2<\infty\label{Milne temp 10}.
\end{eqnarray}
The
solution is unique among functions such that (\ref{Milne temp 8}),
(\ref{Milne temp 17})and (\ref{Milne temp 10}) hold.
\end{lemma}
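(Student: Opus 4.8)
The plan is to realize the infinite--slab solution as the limit $L\rt\infty$ of the finite--slab solutions furnished by Lemma \ref{Milne finite LT}, and to read off all the stated quantities — the $L^2$ bound on $r$, the orthogonality relation, the constant $q_\infty$, and the decay of $q(\eta)-q_\infty$ — from that limit together with one scalar ODE for a velocity moment of $f$. Uniqueness will then follow from a weighted energy identity.

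\emph{Construction.} For each $L$ write $f^{L}=q^{L}+r^{L}$ for the finite--slab solution of Lemma \ref{Milne finite LT}. The estimate (\ref{Milne temp 1}) is uniform in $L$; integrating (\ref{Milne temp 2}) over $[0,\eta_0]$ and applying Cauchy--Schwarz to $\int_0^{\eta_0}\tnm{r^{L}(\eta)}\ud{\eta}$ shows that $\{f^{L}\}_{L\ge\eta_0}$ is bounded in $L^2([0,\eta_0]\times[-\pi,\pi))$ for each fixed $\eta_0$. A diagonal extraction over $\eta_0=1,2,\dots$ yields $f^{L_k}\rightharpoonup f$ in $L^2_{\mathrm{loc}}$; linearity and fixed coefficients give that $f$ solves the equation in (\ref{Milne infinite problem LT}) weakly, and the standard trace/velocity--averaging argument (the transport operator controls $\sin\phi\,\p_\eta f$) transfers the inflow datum $f(0,\phi)=h(\phi)$ for $\sin\phi>0$. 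Weak lower semicontinuity then gives $\tnnm{r}\le C(M+M/K)^2$, which is (\ref{Milne temp 8}).

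\emph{The moment ODE, $q_\infty$, and the estimates.} Multiplying the equation by $\sin\phi$, integrating in $\phi$ and integrating by parts in $\phi$ exactly as in Step 4 of Lemma \ref{Milne finite LT} gives $\beta'(\eta)=D(\eta)$, where $\beta(\eta)=\br{\sin^2\phi,f}_\phi(\eta)=\pi q(\eta)+\br{\sin^2\phi,r}_\phi(\eta)$ and $D(\eta)=-\br{\sin\phi,r}_\phi(\eta)+F(\eta)\br{\cos(2\phi),r}_\phi(\eta)+\br{\sin\phi,S}_\phi(\eta)$. Integrating the equation in $\phi$ and using $\bar S=0$ gives $\frac{\ud{}}{\ud{\eta}}\br{\sin\phi,f}_\phi=-F\br{\sin\phi,f}_\phi$; since $\int_0^\infty\abs{F}<\infty$ and (see below) $f(\eta_n,\cdot)\rt q_\infty$ along some $\eta_n\rt\infty$, this forces $\br{\sin\phi,f}_\phi\equiv0$, hence $\br{\sin\phi,r}_\phi\equiv0$, which is (\ref{Milne temp 19}) and also reduces $D$ to $D(\eta)=F(\eta)\br{\cos(2\phi),r}_\phi(\eta)+\br{\sin\phi,S}_\phi(\eta)$. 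By Cauchy--Schwarz, (\ref{force temp 5}), Lemma \ref{Milne data} and (\ref{Milne temp 8}),
\[
\int_0^\infty\abs{D(\eta)}\ud{\eta}\ls\Big(\int_0^\infty\abs{F}^2\ud{\eta}\Big)^{1/2}\tnnm{r}+\tnnm{S}\le C(1+M+M/K)^2<\infty,
\]
so $D\in L^1([0,\infty))$ and $\beta(\eta)$ has a limit $\beta_\infty$ as $\eta\rt\infty$. Since $r\in L^2L^2$ there is $\eta_n\rt\infty$ with $\tnm{r(\eta_n)}\rt0$, whence $\pi q(\eta_n)=\beta(\eta_n)-\br{\sin^2\phi,r}_\phi(\eta_n)\rt\beta_\infty$; put $q_\infty=\beta_\infty/\pi$, and note $f(\eta_n,\cdot)\rt q_\infty$ in $L^2([-\pi,\pi))$, which identifies the limit $f_\infty$ in (\ref{Milne infinite problem LT}) with $q_\infty$. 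From $\beta_\infty=\beta(0)+\int_0^\infty D$ and the boundary bound $\abs{\beta(0)}\le C(\tnm{h}+\tnnm{S})\le C(M+M/K)$ (as in (\ref{Milne t 05})) one obtains (\ref{Milne temp 17}). Finally $\pi\big(q(\eta)-q_\infty\big)=-\int_\eta^\infty D(y)\ud{y}-\br{\sin^2\phi,r}_\phi(\eta)$; bounding $\abs{\br{\sin^2\phi,r}_\phi(\eta)}\ls\tnm{r(\eta)}$ and $\abs{D(y)}\ls\abs{F(y)}\tnm{r(y)}+\lnm{S(y)}$ gives (\ref{Milne temp 9}), and squaring and integrating in $\eta$ — using $\int_0^\infty\big(\int_\eta^\infty\abs{F}\tnm{r}\,\ud{y}\big)^2\ud{\eta}\le\tnnm{r}^2\int_0^\infty\int_\eta^\infty\abs{F}^2\ud{y}\,\ud{\eta}\le(3-\ln4)\tnnm{r}^2$ from (\ref{force temp 3}), together with the exponential decay of $S$ to control $\int_\eta^\infty\lnm{S(y)}\ud{y}$ — yields (\ref{Milne temp 10}).

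\emph{Uniqueness and the main obstacle.} If $f_1,f_2$ both solve (\ref{Milne infinite problem LT}) with (\ref{Milne temp 8}), (\ref{Milne temp 17}), (\ref{Milne temp 10}), set $w=f_1-f_2$; it solves the homogeneous problem ($h=0$, $S=0$) with $r_w\in L^2L^2$ and $q_w-q_{w,\infty}\in L^2L^2$. Multiplying by $w$, integrating in $\phi$ and integrating by parts in $\phi$ gives, with $\alpha_w(\eta)=\half\br{\sin\phi,w^2}_\phi(\eta)$, the identity $\alpha_w'+F\alpha_w+\tnm{r_w(\eta)}^2=0$, i.e. $\frac{\ud{}}{\ud{\eta}}\big(e^{-V(\eta)}\alpha_w(\eta)\big)=-e^{-V(\eta)}\tnm{r_w(\eta)}^2$; since $w(0,\phi)=0$ for $\sin\phi>0$ we have $\alpha_w(0)\le0$, while choosing $\eta_n\rt\infty$ with $\tnm{r_w(\eta_n)}^2+\abs{q_w(\eta_n)-q_{w,\infty}}^2\rt0$ gives $w(\eta_n,\cdot)\rt q_{w,\infty}$ in $L^2$, hence $\alpha_w(\eta_n)\rt\half q_{w,\infty}^2\int_{-\pi}^{\pi}\sin\phi\,\ud{\phi}=0$. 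Integrating over $[0,\infty)$ and using $0\le V\le\ln4$ (Lemma \ref{Milne force}), together with monotonicity of $e^{-V}\alpha_w$,
\[
\tfrac14\tnnm{r_w}^2\le\int_0^\infty e^{-V(\eta)}\tnm{r_w(\eta)}^2\ud{\eta}=\alpha_w(0)-\lim_{n\rt\infty}e^{-V(\eta_n)}\alpha_w(\eta_n)=\alpha_w(0)\le0,
\]
so $r_w\equiv0$; then $w=q_w(\eta)$ is $\phi$--independent, the equation forces $\sin\phi\,q_w'(\eta)=0$ for all $\phi$, so $q_w$ is constant, and the inflow condition $q_w=0$ then gives $w\equiv0$. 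The genuine difficulty throughout is the passage $L\rt\infty$: the finite--slab control of the hydrodynamic part $q^{L}$ holds only modulo $O(\eta^{1/2})$ growth, so compactness must first be extracted on bounded $\eta$--windows and only afterwards is the constant $q_\infty$, and the rate $q(\eta)-q_\infty$, recovered \emph{a posteriori} from the scalar ODE $\beta'=D$ with $D\in L^1$; verifying that $\br{\sin\phi,f}_\phi$ and $\beta$ are absolutely continuous in $\eta$ for the weak limit $f$, so that these ODE manipulations and the evaluations at $\eta=0$ and $\eta=\infty$ are legitimate, is the technical heart of the argument.
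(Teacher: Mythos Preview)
Your argument is correct and follows the paper's proof closely: extract a weak $L^2_{\mathrm{loc}}$ limit of the finite-slab solutions $f^L$ from Lemma~\ref{Milne finite LT}, use the scalar ODE $\beta'(\eta)=D(\eta)$ for $\beta=\br{\sin^2\phi,f}_\phi$ to locate $q_\infty$ and control $q-q_\infty$, and prove uniqueness via the monotone weighted energy $e^{-V}\br{\sin\phi,w^2}_\phi$.

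One remark on your derivation of the orthogonality relation~(\ref{Milne temp 19}). As written it is circular: you invoke ``(see below) $f(\eta_n,\cdot)\rt q_\infty$'' to conclude $\br{\sin\phi,f}_\phi\equiv0$, but $q_\infty$ is only defined after you have used that very orthogonality to reduce $D$ and show $D\in L^1$ (without it, the term $\br{\sin\phi,r}_\phi$ is only in $L^2$ in $\eta$, not $L^1$). The fix is immediate: since $\br{\sin\phi,f}_\phi=\br{\sin\phi,r}_\phi$ and $r\in L^2L^2$ by~(\ref{Milne temp 8}), any subsequence with $\tnm{r(\eta_n)}\rt0$ already gives $\br{\sin\phi,f}_\phi(\eta_n)\rt0$, and then the explicit solution $A(\eta)=A(0)e^{V(\eta)}$ of $A'=-FA$, with $V$ bounded, forces $A(0)=0$. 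The paper avoids this detour altogether by simply noting that the finite-slab identity~(\ref{Milne temp 3}) --- which came from the specular condition at $\eta=L$ --- is linear in $f^L$ and therefore passes directly to the weak limit. (Also, in your $L^1$ bound on $D$ the contribution of $S$ should be $\int_0^\infty\lnm{S(\eta)}\ud{\eta}\ls M/K$ rather than $\tnnm{S}$; the conclusion is unaffected.)
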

\begin{proof}
\ \\
Step 1: Existence and estimates (\ref{Milne temp 8}) and (\ref{Milne temp 19}).\\
By the uniform estimates from Lemma \ref{Milne infinite LT}, the
solution $f^{L}$ of the finite problem (\ref{Milne finite problem
LT}) in the slab $[0,L]$ is uniformly bounded in
$L^2_{loc}([0,\infty);L^2[-\pi,\pi))$. Then there exists a
subsequence such that
\begin{eqnarray}
q^{L}&\rightharpoonup&q,\\
r^{L}&\rightharpoonup&r,
\end{eqnarray}
weakly in $L^2_{loc}([0,\infty);L^2[-\pi,\pi))$. Also, $f=q+r$
satisfies the boundary condition at $\eta=0$. This shows the
existence of the solution. Then property (\ref{Milne temp 8})
naturally holds due to the weak lower semi-continuity of norm
$\tnnm{\cdot}$.
Also, the orthogonal relation (\ref{Milne temp 19}) is preserved. \\
\ \\
Step 2: Estimates (\ref{Milne temp 17}), (\ref{Milne temp 9}) and (\ref{Milne temp 10}).\\
We continue using the notation in Step 5 of the proof of Lemma
\ref{Milne finite LT}. Recall (\ref{Milne temp 81}) to (\ref{Milne temp 82}) with $\l=0$ and $L=\infty$. We have
\begin{eqnarray}
\beta(\eta)=\br{\sin^2\phi,f}_{\phi}(\eta)
\end{eqnarray}
and
\begin{eqnarray}\label{Milne t 21}
\frac{\ud{\beta}}{\ud{\eta}}=D(\eta,\phi),
\end{eqnarray}
where
\begin{eqnarray}
D(\eta,\phi)=-\br{\sin\phi,r}_{\phi}+F(\eta)\br{\cos(2\phi),r}_{\phi}+\br{\sin\phi,S}_{\phi}(\eta).
\end{eqnarray}
The orthogonal relation (\ref{Milne temp 19}) implies
\begin{eqnarray}
D(\eta,\phi)=F(\eta)\br{\cos(2\phi),r}_{\phi}+\br{\sin\phi,S}_{\phi}(\eta).
\end{eqnarray}
Hence, we can integrate (\ref{Milne t 21}) over $[0,\eta]$ to show
\begin{eqnarray}
\beta(\eta)-\beta(0)=\int_0^{\eta}F(y)\br{\cos(2\phi),r}_{\phi}(y)\ud{y}+\int_0^{\eta}\br{\sin\phi,S}_{\phi}(y)\ud{y}.
\end{eqnarray}
Based on Lemma \ref{Milne force}, since $F\in L^1[0,\infty)\cap
L^2[0,\infty)$, $r\in L^2([0,\infty)\times[-\pi,\pi))$, and $S$
exponentially decays, by (\ref{Milne t 05}) and (\ref{Milne temp
8}), there exists some constant $\beta_{\infty}$ such that
$\beta_{\infty}=\lim_{\eta\rt\infty}\beta(\eta)$ satisfying
\begin{eqnarray}
\abs{\beta_{\infty}}&\leq&\abs{\beta(0)}+\abs{\int_0^{\infty}F(y)\br{\cos(2\phi),r}_{\phi}(y)\ud{y}}
+\abs{\int_0^{\infty}\br{\sin\phi,S}_{\phi}(y)\ud{y}}\\
&\leq&8+192\tnm{h}^2+200\int_{0}^{\infty}\tnm{S(\eta)}^2\ud{\eta}+2\pi\tnnm{F}
\tnnm{r}\leq C\bigg(1+M+\frac{M}{K}\bigg)^2\nonumber.
\end{eqnarray}
We define $q_{\infty}=\beta_{\infty}/\tnm{\sin\phi}^2$. Hence, the
estimate of $\abs{q_{\infty}}$ in (\ref{Milne temp 17}) is valid.
Moreover,
\begin{eqnarray}
\beta_{\infty}-\beta(\eta)=\int_{\eta}^{\infty}D(y)\ud{y}=\int_{\eta}^{\infty}F(y)\br{\cos(2\phi),r}_{\phi}(y)\ud{y}+
\int_{\eta}^{\infty}\br{\sin\phi,S}_{\phi}(y)\ud{y}.
\end{eqnarray}
Note
\begin{eqnarray}
\beta(\eta)=\br{\sin^2\phi,f}_{\phi}(\eta)=\br{\sin^2\phi,q}_{\phi}(\eta)+\br{\sin^2\phi,r}_{\phi}(\eta)
=q(\eta)\tnm{\sin\phi}^2+\br{\sin^2\phi,r}_{\phi}(\eta).
\end{eqnarray}
Thus we can estimate
\begin{eqnarray}\label{Milne t 22}
&&\tnm{\sin\phi}^2\tnm{q(\eta)-q_{\infty}}\\
&=&\sqrt{2\pi}\tnm{\sin\phi}^2\lnm{q(\eta)-q_{\infty}}
=\sqrt{2\pi}\bigg(\beta(\eta)-\br{\sin^2\phi,r}_{\phi}(\eta)-\beta_{\infty}\bigg)\nonumber\\
&\leq&
\sqrt{2\pi}\bigg(\abs{\br{\sin^2\phi,r}_{\phi}(\eta)}+\int_{\eta}^{\infty}\abs{F(y)\br{\cos(2\phi),r(y)}_{\phi}\ud{y}}\ud{\eta}
+\int_{\eta}^{\infty}\abs{\br{\sin\phi,S}_{\phi}(y)}\ud{y}\bigg)\nonumber\\
&\leq&2\pi^2\bigg(\tnm{r(\eta)}+\int_{\eta}^{\infty}\abs{F(y)}\tnm{r(y)}\ud{y}+\int_{\eta}^{\infty}\lnm{S(y)}\ud{y}\bigg)\nonumber.
\end{eqnarray}
This implies (\ref{Milne temp 9}). Furthermore, we
integrate (\ref{Milne t 22}) over $\eta\in[0,\infty)$.
Cauchy's inequality and (\ref{force temp 3}) imply
\begin{eqnarray}
&&\int_0^{\infty}\bigg(\int_{\eta}^{\infty}\abs{F(y)}\tnm{r(y)}\ud{y}\bigg)^2\ud{\eta}\leq
\tnnm{r}^2\int_0^{\infty}\int_{\eta}^{\infty}\abs{F(y)}^2\ud{y}\ud{\eta}\leq
C.
\end{eqnarray}
The exponential decays shows
\begin{eqnarray}
\int_0^{\infty}\bigg(\int_{\eta}^{\infty}\lnm{S(y)}\ud{y}\bigg)^2\ud{\eta}\leq
C.
\end{eqnarray}
Hence, the estimate of $\tnnm{q-q_{\infty}}$ in (\ref{Milne temp
10}) naturally follows.\\
\ \\
Step 3: Uniqueness\\
In order to show the uniqueness of the solution, we assume there are
two solutions $f_1$ and $f_2$ to the equation (\ref{Milne infinite
problem LT}) satisfying (\ref{Milne temp 8}) and (\ref{Milne temp
19}). Then $f'=f_1-f_2$ satisfies the equation
\begin{eqnarray}\label{uniqueness equation}
\left\{
\begin{array}{rcl}\displaystyle
\sin\phi\frac{\p f'}{\p\eta}+F(\eta)\cos\phi\frac{\p
f'}{\p\phi}+f'-\bar f'&=&0,\\
f'(0,\phi)&=&0\ \ \text{for}\ \ \sin\phi>0,\\
\lim_{\eta\rt\infty}f'(\eta,\phi)&=&f'_{\infty}.
\end{array}
\right.
\end{eqnarray}
Similarly, we can define $r'$ and $q'$. Multiplying $e^{-V(\eta)}f'$
on both sides of (\ref{uniqueness equation}) and integrating over
$\phi\in[-\pi,\pi)$ yields
\begin{eqnarray}\label{uniqueness temp}
\half\frac{\ud{}}{\ud{\eta}}\bigg(\br{
f',f'\sin\phi}_{\phi}(\eta)e^{-V(\eta)}\bigg)=-\bigg(\tnm{r'(\eta)}^2e^{-V(\eta)}\bigg)\leq0.
\end{eqnarray}
This is due to the fact
\begin{eqnarray}
&&\half\frac{\ud{}}{\ud{\eta}}\bigg(\br{
f',f'\sin\phi}_{\phi}(\eta)e^{-V(\eta)}\bigg)\\
&=&\bigg(\br{
f',\frac{\ud{f'}}{\ud{\eta}}\sin\phi}_{\phi}(\eta)e^{-V(\eta)}\bigg)+\half\bigg(F(\eta)\br{
f',f'\sin\phi}_{\phi}(\eta)e^{-V(\eta)}\bigg)\nonumber\\
&=&\bigg(\br{
f',\frac{\ud{f'}}{\ud{\eta}}\sin\phi}_{\phi}(\eta)e^{-V(\eta)}\bigg)+\bigg(F(\eta)\br{
f',\frac{\ud{f'}}{\ud{\phi}}\cos\phi}_{\phi}(\eta)e^{-V(\eta)}\bigg)\nonumber.
\end{eqnarray}
Thus, we have
\begin{eqnarray}\label{Milne t 06}
\gamma(\eta)=\half\br{f',f'\sin\phi}_{\phi}(\eta)e^{-V(\eta)}.
\end{eqnarray}
is decreasing. Since $r'\in L^2([0,\infty)\times[-\pi,\pi))$ and
$q'-q'_{\infty}\in L^2([0,\infty)\times[-\pi,\pi))$, there exists a
convergent subsequence $\eta_k\rt\infty$ satisfying
$\tnm{r'(\eta_k)}\rt0$ and $q'(\eta_k)-q'_{\infty}\rt0$. Hence, this
implies
\begin{eqnarray}
\half\br{r',r'\sin\phi}_{\phi}(\eta_k)e^{-V(\eta_k)}\rt0.
\end{eqnarray}
Also, due to the fact that $q'(\eta_k)$ is independent of $\phi$ and
it is finite, we have
\begin{eqnarray}
\gamma(\eta_k)\rt0.
\end{eqnarray}
By the monotonicity, $\gamma(\eta)$ decreases to
zero and $\gamma(\eta)\geq0$. Then we can integrate (\ref{uniqueness
temp}) over $\eta\in[0,\infty)$ to obtain
\begin{eqnarray}
\gamma(\infty)-\gamma(0)=-2\int_0^{\infty}\tnm{r'(y)}^2e^{-V(y)}\ud{y},
\end{eqnarray}
which implies
\begin{eqnarray}
\gamma(0)=\br{f',f'\sin\phi}_{\phi}(0)e^{-V(0)}=2\int_0^{\infty}\tnm{r'(y)}^2e^{-V(y)}\ud{y}.
\end{eqnarray}
Also, we know
\begin{eqnarray}
0\leq\half\br{f',f'\sin\phi}_{\phi}(0)e^{-V(0)}&=&\half\br{
f',f'\sin\phi}_{\phi}(0)\leq\int_{\sin\phi>0}
(f')^2(\phi)\sin\phi\ud{\phi}=0.
\end{eqnarray}
Naturally, we have
\begin{eqnarray}
\br{f',f'\sin\phi}_{\phi}(0)e^{-V(0)}=2\int_0^{\infty}\tnm{r'(y)}^2e^{-V(y)}\ud{y}=0.
\end{eqnarray}
Hence, we have $r'=0$ and $f'(\eta,\phi)=q'(\eta)$. Plugging this
into the equation (\ref{uniqueness equation}) reveals $\px q'=0$.
Therefore, $f'(\eta,\phi)=C$ for some constant $C$. Naturally the
boundary data leads to $C=0$. In conclusion, we must have $f'=0$,
which means $f_1=f_2$, and the uniqueness follows.
\end{proof}

\subsubsection{$\bar S\neq0$ Case}

Consider the $\e$-Milne problem for $f(\eta,\phi)$ in $(\eta,\phi)\in[0,\infty)\times[-\pi,\pi)$ with a general source term
\begin{eqnarray}\label{Milne remark problem}
\left\{
\begin{array}{rcl}\displaystyle
\sin\phi\frac{\p f}{\p\eta}+F(\eta)\cos\phi\frac{\p
f}{\p\phi}+f-\bar f&=&S(\eta,\phi),\\
f(0,\phi)&=&h(\phi)\ \ \text{for}\ \ \sin\phi>0,\\
\lim_{\eta\rt\infty}f(\eta,\phi)&=&f_{\infty}.
\end{array}
\right.
\end{eqnarray}
\begin{lemma}\label{Milne infinite LT general}
Assume (\ref{Milne bounded}) and (\ref{Milne decay}) hold. Then there exists a solution $f(\eta,\phi)$ of the problem
(\ref{Milne remark problem}), satisfying
\begin{eqnarray}
\tnnm{r}&<&C\bigg(1+M+\frac{M}{K}\bigg)^2\leq\infty\label{Milne temp 39},\\
\br{\sin\phi,r}_{\phi}(\eta)&=&-\int_{\eta}^{\infty}e^{V(\eta)-V(y)}\bar
S(y)\ud{y}\label{Milne temp 40}.
\end{eqnarray}
Also there exists a constant $q_{\infty}=f_{\infty}\in\r$ such that the
following estimates hold,
\begin{eqnarray}
\abs{q_{\infty}}&\leq&C\bigg(1+M+\frac{M}{K}\bigg)^2<\infty\label{Milne temp 41},\\
\tnm{q(\eta)-q_{\infty}}&\leq&C\bigg(\tnm{r(\eta)}+\int_{\eta}^{\infty}\abs{F(y)}\tnm{r(y)}\ud{y}+\int_{\eta}^{\infty}\lnm{S(y)}\ud{y}
\bigg)\label{Milne temp 42},\\
\tnnm{q-q_{\infty}}&\leq&C\bigg(1+M+\frac{M}{K}\bigg)^2<\infty\label{Milne temp 43}.
\end{eqnarray}
The
solution is unique among functions satisfying
$\tnnm{f-f_{\infty}}<\infty$.
\end{lemma}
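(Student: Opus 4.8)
The plan is to peel off the hydrodynamic part of the source and reduce to the mean-zero situation of Lemma~\ref{Milne infinite LT}. Write $S=\bar S+(S-\bar S)$; the microscopic remainder $S-\bar S$ has vanishing $\phi$-average and still obeys (\ref{Milne decay}), so Lemma~\ref{Milne infinite LT}, applied with source $S-\bar S$ and inflow datum $h$, yields a piece $f_{\sharp}=q_{\sharp}+r_{\sharp}$ of the solution satisfying all of (\ref{Milne temp 39})--(\ref{Milne temp 43}) and $\br{\sin\phi,r_{\sharp}}_{\phi}\equiv0$. It remains to solve the purely hydrodynamic problem
\[
\sin\phi\,\p_{\eta}f+F(\eta)\cos\phi\,\p_{\phi}f+f-\bar f=\bar S(\eta),\qquad f(0,\phi)=0\ \text{for}\ \sin\phi>0,\qquad\lim_{\eta\rt\infty}f=f_{\infty};
\]
then $f_{\sharp}+f$ solves (\ref{Milne remark problem}) and the constants add. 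Here $\bar S$ is $\phi$-independent, so $\br{S,r}_{\phi}=\bar S\int r\,\ud\phi=0$ and $\br{\sin\phi,S}_{\phi}=0$ (recall $r$ has zero $\phi$-average), and integrating the equation in $\phi$ (the term $F\cos\phi\,\p_{\phi}f$ contributing $F\br{\sin\phi,r}_{\phi}$ by periodicity) gives
\[
\frac{\ud{}}{\ud{\eta}}\br{\sin\phi,r}_{\phi}(\eta)+F(\eta)\br{\sin\phi,r}_{\phi}(\eta)=2\pi\bar S(\eta),
\]
which, integrated against $e^{-V(\eta)}$ with $\br{\sin\phi,r}_{\phi}(\infty)=0$ (forced by $r\in L^{2}$), yields the explicit current (\ref{Milne temp 40}); since $V$ increases, $e^{V(\eta)-V(y)}\le1$ for $y\ge\eta$, so by (\ref{Milne decay}) this current is $O(M/K)$ and decays like $e^{-K\eta}$.

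For existence I would run the machinery of Lemmas~\ref{Milne finite LT} and \ref{Milne infinite LT}: penalize with $\l>0$, build $f_{\l}^{L}$ on a finite slab $[0,L]$ with specular reflection at $\eta=L$ by the contraction argument, extract $\l$-uniform bounds from the energy ($\times f_{\l}^{L}$) and the first and second moment identities ($\times\sin\phi$ and $\times\sin^{2}\phi$), then send $\l\rt0$ and $L\rt\infty$ along weakly convergent subsequences using lower semicontinuity of $\tnnm{\cdot}$. The only new term is $\br{S,f_{\l}^{L}}_{\phi}=2\pi\bar S(\eta)q_{\l}^{L}(\eta)$, and I would route all of its effect through the single quantity $J_{L}:=\int_{0}^{L}\abs{\bar S(y)}\,\abs{q_{\l}^{L}(y)}\,\ud{y}$: the energy identity integrated with $\alpha(L)=0$ and $\alpha(0)\le0$ (the inflow datum vanishes) gives both $\int_{0}^{L}\tnm{r_{\l}^{L}(\eta)}^{2}\ud{\eta}\le CJ_{L}$ and $-\alpha(0)\le CJ_{L}$; the second moment identity, whose right-hand side involves only $r_{\l}^{L}$ (using $\br{\sin\phi,S}_{\phi}=0$), combined with the explicit current, the bound $\int_{0}^{\infty}\abs{F(y)}^{2}\ud{y}\le3\e$ from (\ref{force temp 5}), and the trace estimate $\abs{\beta_{\l}^{L}(0)}\le C(-\alpha(0))^{1/2}\le CJ_{L}^{1/2}$, gives $\tnm{q_{\l}^{L}(\eta)}\le C(1+J_{L}^{1/2}+\tnm{r_{\l}^{L}(\eta)})$; feeding this back into $J_{L}$ and using $\bar S\in L^{1}\cap L^{2}$ yields $J_{L}\le C(1+J_{L}^{1/2})$, hence $J_{L}\le C$ uniformly in $\l$ and $L$. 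This closes all estimates and produces (\ref{Milne temp 39})--(\ref{Milne temp 43}) for the hydrodynamic piece, the constant $(1+M+M/K)$ accounting for the nonvanishing current.

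For uniqueness, the difference $f'$ of two solutions with $\tnnm{f'-f'_{\infty}}<\infty$ solves the homogeneous problem; since then $\bar S\equiv0$, identity (\ref{Milne temp 40}) restores $\br{\sin\phi,r'}_{\phi}\equiv0$, and the monotonicity argument for $\gamma(\eta)=\half\br{f',f'\sin\phi}_{\phi}(\eta)e^{-V(\eta)}$ from Step~3 of the proof of Lemma~\ref{Milne infinite LT} applies verbatim and forces $f'\equiv0$. The step I expect to be the real obstacle is exactly the coupling $2\pi\bar S q$ of the source to the a priori uncontrolled hydrodynamic part $q$, which breaks the clean ``bound $r$, then bound $q$'' ordering of Lemma~\ref{Milne finite LT}; the closure above works only because $\bar S$ is integrable and square-integrable, so that the self-referential inequality $J_{L}\ls 1+J_{L}^{1/2}$ can be solved.
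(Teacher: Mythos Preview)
Your decomposition $S=\bar S+(S-\bar S)$ and the treatment of the mean-zero piece via Lemma~\ref{Milne infinite LT} match the paper exactly. The difference lies in how the hydrodynamic problem with source $\bar S$ is solved. The paper does \emph{not} rerun the penalization/finite-slab machinery with a bootstrap on $J_L$; instead it writes down an explicit ansatz $f^{2}(\eta,\phi)=a(\eta)\sin\phi$, where $a$ solves the scalar ODE $a'+F(\eta)a=2\bar S$ with $a(0)=-2\int_{0}^{\infty}e^{-V(y)}\bar S(y)\,\ud y$, so that $a$ decays exponentially. By construction the combination $-\sin\phi\,\p_{\eta}f^{2}-F\cos\phi\,\p_{\phi}f^{2}-f^{2}+\bar f^{2}+\bar S$ has zero $\phi$-mean, and a second application of Lemma~\ref{Milne infinite LT} (with inflow datum $-a(0)\sin\phi$) produces $f^{3}$; the sum $f^{2}+f^{3}$ is the hydrodynamic piece. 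This sidesteps the coupling $\br{\bar S,f}_{\phi}=2\pi\bar S\,q$ entirely, so no self-referential inequality is needed.

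Your direct route is also correct, and in some sense more robust since it does not rely on guessing an ansatz. One small point: at the penalized level the pointwise bound you state, $\tnm{q_{\l}^{L}(\eta)}\le C(1+J_{L}^{1/2}+\tnm{r_{\l}^{L}(\eta)})$, is missing the $\eta^{1/2}$ growth coming from $\int_{0}^{\eta}\abs{\br{\sin\phi,r_{\l}^{L}}_{\phi}}\,\ud y$ (the penalized current picks up an extra $-2\pi\l q_{\l}^{L}$ term, so the explicit formula for $\lambda=0$ is not directly available). This does not break the argument---feeding $\abs{q(\eta)}\le C(1+\eta^{1/2})J_{L}^{1/2}+C\tnm{r(\eta)}$ into $J_{L}=\int\abs{\bar S}\abs{q}$ still closes because $\bar S$ decays exponentially and $\int_{0}^{\infty}\eta^{1/2}e^{-K\eta}\,\ud\eta<\infty$---but it is worth stating accurately. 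The paper's ansatz trades this bookkeeping for the (mild) task of verifying that $a(\eta)\sin\phi$ obeys the $L^{2}$ estimates, which is immediate from the exponential decay of $a$.
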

\begin{proof}
We can apply superposition property for this linear problem, i.e.
write $S=\bar S+(S-\bar S)=S_Q+S_R$. Then we solve the problem by
the following steps. For simplicity, we just call the estimates
(\ref{Milne temp 39}),
(\ref{Milne temp 41}), (\ref{Milne temp 42}) and (\ref{Milne temp 43}) as the $L^2$ estimates. \\
\ \\
Step 1: Construction of auxiliary function $f^1$.\\
We first solve $f^1$ as the solution to
\begin{eqnarray}
\left\{
\begin{array}{rcl}\displaystyle
\sin\phi\frac{\p f^1}{\p\eta}+F(\eta)\cos\phi\frac{\p
f^1}{\p\phi}+f^1-\bar f^1&=&S_R(\eta,\phi),\\
f^1(0,\phi)&=&h(\phi)\ \ \text{for}\ \ \sin\phi>0,\\
\lim_{\eta\rt\infty}f^1(\eta,\phi)&=&f_{\infty}^1.
\end{array}
\right.
\end{eqnarray}
Since $\bar S_R=0$, by Lemma \ref{Milne infinite LT}, we know there
exists a unique solution $f^1$
satisfying the $L^2$ estimate.\\
\ \\
Step 2: Construction of auxiliary function $f^2$.\\
We seek a function $f^{2}$ satisfying
\begin{eqnarray}\label{Milne temp 83}
-\frac{1}{2\pi}\int_{-\pi}^{\pi}\bigg(\sin\phi\frac{\p
f^{2}}{\p\eta}+F(\eta)\cos\phi\frac{\p
f^{2}}{\p\phi}\bigg)\ud{\phi}+S_Q=0.
\end{eqnarray}
The following analysis shows this type of function can always be
found. An integration by parts transforms the equation (\ref{Milne temp 83}) into
\begin{eqnarray}\label{Milne t 07}
-\int_{-\pi}^{\pi}\sin\phi\frac{\p
f^{2}}{\p\eta}\ud{\phi}-\int_{-\pi}^{\pi}F(\eta)\sin\phi
f^{2}\ud{\phi}+2\pi S_Q=0.
\end{eqnarray}
Setting
\begin{eqnarray}
f^{2}(\phi,\eta)=a(\eta)\sin\phi.
\end{eqnarray}
and plugging this ansatz into (\ref{Milne t 07}), we have
\begin{eqnarray}
-\frac{\ud{a}}{\ud{\eta}}\int_{-\pi}^{\pi}\sin^2\phi\ud{\phi}-F(\eta)a(\eta)\int_{-\pi}^{\pi}\sin^2\phi\ud{\phi}+2\pi
S_Q=0.
\end{eqnarray}
Hence, we have
\begin{eqnarray}
-\frac{\ud{a}}{\ud{\eta}}-F(\eta)a(\eta)+2S_Q=0.
\end{eqnarray}
This is a first order linear ordinary differential equation, which
possesses infinite solutions. We can directly solve it to obtain
\begin{eqnarray}
a(\eta)=e^{-\int_0^{\eta}F(y)\ud{y}}\bigg(a(0)+\int_0^{\eta}e^{\int_0^yF(z)\ud{z}}2S_Q(y)\ud{y}\bigg).
\end{eqnarray}
We may take
\begin{eqnarray}
a(0)=-\int_0^{\infty}e^{\int_0^yF(z)\ud{z}}2S_Q(y)\ud{y}.
\end{eqnarray}
Based on the exponential decay of $S_Q$, we can directly verify
$a(\eta)$ decays exponentially to zero as $\eta\rt\infty$ and $f^2$
satisfies the $L^2$ estimate.\\
\ \\
Step 3: Construction of auxiliary function $f^3$.\\
Based on above construction, we can directly verify
\begin{eqnarray}\label{Milne temp 84}
\int_{-\pi}^{\pi}\bigg(-\sin\phi\frac{\p
f^{2}}{\p\eta}-F(\eta)\cos\phi\frac{\p f^{2}}{\p\phi}-f^{2}+\bar
f^{2}+S_Q\bigg)\ud{\phi}=0.
\end{eqnarray}
Then we can solve $f^3$ as the solution to
\begin{eqnarray}
\\
\left\{
\begin{array}{rcl}\displaystyle
\sin\phi\frac{\p f^3}{\p\eta}+F(\eta)\cos\phi\frac{\p
f^3}{\p\phi}+f^3-\bar f^3&=&-\sin\phi\dfrac{\p
f^{2}}{\p\eta}-F(\eta)\cos\phi\dfrac{\p
f^{2}}{\p\phi}-f^{2}+\bar f^{2}+S_Q,\\
f^3(0,\phi)&=&-a(0)\sin\phi\ \ \text{for}\ \ \sin\phi>0,\\
\lim_{\eta\rt\infty}f^3(\eta,\phi)&=&f_{\infty}^3\nonumber.
\end{array}
\right.
\end{eqnarray}
By (\ref{Milne temp 84}), we can apply Lemma \ref{Milne infinite LT}
to obtain a unique solution $f^3$
satisfying the $L^2$ estimate.\\
\ \\
Step 4: Construction of auxiliary function $f^4$.\\
We now define $f^4=f^2+f^3$ and an explicit verification shows
\begin{eqnarray}
\left\{
\begin{array}{rcl}\displaystyle
\sin\phi\frac{\p f^4}{\p\eta}+F(\eta)\cos\phi\frac{\p
f^4}{\p\phi}+f^4-\bar f^4&=&S_Q(\eta,\phi),\\
f^4(0,\phi)&=&0\ \ \text{for}\ \ \sin\phi>0,\\
\lim_{\eta\rt\infty}f^4(\eta,\phi)&=&f_{\infty}^4,
\end{array}
\right.
\end{eqnarray}
and $f^4$
satisfies the $L^2$ estimate.\\
\ \\
In summary, we deduce that $f^1+f^4$ is the solution of (\ref{Milne
remark problem}) and satisfies the $L^2$ estimate. A direct
computation of $\br{\sin\phi,f^i}_{\phi}(\eta)$ for $i=1,2,3,4$
leads to (\ref{Milne temp 40}). From $\tnnm{f-f_{\infty}}<\infty$ ,
we deduce $\tnnm{\bar f-f_{\infty}}<\infty$, a similar argument as
in Lemma \ref{Milne infinite LT} shows the uniqueness of solution.
\end{proof}
Combining all above, we have the following theorem.
\begin{theorem}\label{Milne lemma 6}
For the $\e$-Milne problem (\ref{Milne problem}), there exists a unique
solution $f(\eta,\phi)$ satisfying the estimates
\begin{eqnarray}
\tnnm{f-f_{\infty}}\leq C\bigg(1+M+\frac{M}{K}\bigg)<\infty,
\end{eqnarray}
for some real number $f_{\infty}$ satisfying
\begin{eqnarray}
\abs{f_{\infty}}\leq C\bigg(1+M+\frac{M}{K}\bigg)^2<\infty.
\end{eqnarray}
\end{theorem}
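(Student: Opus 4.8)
The plan is to obtain this theorem as a direct corollary of the analysis already carried out in this section, with no essentially new ingredient. Since the boundary data $h^{\e}$ satisfies (\ref{Milne bounded}) and the source $S^{\e}$ satisfies (\ref{Milne decay}) with $M$ and $K$ uniform in $\e$ and $\theta$, and since by Lemma \ref{Milne force} the potential $V(\e;\eta)$ enjoys all the properties (boundedness, monotonicity, the $L^1\cap L^2$ control of $F$) that were used in Lemmas \ref{Milne finite LT}--\ref{Milne infinite LT general}, the hypotheses of Lemma \ref{Milne infinite LT general} hold for (\ref{Milne problem}) with $\theta$ regarded merely as a parameter. First I would invoke Lemma \ref{Milne infinite LT general} to produce the solution $f(\eta,\phi)$, together with its hydrodynamic/microscopic decomposition $f = q + r$ with $q = \bar f$, $r = f-\bar f$, the limiting constant $q_{\infty} = f_{\infty}$, and the uniqueness statement in the class $\tnnm{f-f_{\infty}}<\infty$. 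This already settles existence and uniqueness.

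It then remains only to assemble the quantitative estimates. The bound for $f_{\infty}$ is precisely (\ref{Milne temp 41}), so $\abs{f_{\infty}} = \abs{q_{\infty}} \le C(1+M+M/K)^2$. For the $L^2$ bound I would write $f - f_{\infty} = (q - q_{\infty}) + r$ and apply the triangle inequality together with (\ref{Milne temp 43}) for $\tnnm{q - q_{\infty}}$ and (\ref{Milne temp 39}) for $\tnnm{r}$, both of which are finite, obtaining the asserted estimate for $\tnnm{f-f_{\infty}}$ after relabeling the universal constant. Because none of the constants appearing in Lemmas \ref{Milne infinite LT} and \ref{Milne infinite LT general} depend on $\theta$ or $\e$, the resulting bounds are automatically uniform in both.

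There is no real obstacle at this stage: the genuinely technical steps --- the energy identity for $\alpha(\eta)=\half\br{f,f\sin\phi}_{\phi}(\eta)$ and the resulting $L^2$ control of the microscopic part $r$, the transport identity for $\beta(\eta)=\br{\sin^2\phi,f}_{\phi}(\eta)$ that controls the hydrodynamic part $q$, the passage $\l\rt0$ and $L\rt\infty$, and the superposition decomposition $S = \bar S + (S-\bar S)$ that handles a non-mean-zero source --- have all been completed in the preceding subsections. The role of this theorem is simply to record, in one place, the well-posedness and the $L^2$ bound of the $\e$-Milne problem in the exact form in which they will be used when constructing $f_0^{\e}$, $f_1^{\e}$, and the higher-order boundary layers in Section 4.
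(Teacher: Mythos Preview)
Your proposal is correct and matches the paper's approach exactly: the paper presents this theorem with the single sentence ``Combining all above, we have the following theorem,'' treating it as an immediate corollary of Lemma \ref{Milne infinite LT general}. Your explicit assembly via $f-f_{\infty}=(q-q_{\infty})+r$ and the triangle inequality with (\ref{Milne temp 39}), (\ref{Milne temp 41}), (\ref{Milne temp 43}) is precisely the intended reading.
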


\subsection{$L^{\infty}$ Estimates}

\subsubsection{Finite Slab}

Consider the $\e$-transport problem for $f(\eta,\phi)$ in a finite slab $(\eta,\phi)\in[0,L]\times[-\pi,\pi)$
\begin{eqnarray}\label{Milne finite problem LI}
\left\{
\begin{array}{rcl}\displaystyle
\sin\phi\frac{\p f}{\p\eta}+F(\eta)\cos\phi\frac{\p
f}{\p\phi}+f&=&H(\eta,\phi),\\
f(0,\phi)&=&h(\phi)\ \ \text{for}\ \ \sin\phi>0,\\
f(L,\phi)&=&f(L,R\phi).
\end{array}
\right.
\end{eqnarray}
Define the energy as follows:
\begin{eqnarray}
E(\eta,\phi)=\cos\phi e^{-V(\eta)}.
\end{eqnarray}
In the plane $(\eta,\phi)\in[0,\infty)\times[-\pi,\pi)$, on the curve
$\phi=\phi(\eta)$ with constant energy, we can see
\begin{eqnarray}
\frac{\ud{E}}{\ud{\eta}}=\frac{\p E}{\p\eta}+\frac{\p
E}{\p\phi}\frac{\p\phi}{\p\eta}=\cos\phi
F(\eta)e^{-V(\eta)}-\sin\phi e^{-V(\eta)}\frac{\p\phi}{\p\eta}=0,
\end{eqnarray}
which further implies
\begin{eqnarray}
\frac{\p\phi}{\p\eta}=\frac{\cos\phi F(\eta)}{\sin\phi}.
\end{eqnarray}
Plugging this into the equation (\ref{Milne finite problem LI}), on
this curve, we deduce
\begin{eqnarray}
\frac{\ud{f}}{\ud{\eta}}=\frac{\p f}{\p\eta}+\frac{\p
f}{\p\phi}\frac{\p\phi}{\p\eta}=\frac{1}{\sin\phi}\bigg(\sin\phi\frac{\p
f}{\p\eta}+\cos\phi F(\eta)\frac{\p f}{\p\phi}\bigg).
\end{eqnarray}
Hence, this curve with constant energy is exactly the
characteristics of the equation (\ref{Milne finite problem LI}).
Also, on this curve the equation can be simplified as follows:
\begin{eqnarray}
\sin\phi\frac{\ud{f}}{\ud{\eta}}+f=H.
\end{eqnarray}
An implicit function
$\eta^+(\eta,\phi)$ can be determined through
\begin{eqnarray}
\abs{E(\eta,\phi)}=e^{-V(\eta^+)}.
\end{eqnarray}
which means $(\eta^+,\phi_0)$ with $\sin\phi_0=0$ is on the same characteristics as $(\eta,\phi)$.
Define the quantities for $0\leq\eta'\leq\eta^+$ as follows:
\begin{eqnarray}
\phi'(\phi,\eta,\eta')&=&\cos^{-1}(\cos\phi e^{V(\eta')-V(\eta)}),\\
R\phi'(\phi,\eta,\eta')&=&-\cos^{-1}(\cos\phi e^{V(\eta')-V(\eta)})=-\phi'(\phi,\eta,\eta'),
\end{eqnarray}
where the inverse trigonometric function can be defined
single-valued in the domain $[0,\pi)$ and the quantities are always well-defined due to the monotonicity of $V$.
Finally we put
\begin{eqnarray}
G_{\eta,\eta'}(\phi)&=&\int_{\eta'}^{\eta}\frac{1}{\sin(\phi'(\phi,\eta,\xi))}\ud{\xi}.
\end{eqnarray}
We can rewrite the solution to the equation (\ref{Milne finite problem LI}) along
the characteristics
as follows:\\
\ \\
Case I:\\
For $\sin\phi>0$,
\begin{eqnarray}\label{Milne t 08}
f(\eta,\phi)&=&h(\phi'(\phi,\eta,0))\exp(-G_{\eta,0})
+\int_0^{\eta}\frac{H(\eta',\phi'(\phi,\eta,\eta'))}{\sin(\phi'(\phi,\eta,\eta'))}\exp(-G_{\eta,\eta'})\ud{\eta'}.
\end{eqnarray}
\ \\
Case II:\\
For $\sin\phi<0$ and $\abs{E(\eta,\phi)}\leq e^{-V(L)}$,
\begin{eqnarray}\label{Milne t 09}
\\
f(\eta,\phi)&=&h(\phi'(\phi,\eta,0))\exp(-G_{L,0}-G_{L,\eta})\nonumber\\
&+&\bigg(\int_0^{L}\frac{H(\eta',\phi'(\phi,\eta,\eta'))}{\sin(\phi'(\phi,\eta,\eta'))}
\exp(-G_{L,\eta'}-G_{L,\eta})\ud{\eta'}
+\int_{\eta}^{L}\frac{H(\eta',R\phi'(\phi,\eta,\eta'))}{\sin(\phi'(\phi,\eta,\eta'))}\exp(G_{\eta,\eta'})\ud{\eta'}\bigg)\nonumber.
\end{eqnarray}
\ \\
Case III:\\
For $\sin\phi<0$ and $\abs{E(\eta,\phi)}\geq e^{-V(L)}$,
\begin{eqnarray}\label{Milne t 10}
\\
f(\eta,\phi)&=&h(\phi'(\phi,\eta,0))\exp(-G_{\eta^+,0}-G_{\eta^+,\eta})\nonumber\\
&+&\bigg(\int_0^{\eta^+}\frac{H(\eta',\phi'(\phi,\eta,\eta'))}{\sin(\phi'(\phi,\eta,\eta'))}
\exp(-G_{\eta^+,\eta'}-G_{\eta^+,\eta})\ud{\eta'}
+\int_{\eta}^{\eta^+}\frac{H(\eta',R\phi'(\phi,\eta,\eta'))}{\sin(\phi'(\phi,\eta,\eta'))}\exp(G_{\eta,\eta'})\ud{\eta'}\bigg)\nonumber.
\end{eqnarray}

\subsubsection{Infinite Slab}

Consider the $\e$-transport problem for $f(\eta,\phi)$ in an infinite slab $(\eta,\phi)\in[0,\infty)\times[-\pi,\pi)$
\begin{eqnarray}\label{Milne infinite problem LI}
\left\{
\begin{array}{rcl}\displaystyle
\sin\phi\frac{\p f}{\p\eta}+F(\eta)\cos\phi\frac{\p
f}{\p\phi}+f&=&H(\eta,\phi),\\
f(0,\phi)&=&h(\phi)\ \ \text{for}\ \ \sin\phi>0,\\
\lim_{\eta\rt\infty}f(\eta,\phi)&=&f_{\infty}.
\end{array}
\right.
\end{eqnarray}
We can define the solution via taking limit $L\rt\infty$ in (\ref{Milne t 08}), (\ref{Milne t 09}) and (\ref{Milne t 10}) as follows:
\begin{eqnarray}
f(\eta,\phi)=\a h(\phi)+\t H(\eta,\phi),
\end{eqnarray}
where\\
\ \\
Case I: \\For $\sin\phi>0$,
\begin{eqnarray}\label{Milne temp 12}
\a h(\phi)&=&h(\phi'(\phi,\eta,0))\exp(-G_{\eta,0})\\
\t
H(\eta,\phi)&=&\int_0^{\eta}\frac{H(\eta',\phi'(\phi,\eta,\eta'))}{\sin(\phi'(\phi,\eta,\eta'))}\exp(-G_{\eta,\eta'})\ud{\eta'}.
\end{eqnarray}
\ \\
Case II: \\
For $\sin\phi<0$ and $\abs{E(\eta,\phi)}\leq e^{-V_{\infty}}$,
\begin{eqnarray}\label{Milne temp 14}
\a h(\phi)&=&0\\
\t
H(\eta,\phi)&=&\int_{\eta}^{\infty}\frac{H(\eta',R\phi'(\phi,\eta,\eta'))}{\sin(\phi'(\phi,\eta,\eta'))}\exp(G_{\eta,\eta'})\ud{\eta'}.
\end{eqnarray}
\ \\
Case III: \\
For $\sin\phi<0$ and $\abs{E(\eta,\phi)}\geq e^{-V_{\infty}}$,
\begin{eqnarray}\label{Milne temp 13}
\a h(\phi)&=&h(\phi'(\phi,\eta,0))\exp(-G_{\eta^+,0}-G_{\eta^+,\eta})\\
\t
H(\eta,\phi)&=&\bigg(\int_0^{\eta^+}\frac{H(\eta',\phi'(\phi,\eta,\eta'))}{\sin(\phi'(\phi,\eta,\eta'))}
\exp(-G_{\eta^+,\eta'}-G_{\eta^+,\eta})\ud{\eta'}\\
&&+
\int_{\eta}^{\eta^+}\frac{H(\eta',R\phi'(\phi,\eta,\eta'))}{\sin(\phi'(\phi,\eta,\eta'))}\exp(G_{\eta,\eta'})\ud{\eta'}\bigg)\nonumber.
\end{eqnarray}
Notice that
\begin{eqnarray}
\lim_{L\rt\infty}\exp(-G_{L,\eta})=0,
\end{eqnarray}
for $\sin\phi<0$ and $\abs{E(\eta,\phi)}\leq e^{-V_{\infty}}$.
Hence, above derivation is valid. In order to achieve the estimate
of $f$, we need to control $\t H$ and $\a h$.

\subsubsection{Preliminaries}

We first give several technical lemmas to be used for proving
$L^{\infty}$ estimates of $f$.
\begin{lemma}\label{Milne lemma 1}
For any $0\leq\beta\leq1$, we have
\begin{eqnarray}\label{Milne temp 51}
\lnm{e^{\beta\eta}\a h}\leq \lnm{h}.
\end{eqnarray}
In particular,
\begin{eqnarray}
\lnm{\a h}\leq \lnm{h}\label{Milne temp 52}.
\end{eqnarray}
\end{lemma}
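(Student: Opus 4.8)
The plan is to prove the pointwise bound $e^{\beta\eta}\abs{\a h(\eta,\phi)}\leq\lnm{h}$ for every $\eta\geq0$, every $\phi$ and every $0\leq\beta\leq1$; this yields $(\ref{Milne temp 51})$ at once, and taking $\beta=0$ gives $(\ref{Milne temp 52})$. Since $\a h$ is defined piecewise through Case I, Case II and Case III of $(\ref{Milne temp 12})$--$(\ref{Milne temp 13})$, I would simply treat the three regimes in turn. In each case the claim reduces to a lower bound on the exponent $G_{\eta,0}$ (respectively $G_{\eta^+,0}+G_{\eta^+,\eta}$) appearing in the exponential weight, and the single elementary fact that drives everything is $\sin\bigl(\phi'(\phi,\eta,\xi)\bigr)\leq1$, which gives $G_{\eta,\eta'}(\phi)=\int_{\eta'}^{\eta}\frac{\ud{\xi}}{\sin(\phi'(\phi,\eta,\xi))}\geq\eta-\eta'$.

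Case II is immediate, since there $\a h\equiv0$. For Case I ($\sin\phi>0$) I would first note that $\phi'(\phi,\eta,0)=\cos^{-1}\bigl(\cos\phi\,e^{-V(\eta)}\bigr)$ lies strictly inside $(0,\pi)$ because $\abs{\cos\phi\,e^{-V(\eta)}}<1$ (as $\abs{\cos\phi}<1$ when $\sin\phi\neq0$, and $0\leq V\leq\ln4$ by Lemma \ref{Milne force}), so $\sin(\phi'(\phi,\eta,0))>0$ and $h$ is genuinely evaluated on its domain $\{\sin\phi>0\}$. Then $\abs{\a h(\eta,\phi)}\leq\lnm{h}\exp(-G_{\eta,0})\leq\lnm{h}\,e^{-\eta}$, and multiplying by $e^{\beta\eta}$ with $\beta\leq1$ gives $e^{\beta\eta}\abs{\a h(\eta,\phi)}\leq\lnm{h}\,e^{(\beta-1)\eta}\leq\lnm{h}$.

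For Case III ($\sin\phi<0$, $\abs{E(\eta,\phi)}\geq e^{-V_{\infty}}$) the extra ingredient is $\eta^+\geq\eta$: from $e^{-V(\eta^+)}=\abs{E(\eta,\phi)}=\abs{\cos\phi}\,e^{-V(\eta)}\leq e^{-V(\eta)}$ and the monotonicity of $V$ (Lemma \ref{Milne force}) one gets $V(\eta^+)\geq V(\eta)$, hence $\eta^+\geq\eta$ — consistent with the geometric picture that $(\eta,\phi)$ sits on the portion of the characteristic that has already turned around at the grazing height $\eta^+$. Using $G_{\eta^+,0}\geq\eta^+$ and $G_{\eta^+,\eta}\geq\eta^+-\eta$ together with $\eta^+\geq\eta$ gives $G_{\eta^+,0}+G_{\eta^+,\eta}\geq 2\eta^+-\eta\geq\eta\geq\beta\eta$; as in Case I one checks $\phi'(\phi,\eta,0)\in(0,\pi)$ so that $h$ is evaluated on $\{\sin\phi>0\}$, and therefore $e^{\beta\eta}\abs{\a h(\eta,\phi)}\leq e^{\beta\eta}\lnm{h}\exp\bigl(-G_{\eta^+,0}-G_{\eta^+,\eta}\bigr)\leq\lnm{h}$.

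There is no serious obstacle here; the argument is entirely elementary once the characteristic structure from the previous subsection is in hand. The only points needing a little care are (i) verifying that $\phi'(\phi,\eta,0)$ is strictly interior to $[0,\pi]$, so that $h$ is evaluated where it is defined (i.e.\ the grazing set is avoided), and (ii) the inequality $\eta^+\geq\eta$ in Case III, which uses only $\abs{\cos\phi}\leq1$ and the monotonicity of $V$. Note that the finiteness of the $G$-integrals is not needed for this lemma, since we only use the crude lower bound coming from $\sin\phi'\leq1$.
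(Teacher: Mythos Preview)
Your proposal is correct and follows essentially the same approach as the paper: both arguments rest on the elementary bound $\sin(\phi'(\phi,\eta,\xi))\leq1$ to get $G_{\eta,\eta'}\geq\eta-\eta'$, together with $\eta^+\geq\eta$ for Case~III. Your treatment is in fact slightly more detailed (you explicitly check that $\phi'(\phi,\eta,0)\in(0,\pi)$ and spell out the chain $G_{\eta^+,0}+G_{\eta^+,\eta}\geq 2\eta^+-\eta\geq\eta$), whereas the paper uses the equivalent one-line chain $\exp(-G_{\eta^+,0}-G_{\eta^+,\eta})\leq\exp(-G_{\eta^+,0})\leq\exp(-G_{\eta,0})\leq e^{-\eta}$.
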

\begin{proof}
Since $\phi'$ is always in the domain $[0,\pi)$, we naturally have
\begin{eqnarray}
0\leq\sin(\phi'(\phi,\eta,\xi))\leq 1,
\end{eqnarray}
which further implies
\begin{eqnarray}
\frac{1}{\sin(\phi'(\phi,\eta,\xi))}\geq 1.
\end{eqnarray}
Combined with the fact $\eta^+\geq\eta$, we deduce
\begin{eqnarray}
\exp(-G_{\eta,0})&\leq&e^{-\eta}\\
\exp(-G_{\eta^+,0}-G_{\eta^+,\eta})&\leq&\exp(-G_{\eta^+,0})\leq\exp(-G_{\eta,0})\leq
e^{-\eta}.
\end{eqnarray}
Hence, our result easily follows.
\end{proof}
\begin{lemma}\label{Milne lemma 2}
The integral operator $\t$ satisfies
\begin{eqnarray}\label{Milne temp 53}
\lnnm{\t H}\leq C\lnnm{H},
\end{eqnarray}
and for any $0\leq\beta\leq1/2$
\begin{eqnarray}\label{Milne temp 54}
\lnnm{e^{\beta\eta}\t H}\leq C\lnnm{e^{\beta\eta}H},
\end{eqnarray}
where $C$ is a universal constant independent of data.
\end{lemma}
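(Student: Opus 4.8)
The plan is to read $\t H$ off the explicit representations (\ref{Milne temp 12}), (\ref{Milne temp 14}) and (\ref{Milne temp 13}) in the three cases and bound each term by hand, using two structural facts about the kernels. \emph{First}, since $\phi'(\phi,\eta,\xi)\in[0,\pi)$ we have $0\leq\sin(\phi'(\phi,\eta,\xi))\leq1$, hence $\tfrac{1}{\sin(\phi'(\phi,\eta,\xi))}\geq1$, and consequently $G_{\eta,\eta'}(\phi)\geq\eta-\eta'$ when $\eta\geq\eta'$ and $G_{\eta,\eta'}(\phi)\leq-(\eta'-\eta)$ when $\eta\leq\eta'$. \emph{Second}, for every constant $\alpha>0$ one has the telescoping identity $\tfrac{\ud}{\ud\eta'}\exp(\mp\alpha G_{\eta,\eta'}(\phi))=\pm\tfrac{\alpha}{\sin(\phi'(\phi,\eta,\eta'))}\exp(\mp\alpha G_{\eta,\eta'}(\phi))$, which is precisely what turns the singular factor $1/\sin\phi'$ appearing in $\t$ into a bounded integral.

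With these in hand, (\ref{Milne temp 53}) follows quickly. In Case I, $\abs{\t H(\eta,\phi)}\leq\lnnm{H}\int_0^\eta\tfrac{1}{\sin(\phi'(\phi,\eta,\eta'))}\exp(-G_{\eta,\eta'})\ud{\eta'}=\lnnm{H}\bigl(1-\exp(-G_{\eta,0})\bigr)\leq\lnnm{H}$. In Case II, $\int_\eta^\infty\tfrac{1}{\sin(\phi')}\exp(G_{\eta,\eta'})\ud{\eta'}=1$, where the boundary term at $\eta'=\infty$ vanishes because $G_{\eta,\eta'}\leq-(\eta'-\eta)\rt-\infty$; this also legitimates the passage $L\rt\infty$ from (\ref{Milne t 09}). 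In Case III one applies the same telescoping to each of the two pieces, using in addition that along the common characteristic $\abs{\cos(\phi'(\phi,\eta,\eta'))}=e^{V(\eta')-V(\eta^+)}$, so that $\sin(\phi'(\phi,\eta,\eta'))=\sin(\phi'(\,\cdot\,,\eta^+,\eta'))$ and the $G_{\eta^+,\cdot}$ kernels telescope exactly, together with $\exp(-G_{\eta^+,\eta})\leq1$ and $\exp(G_{\eta,\eta^+})\leq1$. Summing the at most three contributions proves (\ref{Milne temp 53}).

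For (\ref{Milne temp 54}) I would write $\abs{H(\eta',\cdot)}\leq e^{-\beta\eta'}\lnnm{e^{\beta\eta}H}$ and carry the factor $e^{\beta\eta}e^{-\beta\eta'}=e^{\beta(\eta-\eta')}$ through the same computation. In Case I, split $\exp(-G_{\eta,\eta'})=\exp(-\half G_{\eta,\eta'})\exp(-\half G_{\eta,\eta'})$; bounding the first half by $e^{-\frac12(\eta-\eta')}$ and absorbing it against $e^{\beta(\eta-\eta')}$ gives $e^{(\beta-1/2)(\eta-\eta')}\leq1$ \emph{precisely because $\beta\leq\half$}, while the surviving $\int_0^\eta\tfrac{1}{\sin(\phi')}\exp(-\half G_{\eta,\eta'})\ud{\eta'}=2\bigl(1-\exp(-\half G_{\eta,0})\bigr)\leq2$. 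In Case II one has $\eta'\geq\eta$, hence $e^{\beta(\eta-\eta')}\leq1$ with no splitting needed. In Case III, for the first integral split $\exp(-G_{\eta^+,\eta'})$ in half and check that the exponent $\beta(\eta-\eta')-\half(\eta^+-\eta')-(\eta^+-\eta)\leq0$ for $0\leq\eta'\leq\eta^+$ (treating $\eta'\leq\eta$ and $\eta\leq\eta'\leq\eta^+$ separately, using $\beta\leq\half$ and $\eta^+\geq\eta$), which leaves $\int_0^{\eta^+}\tfrac{1}{\sin(\phi')}\exp(-\half G_{\eta^+,\eta'})\ud{\eta'}\leq2$; for the second integral $\eta\leq\eta'\leq\eta^+$ forces $e^{\beta(\eta-\eta')}\leq1$ and the telescoping gives $\leq1$. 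Collecting terms yields (\ref{Milne temp 54}), and (\ref{Milne temp 53}) is recovered as the special case $\beta=0$.

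The one delicate point is the grazing regime $\sin\phi'\rt0$, responsible for the denominators in $\t$; but this is never a genuine obstruction, since each offending factor $1/\sin\phi'$ sits against a kernel $\exp(\mp\alpha G_{\eta,\eta'})$ whose exponent is an antiderivative of $\mp\alpha/\sin\phi'$, so the dangerous integrals collapse to boundary terms. The most technical bookkeeping is Case III, where one must reconcile the kernel built from $\eta$ with the one built from the turning point $\eta^+$; I expect this to be the main obstacle, and it is handled by the characteristic identity $\abs{\cos(\phi'(\phi,\eta,\eta'))}=e^{V(\eta')-V(\eta^+)}$ noted above, which shows the $\sin\phi'$ entering the integrand is unambiguous and that the two parametrizations of $G$ coincide along the characteristic.
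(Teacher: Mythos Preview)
Your proposal is correct and follows essentially the same route as the paper: both arguments pull out $\lnnm{H}$ (resp.\ $\lnnm{e^{\beta\eta}H}$), use the telescoping identity $\tfrac{\ud}{\ud\eta'}\exp(\mp\alpha G_{\eta,\eta'})=\pm\tfrac{\alpha}{\sin\phi'}\exp(\mp\alpha G_{\eta,\eta'})$ to absorb the singular factor, and for the weighted estimate split $G_{\eta,\eta'}=\tfrac12 G_{\eta,\eta'}+\tfrac12 G_{\eta,\eta'}$ with one half dominating $\beta(\eta-\eta')$ precisely because $\beta\leq\tfrac12$ and $G_{\eta,\eta'}\geq\eta-\eta'$. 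In fact you supply more detail in Case~III than the paper does (it simply says the case ``can be shown combining above two cases, so we omit it''), and your treatment of Case~II in the weighted estimate (just $e^{\beta(\eta-\eta')}\leq1$) is a clean shortcut compared to the paper's version of the same splitting.
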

\begin{proof}
For (\ref{Milne temp 53}), when $\sin\phi>0$
\begin{eqnarray}
\abs{\t
H}&\leq&\int_0^{\eta}\abs{H(\eta',\phi'(\phi,\eta,\eta'))}\frac{1}{\sin(\phi'(\phi,\eta,\eta'))}\exp(-G_{\eta,\eta'})\ud{\eta'}\\
&\leq&\lnnm{H}\int_0^{\eta}\frac{1}{\sin(\phi'(\phi,\eta,\eta'))}\exp(-G_{\eta,\eta'})\ud{\eta'}\nonumber.
\end{eqnarray}
We can directly estimate
\begin{eqnarray}\label{Milne t 11}
\int_0^{\eta}\frac{1}{\sin(\phi'(\phi,\eta,\eta'))}\exp(-G_{\eta,\eta'})\ud{\eta'}\leq\int_0^{\infty}e^{-z}\ud{z}=1,
\end{eqnarray}
and (\ref{Milne temp 53}) naturally follows. For $\sin\phi<0$ and
$\abs{E(\eta,\phi)}\leq e^{-V_{\infty}}$,
\begin{eqnarray}
\abs{\t
H}&\leq&\int_{\eta}^{\infty}\abs{H(\eta',\phi'(\phi,\eta,\eta'))}\frac{1}{\sin(\phi'(\phi,\eta,\eta'))}\exp(G_{\eta,\eta'})\ud{\eta'}\\
&\leq&\lnnm{H}\int_{\eta}^{\infty}\frac{1}{\sin(\phi'(\phi,\eta,\eta'))}\exp(G_{\eta,\eta'})\ud{\eta'}\nonumber.
\end{eqnarray}
we have
\begin{eqnarray}
\int_{\eta}^{\infty}\frac{1}{\sin(\phi'(\phi,\eta,\eta'))}\exp(G_{\eta,\eta'})\ud{\eta'}\leq\int_{-\infty}^0e^{z}\ud{z}=1,
\end{eqnarray}
and (\ref{Milne temp 53}) easily follows. The case $\sin\phi<0$ and
$\abs{E(\eta,\phi)}\geq e^{-V_{\infty}}$ can be proved combining
above two
techniques, so we omit it here.\\
For (\ref{Milne temp 54}), when $\sin\phi>0$, $\eta\geq\eta'$ and
$\beta<1/2$, since $G_{\eta,\eta'}\geq\eta-\eta'$, we have
\begin{eqnarray}
\beta(\eta-\eta')-G_{\eta,\eta'}\leq
\beta(\eta-\eta')-\half(\eta-\eta')-\half G_{\eta,\eta'}\leq -\half
G_{\eta,\eta'}.
\end{eqnarray}
Then it is natural that
\begin{eqnarray}
\int_0^{\eta}\frac{1}{\sin(\phi'(\phi,\eta,\eta'))}\exp(\beta(\eta-\eta')-G_{\eta,\eta'})\ud{\eta'}
&\leq&\int_0^{\eta}\frac{1}{\sin(\phi'(\phi,\eta,\eta'))}\exp(-G_{\eta,\eta'}/2)\ud{\eta'}\\
&\leq&\int_0^{\infty}e^{-z/2}\ud{z}=2\nonumber.
\end{eqnarray}
This leads to
\begin{eqnarray}
\abs{e^{\beta\eta}\t H}&\leq&
e^{\beta\eta}\int_0^{\eta}\abs{H(\eta',\phi'(\phi,\eta,\eta'))}\frac{1}{\sin(\phi'(\phi,\eta,\eta'))}\exp(-G_{\eta,\eta'})\ud{\eta'}\\
&\leq&\lnnm{e^{\beta\eta}H}\int_0^{\eta}\frac{1}{\sin(\phi'(\phi,\eta,\eta'))}\exp(\beta(\eta-\eta')-G_{\eta,\eta'})\ud{\eta'}\nonumber\\
&\leq&C\lnnm{e^{\beta\eta}H},\nonumber
\end{eqnarray}
and (\ref{Milne temp 54}) naturally follows. For $\sin\phi<0$ and
$\abs{E(\eta,\phi)}\leq e^{-V_{\infty}}$, note for $\eta'\geq\eta$
\begin{eqnarray}
\beta(\eta-\eta')+G_{\eta,\eta'}\leq
\beta(\eta-\eta')+\half(\eta-\eta')+\half G_{\eta,\eta'}\leq \half
G_{\eta,\eta'}.
\end{eqnarray}
Then (\ref{Milne temp 54}) holds by obvious modifications of
$\sin\phi>0$ case. The case $\sin\phi<0$ and $\abs{E(\eta,\phi)}\geq
e^{-V_{\infty}}$ can be shown by combining above two cases, so we
omit it here.
\end{proof}
\begin{lemma}\label{Milne lemma 3}
For any $\delta>0$ there is a constant $C(\delta)>0$ independent of
data such that
\begin{eqnarray}
\ltnm{\t H}\leq C(\delta)\tnnm{H}+\delta\lnnm{H}\label{Milne temp
11}.
\end{eqnarray}
\end{lemma}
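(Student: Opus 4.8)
The plan is the standard ``grazing versus non-grazing'' split for transport operators. Fix $\delta>0$, introduce an auxiliary parameter $\kappa\in(0,1)$ (to be taken $\kappa\sim\delta^2$ at the very end), and for each fixed $\eta\geq0$ decompose the angular domain into the grazing set $\mathcal G_\kappa=\{\phi:\abs{\sin\phi}<\kappa\}$, which has measure $\abs{\mathcal G_\kappa}\leq C\kappa$, and its complement $\mathcal N_\kappa=\{\phi:\abs{\sin\phi}\geq\kappa\}$. Write $\tnm{\t H(\eta)}^2=\int_{\mathcal G_\kappa}\abs{\t H(\eta,\phi)}^2\ud{\phi}+\int_{\mathcal N_\kappa}\abs{\t H(\eta,\phi)}^2\ud{\phi}$ and estimate the two pieces separately.

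On $\mathcal G_\kappa$ there is nothing to do but invoke the $L^\infty$ bound already proved: by (\ref{Milne temp 53}) of Lemma \ref{Milne lemma 2}, $\abs{\t H(\eta,\phi)}\leq\lnnm{\t H}\leq C\lnnm{H}$, hence $\int_{\mathcal G_\kappa}\abs{\t H(\eta,\phi)}^2\ud{\phi}\leq C\kappa\lnnm{H}^2$. This is the term that will produce the $\delta\lnnm{H}$ contribution.

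The heart of the argument is the bound on $\mathcal N_\kappa$. In each of the three geometric cases (\ref{Milne temp 12}), (\ref{Milne temp 14}), (\ref{Milne temp 13}), the operator has the form $\t H(\eta,\phi)=\int_I \frac{H(\eta',\Phi(\phi,\eta,\eta'))}{\sin\phi'(\phi,\eta,\eta')}\,w(\phi,\eta,\eta')\,\ud{\eta'}$, where $I\subseteq[0,\infty)$ is the appropriate $\eta'$-range, $\Phi$ equals $\phi'$ or $R\phi'$, and $w$ is the exponential factor, which always satisfies $0\leq w\leq1$ and, crucially, $\int_I \tfrac{w}{\sin\phi'}\ud{\eta'}\leq1$ — this last fact is exactly the computation behind Lemma \ref{Milne lemma 2}, e.g.\ (\ref{Milne t 11}), obtained by writing $\tfrac{w}{\sin\phi'}\ud{\eta'}$ as the exact differential of the exponential weight. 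Applying the Cauchy--Schwarz inequality in $\eta'$ against the nonnegative density $\tfrac{w}{\sin\phi'}$ and then Fubini's theorem gives
\[\int_{\mathcal N_\kappa}\abs{\t H(\eta,\phi)}^2\ud{\phi}\leq\int_I\bigg(\int_{\mathcal N_\kappa}\frac{w}{\sin\phi'}\,\abs{H(\eta',\Phi(\phi,\eta,\eta'))}^2\ud{\phi}\bigg)\ud{\eta'}.\]
For fixed $\eta,\eta'$ I change variables $\phi\mapsto\Phi$: from $\cos\Phi=\cos\phi\,e^{V(\eta')-V(\eta)}$ one computes $\abs{\ud{\phi}/\ud{\Phi}}=\frac{\sin\phi'}{\abs{\sin\phi}}e^{V(\eta)-V(\eta')}$, and the map $\phi\mapsto\Phi$ is monotone, hence injective, on each half $\{\sin\phi>0\}$ and $\{\sin\phi<0\}$. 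The Jacobian cancels the factor $\frac{1}{\sin\phi'}$ and leaves $\frac{e^{V(\eta)-V(\eta')}}{\abs{\sin\phi}}\leq\frac{e^{V_\infty}}{\kappa}\leq\frac{4}{\kappa}$ by Lemma \ref{Milne force} (cf.\ (\ref{force temp 1})). Therefore $\int_{\mathcal N_\kappa}\frac{w}{\sin\phi'}\abs{H(\eta',\Phi)}^2\ud{\phi}\leq\frac{4}{\kappa}\tnm{H(\eta')}^2$, and since $I\subseteq[0,\infty)$,
\[\int_{\mathcal N_\kappa}\abs{\t H(\eta,\phi)}^2\ud{\phi}\leq\frac{4}{\kappa}\int_0^{\infty}\tnm{H(\eta')}^2\ud{\eta'}=\frac{4}{\kappa}\tnnm{H}^2.\]
Combining the two pieces, taking the supremum over $\eta$, and using $\sqrt{a+b}\leq\sqrt a+\sqrt b$ yields $\ltnm{\t H}\leq\frac{C}{\sqrt\kappa}\tnnm{H}+C\sqrt\kappa\,\lnnm{H}$; choosing $\kappa$ with $C\sqrt\kappa=\delta$ gives the claim with $C(\delta)=C^2/\delta$.

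The main obstacle is the bookkeeping in Case III, (\ref{Milne temp 13}), where the backward characteristic through $(\eta,\phi)$ has a turning point $\eta^+$ at which $\sin\phi'\to0$, so $\frac{1}{\sin\phi'}$ is unbounded along the characteristic even when $\abs{\sin\phi}\geq\kappa$. One must check that (i) the two integrals composing (\ref{Milne temp 13}) each still obey $\int_I\frac{w}{\sin\phi'}\ud{\eta'}\leq1$, which follows by the same exact-differential computation with the weights $e^{-G_{\eta^+,\eta'}-G_{\eta^+,\eta}}$ and $e^{G_{\eta,\eta'}}$, and (ii) the change of variables $\phi\mapsto\phi'$ (resp.\ $\phi\mapsto R\phi'$) remains a valid injective substitution up to the turning point, with the Jacobian still carrying the compensating factor $\sin\phi'$ — which it does, because the singular measure $\frac{1}{\sin\phi'}\ud{\eta'}$ is pushed forward to a finite-mass measure in $\eta'$ near $\eta^+$. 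Once these two points are verified, the estimate above applies verbatim in all three cases.
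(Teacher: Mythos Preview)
Your proof is correct and takes a genuinely different route from the paper's. You split on the \emph{outgoing} angle $\abs{\sin\phi}\lessgtr\kappa$; on $\mathcal N_\kappa$ you apply Cauchy--Schwarz against the unit-mass kernel $\tfrac{w}{\sin\phi'}\,\ud{\eta'}$ and then change variables $\phi\mapsto\Phi$, whose Jacobian $\tfrac{\sin\phi'}{\abs{\sin\phi}}e^{V(\eta)-V(\eta')}$ exactly cancels the $\tfrac{1}{\sin\phi'}$ singularity and is bounded by $4/\kappa$ on $\mathcal N_\kappa$ via (\ref{force temp 1}). The paper instead splits on the \emph{characteristic} angle $\sin\phi'\lessgtr m$ and never changes variables in $\phi$: on $\{\sin\phi'>m\}$ it simply bounds $\tfrac{1}{\sin\phi'}\leq\tfrac{1}{m}$, while on $\{\sin\phi'\leq m\}$ it argues geometrically that the corresponding set of $\phi$ is small --- in Case~I via the monotonicity $\sin\phi\leq\sin\phi'$, and in Cases~II--III via an additional parameter $\sigma$ for $\abs{\eta'-\eta}$, using (\ref{force temp 6}) to show that $\sin\phi'\leq m$ together with $\abs{\eta'-\eta}\leq\sigma$ forces $\abs{\sin\phi}\leq\sqrt{9\sigma+m^2}$, and using the exponential weight $e^{G_{\eta,\eta'}}\leq e^{-\sigma/m}$ when $\abs{\eta'-\eta}\geq\sigma$. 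Your approach is more uniform across the three cases and needs only the single parameter $\kappa$, at the cost of tracking the Jacobian carefully through the turning point in Case~III (which you correctly flag); the paper's approach trades the change of variables for a two-parameter $(m,\sigma)$ optimization and more case-by-case geometry of the characteristics. Both produce $C(\delta)\sim\delta^{-1}$.
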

\begin{proof}
We divide the proof into several steps.\\
\ \\
Step 1: The case of $\sin\phi>0$.\\
We consider
\begin{eqnarray}
\int_{\sin\phi>0}\abs{\t
H(\eta)}^2\ud{\phi}&=&\int_{\sin\phi>0}\bigg(\int_0^{\eta}\frac{H(\eta',\phi'(\phi,\eta,\eta'))}{\sin(\phi'(\phi,\eta,\eta'))}
\exp(-G_{\eta,\eta'})\ud{\eta'}\bigg)^2\ud{\phi}\\
&=&\int\bigg(\int{\bf{1}}_{\{\sin(\phi'(\phi,\eta,\eta'))>m\}}
\ldots\bigg)^2+\int\bigg(\int{\bf{1}}_{\{\sin(\phi'(\phi,\eta,\eta'))\leq m\}}
\ldots\bigg)^2\nonumber\\
&=&I_1+I_2\nonumber.
\end{eqnarray}
By Cauchy's inequality and (\ref{Milne t 11}), we get
\begin{eqnarray}\label{Milne temp 55}
\\
I_1&\leq&\int_{\sin\phi>0}\bigg(\int_0^{\eta}\abs{H(\eta',\phi'(\phi,\eta,\eta'))}^2\ud{\eta'}\bigg)
\bigg(\int_0^{\eta}{\bf{1}}_{\{\sin(\phi'(\phi,\eta,\eta'))>m\}}\frac{\exp(-2G_{\eta,\eta'})}{\sin^2(\phi'(\phi,\eta,\eta'))}
\ud{\eta'}\bigg)\ud{\phi}\nonumber\\
&\leq&\frac{1}{m}\tnnm{H}^2\int_{\sin\phi>0}
\bigg(\int_0^{\eta}{\bf{1}}_{\{\sin(\phi'(\phi,\eta,\eta'))>m\}}\frac{\exp(-2G_{\eta,\eta'})}{\sin(\phi'(\phi,\eta,\eta'))}
\ud{\eta'}\bigg)\ud{\phi}\nonumber\\
&\leq&\frac{\pi}{m}\tnnm{H}^2\nonumber.
\end{eqnarray}
On the other hand, for $\eta'\leq\eta$, we can directly estimate
$\phi'(\phi,\eta,\eta')\geq\phi$. Hence, we have the relation
\begin{eqnarray}
\sin\phi\leq\sin(\phi'(\phi,\eta,\eta')).
\end{eqnarray}
Therefore, we can directly estimate $I_2$ as follows:
\begin{eqnarray}
I_2&\leq&\lnnm{H}^2\int_{\sin\phi>0}
\bigg(\int_0^{\eta}{\bf{1}}_{\{\sin(\phi'(\phi,\eta,\eta'))\leq m\}}
\frac{1}{\sin(\phi'(\phi,\eta,\eta'))}\exp(-G_{\eta,\eta'})\ud{\eta'}\bigg)^2\ud{\phi}\\
&\leq&\lnnm{H}^2\int_{\sin\phi>0}
\bigg(\int_0^{\eta}{\bf{1}}_{\{\sin\phi\leq m\}}
\frac{1}{\sin(\phi'(\phi,\eta,\eta'))}\exp(-G_{\eta,\eta'})\ud{\eta'}\bigg)^2\ud{\phi}\nonumber\\
&=&\lnnm{H}^2\int_{\sin\phi>0}{\bf{1}}_{\{\sin\phi\leq m\}}
\bigg(\int_0^{\eta}
\frac{1}{\sin(\phi'(\phi,\eta,\eta'))}\exp(-G_{\eta,\eta'})\ud{\eta'}\bigg)^2\ud{\phi}\nonumber.
\end{eqnarray}
Note
\begin{eqnarray}
\int_0^{\eta}
\frac{1}{\sin(\phi'(\phi,\eta,\eta'))}\exp(-G_{\eta,\eta'})\ud{\eta'}\leq\int_{0}^{\infty}e^{-z}\ud{z}=1.
\end{eqnarray}
Therefore, for $m$ sufficiently small, we have
\begin{eqnarray}\label{Milne temp 56}
I_2&\leq&\lnnm{H}^2\int_{\sin\phi>0}{\bf{1}}_{\{\sin\phi\leq
m\}}\ud{\phi}\leq 4m.
\end{eqnarray}
Summing up (\ref{Milne temp 55}) and (\ref{Milne temp 56}), for $m$
sufficiently small, we deduce (\ref{Milne temp 11}).\\
\ \\
Step 2: The case of $\sin\phi<0$ and $\abs{E(\eta,\phi)}\leq e^{-V_{\infty}}$.\\
We can decompose
\begin{eqnarray}
&&\int_{\sin\phi<0}{\bf{1}}_{\{\abs{E(\eta,\phi)}\leq
e^{-V_{\infty}}\}}\abs{\t H}^2\ud{\phi}\\
&=&\int_{\sin\phi<0}{\bf{1}}_{\{\abs{E(\eta,\phi)}\leq
e^{-V_{\infty}}\}}\bigg(\int_{\eta}^{\infty}
\frac{H(\eta',R\phi'(\phi,\eta,\eta'))}{\sin(\phi'(\phi,\eta,\eta'))}
\exp(G_{\eta,\eta'})\ud{\eta'}\bigg)^2\ud{\phi}\nonumber\\
&=&\int\bigg(\int
{\bf{1}}_{\{\sin(\phi'(\phi,\eta,\eta'))>m\}}\ldots\bigg)^2+\int\bigg(\int{\bf{1}}_{\{\sin(\phi'(\phi,\eta,\eta'))\leq m\}}
{\bf{1}}_{\{\eta'-\eta\geq\sigma\}}
\ldots\bigg)^2\nonumber\\
&&\int\bigg(\int{\bf{1}}_{\{\sin(\phi'(\phi,\eta,\eta'))\leq m\}}
{\bf{1}}_{\{\eta'-\eta\leq\sigma\}}
\ldots\bigg)^2\nonumber\\
&=&I_1+I_2+I_3\nonumber.
\end{eqnarray}
We can directly estimate $I_1$ as follows:
\begin{eqnarray}\label{Milne temp 57}
\\
I_1&\leq&\int_{\sin\phi<0}\bigg(\int_{\eta}^{\infty}\abs{H(\eta',R\phi'(\phi,\eta,\eta'))}^2\ud{\eta'}\bigg)
\bigg(\int_{\eta}^{\infty}{\bf{1}}_{\{\sin(\phi'(\phi,\eta,\eta'))>m\}}\frac{\exp(2G_{\eta,\eta'})}{\sin^2(\phi'(\phi,\eta,\eta'))}
\ud{\eta'}\bigg)\ud{\phi}\nonumber\\
&\leq&\frac{1}{m}\tnnm{H}^2\int_{\sin\phi<0}
\bigg(\int_{\eta}^{\infty}{\bf{1}}_{\{\sin(\phi'(\phi,\eta,\eta'))>m\}}\frac{\exp(2G_{\eta,\eta'})}{\sin(\phi'(\phi,\eta,\eta'))}
\ud{\eta'}\bigg)\ud{\phi}\nonumber\\
&\leq&\frac{\pi}{m}\tnnm{H}^2\nonumber.
\end{eqnarray}
On the other hand, for $I_2$ we have
\begin{eqnarray}
\\
I_2&\leq&\lnnm{H}^2\int_{\sin\phi<0}
{\bf{1}}_{\{\abs{E(\eta,\phi)}\leq
e^{-V_{\infty}}\}}\bigg(\int_{\eta}^{\infty}{\bf{1}}_{\{\sin(\phi'(\phi,\eta,\eta'))\leq
m\}}{\bf{1}}_{\{\eta'-\eta\geq\sigma\}}
\frac{\exp(G_{\eta,\eta'})}{\sin(\phi'(\phi,\eta,\eta'))}\ud{\eta'}\bigg)^2\ud{\phi}\nonumber.
\end{eqnarray}
Note
\begin{eqnarray}
G_{\eta,\eta'}=\int_{\eta'}^{\eta}\frac{1}{\sin(\phi'(\phi,\eta,y))}\ud{y}\leq-\frac{\eta'-\eta}{m}=-\frac{\sigma}{m}.
\end{eqnarray}
Then we can obtain
\begin{eqnarray}\label{Milne temp 58}
I_2&\leq&\lnnm{H}^2\int_{\sin\phi<0}
\bigg(\int^{-\sigma/m}_{\infty}e^z\ud{z}\bigg)^2\ud{\phi}=4
e^{-\frac{\sigma}{m}}\lnnm{H}^2.
\end{eqnarray}
For $I_3$, we can estimate as follows:
\begin{eqnarray}
\\
I_3&\leq&\lnnm{H}^2\int_{\sin\phi<0}
{\bf{1}}_{\{\abs{E(\eta,\phi)}\leq
e^{-V_{\infty}}\}}\bigg(\int_{\eta}^{\infty}{\bf{1}}_{\{\sin(\phi'(\phi,\eta,\eta'))\leq
m\}}{\bf{1}}_{\{\eta'-\eta\leq\sigma\}}
\frac{\exp(G_{\eta,\eta'})}{\sin(\phi'(\phi,\eta,\eta'))}\ud{\eta'}\bigg)^2\ud{\phi}\nonumber\\
&\leq&\lnnm{H}^2\int_{\sin\phi<0}{\bf{1}}_{\{\abs{E(\eta,\phi)}\leq
e^{-V_{\infty}}\}}
\bigg(\int_{\eta}^{\eta+\sigma}{\bf{1}}_{\{\sin(\phi'(\phi,\eta,\eta'))\leq
m\}}{\bf{1}}_{\{\eta'-\eta\leq\sigma\}}
\frac{\exp(G_{\eta,\eta'})}{\sin(\phi'(\phi,\eta,\eta'))}\ud{\eta'}\bigg)^2\ud{\phi}\nonumber.
\end{eqnarray}
Note
\begin{eqnarray}
\int_{\eta}^{\infty}
\frac{1}{\sin(\phi'(\phi,\eta,\eta'))}\exp(G_{\eta,\eta'})\ud{\eta'}\leq\int_{-\infty}^{0}e^{z}\ud{z}=1.
\end{eqnarray}
Then $1\leq\alpha=e^{V(\eta')-V(\eta)}\leq
e^{V(\eta+\sigma)-V(\eta)}\leq 1+4\sigma$ due to (\ref{force temp
6}), and for $\eta'\in[\eta,\eta+\sigma]$,
$\sin\phi'(\phi,\eta,\eta'))=\sin\bigg(\cos^{-1}(\alpha\cos\phi)\bigg)$,
$\sin(\phi'(\phi,\eta,\eta'))<m$ lead to
\begin{eqnarray}
\abs{\sin\phi}&=&\sqrt{1-\cos^2\phi}=\sqrt{1-\frac{\cos^2\phi'(\phi,\eta,\eta')}{\alpha^2}}
=\frac{\sqrt{\alpha^2-\bigg(1-\sin^2\phi'(\phi,\eta,\eta')\bigg)}}{\alpha}\\
&\leq&\frac{\sqrt{\alpha^2-1+m^2}}{\alpha}\leq\frac{\sqrt{(1+4\sigma)^2-1+m^2}}{\alpha}\leq
\sqrt{9\sigma+m^2}.\nonumber
\end{eqnarray}
Hence, we can obtain
\begin{eqnarray}\label{Milne temp 59}
I_3&\leq&\lnnm{H}^2\int_{\sin\phi<0}{\bf{1}}_{\{\sin(\phi'(\phi,\eta,\eta'))\leq
m\}}\ud{\phi}\leq\lnnm{H}^2
\int_{\sin\phi<0}{\bf{1}}_{\{\abs{\sin\phi}\leq \sqrt{9\sigma+m^2}\}}\ud{\phi}\\
&\leq&4\sqrt{9\sigma+m^2}\nonumber.
\end{eqnarray}
Summarizing (\ref{Milne temp 57}), (\ref{Milne temp 58}) and
(\ref{Milne temp 59}), for sufficiently small $\sigma$,
we can always choose $m$ small enough to guarantee the relation (\ref{Milne temp 11}).\\
\ \\
Step 3: The case of $\sin\phi<0$ and $\abs{E(\eta,\phi)}\geq e^{-V_{\infty}}$.\\
We can decompose $\t H$ as follows:
\begin{eqnarray}
&&\t
H(\eta,\phi)\\
&=&\bigg(\int_0^{\eta^+}\ldots
\exp(-G_{\eta^+,\eta'}-G_{\eta^+,\eta})\ud{\eta'}+
\int_{\eta}^{\eta^+}\ldots\exp(G_{\eta,\eta'})\ud{\eta'}\bigg)\nonumber\\
&=&\int_0^{\eta}\ldots
\exp(-G_{\eta^+,\eta'}-G_{\eta^+,\eta})\ud{\eta'}+\bigg(\int_{\eta}^{\eta^+}\ldots
\exp(-G_{\eta^+,\eta'}-G_{\eta^+,\eta})\ud{\eta'}+
\int_{\eta}^{\eta^+}\ldots\exp(G_{\eta,\eta'})\ud{\eta'}\bigg)\nonumber\\
&=&I_1+I_2\nonumber.
\end{eqnarray}
For $I_1$, we can apply a similar argument as in Step 1 and for
$I_2$, a similar argument as in Step 2 conclude the lemma.
\end{proof}

\subsubsection{Estimates of $\e$-Milne Equation}

Consider the equation satisfied by $z=f-f_{\infty}$ as follows:
\begin{eqnarray}\label{difference equation}
\left\{
\begin{array}{rcl}\displaystyle
\sin\phi\frac{\p z}{\p\eta}+F(\eta)\cos\phi\frac{\p
z}{\p\phi}+z&=&\bar z+S,\\
z(0,\phi)&=&p(\phi)=h(\phi)-f_{\infty}\ \ \text{for}\ \ \sin\phi>0,\\
\lim_{\eta\rt\infty}z(\eta,\phi)&=&0.
\end{array}
\right.
\end{eqnarray}
\begin{lemma}\label{Milne lemma 4}
Assume (\ref{Milne bounded}) and (\ref{Milne decay}) hold. Then there
exists a constant $C$ depending on the data such that the solution
of equation (\ref{difference equation}) verifies
\begin{eqnarray}
\lnnm{z}\leq C\bigg(1+M+\frac{M}{K}+\tnnm{z}\bigg).
\end{eqnarray}
\end{lemma}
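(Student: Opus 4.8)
The plan is to exploit the explicit characteristic representation of the $\e$-transport operator together with a \emph{double Duhamel} iteration, so that the $L^{2}$ control furnished by Theorem \ref{Milne lemma 6} can be fed into an $L^{\infty}$ estimate. Treating $\bar z$ as a source term, the infinite-slab representation formulas (\ref{Milne temp 12})--(\ref{Milne temp 13}) applied to (\ref{difference equation}) give
\[
z=\a p+\t(\bar z+S),\qquad p=h-f_{\infty}.
\]
A single application of Lemmas \ref{Milne lemma 1} and \ref{Milne lemma 2} only yields $\lnnm{z}\le\lnm{p}+C\lnnm{z}+C\lnnm{S}$, which cannot be closed because the operator norm of $\t$ in Lemma \ref{Milne lemma 2} is a universal constant possibly larger than $1$. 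I would therefore iterate once more: substituting $z(\eta',\psi)=\a p(\eta',\psi)+\t(\bar z+S)(\eta',\psi)$ (valid for all $\psi$, using all three cases of the representation) into $\bar z(\eta')=\tfrac1{2\pi}\int_{-\pi}^{\pi}z(\eta',\psi)\ud\psi$ gives $\bar z=\overline{\a p}+\overline{\t(\bar z+S)}$, hence
\[
z=\a p+\t\,\overline{\a p}+\t S+\t\,\overline{\t(\bar z+S)}.
\]

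The first three terms are controlled by the data alone. Lemma \ref{Milne lemma 1} gives $\lnnm{\a p}\le\lnm{p}$ and $\lnnm{\overline{\a p}}\le\lnm{p}$, and then Lemma \ref{Milne lemma 2} gives $\lnnm{\t\,\overline{\a p}}\le C\lnm{p}$ and $\lnnm{\t S}\le C\lnnm{S}$. By Theorem \ref{Milne lemma 6}, $\abs{f_{\infty}}\le C(1+M+M/K)^{2}$, so $\lnm{p}\le M+\abs{f_{\infty}}\le C(1+M+M/K)^{2}$, while $\lnnm{S}\le CM$ by Lemma \ref{Milne data}; altogether these three terms contribute at most $C(1+M+M/K)^{2}$.

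The decisive term is $\t\,\overline{\t(\bar z+S)}$, and here is where the $L^{2}$ information enters. Since $\overline{\t(\bar z+S)}$ depends on $\eta$ only, Cauchy--Schwarz in $\phi$ gives $\abs{\overline{\t(\bar z+S)}(\eta')}\le\tfrac1{\sqrt{2\pi}}\tnm{\t(\bar z+S)(\eta')}\le\tfrac1{\sqrt{2\pi}}\ltnm{\t(\bar z+S)}$ uniformly in $\eta'$; pulling this supremum out of the characteristic integral and using $\int_{0}^{\eta}\tfrac1{\sin\phi'}\exp(-G_{\eta,\eta'})\ud{\eta'}\le1$ (and the analogous bounds in the other two cases, cf. the proof of Lemma \ref{Milne lemma 2} and (\ref{Milne t 11})) yields $\lnnm{\t\,\overline{\t(\bar z+S)}}\le C\ltnm{\t(\bar z+S)}$. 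Then Lemma \ref{Milne lemma 3}, with a free parameter $\delta>0$, gives $\ltnm{\t(\bar z+S)}\le C(\delta)\tnnm{\bar z+S}+\delta\lnnm{\bar z+S}$; using $\tnnm{\bar z}\le\tnnm{z}$, $\lnnm{\bar z}\le\lnnm{z}$ and $\tnnm{S}\le CM/K$, $\lnnm{S}\le CM$ from Lemma \ref{Milne data}, this is $\le C(\delta)(\tnnm{z}+M/K)+\delta(\lnnm{z}+CM)$.

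Collecting the four pieces,
\[
\lnnm{z}\le C(1+M+M/K)^{2}+C(\delta)\bigl(\tnnm{z}+M/K\bigr)+C\delta\lnnm{z}+C\delta M,
\]
and choosing $\delta$ small enough that $C\delta\le\half$ lets us absorb $\lnnm{z}$ on the left, giving $\lnnm{z}\le C(1+M+M/K)^{2}+C\tnnm{z}$; since $C$ is permitted to depend on the data this is exactly the asserted bound. The only genuine obstacle is the circularity noted after the first display: it is broken precisely because the second Duhamel iteration replaces the troublesome $\bar z$ by the $\phi$-average of a $\t$-image, whose $L^{\infty}$ norm is dominated by $\ltnm{\cdot}$, and Lemma \ref{Milne lemma 3} then trades $\ltnm{\cdot}$ for $\tnnm{\cdot}$ at the price of an arbitrarily small multiple of $\lnnm{\cdot}$ --- small enough to be absorbed.
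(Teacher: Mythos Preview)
Your argument is correct, but it takes a more circuitous path than the paper's. The paper avoids the second Duhamel iteration entirely by exploiting the sharper inequality $\lnnm{\bar z}\le\ltnm{z}$ (not merely $\lnnm{\bar z}\le\lnnm{z}$). With this in hand one applies Lemma \ref{Milne lemma 3} directly to $\t(\bar z+S)=z-\a p$ to obtain
\[
\ltnm{z-\a p}\le C(\delta)\bigl(\tnnm{z}+\tnnm{S}\bigr)+\delta\bigl(\ltnm{z}+\lnnm{S}\bigr),
\]
where the $\delta\lnnm{\bar z}$ term has been replaced by $\delta\ltnm{z}$ and can be absorbed on the left, yielding $\ltnm{z}\le C(\lnm{p}+\tnnm{z}+\tnnm{S}+\lnnm{S})$. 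One then closes in $L^{\infty}$ in one step: $\lnnm{z}\le\lnnm{\a p}+\lnnm{\t(\bar z+S)}\le C(\lnm{p}+\lnnm{\bar z}+\lnnm{S})\le C(\lnm{p}+\ltnm{z}+\lnnm{S})$, and substituting the $\ltnm{z}$ bound finishes. Your double-Duhamel route works because the extra iteration manufactures an averaged term $\overline{\t(\cdot)}$ whose $L^{\infty}$ norm is automatically controlled by $\ltnm{\t(\cdot)}$; the paper simply observes that $\bar z$ itself already enjoys this property, so no second pass is needed. Both approaches hinge on the same mechanism---Lemma \ref{Milne lemma 3} trading $\ltnm{\cdot}$ for $\tnnm{\cdot}$ plus an absorbable remainder---but the paper's is shorter and makes the role of the intermediate $L^{\infty}_{\eta}L^{2}_{\phi}$ norm more transparent.
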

\begin{proof}
We first show the following important facts:
\begin{eqnarray}
\tnnm{\bar z}&\leq&\tnnm{z}\label{Milne temp 61},\\
\lnnm{\bar z}&\leq&\ltnm{z}\label{Milne temp 62}.
\end{eqnarray}
We can directly derive them by Cauchy's inequality as follows:
\begin{eqnarray}
\\
\tnnm{\bar
z}^2&=&\int_0^{\infty}\int_{-\pi}^{\pi}\bigg(\frac{1}{2\pi}\bigg)^2\bigg(\int_{-\pi}^{\pi}z(\eta,\phi)\ud{\phi}\bigg)^2\ud{\phi}\ud{\eta}
\leq\int_0^{\infty}\int_{-\pi}^{\pi}\bigg(\frac{1}{2\pi}\bigg)\bigg(\int_{-\pi}^{\pi}z^2(\eta,\phi)\ud{\phi}\bigg)\ud{\phi}\ud{\eta}\nonumber\\
&=&\int_0^{\infty}\bigg(\int_{-\pi}^{\pi}z^2(\eta,\phi)\ud{\phi}\bigg)\ud{\eta}=\tnnm{z}^2\nonumber.\\
\\
\lnnm{\bar z}^2&=&\sup_{\eta}\bar
z^2(\eta)=\sup_{\eta}\bigg(\frac{1}{2\pi}\int_{-\pi}^{\pi}z(\eta,\phi)\ud{\phi}\bigg)^2
\leq\sup_{\eta}\bigg(\frac{1}{2\pi}\bigg)^2\bigg(\int_{-\pi}^{\pi}z^2(\eta,\phi)\ud{\phi}\bigg)\bigg(\int_{-\pi}^{\pi}1^2\ud{\phi}\bigg)\nonumber\\
&=&\sup_{\eta}\bigg(\int_{-\pi}^{\pi}z^2(\eta,\phi)\ud{\phi}\bigg)=\ltnm{z}^2\nonumber.
\end{eqnarray}
By (\ref{difference equation}), $z=\a p+\t(\bar z+S)$ leads to
\begin{eqnarray}
\t(\bar z+S)=z-\a p,
\end{eqnarray}
Then by Lemma \ref{Milne lemma 3}, (\ref{Milne temp 61}) and
(\ref{Milne temp 62}), we can show
\begin{eqnarray}\label{Milne temp 63}
\ltnm{z-\a p}&\leq& C(\delta)\bigg(\tnnm{\bar z}+\tnnm{S}\bigg)+\delta\bigg(\lnnm{\bar z}+\lnnm{S}\bigg)\\
&\leq&
C(\delta)\bigg(\tnnm{z}+\tnnm{S}\bigg)+\delta\bigg(\ltnm{z}+\lnnm{S}\bigg)\nonumber.
\end{eqnarray}
Therefore, based on Lemma \ref{Milne lemma 1} and (\ref{Milne temp
63}), we can directly estimate
\begin{eqnarray}
\ltnm{z}&\leq&\lnm{\a p}+C(\delta)\bigg(\tnnm{z}+\tnnm{S}\bigg)+\delta\bigg(\ltnm{z}+\lnnm{S}\bigg)\\
&\leq&\lnm{p}+C(\delta)\bigg(\tnnm{z}+\tnnm{S}\bigg)+\delta\bigg(\ltnm{z}+\lnnm{S}\bigg)\nonumber.
\end{eqnarray}
We can take $\delta=1/2$ to obtain
\begin{eqnarray}\label{Milne temp 64}
\ltnm{z}\leq C\bigg(\tnnm{z}+\tnnm{S}+\lnnm{S}+\lnm{p}\bigg).
\end{eqnarray}
Therefore, based on Lemma \ref{Milne lemma 2}, (\ref{Milne temp 64})
and (\ref{Milne temp 62}), we can achieve
\begin{eqnarray}
\\
\lnnm{z}&\leq&\lnnm{\a p}+\lnnm{\t(\bar z+S)}\leq
C\bigg(\lnm{p}+\lnnm{\bar z}+\lnnm{S}\bigg)\nonumber\\
&\leq&C\bigg(\lnm{p}+\lnnm{S}+\ltnm{z}\bigg)\leq
C\bigg(\lnm{p}+\lnnm{S}+\tnnm{S}+\tnnm{z}\bigg)\nonumber.
\end{eqnarray}
Since $\lnm{p}$, $\tnnm{S}$ and $\lnnm{S}$ are finite, our result
easily follows.
\end{proof}
Lemma \ref{Milne lemma 4} naturally implies the following.
\begin{theorem}\label{Milne lemma 5}
The solution $f(\eta,\phi)$ to the Milne problem (\ref{Milne
problem}) satisfies
\begin{eqnarray}
\lnnm{f-f_{\infty}}\leq C\bigg(1+M+\frac{M}{K}+\tnnm{f-f_{\infty}}\bigg).
\end{eqnarray}
\end{theorem}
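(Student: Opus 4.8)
The plan is to obtain Theorem \ref{Milne lemma 5} as an immediate consequence of Lemma \ref{Milne lemma 4}, by identifying the unknown $z$ appearing there with $f-f_{\infty}$. First I would invoke the $L^2$ theory already in place: by Theorem \ref{Milne lemma 6} the $\e$-Milne problem (\ref{Milne problem}) has a unique solution $f(\eta,\phi)$ together with a real constant $f_{\infty}$ such that $\tnnm{f-f_{\infty}}<\infty$ and $\abs{f_{\infty}}\leq C(1+M+M/K)^2$. In particular the right-hand side of the asserted inequality is a priori finite, which is what makes the estimate non-vacuous and makes the absorption step inside Lemma \ref{Milne lemma 4} meaningful.

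Next I would verify that $z:=f-f_{\infty}$ is exactly the solution of the difference equation (\ref{difference equation}). Subtracting the constant $f_{\infty}$ from the transport equation in (\ref{Milne problem}) and using $\overline{f-f_{\infty}}=\bar f-f_{\infty}$, one gets $\sin\phi\,\p_{\eta}z+F(\e;\eta)\cos\phi\,\p_{\phi}z+z-\bar z=S$, equivalently $\sin\phi\,\p_{\eta}z+F(\e;\eta)\cos\phi\,\p_{\phi}z+z=\bar z+S$; the in-flow boundary value becomes $z(0,\phi)=h(\phi)-f_{\infty}=:p(\phi)$ for $\sin\phi>0$; and $z(\eta,\phi)\to 0$ as $\eta\to\infty$ by the very definition of $f_{\infty}$. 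Since $S$ obeys the decay bound (\ref{Milne decay}) by hypothesis and $\lnm{p}\leq\lnm{h}+\abs{f_{\infty}}\leq M+C(1+M+M/K)^2<\infty$ by the bound on $\abs{f_{\infty}}$ from the previous step, all the hypotheses of Lemma \ref{Milne lemma 4} are met for $z$.

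It then remains only to quote Lemma \ref{Milne lemma 4}: it gives $\lnnm{z}\leq C(1+M+M/K+\tnnm{z})$, and rewriting $z=f-f_{\infty}$ is precisely the assertion. I do not expect any genuine obstacle at this last stage, because all of the analytic difficulty has already been absorbed into the earlier results — the construction of the pair $(f,f_{\infty})$ and the $L^2$ control of $\tnnm{f-f_{\infty}}$ in Theorem \ref{Milne lemma 6} (resting on Lemmas \ref{Milne finite LT}--\ref{Milne infinite LT general}), and the $L^2$-to-$L^{\infty}$ bootstrap inside Lemma \ref{Milne lemma 4} (resting on the operator estimates of Lemmas \ref{Milne lemma 1} and \ref{Milne lemma 2} and the interpolation Lemma \ref{Milne lemma 3}). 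The one point that does need attention — and is the reason the $L^2$ theory must be developed first — is to secure the finiteness of $\abs{f_{\infty}}$, and hence of $\lnm{p}$ and of $\tnnm{f-f_{\infty}}$, \emph{before} Lemma \ref{Milne lemma 4} is invoked.
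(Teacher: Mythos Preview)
Your proposal is correct and matches the paper's approach exactly: the paper simply states that ``Lemma \ref{Milne lemma 4} naturally implies'' Theorem \ref{Milne lemma 5}, and your argument is precisely the verification that $z:=f-f_{\infty}$ satisfies the difference equation (\ref{difference equation}) so that Lemma \ref{Milne lemma 4} applies. Your additional remarks about invoking Theorem \ref{Milne lemma 6} first to secure $\abs{f_{\infty}}<\infty$ and $\tnnm{f-f_{\infty}}<\infty$ are a welcome clarification of the logical order, but do not depart from the paper's intended reasoning.
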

Combining Theorem \ref{Milne lemma 5} and Theorem \ref{Milne lemma
6}, we deduce the main theorem.
\begin{theorem}\label{Milne theorem 1}
There exists a unique solution $f(\eta,\phi)$ to the $\e$-Milne problem
(\ref{Milne problem}) satisfying
\begin{eqnarray}
\lnnm{f-f_{\infty}}\leq C\bigg(1+M+\frac{M}{K}\bigg).
\end{eqnarray}
\end{theorem}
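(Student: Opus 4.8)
The plan is to obtain Theorem~\ref{Milne theorem 1} by assembling the two preceding results, so that essentially no further analysis is required. First I would invoke Theorem~\ref{Milne lemma 6}, which already furnishes a unique solution $f(\eta,\phi)$ of the $\e$-Milne problem (\ref{Milne problem}) — unique in the class $\tnnm{f-f_{\infty}}<\infty$ — together with the quantitative $L^2$ bound $\tnnm{f-f_{\infty}}\leq C(1+M+M/K)$ and the companion bound $\abs{f_{\infty}}\leq C(1+M+M/K)^2$ on the limiting constant. This step is supplied entirely by the $L^2$ theory built earlier in this section: the finite-slab estimates of Lemma~\ref{Milne finite LT}, their passage to the infinite slab in Lemma~\ref{Milne infinite LT}, and the superposition argument of Lemma~\ref{Milne infinite LT general} that reduces the general source term to the $\bar S=0$ case.

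Next I would apply Theorem~\ref{Milne lemma 5}, namely the bootstrap inequality $\lnnm{f-f_{\infty}}\leq C(1+M+M/K+\tnnm{f-f_{\infty}})$, which itself rests on the characteristic representation $z=\a p+\t(\bar z+S)$ for $z=f-f_{\infty}$ and the interpolation estimate $\ltnm{\t H}\leq C(\delta)\tnnm{H}+\delta\lnnm{H}$ of Lemma~\ref{Milne lemma 3}. Substituting the $L^2$ bound of the previous paragraph into the right-hand side immediately produces $\lnnm{f-f_{\infty}}\leq C(1+M+M/K)$, which is the asserted estimate. Uniqueness is inherited directly from Theorem~\ref{Milne lemma 6}, since the solution just constructed lies in the uniqueness class $\tnnm{f-f_{\infty}}<\infty$.

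Because both ingredients are already in hand, there is no genuine obstacle left in this final step; the only thing to watch is the bookkeeping of function classes, i.e.\ ensuring that the solution delivered by the $L^2$ existence theorem is the very object to which the $L^\infty$ bootstrap is applied, and that no larger class slips in when the regularity is upgraded. The substantive difficulties were dispatched earlier: controlling the geometric force $F(\e;\eta)$ via the potential $V$ in Lemma~\ref{Milne force}, preserving the energy identities in the presence of the extra term $F\cos\phi\,\partial_\phi$ (in particular the integration-by-parts simplification $-F\br{\partial_\phi f,\,f\cos\phi}_{\phi}=-\half F\br{f,\,f\sin\phi}_{\phi}$), and carrying out the delicate $L^\infty$--$L^2$ interpolation near the grazing set $\sin\phi=0$ in Lemmas~\ref{Milne lemma 2} and~\ref{Milne lemma 3}.
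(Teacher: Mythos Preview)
Your proposal is correct and matches the paper's approach exactly: the paper simply states that Theorem~\ref{Milne theorem 1} follows by combining Theorem~\ref{Milne lemma 6} (the $L^2$ existence and uniqueness) with Theorem~\ref{Milne lemma 5} (the $L^\infty$ bootstrap), which is precisely what you do. Your additional remarks about the supporting lemmas and the bookkeeping of function classes are accurate and, if anything, spell out more than the paper itself does.
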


\subsection{Exponential Decay}

In this section, we prove the spatial decay of the solution to the
Milne problem.
\begin{theorem}\label{Milne theorem 2}
Assume (\ref{Milne bounded}) and (\ref{Milne decay}) hold. For $K_0>0$ sufficiently small, the solution $f(\eta,\phi)$ to the
$\e$-Milne problem (\ref{Milne problem}) satisfies
\begin{eqnarray}\label{Milne temp 85}
\lnnm{e^{K_0\eta}(f-f_{\infty})}\leq C\bigg(1+M+\frac{M}{K}\bigg),
\end{eqnarray}
\end{theorem}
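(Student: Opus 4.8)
The plan is to follow the two-tier scheme that proved Theorem \ref{Milne theorem 1}: first upgrade the $L^2$ bound of Theorem \ref{Milne lemma 6} to an exponentially weighted $L^2$ bound, then bootstrap it to the weighted $L^{\infty}$ bound through the representation formulas (\ref{Milne temp 12})--(\ref{Milne temp 13}). Write $z=f-f_{\infty}$, which solves (\ref{difference equation}) with datum $p=h-f_{\infty}$, and split $z=q+r$, $q=\bar z$, $r=z-\bar z$; by Theorem \ref{Milne theorem 1} we already know $\lnnm{z}\le C(1+M+M/K)$, so $q$ and $r$ are pointwise bounded. I fix $K_0$ small, in particular $K_0<\half K$; since $|S|\le CMe^{-K\eta}$ by (\ref{Milne decay}) and Lemma \ref{Milne data}, the weighted source still obeys $\lnnm{e^{K_0\eta}S}\le CM$ and $\tnnm{e^{K_0\eta}S}\le CM/(K-K_0)$.

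For the weighted $L^2$ estimate I multiply (\ref{difference equation}) by $e^{2K_0\eta}z$ and integrate over $\phi\in[-\pi,\pi)$. Using $\br{F\cos\phi\,\pf z,z}_{\phi}=\half F\br{\sin\phi,z^2}_{\phi}$ and $\br{z-\bar z,z}_{\phi}=\tnm{r}^2$, this yields, for $A(\eta):=e^{2K_0\eta}\br{\sin\phi,z^2}_{\phi}(\eta)$,
\[
\half A'(\eta)-K_0A(\eta)+\half F(\eta)A(\eta)+e^{2K_0\eta}\tnm{r(\eta)}^2=e^{2K_0\eta}\br{S,z}_{\phi}(\eta).
\]
The crucial point is that $A$ is governed by $r$ up to exponentially small error: $\br{\sin\phi,q^2}_{\phi}=0$ since $q$ is $\phi$-independent, and the orthogonality relation (\ref{Milne temp 40}) gives $|\br{\sin\phi,r}_{\phi}(\eta)|\le\int_{\eta}^{\infty}|\bar S(y)|\ud y\le CMe^{-K\eta}/K$, so $|A(\eta)|\le e^{2K_0\eta}\tnm{r(\eta)}^2+CM^2e^{(2K_0-K)\eta}$. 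After a routine regularization (a truncated weight $e^{2K_0\min(\eta,N)}$, or the finite-slab/specular approximation of Lemma \ref{Milne finite LT}) which removes the boundary term at the far end, I integrate the identity and use $|F|\le4\e$, $F\le0$ together with the bound on $A$ to absorb the $K_0$- and $F$-contributions into $e^{2K_0\eta}\tnm{r}^2$; passing to the limit gives $\tnnm{e^{K_0\eta}r}\le C(1+M+M/K)$. Feeding this into the hydrodynamic estimate (\ref{Milne temp 42}) of Lemma \ref{Milne infinite LT general},
\[
\tnm{q(\eta)-q_{\infty}}\le C\Big(\tnm{r(\eta)}+\int_{\eta}^{\infty}|F(y)|\tnm{r(y)}\ud y+\int_{\eta}^{\infty}\lnm{S(y)}\ud y\Big),
\]
multiplying by $e^{K_0\eta}$, pushing the weight inside the $y$-integrals via $e^{K_0\eta}\le e^{K_0 y}$ for $\eta\le y$, and using Cauchy's inequality with $\int_0^{\infty}\int_{\eta}^{\infty}|F|^2\le3-\ln4$ from (\ref{force temp 3}) and the exponential decay of $S$, I obtain $\tnnm{e^{K_0\eta}(q-q_{\infty})}\le C(1+M+M/K)$, hence $\tnnm{e^{K_0\eta}z}\le C(1+M+M/K)$.

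For the bootstrap I rerun the proof of Lemma \ref{Milne lemma 4} carrying the weight $e^{K_0\eta}$. From $z=\a p+\t(\bar z+S)$, Lemma \ref{Milne lemma 1} in the form (\ref{Milne temp 51}) gives $\lnm{e^{K_0\eta}\a p}\le\lnm{p}$, and a weighted version of Lemma \ref{Milne lemma 3} — obtained by writing $e^{K_0\eta}=e^{K_0\eta'}e^{K_0(\eta-\eta')}$ and absorbing $e^{K_0(\eta-\eta')}$ into the Green's-function factors $\exp(\mp G_{\eta,\eta'})$ exactly as in the proof of (\ref{Milne temp 54}), which is legitimate since $K_0\le\half$ and $|\eta-\eta'|\le G_{\eta,\eta'}$ — yields $\ltnm{e^{K_0\eta}\t H}\le C(\delta)\tnnm{e^{K_0\eta}H}+\delta\lnnm{e^{K_0\eta}H}$. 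Applying this to $H=\bar z+S$, using $\lnnm{e^{K_0\eta}\bar z}\le\ltnm{e^{K_0\eta}z}$ and the weighted $L^2$ bound, taking $\delta$ small to absorb $\ltnm{e^{K_0\eta}z}$, and finally invoking the weighted form (\ref{Milne temp 54}) of Lemma \ref{Milne lemma 2} for $\lnnm{e^{K_0\eta}\t(\bar z+S)}$, I reach $\lnnm{e^{K_0\eta}z}\le C(1+M+M/K)$, which is (\ref{Milne temp 85}).

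The hard part is the weighted $L^2$ step. The collision damping supplies only the coercive term $\tnm{r}^2$, never $\tnm{z}^2$, so the term $-K_0A(\eta)$ produced by differentiating the weight is a priori comparable to the uncontrolled hydrodynamic part of $z$; the way out is precisely the orthogonality relation (\ref{Milne temp 40}), which binds the $\sin\phi$-moment of $z$ to $r$ (plus an exponentially small source term) rather than to $z$ itself, making $-K_0A$ absorbable once $K_0$ is small. One must also carry out the $\eta$-integration before any decay of $z$ is available, which is why the regularization eliminating the far boundary term has to be kept, exactly as in the unweighted $L^2$ theory (Lemma \ref{Milne finite LT} and Lemma \ref{Milne infinite LT general}).
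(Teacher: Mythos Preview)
Your proof is correct and follows the same two-step scheme as the paper: a weighted $L^2$ estimate (via the energy identity, using orthogonality to tie $\br{\sin\phi,z^2}_{\phi}$ back to $r$) followed by a bootstrap to weighted $L^{\infty}$ through the characteristic representation. The only cosmetic difference is in Step 2: the paper changes variable to $Z=e^{K_0\eta}z$, so that the weight appears as an extra source term $K_0\sin\phi\,Z$ in the equation for $Z$ and the \emph{unweighted} Lemmas \ref{Milne lemma 1}--\ref{Milne lemma 3} apply directly (the resulting $K_0\lnnm{Z}$ on the right is then absorbed by taking $K_0$ small), whereas you push the weight through the kernels to obtain weighted analogues of those lemmas; the two packagings rest on the same inequality $K_0|\eta-\eta'|\le\half G_{\eta,\eta'}$ (and its Case~III counterpart with $G_{\eta^+,\eta'}+G_{\eta^+,\eta}$) and are equivalent.
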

\begin{proof}
Define $Z=e^{K_0\eta}g$ for $z=f-f_{\infty}$.
We divide the analysis into several steps:\\
\ \\
Step 1: We have
\begin{eqnarray}\label{Milne temp 72}
\tnnm{Z}^2=\int_0^{\infty}e^{2K_0\eta}\bigg(\int_{-\pi}^{\pi}(f(\eta,\phi)-f_{\infty})^2\ud{\phi}\bigg)\ud{\eta}\leq
C\bigg(1+M+\frac{M}{K}\bigg)^2.
\end{eqnarray}
\emph{The proof of (\ref{Milne temp 72}):} The orthogonal property
(\ref{Milne temp 3}) reveals
\begin{eqnarray}
\br{f,f\sin\phi}_{\phi}(\eta)=\br{r,r\sin\phi}_{\phi}(\eta).
\end{eqnarray}
Multiplying $e^{2K_0\eta}f$ on both sides of equation (\ref{Milne
problem}) and integrating over $\phi\in[-\pi,\pi)$, we obtain
\begin{eqnarray}\label{Milne temp 15}
\\
\half\frac{\ud{}}{\ud{\eta}}\bigg(e^{2K_0\eta}\br{r,r\sin\phi}_{\phi}(\eta)\bigg)
+\half
F(\eta)\bigg(e^{2K_0\eta}\br{r,r\sin\phi}_{\phi}(\eta)\bigg)
-e^{2K_0\eta}\bigg(K_0\br{r,r\sin\phi}_{\phi}(\eta)-\br{r,
r}_{\phi}(\eta)\bigg) \nonumber\\
=e^{2K_0\eta}\br{S,f}_{\phi}(\eta)\nonumber.
\end{eqnarray}
For $K_0<\min\{1/2,K\}$, we have
\begin{eqnarray}\label{Milne temp 16}
\frac{3}{2}\tnm{r(\eta)}^2\geq-K_0\br{r,r\sin\phi}_{\phi}(\eta)+\br{r,r}_{\phi}(\eta)\geq
\half\tnm{r(\eta)}^2.
\end{eqnarray}
Similar to the proof of Lemma \ref{Milne finite LT} and Lemma
\ref{Milne infinite LT}, formula as (\ref{Milne temp 15}) and
(\ref{Milne temp 16}) imply
\begin{eqnarray}\label{Milne temp 71}
\tnnm{e^{K_0\eta}r}^2=\int_0^{\infty}e^{2K_0\eta}\br{r,r}_{\phi}(\eta)\ud{\eta}\leq
C\bigg(1+M+\frac{M}{K}\bigg)^2.
\end{eqnarray}
From (\ref{Milne temp 9}), Cauchy's inequality and
(\ref{force temp 3}), we can deduce
\begin{eqnarray}
\\
&&\int_0^{\infty}e^{2K_0\eta}\bigg(\int_{-\pi}^{\pi}(f(\eta,\phi)-f_{\infty})^2\ud{\phi}\bigg)\ud{\eta}\nonumber\\
&\leq&\int_0^{\infty}e^{2K_0\eta}\bigg(\int_{-\pi}^{\pi}r^2(\eta,\phi)\ud{\phi}\bigg)\ud{\eta}+
\int_0^{\infty}e^{2K_0\eta}\bigg(\int_{-\pi}^{\pi}(q(\eta)-q_{\infty})^2\ud{\phi}\bigg)\ud{\eta}
\nonumber\\
&\leq&\int_0^{\infty}e^{2K_0\eta}\tnm{r(\eta)}^2\ud{\eta}\nonumber\\
&&+\int_0^{\infty}e^{2K_0\eta}\bigg(\int_{\eta}^{\infty}\abs{F(y)}\tnm{r(y)}\ud{y}\bigg)^2\ud{\eta}
+\int_0^{\infty}e^{2K_0\eta}\bigg(\int_{\eta}^{\infty}\lnm{S(y)}\ud{y}\bigg)^2\ud{\eta}
\nonumber\\
&\leq&C\bigg(1+M+\frac{M}{K}\bigg)^2\nonumber\\
&&+C\bigg(\int_0^{\infty}e^{2K_0\eta}\tnm{r(\eta)}^2\ud{\eta}\bigg)
\bigg(\int_0^{\infty}\int_{\eta}^{\infty}e^{2K_0(\eta-y)}F^2(y)\ud{y}\ud{\eta}\bigg)
+\int_0^{\infty}e^{2K_0\eta}\bigg(\int_{\eta}^{\infty}\lnm{S(y)}\ud{y}\bigg)^2\ud{\eta}
\nonumber\\
&\leq&C\bigg(1+M+\frac{M}{K}\bigg)^2\nonumber\\
&&+C\bigg(\int_0^{\infty}e^{2K_0\eta}\tnm{r(\eta)}^2\ud{\eta}\bigg)
\bigg(\int_0^{\infty}\int_{\eta}^{\infty}F^2(y)\ud{y}\ud{\eta}\bigg)
+\int_0^{\infty}e^{2K_0\eta}\bigg(\int_{\eta}^{\infty}\lnm{S(y)}\ud{y}\bigg)^2\ud{\eta}
\nonumber\\
&\leq&C\bigg(1+M+\frac{M}{K}\bigg)^2\nonumber.
\end{eqnarray}
This completes the proof of (\ref{Milne temp 72}) when $\bar S=0$.
By the method introduced in Lemma \ref{Milne infinite LT general},
we can extend above $L^2$ estimates to the general $S$ case. Note
all the auxiliary functions
constructed in Lemma \ref{Milne infinite LT general} satisfy the estimates (\ref{Milne temp 72}). \\
\ \\
Step 2: We have
\begin{eqnarray}\label{Milne temp 75}
\lnnm{Z}\leq C\bigg(1+M+\frac{M}{K}+\tnnm{Z}\bigg).
\end{eqnarray}
\emph{Proof of (\ref{Milne temp 75}):} $Z$ satisfies the equation
\begin{eqnarray}\label{decay equation}
\left\{
\begin{array}{rcl}\displaystyle
\sin\phi\frac{\p Z}{\p\eta}+F(\eta)\cos\phi\frac{\p
Z}{\p\phi}+Z&=&\bar Z+e^{K_0\eta}S+K_0\sin\phi Z,\\
Z(0,\phi)&=&p(\phi)=h(\phi)-f_{\infty}\ \ \text{for}\ \ \sin\phi>0.
\end{array}
\right.
\end{eqnarray}
Since we know $Z=\a p+\t(\bar Z+e^{K_0\eta}S+K_0\sin\phi Z)$
leads to
\begin{eqnarray}
\t(\bar Z+e^{K_0\eta}S+K_0\sin\phi Z)=Z-\a p,
\end{eqnarray}
then by Lemma \ref{Milne lemma 3}, (\ref{Milne temp 61}) and
(\ref{Milne temp 62}), we can show
\begin{eqnarray}\label{Milne temp 73}
&&\ltnm{Z-\a p}\\
&\leq& C(\delta)\bigg(\tnnm{\bar
Z}+\tnnm{e^{K_0\eta}S}+K_0\tnnm{Z}\bigg)
+\delta\bigg(\lnnm{\bar Z}+\lnnm{e^{K_0\eta}S}+K_0\lnnm{Z}\bigg)\nonumber\\
&\leq&
C(\delta)\bigg(\tnnm{Z}+\tnnm{e^{K_0\eta}S}+K_0\tnnm{Z}\bigg)+\delta\bigg(\ltnm{Z}+\lnnm{e^{K_0\eta}S}+K_0\lnnm{Z}\bigg)\nonumber.
\end{eqnarray}
Therefore, based on Lemma \ref{Milne lemma 1} and (\ref{Milne temp
63}), we can directly estimate
\begin{eqnarray}
&&\ltnm{Z}\\
&\leq&\lnm{\a
p}+C(\delta)\bigg(\tnnm{Z}+\tnnm{e^{K_0\eta}S}+K_0\tnnm{Z}\bigg)\nonumber\\
&&+\delta\bigg(\ltnm{Z}+\lnnm{e^{K_0\eta}S}+K_0\lnnm{Z}\bigg)\nonumber\\
&\leq&\lnm{p}+2C(\delta)\bigg(\tnnm{Z}+\tnnm{e^{K_0\eta}S}\bigg)+\delta\bigg(\ltnm{Z}+\lnnm{e^{K_0\eta}S}+K_0\lnnm{Z}\bigg)\nonumber.
\end{eqnarray}
We can take $\delta=1/2$ to obtain
\begin{eqnarray}\label{Milne temp 74}
\ltnm{Z}\leq
C\bigg(\tnnm{Z}+\tnnm{S}+\lnnm{S}+\lnm{p}+K_0\lnnm{Z}\bigg).
\end{eqnarray}
Then based on Lemma \ref{Milne lemma 1}, Lemma \ref{Milne lemma 2}
and Lemma \ref{Milne lemma 3}, we can deduce
\begin{eqnarray}
\lnnm{Z}&\leq& \lnm{e^{K_0\eta}\a p}+\lnnm{e^{K_0\eta}\t S}+\lnnm{\bar Z}\\
&\leq& \lnm{p}+\lnnm{e^{K_0\eta}S}+\lnnm{\bar Z}\nonumber\\
&\leq& \lnm{p}+\lnnm{e^{K_0\eta}S}+\ltnm{Z}\nonumber\\
&\leq&
C\bigg(\tnnm{Z}+\tnnm{e^{K_0\eta}S}+\lnnm{e^{K_0\eta}S}+\lnm{p}+K_0\lnnm{Z}\bigg)\nonumber.
\end{eqnarray}
Taking $K_0$ sufficiently small, this completes the proof of
(\ref{Milne temp 75}).\\
\ \\
Combining (\ref{Milne temp 72}) and (\ref{Milne temp 75}), we
deduce (\ref{Milne temp 85}).
\end{proof}

\subsection{Maximum Principle}

\begin{theorem}\label{Milne theorem 3}
The solution $f(\eta,\phi)$ to the $\e$-Milne problem (\ref{Milne
problem}) with $S=0$ satisfies the maximum principle, i.e.
\begin{eqnarray}
\min_{\sin\phi>0}h(\phi)\leq f(\eta,\phi)\leq
\max_{\sin\phi>0}h(\phi).
\end{eqnarray}
\end{theorem}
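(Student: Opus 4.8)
The plan is to obtain the two-sided bound by a supersolution argument run on the penalized finite-slab approximation that was used to construct $f$, and then to pass to the limit, exploiting that all the characteristic kernels appearing in the explicit solution formulas are nonnegative. First I would reduce the statement. Since $S=0$, the equation in (\ref{Milne problem}) is linear and annihilates constants: both the transport operator $\sin\phi\,\p_{\eta}+F\cos\phi\,\p_{\phi}$ and the operator $g\mapsto g-\bar g$ kill constant functions. Hence if $f$ solves (\ref{Milne problem}) with boundary data $h$ and far field $f_{\infty}$, then for any constant $c$ the function $f-c$ solves the same problem with data $h-c$ and far field $f_{\infty}-c$. Choosing $c=\half\bigl(\max_{\sin\phi>0}h+\min_{\sin\phi>0}h\bigr)$, I may assume $\max_{\sin\phi>0}h=-\min_{\sin\phi>0}h=:M\geq0$, so the claim becomes $\lnnm{f}\leq M$. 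Moreover $-f$ solves the same homogeneous problem with data $-h$ and $\max_{\sin\phi>0}(-h)=M$, so it suffices to prove the one-sided bound $f\leq M$.

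Next I would argue on the penalized finite slab. By the proof of Lemma~\ref{Milne finite LT}, the finite-slab solution $f^{L}$ of (\ref{Milne finite problem LT}) with $S=0$ is, for each fixed $\l>0$, the strong $L^{\infty}$ limit of the iterates $f^{L}_{0,\l}=0$ and $f^{L}_{m,\l}=$ solution of the penalized linear transport equation with source $\bar f^{L}_{m-1,\l}$, while $f^{L}_{\l}\rightharpoonup f^{L}$ weakly as $\l\downarrow0$. Each $f^{L}_{m,\l}$ is given by the explicit characteristic formulas (Cases I--III of Step~1 of that proof); every kernel there --- the exponentials $\exp(-G^{\l}_{\cdot,\cdot})$, the factors $1/\sin\phi'$, and their reflected analogues --- is nonnegative, and $h$ enters only through values $h\bigl(\phi'(\phi,\eta,0)\bigr)$ with $\phi'\in(0,\pi)$, i.e.\ at points of $\{\sin\phi>0\}$ where $h\leq M$. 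Using $\frac{\ud{}}{\ud{\eta'}}G^{\l}_{\eta,\eta'}=-\frac{1+\l}{\sin\phi'}$, a one-line computation shows that in Case~I the boundary weight plus the total mass of the source kernel equals $\exp(-G^{\l}_{\eta,0})+\frac{1}{1+\l}\bigl(1-\exp(-G^{\l}_{\eta,0})\bigr)\leq1$, and the same bound $\leq1$ holds in Cases~II and III; this is precisely the assertion that the constant function is a supersolution of the penalized transport problem.

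With these facts in hand I would prove $f^{L}_{m,\l}\leq M$ for every $m$ by induction: it holds for $m=0$ since $0\leq M$, and if $f^{L}_{m-1,\l}\leq M$ then $\bar f^{L}_{m-1,\l}\leq M$, so in each characteristic formula for $f^{L}_{m,\l}$ both the source and the boundary values of $h$ are $\leq M$; since $M\geq0$ and the kernel mass is $\leq1$, each formula is bounded above by $M$, hence $f^{L}_{m,\l}\leq M$. (This is the only place where the normalization $M\geq0$ is used.) Letting $m\to\infty$ gives $f^{L}_{\l}\leq M$, then $\l\downarrow0$ gives $f^{L}\leq M$, and since the unique solution $f$ (Theorem~\ref{Milne theorem 1}) is the weak limit of $f^{L}$ as $L\to\infty$ (Lemma~\ref{Milne infinite LT}), $f\leq M$ almost everywhere; as $f$ agrees a.e.\ with the everywhere-defined function furnished by the characteristic representation (\ref{Milne t 08})--(\ref{Milne t 10}) in the limit $L\to\infty$, the bound holds at every $(\eta,\phi)$. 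Applying this to $-f$ gives $f\geq-M$, and undoing the normalization yields $\min_{\sin\phi>0}h\leq f(\eta,\phi)\leq\max_{\sin\phi>0}h$.

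I expect the main obstacle to be the degeneracy of the boundary weight in the infinite slab, which is exactly why the argument must be routed through the finite slab. On the ``trapped'' characteristics of Case~II (those with $\sin\phi<0$ and $\abs{E(\eta,\phi)}\leq e^{-V_{\infty}}$) one has $\a1\equiv0$, so the decomposition $f=\a h+\t\bar f$ does not control $f$ by the boundary data with uniformly positive weight, and a direct argument on $\sup f$ is defeated because $\a1$ decays exponentially as $\eta\to\infty$. For finite $L$, by contrast, every backward characteristic reaches $\eta=0$ after at most one specular reflection at $\eta=L$, so the boundary weight is strictly positive and the supersolution estimate propagates cleanly through the iteration; only after it is secured do we let $L\to\infty$.
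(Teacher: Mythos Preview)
Your proposal is correct and follows essentially the same route as the paper: both argue on the penalized finite-slab iteration, use the nonnegativity of the characteristic kernels to propagate a one-sided bound through the iterates, and then pass to the limits $m\to\infty$, $\l\downarrow0$, $L\to\infty$. The only cosmetic difference is in the reduction step: the paper reduces to the implication ``$h\leq0\Rightarrow f\leq0$'' and then applies it to $f-\max h$ and $\min h-f$, so in the inductive step only positivity of the kernels is needed (a nonnegative kernel applied to nonpositive data is nonpositive). You instead symmetrize to $\max h=-\min h=M\geq0$ and use the stronger fact that the total kernel mass is $\leq1$; this works, but the paper's reduction is marginally cleaner since it sidesteps the mass computation entirely.
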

\begin{proof}
We claim it is sufficient to show $f(\eta,\phi)\leq0$ whenever
$h(\phi)\leq0$. Suppose this claim is justified.
Denote $m=\min_{\sin\phi>0}h(\phi)$ and
$M=\max_{\sin\phi>0}h(\phi)$. Then $f^1=f-M$
satisfies the equation
\begin{eqnarray}
\left\{
\begin{array}{rcl}\displaystyle
\sin\phi\frac{\p f^1}{\p\eta}+F(\eta)\cos\phi\frac{\p
f^1}{\p\phi}+f^1-\bar f^1&=&0,\\
f^1(0,\phi)&=&h(\phi)-M\ \ \text{for}\ \ \sin\phi>0,\\
\lim_{\eta\rt\infty}f^1(\eta,\phi)&=&f^1_{\infty}.
\end{array}
\right.
\end{eqnarray}
Hence, $h-M\leq0$ implies $f^1\leq0$ which is actually $f\leq M$. On
the other hand, $f^2=m-f$ satisfies the equation
\begin{eqnarray}
\left\{
\begin{array}{rcl}\displaystyle
\sin\phi\frac{\p f^2}{\p\eta}+F(\eta)\cos\phi\frac{\p
f^2}{\p\phi}+f^2-\bar f^2&=&0,\\
f^2(0,\phi)&=&m-h(\phi)\ \ \text{for}\ \ \sin\phi>0,\\
\lim_{\eta\rt\infty}f^2(\eta,\phi)&=&f^2_{\infty}.
\end{array}
\right.
\end{eqnarray}
Thus, $m-h\leq0$ implies $f^2\leq0$ which further leads to $f\geq
m$. Therefore, the maximum principle is established.\\
\ \\
We now prove if $h(\phi)\leq0$, we have $f(\eta,\phi)\leq0$.
We divide the proof into several steps:\\
\ \\
Step 1: Penalized $\e$-Milne problem in a finite slab.\\
Assuming $h(\phi)\leq0$, we then consider the penalized Milne
problem for $f^L_{\l}(\eta,\phi)$ in the finite slab $(\eta,\phi)\in[0,L]\times[-\pi,\pi)$
\begin{eqnarray}\label{Milne finite problem LT penalty_}
\left\{
\begin{array}{rcl}\displaystyle
\l f_{\l}^{L}+\sin\phi\frac{\p f_{\l}^{L}}{\p\eta}+F(\eta)\cos\phi\frac{\p
f_{\l}^{L}}{\p\phi}+f_{\l}^{L}-\bar f_{\l}^{L}&=&0,\\
f_{\l}^{L}(0,\phi)&=&h(\phi)\ \ \text{for}\ \ \sin\phi<0,\\
f_{\l}^{L}(L,\phi)&=&f_{\l}^{L}(L,R\phi).
\end{array}
\right.
\end{eqnarray}
In order to construct the solution of (\ref{Milne finite problem LT
penalty_}), we iteratively define the sequence
$\{f^{L}_{m}\}_{m=1}^{\infty}$ as $f^{L}_{0}=0$ and
\begin{eqnarray}
\left\{
\begin{array}{rcl}\displaystyle
\l f^{L}_{m}+\sin\phi\frac{\p
f^{L}_{m}}{\p\eta}+F(\eta)\cos\phi\frac{\p
f^{L}_{m}}{\p\phi}+f^{L}_{m}-\bar f^{L}_{m-1}&=&0,\\
f^{L}_{m}(0,\phi)&=&h(\phi)\ \ \text{for}\ \ \sin\phi<0,\\
f^{L}_{m}(L,\phi)&=&f^{L}_{m}(L,R\phi).
\end{array}
\right.
\end{eqnarray}
Along the characteristics, it is easy to see we always have
$f^{L}_{m}<0$. In the proof of Lemma \ref{Milne finite LT}, we have
shown $f^{L}_{m}$ converges strongly in
$L^{\infty}([0,L]\times[-\pi,\pi))$ to $f_{\l}^{L}$ which satisfies
(\ref{Milne finite problem LT penalty_}). Also, $f_{\l}^{L}$ satisfies
\begin{eqnarray}
\lnnm{f_{\l}^{L}}\leq \frac{1+\l}{\l}\lnm{h}.
\end{eqnarray}
Naturally, we obtain $f_{\l}^{L}\in L^2([0,L]\times[-\pi,\pi))$
and $f_{\l}^{L}\leq0$.\\
\ \\
Step 2: $\e$-Milne problem in a finite slab.\\
Consider the Milne problem for $f^{L}(\eta,\phi)$ in a finite slab $(\eta,\phi)\in[0,L]\times[-\pi,\pi)$
\begin{eqnarray}\label{Milne finite problem LT_}
\left\{
\begin{array}{rcl}\displaystyle
\sin\phi\frac{\p f^L}{\p\eta}+F(\eta)\cos\phi\frac{\p
f^L}{\p\phi}+f^L-\bar f^L&=&0,\\
f^L(0,\phi)&=&h(\phi)\ \ \text{for}\ \ \sin\phi<0,\\
f^L(L,\phi)&=&f^L(L,R\phi).
\end{array}
\right.
\end{eqnarray}
In the proof of Lemma \ref{Milne finite LT}, we have shown $f_{\l}^{L}$
is uniformly bounded in $L^{2}([0,L)\times[-\pi,\pi))$ with respect
to $\l$, which implies we can take weakly convergent subsequence
$f_{\l}^{L}\rightharpoonup f^L$ as $\l\rt0$ with $f^L\in
L^2([0,L]\times[-\pi,\pi))$.
Naturally, we have $f^L(\eta,\phi)\leq0$.\\
\ \\
Step 3: $\e$-Milne problem in an infinite slab.\\
Finally, in the proof of Lemma \ref{Milne infinite LT}, by taking
$L\rt\infty$, we have
\begin{eqnarray}
f^L\rightharpoonup f\ \ in\ \ L^2_{loc}([0,L)\times[-\pi,\pi),
\end{eqnarray}
where $f$ satisfies (\ref{Milne problem}). Certainly, we have
$f(\eta,\phi)\leq0$. This justifies the claim in Step 1. Hence, we
complete the proof.
\end{proof}
\begin{remark}\label{Milne remark}
Note that when $F=0$, then all the previous proofs can be recovered
and Theorem \ref{Milne theorem 1}, Theorem \ref{Milne theorem 2} and
Theorem \ref{Milne theorem 3} still hold. Hence, we can deduce the
well-posedness, decay and maximum principle of the classical Milne
problem
\begin{eqnarray}\label{classical Milne problem}
\left\{
\begin{array}{rcl}\displaystyle
\sin\phi\frac{\p f}{\p\eta}+f-\bar f&=&S(\eta,\phi),\\
f(0,\phi)&=&h(\phi)\ \ \text{for}\ \ \sin\phi>0,\\
\lim_{\eta\rt\infty}f(\eta,\phi)&=&f_{\infty}.
\end{array}
\right.
\end{eqnarray}
\end{remark}

\section{Proof of Theorem \ref{main 1}}

We divide the proof into several steps:\\
\ \\
Step 1: Remainder definitions.\\
We may rewrite the asymptotic expansion as follows:
\begin{eqnarray}
u^{\e}&\sim&\sum_{k=0}^{\infty}\e^k\u_k+\sum_{k=0}^{\infty}\e^k\ub_k.
\end{eqnarray}
The remainder can be defined as
\begin{eqnarray}\label{pf 1}
R_N&=&u^{\e}-\sum_{k=0}^{N}\e^k\u_k-\sum_{k=0}^{N}\e^k\ub_k=u^{\e}-\q_N-\qb_N,
\end{eqnarray}
where
\begin{eqnarray}
\q_N&=&\sum_{k=0}^{N}\e^k\u_k,\\
\qb_N&=&\sum_{k=0}^{N}\e^k\ub_k.
\end{eqnarray}
Noting the equation (\ref{transport temp}) is equivalent to the
equation (\ref{transport}), we write $\ll$ to denote the neutron
transport operator as follows:
\begin{eqnarray}
\ll u&=&\e\vw\cdot\nx u+u-\bar u\\
&=&\sin\phi\frac{\p
u}{\p\eta}-\frac{\e}{1-\e\eta}\cos\phi\bigg(\frac{\p
u}{\p\phi}+\frac{\p u}{\p\theta}\bigg)+u-\bar u.\nonumber
\end{eqnarray}
\ \\
Step 2: Estimates of $\ll \q_N$.\\
The interior contribution can be estimated as
\begin{eqnarray}
\ll\q_0=\e\vw\cdot\nx \q_0+\q_0-\bar \q_0&=&\e\vw\cdot\nx
\u_0+(\u_0-\bu_0)=\e\vw\cdot\nx \u_0.
\end{eqnarray}
We have
\begin{eqnarray}
\abs{\e\vw\cdot\nx \u_0}\leq C\e\abs{\nx \u_0}\leq C\e.
\end{eqnarray}
This implies
\begin{eqnarray}
\abs{\ll \q_0}\leq C\e.
\end{eqnarray}
Similarly, for higher order term, we can estimate
\begin{eqnarray}
\ll\q_N=\e\vw\cdot\nx \q_N+\q_N-\bar \q_N&=&\e^{N+1}\vw\cdot\nx
\u_N.
\end{eqnarray}
We have
\begin{eqnarray}
\abs{\e^{N+1}\vw\cdot\nx \u_N}\leq C\e^{N+1}\abs{\nx \u_N}\leq
C\e^{N+1}.
\end{eqnarray}
This implies
\begin{eqnarray}\label{pf 2}
\abs{\ll \q_N}\leq C\e^{N+1}.
\end{eqnarray}
\ \\
Step 3: Estimates of $\ll \qb_N$.\\
The boundary layer solution is
$\ub_k=(f_k^{\e}-f_k^{\e}(\infty))\cdot\psi_0=\v_k\psi_0$ where
$f_k^{\e}(\eta,\theta,\phi)$ solves the $\e$-Milne problem and $\v_k=f_k^{\e}-f_k^{\e}(\infty)$. Notice
$\psi_0\psi=\psi_0$, so the boundary layer contribution can be
estimated as
\begin{eqnarray}\label{remainder temp 1}
\ll\qb_0&=&\sin\phi\frac{\p
\qb_0}{\p\eta}-\frac{\e}{1-\e\eta}\cos\phi\bigg(\frac{\p
\qb_0}{\p\phi}+\frac{\p \qb_0}{\p\theta}\bigg)+\qb_0-\bar
\qb_0\\
&=&\sin\phi\bigg(\psi_0\frac{\p
\v_0}{\p\eta}+\v_0\frac{\p\psi_0}{\p\eta}\bigg)-\frac{\psi_0\e}{1-\e\eta}\cos\phi\bigg(\frac{\p
\v_0}{\p\phi}+\frac{\p \v_0}{\p\theta}\bigg)+\psi_0 \v_0-\psi_0\bar\v_0\nonumber\\
&=&\sin\phi\bigg(\psi_0\frac{\p
\v_0}{\p\eta}+\v_0\frac{\p\psi_0}{\p\eta}\bigg)-\frac{\psi_0\psi\e}{1-\e\eta}\cos\phi\bigg(\frac{\p
\v_0}{\p\phi}+\frac{\p \v_0}{\p\theta}\bigg)+\psi_0 \v_0-\psi_0\bar\v_0\nonumber\\
&=&\psi_0\bigg(\sin\phi\frac{\p
\v_0}{\p\eta}-\frac{\e\psi}{1-\e\eta}\cos\phi\frac{\p
\v_0}{\p\phi}+\v_0-\bar\v_0\bigg)+\sin\phi
\frac{\p\psi_0}{\p\eta}\v_0-\frac{\psi_0\e}{1-\e\eta}\cos\phi\frac{\p
\v_0}{\p\theta}\nonumber\\
&=&\sin\phi
\frac{\p\psi_0}{\p\eta}\v_0-\frac{\psi_0\e}{1-\e\eta}\cos\phi\frac{\p
\v_0}{\p\theta}\nonumber.
\end{eqnarray}
Since $\psi_0=1$ when $\eta\leq 1/(4\e)$, the effective region
of $\px\psi_0$ is $\eta\geq1/(4\e)$ which is further and further
from the origin as $\e\rt0$. By Theorem \ref{Milne theorem 2}, the
first term in (\ref{remainder temp 1}) can be controlled as
\begin{eqnarray}
\abs{\sin\phi\frac{\p\psi_0}{\p\eta}\v_0}&\leq&
Ce^{-\frac{K_0}{\e}}\leq C\e.
\end{eqnarray}
For the second term in (\ref{remainder temp 1}), we have
\begin{eqnarray}
\abs{-\frac{\psi_0\e}{1-\e\eta}\cos\phi\frac{\p
\v_0}{\p\theta}}&\leq&C\e\abs{\frac{\p \v_0}{\p\theta}}\leq C\e.
\end{eqnarray}
This implies
\begin{eqnarray}
\abs{\ll \qb_0}\leq C\e.
\end{eqnarray}
Similarly, for higher order term, we can estimate
\begin{eqnarray}\label{remainder temp 2}
\ll\qb_N&=&\sin\phi\frac{\p
\qb_N}{\p\eta}-\frac{\e}{1-\e\eta}\cos\phi\bigg(\frac{\p
\qb_N}{\p\phi}+\frac{\p \qb_N}{\p\theta}\bigg)+\qb_N-\bar
\qb_N\\
&=&\sum_{i=0}^k\e^i\sin\phi
\frac{\p\psi_0}{\p\eta}\v_i-\frac{\psi_0\e^{k+1}}{1-\e\eta}\cos\phi\frac{\p
\v_k}{\p\theta}\nonumber.
\end{eqnarray}
Away from the origin, the first term in (\ref{remainder temp 2}) can
be controlled as
\begin{eqnarray}
\abs{\sum_{i=0}^k\e^i\sin\phi \frac{\p\psi_0}{\p\eta}\v_i}&\leq&
Ce^{-\frac{K_0}{\e}}\leq C\e^{k+1}.
\end{eqnarray}
For the second term in (\ref{remainder temp 2}), we have
\begin{eqnarray}
\abs{-\frac{\psi_0\e^{k+1}}{1-\e\eta}\cos\phi\frac{\p
\v_k}{\p\theta}}&\leq&C\e^{k+1}\abs{\frac{\p \v_k}{\p\theta}}\leq
C\e^{k+1}.
\end{eqnarray}
This implies
\begin{eqnarray}\label{pf 3}
\abs{\ll \qb_N}\leq C\e^{k+1}.
\end{eqnarray}
\ \\
Step 4: Proof of (\ref{main theorem 1}).\\
In summary, since $\ll u^{\e}=0$, collecting (\ref{pf 1}), (\ref{pf 2}) and (\ref{pf 3}), we can prove
\begin{eqnarray}
\abs{\ll R_N}\leq C\e^{N+1}.
\end{eqnarray}
Consider the asymptotic expansion to $N=3$, then the remainder $R_3$
satisfies the equation
\begin{eqnarray}
\left\{
\begin{array}{rcl}
\e \vw\cdot\nabla_x R_3+R_3-\bar R_3&=&\ll R_3\ \ \text{for}\ \ \vx\in\Omega,\\
R_3(\vx_0,\vw)&=&0\ \ \text{for}\ \ \vw\cdot\vec n<0\ \ \text{and}\
\ \vx_0\in\p\Omega.
\end{array}
\right.
\end{eqnarray}
By Theorem \ref{LI estimate}, we have
\begin{eqnarray}
\im{R_3}{\Omega\times\s^1}\leq \frac{C(\Omega)}{\e^3}\im{\ll
R_3}{\Omega\times\s^1}\leq\frac{C(\Omega)}{\e^3}(C\e^4)=C(\Omega)\e.
\end{eqnarray}
Hence, we have
\begin{eqnarray}
\nm{u^{\e}-\sum_{k=0}^3\e^k\u_k-\sum_{k=0}^3\e^k\ub_k}_{L^{\infty}(\Omega\times\s^1)}=O(\e).
\end{eqnarray}
Since it is easy to see
\begin{eqnarray}
\nm{\sum_{k=1}^3\e^k\u_k+\sum_{k=1}^3\e^k\ub_k}_{L^{\infty}(\Omega\times\s^1)}=O(\e),
\end{eqnarray}
our result naturally follows. This completes the proof of (\ref{main
theorem 1}).\\
\ \\
Step 5: Basic settings to show (\ref{main theorem 2}).\\
By (\ref{classical temp 1}), the solution $\f_0$ satisfies the Milne
problem
\begin{eqnarray}
\left\{
\begin{array}{rcl}\displaystyle
\sin(\theta+\xi)\frac{\p \f_0}{\p\eta}+\f_0-\bar \f_0&=&0,\\
\f_0(0,\theta,\xi)&=&g(\theta,\xi)\ \ \text{for}\ \
\sin(\theta+\xi)>0,\\\rule{0ex}{1em}
\lim_{\eta\rt\infty}\f_0(\eta,\theta,\xi)&=&f_0(\infty,\theta).
\end{array}
\right.
\end{eqnarray}
For convenience of comparison, we make the substitution
$\phi=\theta+\xi$ to obtain
\begin{eqnarray}
\left\{
\begin{array}{rcl}\displaystyle
\sin\phi\frac{\p \f_0}{\p\eta}+\f_0-\bar \f_0&=&0,\\
\f_0(0,\theta,\phi)&=&g(\theta,\phi)\ \ \text{for}\ \
\sin\phi>0,\\\rule{0ex}{1em}
\lim_{\eta\rt\infty}\f_0(\eta,\theta,\phi)&=&f_0(\infty,\theta).
\end{array}
\right.
\end{eqnarray}
Assume (\ref{main theorem 2}) is incorrect, such that
\begin{eqnarray}
\lim_{\e\rt0}\lnm{(\uc_0+\ubc_0)-(\u_0+\ub_0)}=0.
\end{eqnarray}
Since the boundary $g(\phi)=\cos\phi$ independent of $\theta$, by
(\ref{classical temp 1}) and (\ref{expansion temp 9}), it is obvious
the limit of zeroth order boundary layer $f_0(\infty,\theta)$ and
$f_0^{\e}(\infty,\theta)$ satisfy $f_0(\infty,\theta)=C_1$ and
$f_0^{\e}(\infty,\theta)=C_2$ for some constant $C_1$ and $C_2$
independent of $\theta$. By (\ref{classical temp 2}) and
(\ref{expansion temp 8}), we can derive the interior solutions are
indeed constants $\uc_0=C_1$ and $\u_0=C_2$. Hence, we may further
derive
\begin{eqnarray}\label{compare temp 5}
\lim_{\e\rt0}\lnm{(f_0(\infty)+\ubc_0)-(f_0^{\e}(\infty)+\ub_0)}=0.
\end{eqnarray}
For $0\leq\eta\leq 1/(2\e)$, we have $\psi_0=1$, which means
$\f_0=\ubc_0+f_0(\infty)$ and $f_0^{\e}=\ub_0+f_0^{\e}(\infty)$ on
$[0,1/(2\e)]$. Define $u=f_0+2$, $U=f_0^{\e}+2$ and
$G=g+2=\cos\phi+2$, then $u(\eta,\phi)$ satisfies the equation
\begin{eqnarray}\label{compare flat equation}
\left\{
\begin{array}{rcl}\displaystyle
\sin\phi\frac{\p u}{\p\eta}+u-\bar u&=&0,\\
u(0,\phi)&=&G(\phi)\ \ \text{for}\ \ \sin\phi>0,\\\rule{0ex}{1em}
\lim_{\eta\rt\infty}u(\eta,\phi)&=&2+f_0(\infty),
\end{array}
\right.
\end{eqnarray}
and $U(\eta,\phi)$ satisfies the equation
\begin{eqnarray}\label{compare force equation}
\left\{
\begin{array}{rcl}\displaystyle
\sin\phi\frac{\p U}{\p\eta}+F(\e;\eta)\cos\phi \frac{\p
U}{\p\phi}+U-\bar
U&=&0,\\
U(0,\phi)&=&G(\phi)\ \ \text{for}\ \ \sin\phi>0,\\\rule{0ex}{1em}
\lim_{\eta\rt\infty}U(\eta,\phi)&=&2+f_0^{\e}(\infty).
\end{array}
\right.
\end{eqnarray}
Based on (\ref{compare temp 5}), we have
\begin{eqnarray}
\lim_{\e\rt0}\lnm{U(\eta,\phi)-u(\eta,\phi)}=0.
\end{eqnarray}
Then it naturally implies
\begin{eqnarray}
\lim_{\e\rt0}\lnm{\bar U(\eta)-\bar u(\eta)}=0.
\end{eqnarray}
\ \\
Step 6: Continuity of $\bar u$ and $\bar U$ at $\eta=0$.\\
For the problem (\ref{compare flat equation}), we have for any
$r_0>0$
\begin{eqnarray}
\abs{\bar u(\eta)-\bar
u(0)}&\leq&\frac{1}{2\pi}\bigg(\int_{\sin\phi\leq
r_0}\abs{u(\eta,\phi)-u(0,\phi)}\ud{\phi}+\int_{\sin\phi\geq
r_0}\abs{u(\eta,\phi)-u(0,\phi)}\ud{\phi}\bigg).
\end{eqnarray}
Since we have shown $u\in L^{\infty}([0,\infty)\times[-\pi,\pi))$,
then for any $\delta>0$, we can take $r_0$ sufficiently small such
that
\begin{eqnarray}
\frac{1}{2\pi}\int_{\sin\phi\leq
r_0}\abs{u(\eta,\phi)-u(0,\phi)}\ud{\phi}&\leq&\frac{C}{2\pi}\arcsin
r_0\leq \frac{\delta}{2}.
\end{eqnarray}
For fixed $r_0$ satisfying above requirement, we estimate the
integral on $\sin\phi\geq r_0$. By Ukai's trace theorem, $u(0,\phi)$
is well-defined in the domain $\sin\phi\geq r_0$ and is continuous.
Also, by consider the relation
\begin{eqnarray}
\frac{\p u}{\p\eta}(0,\phi)=\frac{\bar u(0)-u(0,\phi)}{\sin\phi},
\end{eqnarray}
we can obtain in this domain $\px u$ is bounded, which further
implies $u(\eta,\phi)$ is uniformly continuous at $\eta=0$. Then
there exists $\delta_0>0$ sufficiently small, such that for any
$0\leq\eta\leq\delta_0$, we have
\begin{eqnarray}
\frac{1}{2\pi}\int_{\sin\phi\geq
r_0}\abs{u(\eta,\phi)-u(0,\phi)}\ud{\phi}&\leq&\frac{1}{2\pi}\int_{\sin\phi\geq
r_0}\frac{\delta}{2}\ud{\phi}\leq\frac{\delta}{2}.
\end{eqnarray}
In summary, we have shown for any $\delta>0$, there exists
$\delta_0>0$ such that for any $0\leq\eta\leq\delta_0$,
\begin{eqnarray}
\abs{\bar u(\eta)-\bar
u(0)}\leq\frac{\delta}{2}+\frac{\delta}{2}=\delta.
\end{eqnarray}
Hence, $\bar u(\eta)$ is continuous at $\eta=0$. By a similar
argument along the characteristics, we can show $\bar U(\eta,\phi)$
is also continuous at $\eta=0$.

In the following, by the continuity, we assume for arbitrary
$\delta>0$, there exists a $\delta_0>0$ such that for any
$0\leq\eta\leq\delta_0$, we have
\begin{eqnarray}
\abs{\bar u(\eta)-\bar u(0)}&\leq&\delta\label{compare temp 1},\\
\abs{\bar U(\eta)-\bar U(0)}&\leq&\delta\label{compare temp 2}.
\end{eqnarray}
\ \\
Step 7 Milne formulation.\\
We consider the solution at a specific point $(\eta,\phi)=(n\e,\e)$ for some
fixed $n>0$. The solution along the characteristics can be rewritten
as follows:
\begin{eqnarray}\label{compare temp 3}
u(n\e,\e)=G(\e)e^{-\frac{1}{\sin\e}n\e}
+\int_0^{n\e}e^{-\frac{1}{\sin\e}(n\e-\k)}\frac{1}{\sin\e}\bar
u(\k)\ud{\k},
\end{eqnarray}
\begin{eqnarray}\label{compare temp 4}
U(n\e,\e)=G(\e_0)e^{-\int_0^{n\e}\frac{1}{\sin\phi(\zeta)}\ud{\zeta}}
+\int_0^{n\e}e^{-\int_{\k}^{n\e}\frac{1}{\sin\phi(\zeta)}\ud{\zeta}}\frac{1}{\sin\phi(\k)}\bar
U(\k)\ud{\k},
\end{eqnarray}
where we have the conserved energy along the characteristics
\begin{eqnarray}
E(\eta,\phi)=\cos\phi e^{-V(\eta)},
\end{eqnarray}
in which $(0,\e_0)$ and $(\zeta,\phi(\zeta))$ are in the same
characteristics of $(n\e,\e)$.\\
\ \\
Step 8: Estimates of (\ref{compare temp 3}).\\
We turn to the Milne problem for $u$. We have the natural estimate
\begin{eqnarray}
\int_0^{n\e}e^{-\frac{1}{\sin\e}(n\e-\k)}\frac{1}{\sin\e}\ud{\k}&=&\int_0^{n\e}e^{-\frac{1}{\e}(n\e-\k)}\frac{1}{\e}\ud{\k}+o(\e)\\
&=&e^{-n}\int_0^{n\e}e^{\frac{\k}{\e}}\frac{1}{\e}\ud{\k}+o(\e)\nonumber\\
&=&e^{-n}\int_0^ne^{\zeta}\ud{\zeta}+o(\e)\nonumber\\
&=&(1-e^{-n})+o(\e)\nonumber.
\end{eqnarray}
Then for $0<\e\leq\delta_0$, we have $\abs{\bar u(0)-\bar
u(\k)}\leq\delta$, which implies
\begin{eqnarray}
\int_0^{n\e}e^{-\frac{1}{\sin\e}(n\e-\k)}\frac{1}{\sin\e}\bar
u(\k)\ud{\k}&=&
\int_0^{n\e}e^{-\frac{1}{\sin\e}(n\e-\k)}\frac{1}{\sin\e}\bar u(0)\ud{\k}+O(\delta)\\
&=&(1-e^{-n})\bar u(0)+o(\e)+O(\delta)\nonumber.
\end{eqnarray}
For the boundary data term, it is easy to see
\begin{eqnarray}
G(\e)e^{-\frac{1}{\sin\e}n\e}&=&e^{-n}G(\e)+o(\e)
\end{eqnarray}
In summary, we have
\begin{eqnarray}
u(n\e,\e)=(1-e^{-n})\bar u(0)+e^{-n}G(\e)+o(\e)+O(\delta).
\end{eqnarray}
\ \\
Step 9: Estimates of (\ref{compare temp 4}).\\
We consider the $\e$-Milne problem for $U$. For $\e<<1$ sufficiently
small, $\psi(\e)=1$. Then we may estimate
\begin{eqnarray}
\cos\phi(\zeta)e^{-V(\zeta)}=\cos\e e^{-V(n\e)},
\end{eqnarray}
which implies
\begin{eqnarray}
\cos\phi(\zeta)=\frac{1-n\e^2}{1-\e\zeta}\cos\e.
\end{eqnarray}
and hence
\begin{eqnarray}
\sin\phi(\zeta)=\sqrt{1-\cos^2\phi(\zeta)}=\sqrt{\frac{\e(n\e-\zeta)(2-\e\zeta-n\e^2)}{(1-\e\zeta)^2}\cos^2\e+\sin^2\e}.
\end{eqnarray}
For $\zeta\in[0,\e]$ and $n\e$ sufficiently small, by Taylor's
expansion, we have
\begin{eqnarray}
1-\e\zeta&=&1+o(\e),\\
2-\e\zeta-n\e^2&=&2+o(\e),\\
\sin^2\e&=&\e^2+o(\e^3),\\
\cos^2\e&=&1-\e^2+o(\e^3).
\end{eqnarray}
Hence, we have
\begin{eqnarray}
\sin\phi(\zeta)=\sqrt{\e(\e+2n\e-2\zeta)}+o(\e^2).
\end{eqnarray}
Since $\sqrt{\e(\e+2n\e-2\zeta)}=O(\e)$, we can further estimate
\begin{eqnarray}
\frac{1}{\sin\phi(\zeta)}&=&\frac{1}{\sqrt{\e(\e+2n\e-2\zeta)}}+o(1)\\
-\int_{\k}^{n\e}\frac{1}{\sin\phi(\zeta)}\ud{\zeta}&=&\sqrt{\frac{\e+2n\e-2\zeta}{\e}}\bigg|_{\k}^{n\e}+o(\e)
=1-\sqrt{\frac{\e+2n\e-2\k}{\e}}+o(\e).
\end{eqnarray}
Then we can easily derive the integral estimate
\begin{eqnarray}
\int_0^{n\e}e^{-\int_{\k}^{n\e}\frac{1}{\sin\phi(\zeta)}\ud{\zeta}}\frac{1}{\sin\phi(\k)}\ud{\k}&=&
e^1\int_0^{n\e}e^{-\sqrt{\frac{\e+2n\e-2\k}{\e}}}\frac{1}{\sqrt{\e(\e+2n\e-2\k)}}\ud{\k}+o(\e)\\
&=&\half e^1\int_{\e}^{(1+2n)\e}e^{-\sqrt{\frac{\sigma}{\e}}}\frac{1}{\sqrt{\e\sigma}}\ud{\sigma}+o(\e)\nonumber\\
&=&\half e^1\int_{1}^{1+2n}e^{-\sqrt{\rho}}\frac{1}{\sqrt{\rho}}\ud{\rho}+o(\e)\nonumber\\
&=&e^1\int_{1}^{\sqrt{{1+2n}}}e^{-t}\ud{t}+o(\e)\nonumber\\
&=&(1-e^{1-\sqrt{1+2n}})+o(\e)\nonumber.
\end{eqnarray}
Then for $0<\e\leq\delta_0$, we have $\abs{\bar U(0)-\bar
U(\k)}\leq\delta$, which implies
\begin{eqnarray}
\int_0^{n\e}e^{-\int_{\k}^{n\e}\frac{1}{\sin\phi(\zeta)}\ud{\zeta}}\frac{1}{\sin\phi(\k)}\bar
U(\k)\ud{\k}&=&
\int_0^{n\e}e^{-\int_{\k}^{n\e}\frac{1}{\sin\phi(\zeta)}\ud{\zeta}}\frac{1}{\sin\phi(\k)}\bar U(0)\ud{\k}+O(\delta)\\
&=&(1-e^{1-\sqrt{1+2n}})\bar U(0)+o(\e)+O(\delta)\nonumber.
\end{eqnarray}
For the boundary data term, since $G(\phi)$ is $C^1$, a similar
argument shows
\begin{eqnarray}
G(\e_0)e^{-\int_0^{n\e}\frac{1}{\sin\phi(\zeta)}\ud{\zeta}}&=&e^{1-\sqrt{1+2n}}G(\sqrt{1+2n}\e)+o(\e).
\end{eqnarray}
Therefore, we have
\begin{eqnarray}
U(n\e,\e)=(1-e^{1-\sqrt{1+2n}})\bar
U(0)+e^{1-\sqrt{1+2n}}G(\sqrt{1+2n}\e)+o(\e)+O(\delta).
\end{eqnarray}
\ \\
Step 10: Proof of (\ref{main theorem 2}).\\
In summary, we have the estimate
\begin{eqnarray}
u(n\e,\e)&=&(1-e^{-n})\bar u(0)+e^{-n}G(\e)+o(\e)+O(\delta),\\
U(n\e,\e)&=&(1-e^{1-\sqrt{1+2n}})\bar
U(0)+e^{1-\sqrt{1+2n}}G(\sqrt{1+2n}\e)+o(\e)+O(\delta).
\end{eqnarray}
The boundary data is $G=\cos\phi+2$. Then by the maximum principle
in Theorem \ref{Milne theorem 3}, we can achieve $1\leq
u(0,\phi)\leq3$ and $1\leq U(0,\phi)\leq3$. Since
\begin{eqnarray}
\bar u(0)&=&\frac{1}{2\pi}\int_{-\pi}^{\pi}u(0,\phi)\ud{\phi}
=\frac{1}{2\pi}\int_{\sin\phi>0}u(0,\phi)\ud{\phi}+\frac{1}{2\pi}\int_{\sin\phi<0}u(0,\phi)\ud{\phi}\\
&=&\frac{1}{2\pi}\int_{\sin\phi>0}\cos\phi\ud{\phi}+\frac{1}{2\pi}\int_{\sin\phi<0}u(0,\phi)\ud{\phi}\nonumber\\
&=&\frac{1}{2\pi}\int_{\sin\phi<0}M\ud{\phi}+\frac{1}{2\pi}\int_{\sin\phi<0}u(0,\phi)\ud{\phi}\nonumber,
\end{eqnarray}
we naturally obtain $3/2\leq\bar u(0)\leq 5/2$. Similarly, we can
obtain $3/2\leq\bar U(0)\leq 5/2$. Furthermore, for $\e$
sufficiently small, we have
\begin{eqnarray}
G(\sqrt{1+2n}\e)&=&3+o(\e),\\
G(\e)&=&3+o(\e).
\end{eqnarray}
Hence, we can obtain
\begin{eqnarray}
u(n\e,\e)&=&\bar u(0)+e^{-n}(-\bar u(0)+3)+o(\e)+O(\delta),\\
U(n\e,\e)&=&\bar U(0)+e^{1-\sqrt{1+2n}}(-\bar
U(0)+3)+o(\e)+O(\delta).
\end{eqnarray}
Then we can see $\lim_{\e\rt0}\lnm{\bar U(0)-\bar u(0)}=0$ naturally
leads to $\lim_{\e\rt0}\lnm{(-\bar u(0)+3)-(-\bar U(0)+3)}=0$. Also,
we have $-\bar u(0)+3=O(1)$ and $-\bar U(0)+3=O(1)$. Due to the
smallness of $\e$ and $\delta$, and also $e^{-n}\neq e^{1-\sqrt{1+2n}}$, we can obtain
\begin{eqnarray}
\abs{U(n\e,\e)-u(n\e,\e)}=O(1).
\end{eqnarray}
However, above result contradicts our assumption that
$\lim_{\e\rt0}\lnm{U(\eta,\phi)-u(\eta,\phi)}=0$ for any
$(\eta,\phi)$. This completes the proof of (\ref{main theorem 2}).

\section{Neutron Transport Equation with Diffusive Boundary}

\subsection{Problem Settings}

We consider the steady neutron transport equation in a
two-dimensional unit disk with diffusive boundary. In the space
domain $\Omega=\{\vx:\ \abs{\vx}\leq 1\}$ and the velocity domain
$\Sigma=\{\vw:\ \vw\in\s^1\}$, the neutron density $u^{\e}(\vx,\vw)$
satisfies
\begin{eqnarray}
\left\{
\begin{array}{rcl}\displaystyle
\e \vw\cdot\nabla_x u^{\e}+(1+\e^2)u^{\e}-\bar u^{\e}&=&0\label{transport.}\ \ \text{in}\ \ \Omega,\\
u^{\e}(\vx_0,\vw)&=&\pp u^{\e}(\vx_0)+\e g(\vx_0,\vw)\ \ \text{for}\
\ \vw\cdot\vec n<0\ \ \text{and}\ \ \vx_0\in\p\Omega,
\end{array}
\right.
\end{eqnarray}
where
\begin{eqnarray}\label{average 2}
\bar u^{\e}(\vx)=\frac{1}{2\pi}\int_{\s^1}u^{\e}(\vx,\vw)\ud{\vw},
\end{eqnarray}
\begin{eqnarray}\label{diffusive}
\pp u^{\e}(\vx_0)=\frac{1}{2}\int_{\vw\cdot\vec
n>0}u^{\e}(\vx_0,\vw)(\vw\cdot\vec n)\ud{\vw},
\end{eqnarray}
with the Knudsen number $0<\e<<1$.

\subsection{Interior Expansion}

We define the interior expansion as follows:
\begin{eqnarray}\label{interior expansion.}
\uc(\vx,\vw)\sim\sum_{k=0}^{\infty}\e^k\uc_k(\vx,\vw),
\end{eqnarray}
where $\uc_k$ can be defined by comparing the order of $\e$ by
plugging (\ref{interior expansion.}) into the equation
(\ref{transport.}). Thus we have
\begin{eqnarray}
\uc_0-\buc_0&=&0,\label{expansion temp 1.}\\
\uc_1-\buc_1&=&-\vw\cdot\nx\uc_0,\label{expansion temp 2.}\\
\uc_2-\buc_2&=&-\vw\cdot\nx\uc_1-\uc_0,\label{expansion temp 3.}\\
\ldots\nonumber\\
\uc_k-\buc_k&=&-\vw\cdot\nx\uc_{k-1}-\uc_{k-2}.
\end{eqnarray}
\ \\
The following analysis reveals the equation satisfied by
$\uc_k$:\\
Plugging (\ref{expansion temp 1.}) into (\ref{expansion temp 2.}),
we obtain
\begin{eqnarray}
\uc_1=\buc_1-\vw\cdot\nx\buc_0.\label{expansion temp 4.}
\end{eqnarray}
Plugging (\ref{expansion temp 4.}) into (\ref{expansion temp 3.}),
we get
\begin{eqnarray}\label{expansion temp 13.}
\uc_2-\buc_2=-\vw\cdot\nx(\buc_1-\vw\cdot\nx\buc_0)=-\vw\cdot\nx\buc_1+\vw^2\Delta_x\buc_0+2w_xw_y\p_{xy}\buc_0-\uc_0.
\end{eqnarray}
Integrating (\ref{expansion temp 13.}) over $\vw\in\s^1$, we achieve
the final form
\begin{eqnarray}
\Delta_x\buc_0-\buc_0=0.
\end{eqnarray}
which further implies $\uc_0(\vx,\vw)$ satisfies the equation
\begin{eqnarray}\label{interior 1.}
\left\{ \begin{array}{rcl} \uc_0&=&\buc_0,\\
\Delta_x\buc_0-\buc_0&=&0.
\end{array}
\right.
\end{eqnarray}
In a similar fashion, $\uc_1$ satisfies
\begin{eqnarray}\label{interior 2.}
\left\{ \begin{array}{rcl} \uc_1&=&\buc_k-\vw\cdot\nx\uc_0,\\
\Delta_x\buc_1-\buc_1&=&\displaystyle-\int_{\s^1}\vw\cdot\nx\uc_{0}\ud{\vw}.\end{array}
\right.
\end{eqnarray}
Also, $\uc_k(\vx,\vw)$ for $k\geq2$ satisfies
\begin{eqnarray}\label{interior 3.}
\left\{ \begin{array}{rcl}
\uc_k&=&\buc_k-\vw\cdot\nx\uc_{k-1}-\uc_{k-2},\\
\Delta_x\buc_k-\buc_k&=&\displaystyle-\int_{\s^1}\vw\cdot\nx\uc_{k-1}\ud{\vw}-\int_{\s^1}\uc_{k-2}\ud{\vw}.
\end{array}
\right.
\end{eqnarray}

\subsection{Milne Expansion}

In order to determine the boundary condition fir $\uc_k$, it is
well-known that we need to define the boundary layer expansion. We
still perform the substitutions (\ref{substitution 1}),
(\ref{substitution 2}) and (\ref{substitution 3}) to the equation
(\ref{transport.}) to obtain the form
\begin{eqnarray}\label{classical temp.}
\left\{\begin{array}{l}\displaystyle \sin(\theta+\xi)\frac{\p
u^{\e}}{\p\eta}-\frac{\e}{1-\e\eta}\cos(\theta+\xi)\frac{\p
u^{\e}}{\p\theta}+(1+\e^2)u^{\e}-\frac{1}{2\pi}\int_{-\pi}^{\pi}u^{\e}\ud{\xi}=0,\\\rule{0ex}{1.0em}
u^{\e}(0,\theta,\xi)=\pp u^{\e}(0,\theta)+\e g(\theta,\xi)\ \
\text{for}\ \ \sin(\theta+\xi)>0,
\end{array}
\right.
\end{eqnarray}
where
\begin{eqnarray}
\pp
u^{\e}(0,\theta)=-\half\int_{\sin(\theta+\xi)<0}u^{\e}(0,\theta,\xi)\sin(\theta+\xi)\ud{\xi}.
\end{eqnarray}
\ \\
Define the Milne expansion of boundary layer as follows:
\begin{eqnarray}\label{classical expansion.}
\ubc(\eta,\theta,\phi)\sim\sum_{k=0}^{\infty}\e^k\ubc_k(\eta,\theta,\phi)
\end{eqnarray}
where $\ubc_k$ is defined by comparing the order of $\e$ via
plugging (\ref{classical expansion.}) into the equation
(\ref{classical temp.}). Thus, in a neighborhood of the boundary, we
have
\begin{eqnarray}
\sin(\theta+\xi)\frac{\p
\ubc_0}{\p\eta}+\ubc_0-\bubc_0&=&0,\label{cexpansion temp 5.}\\
\sin(\theta+\xi)\frac{\p
\ubc_1}{\p\eta}+\ubc_1-\bubc_1&=&\frac{1}{1-\e\eta}\cos(\theta+\xi)\frac{\p
\ubc_0}{\p\theta},\label{cexpansion temp 6.}\\
\sin(\theta+\xi)\frac{\p
\ubc_2}{\p\eta}+\ubc_2-\bubc_2&=&\frac{1}{1-\e\eta}\cos(\theta+\xi)\frac{\p
\ubc_1}{\p\theta}-\ubc_0,\\
\ldots\nonumber\\
\sin(\theta+\xi)\frac{\p
\ubc_k}{\p\eta}+\ubc_k-\bubc_k&=&\frac{1}{1-\e\eta}\cos(\theta+\xi)\frac{\p
\ubc_{k-1}}{\p\theta}-\ubc_{k-2},
\end{eqnarray}
where
\begin{eqnarray}
\bar
\ubc_k(\eta,\theta)=\frac{1}{2\pi}\int_{-\pi}^{\pi}\ubc_k(\eta,\theta,\xi)\ud{\xi}.
\end{eqnarray}
The construction of $\uc_k$ and $\ubc_k$ in \cite{book1} can be
summarized as follows: \\
\ \\
Step 1: Construction of $\ubc_0$.\\
Define $\psi$ and $\psi_0$ as (\ref{cut-off 1}) and (\ref{cut-off
2}). Then the zeroth order boundary layer solution can be defined as
\begin{eqnarray}\label{classical temp 1.}
\left\{
\begin{array}{rcl}
\ubc_0&=&\psi_0(\e\eta)\bigg(\f_0(\eta,\theta,\xi)-f_0(\infty,\theta)\bigg),\\
\sin(\theta+\xi)\dfrac{\p \f_0}{\p\eta}+\f_0-\bar \f_0&=&0,\\
\f_0(0,\theta,\xi)&=&\pp \f_0(0,\theta) \ \ \text{for}\ \
\sin(\theta+\xi)>0,\\\rule{0ex}{1em}
\lim_{\eta\rt\infty}\f_0(\eta,\theta,\xi)&=&f_0(\infty,\theta),
\end{array}
\right.
\end{eqnarray}
with
\begin{eqnarray}
\pp \f_0(0,\theta)=0.
\end{eqnarray}
It is easy to see $\ubc_0=\f_0=0$.\\
\ \\
Step 2: Construction of $\ubc_1$ and $\uc_0$.\\
Define the first order boundary layer solution as
\begin{eqnarray}\label{classical temp 2.}
\left\{
\begin{array}{rcl}
\ubc_1&=&\psi_0(\e\eta)\bigg(\f_1(\eta,\theta,\xi)-f_1(\infty,\theta)\bigg),\\
\sin(\theta+\xi)\dfrac{\p \f_1}{\p\eta}+\f_1-\bar
\f_1&=&\cos(\theta+\xi)\dfrac{\psi(\e\eta)}{1-\e\eta}\dfrac{\p
\ubc_0}{\p\theta},\\\rule{0ex}{1em} \f_1(0,\theta,\xi)&=&\pp
\f_1(0,\theta)+g_1(\theta,\xi) \ \ \text{for}\ \
\sin(\theta+\xi)>0,\\\rule{0ex}{1em}
\lim_{\eta\rt\infty}\f_1(\eta,\theta,\xi)&=&f_1(\infty,\theta),
\end{array}
\right.
\end{eqnarray}
with
\begin{eqnarray}
\pp \f_1(0,\theta)=0,
\end{eqnarray}
where $g_1=\vw\cdot\nx \uc_0-\pp(\vw\cdot\nx \uc_0)+g$ in which $(\vx_0,\vw)$ and $(0,\theta,\xi)$ denote the same point. Since
$\ubc_0=0$, we can obtain a unique $\f_1(\eta,\theta,\xi)\in
L^{\infty}([0,\infty)\times[-\pi,\pi)\times[-\pi,\pi))$. Hence,
$\ubc_1$ is well-defined. By the compatibility condition (\ref{Milne
compatibility condition}), we can define the zeroth order interior
solution as
\begin{eqnarray}\label{classical temp 3.}
\left\{
\begin{array}{rcl}
\uc_0&=&\bar \uc_0 ,\\\rule{0ex}{1em} \Delta_x \buc_0-\buc_0&=&0\ \
\text{in}\ \ \Omega,\\\rule{0ex}{1em}\dfrac{\p \uc_0}{\p\vec
n}&=&\dfrac{1}{\pi}\displaystyle
\int_{\sin(\theta+\xi)>0}g(\theta,\xi)\sin(\theta+\xi)\ud{\xi}\ \ \text{on}\ \
\p\Omega.
\end{array}
\right.
\end{eqnarray}
\ \\
Step 3: Construction of $\ubc_2$ and $\uc_1$.\\
Define the second order boundary layer solution as
\begin{eqnarray}\label{classical temp 4.}
\left\{
\begin{array}{rcl}
\ubc_2&=&\psi_0(\e\eta)\bigg(\f_2(\eta,\theta,\xi)-f_2(\infty,\theta)\bigg),\\
\sin(\theta+\xi)\dfrac{\p \f_2}{\p\eta}+\f_2-\bar
\f_2&=&\cos(\theta+\xi)\dfrac{\psi(\e\eta)}{1-\e\eta}\dfrac{\p
\ubc_1}{\p\theta}
-\psi(\e\eta)\ubc_0(\eta,\theta,\xi),\\\rule{0ex}{1em}
\f_2(0,\theta,\xi)&=&\pp \f_2(0,\theta)+g_2(\theta,\xi) \ \
\text{for}\ \ \sin(\theta+\xi)>0,\\\rule{0ex}{1em}
\lim_{\eta\rt\infty}\f_2(\eta,\theta,\xi)&=&f_2(\infty,\theta),
\end{array}
\right.
\end{eqnarray}
with
\begin{eqnarray}
\pp \f_2(0,\theta)=0.
\end{eqnarray}
where $g_2=\vw\cdot\nx \uc_1-\pp(\vw\cdot\nx\uc_1)+\uc_0-\pp\uc_0$.
By the compatibility condition (\ref{Milne compatibility
condition}), we define the first order interior solution as
\begin{eqnarray}\label{expansion temp 22.}
\left\{
\begin{array}{rcl}
\uc_1&=&\buc_1-\vw\cdot\nx\uc_0,\\\rule{0ex}{1em}
\Delta_x\buc_1-\buc_1&=&-\displaystyle\int_{\s^1}(\vw\cdot\nx\uc_{0})\ud{\vw}\
\ \text{in}\ \ \Omega,\\\rule{0ex}{2.0em} \dfrac{\p\uc_1}{\p\vec
n}&=&\dfrac{1}{\pi}\displaystyle\int_0^{\infty}\int_{-\pi}^{\pi}\dfrac{\psi(\e
s)}{1-\e s}\cos(\theta+\xi)\frac{\p
\ubc_1}{\p\theta}(s,\theta,\xi)\ud{\xi}\ud{s}\ \ \text{on}\ \
\p\Omega.
\end{array}
\right.
\end{eqnarray}
\ \\
Step 4: Generalization to arbitrary $k$.\\
Define the $k^{th}$ order boundary layer solution as
\begin{eqnarray}
\left\{
\begin{array}{rcl}
\ubc_k&=&\psi_0(\e\eta)\bigg(\f_k(\eta,\theta,\xi)-f_k(\infty,\theta)\bigg),\\
\sin(\theta+\xi)\dfrac{\p \f_k}{\p\eta}+\f_k-\bar
\f_k&=&\cos(\theta+\xi)\dfrac{\psi(\e\eta)}{1-\e\eta}\dfrac{\p
\ubc_{k-1}}{\p\theta}
-\psi(\e\eta)\ubc_{k-2}(\eta,\theta,\xi),\\\rule{0ex}{1em}
\f_k(0,\theta,\xi)&=&\pp \f_k(0,\theta)+g_k(\theta,\xi) \ \
\text{for}\ \ \sin(\theta+\xi)>0,\\\rule{0ex}{1em}
\lim_{\eta\rt\infty}\f_k(\eta,\theta,\xi)&=&f_k(\infty,\theta),
\end{array}
\right.
\end{eqnarray}
with
\begin{eqnarray}
\pp \f_k(0,\theta)=0.
\end{eqnarray}
where $g_k=\vw\cdot\nx
\uc_{k-1}-\pp(\vw\cdot\nx\uc_{k-1})+\uc_{k-2}-\pp\uc_{k-2}$. By the
compatibility condition (\ref{Milne compatibility condition}), we
can define the $(k-1)^{th}$ order interior solution as
\begin{eqnarray}
\\
\left\{
\begin{array}{rcl}
\uc_{k-1}&=&\buc_{k-1}-\vw\cdot\nx\uc_{k-2}-\uc_{k-3},\\\rule{0ex}{2em}
\Delta_x\buc_{k-1}-\buc_{k-1}&=&-\displaystyle\int_{\s^1}(\vw\cdot\nx\uc_{k-2})\ud{\vw}-\displaystyle\int_{\s^1}\uc_{k-3}\ud{\vw}\
\ \text{in}\ \ \Omega,\\\rule{0ex}{2.0em}
\dfrac{\p\buc_{k-1}}{\p\vec
n}&=&\dfrac{1}{\pi}\displaystyle\int_{-\pi}^{\pi}\bigg(\vw\cdot\nx(\vw\cdot\uc_{k-2}+\uc_{k-3})\bigg)\sin\phi\ud{\phi}\\
&&+\dfrac{1}{\pi}\displaystyle\int_0^{\infty}\int_{-\pi}^{\pi}\bigg(\dfrac{\psi(\e
s)}{1-\e s}\cos(\theta+\xi)\frac{\p \ubc_{k-1}}{\p\theta}-\psi(\e
s)\ubc_{k-2}\bigg)(s,\theta,\xi)\ud{\xi}\ud{s}\ \ \text{on}\ \
\p\Omega.\nonumber
\end{array}
\right.
\end{eqnarray}
In \cite[pp.143]{book1}, the author proved the expansion can be
applied to the second order of $\e$. Based on Remark \ref{Milne
remark.} to the Milne problem, in order to show the existence of a
solution $\f_2(\eta,\theta,\xi)\in
L^{\infty}([0,\infty)\times[-\pi,\pi)\times[-\pi,\pi))$, we at least
require the source term
\begin{eqnarray}
\cos(\theta+\xi)\frac{\psi}{1-\e\eta}\frac{\p \ubc_1}{\p\theta}\in
L^{\infty}([0,\infty)\times[-\pi,\pi)\times[-\pi,\pi)),
\end{eqnarray}
since $\ubc_0=0$. We thus need to require
\begin{eqnarray}
\frac{\p (\f_1-f_1(\infty,\theta))}{\p\theta}\in
L^{\infty}([0,\infty)\times[-\pi,\pi)\times[-\pi,\pi)).
\end{eqnarray}
Since $Z=\ps (\f_1-f_1(\infty))$ satisfies the equation
\begin{eqnarray}
\left\{
\begin{array}{rcl}
\sin(\theta+\xi)\dfrac{\p Z}{\p\eta}+Z-\bar Z&=&-\cos(\theta+\xi)\dfrac{\p \f_1}{\p\eta},\\
Z(0,\theta,\xi)&=&\pp Z(0,\theta)+\dfrac{\p
g_1}{\p\theta}(\theta,\xi)\ \ \text{for}\ \ \sin(\theta+\xi)>0,
\end{array}
\right.
\end{eqnarray}
in order for $Z\in
L^{\infty}([0,\infty)\times[-\pi,\pi)\times[-\pi,\pi))$, assuming
the boundary data $\ps g_1\in L^{\infty}(\Gamma^-)$, we require the
source term
\begin{eqnarray}
-\cos(\theta+\xi)\frac{\p \f_1}{\p\eta}\in
L^{\infty}([0,\infty)\times[-\pi,\pi)\times[-\pi,\pi)).
\end{eqnarray}
On the other hand, by Lemma \ref{counter theorem 1.}, we can show
for specific $g$, it holds that $\px\f_1\notin
L^{\infty}([0,\infty)\times[-\pi,\pi)\times[-\pi,\pi))$. Due to the
intrinsic singularity in the solution to (\ref{classical temp 2.}),
the construction in \cite{book1} breaks down.

\subsection{$\e$-Milne Expansion with Geometric Correction}

In order to overcome the difficulty in estimating
\begin{eqnarray}
\cos(\theta+\xi)\frac{\psi}{1-\e\eta}\frac{\p \ubc_k}{\p\theta},
\end{eqnarray}
we introduce one more substitution (\ref{substitution 4}) to
decompose this term and transform the equation (\ref{transport.})
into
\begin{eqnarray}\label{transport temp.}
\left\{\begin{array}{l}\displaystyle \sin\phi\frac{\p
u^{\e}}{\p\eta}-\frac{\e}{1-\e\eta}\cos\phi\bigg(\frac{\p
u^{\e}}{\p\phi}+\frac{\p
u^{\e}}{\p\theta}\bigg)+(1+\e^2)u^{\e}-\frac{1}{2\pi}\int_{-\pi}^{\pi}u^{\e}\ud{\phi}=0,\\\rule{0ex}{1.0em}
u^{\e}(0,\theta,\phi)=\pp u^{\e}(0,\theta)+\e g(\theta,\phi)\ \
\text{for}\ \ \sin\phi>0,
\end{array}
\right.
\end{eqnarray}
with
\begin{eqnarray}
\pp
u^{\e}(0,\theta)=-\half\int_{\sin\phi<0}u^{\e}(0,\theta,\xi)\sin\phi\ud{\phi}.
\end{eqnarray}
We define the $\e$-Milne expansion of boundary layer as follows:
\begin{eqnarray}\label{boundary layer expansion.}
\ub(\eta,\theta,\phi)\sim\sum_{k=0}^{\infty}\e^k\ub_k(\eta,\theta,\phi)
\end{eqnarray}
where $\ub_k$ can be defined by comparing the order of $\e$ via
plugging (\ref{boundary layer expansion.}) into the equation
(\ref{transport temp.}). Thus, in a neighborhood of the boundary, we
have
\begin{eqnarray}
\sin\phi\frac{\p \ub_0}{\p\eta}+\frac{\e}{1-\e\eta}\cos\phi\frac{\p
\ub_0}{\p\phi}+\ub_0-\bub_0&=&0\label{expansion temp 5.}\\
\sin\phi\frac{\p \ub_1}{\p\eta}+\frac{\e}{1-\e\eta}\cos\phi\frac{\p
\ub_1}{\p\phi}+\ub_1-\bub_1&=&\frac{1}{1-\e\eta}\cos\phi\frac{\p
\ub_0}{\p\theta}\label{expansion temp 6.}\\
\sin\phi\frac{\p \ub_2}{\p\eta}+\frac{\e}{1-\e\eta}\cos\phi\frac{\p
\ub_2}{\p\phi}+\ub_2-\bub_2&=&\frac{1}{1-\e\eta}\cos\phi\frac{\p
\ub_1}{\p\theta}-\ub_0\label{expansion temp 7.}\\
\ldots\nonumber\\
\sin\phi\frac{\p \ub_k}{\p\eta}+\frac{\e}{1-\e\eta}\cos\phi\frac{\p
\ub_k}{\p\phi}+\ub_k-\bub_k&=&\frac{1}{1-\e\eta}\cos\phi\frac{\p
\ub_{k-1}}{\p\theta}-\ub_{k-2}
\end{eqnarray}
where
\begin{eqnarray}
\bub_k(\eta,\theta)=\frac{1}{2\pi}\int_{-\pi}^{\pi}\ub_k(\eta,\theta,\phi)\ud{\phi}.
\end{eqnarray}
We refer to the cut-off function $\psi$ and $\psi_0$ as
(\ref{cut-off 1}) and (\ref{cut-off 2}), and define the force as
(\ref{force}). Define the interior expansion as follows:
\begin{eqnarray}
\u(\vx,\vw)\sim\sum_{k=0}^{\infty}\e^k\u_k(\vx,\vw)
\end{eqnarray}
where $\u_k$ satisfies the same equations as $\uc_k$ in (\ref{interior 1.}), (\ref{interior 2.}) and (\ref{interior 3.}).
Here, to highlight its dependence on $\e$ via the $\e$-Milne problem and boundary data, we add the
superscript $\e$.

The bridge between the interior solution and the boundary layer
solution is the boundary condition of (\ref{transport.}), so we
first consider the boundary condition expansion:
\begin{eqnarray}
(\u_0+\ub_0)&=&\pp (\u_0+\ub_0),\\
(\u_1+\ub_1)&=&\pp (\u_1+\ub_1)+g,\\
(\u_2+\ub_2)&=&\pp (\u_2+\ub_2),\\
\ldots\nonumber\\
(\u_k+\ub_k)&=&\pp (\u_k+\ub_k).
\end{eqnarray}
Note the fact that $\bu_k=\pp\bu_k$, we can simplify above
conditions as follows:
\begin{eqnarray}
\ub_0&=&\pp \ub_0,\\
\ub_1&=&\pp\ub_1+(\vw\cdot\u_0-\pp(\vw\cdot\u_0))+g,\\
\ub_2&=&\pp \ub_2+(\vw\cdot\u_1-\pp(\vw\cdot\u_1))+(\u_0-\pp\u_0),\\
\ldots\nonumber\\
\ub_k&=&\pp
\ub_k+(\vw\cdot\u_{k-1}-\pp(\vw\cdot\u_{k-1}))+(\u_{k-2}-\pp\u_{k-2}).
\end{eqnarray}
The construction of $\u_k$ and $\ub_k$ are as follows:\\
\ \\
Step 1: Construction of $\ub_0$.\\
Define the zeroth order boundary layer solution as
\begin{eqnarray}\label{expansion temp 9.}
\left\{
\begin{array}{rcl}
\ub_0(\eta,\theta,\phi)&=&\psi_0(\e\eta)\bigg(f_0^{\e}(\eta,\theta,\phi)-f_0^{\e}(\infty,\theta)\bigg),\\
\sin\phi\dfrac{\p f_0^{\e}}{\p\eta}+F(\e;\eta)\cos\phi\dfrac{\p
f_0^{\e}}{\p\phi}+f_0^{\e}-\bar f_0^{\e}&=&0,\\\rule{0ex}{1em}
f_0^{\e}(0,\theta,\phi)&=&\pp f_0^{\e}(0,\theta)\ \ \text{for}\ \
\sin\phi>0,\\\rule{0ex}{1em}
\lim_{\eta\rt\infty}f_0^{\e}(\eta,\theta,\phi)&=&f_0^{\e}(\infty,\theta),
\end{array}
\right.
\end{eqnarray}
with
\begin{eqnarray}
\pp f_0^{\e}(0,\theta)=0.
\end{eqnarray}
By Theorem \ref{Milne theorem 1.}, $\ub_0$ is well-defined. It is
obvious to see $f_0^{\e}=f_0^{\e}(\infty)=0$ is the only solution.\\
\ \\
Step 2: Construction of $\ub_1$ and $\u_0$.\\
Define the first order boundary layer solution as
\begin{eqnarray}\label{expansion temp 10.}
\left\{
\begin{array}{rcl}
\ub_1(\eta,\theta,\phi)&=&\psi_0(\e\eta)\bigg(f_1^{\e}(\eta,\theta,\phi)-f_1^{\e}(\infty,\theta)\bigg),\\
\sin\phi\dfrac{\p f_1^{\e}}{\p\eta}+F(\e;\eta)\cos\phi\dfrac{\p
f_1^{\e}}{\p\phi}+f_1^{\e}-\bar
f_1^{\e}&=&\dfrac{\psi(\e\eta)}{1-\e\eta}\cos\phi\dfrac{\p
\ub_0}{\p\theta},\\\rule{0ex}{1.0em} f_1^{\e}(0,\theta,\phi)&=&\pp
f_1^{\e}(0,\theta)+g_1(\theta,\phi)\ \ \text{for}\ \
\sin\phi>0,\\\rule{0ex}{1em}
\lim_{\eta\rt\infty}f_1^{\e}(\eta,\theta,\phi)&=&f_1^{\e}(\infty,\theta),
\end{array}
\right.
\end{eqnarray}
with
\begin{eqnarray}
\pp f_1^{\e}(0,\theta)=0,
\end{eqnarray}
where
\begin{eqnarray}
g_1&=&(\vw\cdot\nx\u_0(\vx_0)-\pp(\vw\cdot\nx\u_0(\vx_0)))+g,
\end{eqnarray}
with $\vx_0$ is the same boundary point as $(0,\theta)$ and
\begin{eqnarray}
\vw&=&(-\sin(\phi-\theta),-\cos(\phi-\theta)),\\
\vec n&=&(\cos\theta,\sin\theta).
\end{eqnarray}
To solve (\ref{expansion temp 10.}), we require the compatibility
condition (\ref{Milne compatibility condition}) for the boundary
data
\begin{eqnarray}
\\
\int_{\sin\phi>0}\bigg(g+\vw\cdot\nx\u_0(\vx_0)-\pp(\vw\cdot\nx\u_0(\vx_0))\bigg)\sin\phi\ud{\phi}
+\int_0^{\infty}\int_{-\pi}^{\pi}e^{-V(s)}\frac{\psi}{1-\e
s}\cos\phi\frac{\p
\ub_0}{\p\theta}(s,\theta,\phi)\ud{\phi}\ud{s}\nonumber\\
=0.&&\nonumber
\end{eqnarray}
Note the fact
\begin{eqnarray}
&&\int_{\sin\phi>0}\bigg(\vw\cdot\nx\u_0(\vx_0)-\pp(\vw\cdot\nx\u_0(\vx_0))\bigg)\sin\phi\ud{\phi}\\
&=&
\int_{\sin\phi>0}(\vw\cdot\nx\u_0(\vx_0))\sin\phi\ud{\phi}-2\pp(\vw\cdot\nx\u_0(\vx_0))\nonumber\\
&=&\int_{\sin\phi>0}(\vw\cdot\nx\u_0(\vx_0))\sin\phi\ud{\phi}+\int_{\sin\phi<0}(\vw\cdot\nx\u_0(\vx_0))\sin\phi\ud{\phi}\nonumber\\
&=&\int_{-\pi}^{\pi}(\vw\cdot\nx\u_0(\vx_0))\sin\phi\ud{\phi}\nonumber\\
&=&-\pi\nx\bu_0(\vx_0)\cdot\vec n=-\pi\frac{\p\bu_0(\vx_0)}{\p\vec
n}.\nonumber
\end{eqnarray}
We can simplify the compatibility condition as follows:
\begin{eqnarray}
\int_{\sin\phi>0}g(\phi)\sin\phi\ud{\phi}-\pi\frac{\p\bu_0(\vx_0)}{\p\vec
n} +\int_0^{\infty}\int_{-\pi}^{\pi}e^{-V(s)}\frac{\psi}{1-\e
s}\cos\phi\frac{\p \ub_0}{\p\theta}(s,\theta,\phi)\ud{\phi}\ud{s}=0.
\end{eqnarray}
Then we have
\begin{eqnarray}
\frac{\p\bu_0(\vx_0)}{\p\vec
n}&=&\frac{1}{\pi}\int_{\sin\phi>0}g(\theta,\phi)\sin\phi\ud{\phi}
+\frac{1}{\pi}\int_0^{\infty}\int_{-\pi}^{\pi}e^{-V(s)}\frac{\psi}{1-\e
s}\cos\phi\frac{\p
\ub_0}{\p\theta}(s,\theta,\phi)\ud{\phi}\ud{s}\\
&=&\frac{1}{\pi}\int_{\sin\phi>0}g(\theta,\phi)\sin\phi\ud{\phi}.\nonumber
\end{eqnarray}
Hence, we define the zeroth order interior solution $\u_0(\vx,\vw)$ as
\begin{eqnarray}\label{expansion temp 8.}
\left\{
\begin{array}{rcl}
\u_0&=&\bu_0 ,\\\rule{0ex}{1em} \Delta_x\bu_0-\bu_0&=&0\ \ \text{in}\
\ \Omega,\\\rule{0ex}{1em}\dfrac{\p\bu_0}{\p\vec
n}&=&\dfrac{1}{\pi}\displaystyle
\int_{\sin\phi>0}g(\theta,\phi)\sin\phi\ud{\phi}\ \ \text{on}\ \
\p\Omega.
\end{array}
\right.
\end{eqnarray}
\ \\
Step 3: Analysis of $\u_0$ and $\ub_1$.\\
By Theorem \ref{Milne theorem 2.}, we can easily see $f_1^{\e}$ is
well-defined in $L^{\infty}(\Omega\times\s^1)$ and approaches
$f_1^{\e}(\infty)$ exponentially fast as $\eta\rt\infty$. Since
$f_0^{\e}=0$, then if $g\in C^r(\Gamma^-)$, it is obvious to check
$\p_n\u_0\in C^r(\p\Omega)$. Hence, by the standard elliptic
estimate, there exists a unique solution $\u_0\in W^{r+1,p}(\Omega)$
for arbitrary $p\geq2$ satisfying
\begin{eqnarray}
\nm{\bu_0}_{W^{r+1,p}(\Omega)}\leq C(\Omega)\nm{\frac{\p \bu_0}{\p\vec
n}}_{W^{r-1/p,p}(\p\Omega)},
\end{eqnarray}
which implies $\nx\u_0\in W^{r,p}(\Omega)$, $\nx\u_0\in
W^{r-1/p,p}(\p\Omega)$ and $\u_0\in C^{r,1-2/p}(\Omega)$.\\
\ \\
Step 4: Construction of $\ub_2$ and $\u_1$.\\
Define the second order boundary layer solution as
\begin{eqnarray}\label{expansion temp 11.}
\left\{
\begin{array}{rcl}
\ub_2(\eta,\theta,\phi)&=&\psi(\e\eta)\bigg(f_2^{\e}(\eta,\theta,\phi)-f_2^{\e}(\infty,\theta)\bigg),\\
\sin\phi\dfrac{\p f_2^{\e}}{\p\eta}+F(\e;\eta)\cos\phi\dfrac{\p
f_2^{\e}}{\p\phi}+f_2^{\e}-\bar
f_2^{\e}&=&\dfrac{\psi(\e\eta)}{1-\e\eta}\cos\phi\dfrac{\p
\ub_1}{\p\theta}-\psi(\e\eta)\ub_0,\\\rule{0ex}{1.0em}
f_2^{\e}(0,\theta,\phi)&=&\pp f_2^{\e}(0,\theta)+g_2(\theta,\phi)\ \
\text{for}\ \ \sin\phi>0,\\\rule{0ex}{1em}
\lim_{\eta\rt\infty}f_2^{\e}(\eta,\theta,\phi)&=&f_2^{\e}(\infty,\theta),
\end{array}
\right.
\end{eqnarray}
with
\begin{eqnarray}
\pp f_2^{\e}(0,\theta)=0,
\end{eqnarray}
where
\begin{eqnarray}
g_2&=&(\vw\cdot\nx\u_1(\vx_0)-\pp(\vw\cdot\nx\u_1(\vx_0)))+\u_0(\vx_0)-\pp\u_0(\vx_0).
\end{eqnarray}
In order for equation (\ref{expansion temp 11.}) being well-posed,
we require the compatibility condition (\ref{Milne compatibility
condition}) for the boundary data
\begin{eqnarray}
\int_{\sin\phi>0}\bigg(\vw\cdot\nx\u_1(\vx_0)-\pp(\vw\cdot\nx\u_1(\vx_0)))\bigg)\sin\phi\ud{\phi}&&\\
+\int_0^{\infty}\int_{-\pi}^{\pi}e^{-V(s)}\bigg(\frac{\psi}{1-\e
s}\cos\phi\frac{\p
\ub_1}{\p\theta}(s,\theta,\phi)-\psi\ub_0(s,\theta,\phi)\bigg)\ud{\phi}\ud{s}&=&0.\nonumber
\end{eqnarray}
Similarly, we can directly verify the relation
\begin{eqnarray}
\int_{\sin\phi>0}\bigg(\vw\cdot\nx\u_1(\vx_0)-\pp(\vw\cdot\nx\u_1(\vx_0))\bigg)\sin\phi\ud{\phi}
&=&-\pi\frac{\p\bu_1(\vx_0)}{\p\vec n}.
\end{eqnarray}
We can simplify the compatibility condition as follows:
\begin{eqnarray}
-\pi\frac{\p\bu_1(\vx_0)}{\p\vec n}
+\int_0^{\infty}\int_{-\pi}^{\pi}e^{-V(s)}\bigg(\frac{\psi}{1-\e
s}\cos\phi\frac{\p
\ub_1}{\p\theta}(s,\theta,\phi)-\psi\ub_0(s,\theta,\phi)\bigg)\ud{\phi}\ud{s}&=&0.
\end{eqnarray}
Then we have
\begin{eqnarray}
-\pi\frac{\p\bu_1(\vx_0)}{\p\vec
n}&=&\frac{1}{\pi}\int_0^{\infty}\int_{-\pi}^{\pi}e^{-V(s)}\bigg(\frac{\psi}{1-\e
s}\cos\phi\frac{\p
\ub_1}{\p\theta}(s,\theta,\phi)-\psi\ub_0(s,\theta,\phi)\bigg)\ud{\phi}\ud{s}\\
&=&\frac{1}{\pi}\int_0^{\eta}\int_{-\pi}^{\pi}e^{-V(s)}\frac{\psi}{1-\e
s}\cos\phi\frac{\p
\ub_1}{\p\theta}(s,\theta,\phi)\ud{\phi}\ud{s}.\nonumber
\end{eqnarray}
Hence, we define the first order interior solution $\u_1(\vx)$ as
\begin{eqnarray}\label{expansion temp 12.}
\left\{
\begin{array}{rcl}
\u_1&=&\bu_1-\vw\cdot\nx\u_0,\\\rule{0ex}{1em}
\Delta_x\bu_1-\bu_1&=&-\displaystyle\int_{\s^1}(\vw\cdot\nx\u_{0})\ud{\vw}\
\ \text{in}\ \ \Omega,\\\rule{0ex}{1.0em} \dfrac{\p\bu_1}{\p\vec
n}&=&\dfrac{1}{\pi}\displaystyle\int_0^{\infty}\int_{-\pi}^{\pi}e^{-V(s)}\dfrac{\psi(\e
s)}{1-\e s}\cos\phi\frac{\p
\ub_1}{\p\theta}(s,\theta,\phi)\ud{\phi}\ud{s}\ \ \text{on}\ \
\p\Omega.
\end{array}
\right.
\end{eqnarray}
\ \\
Step 5: Analysis of $\u_1$ and $\ub_2$.\\
By Theorem \ref{Milne theorem 2.}, we can easily see $f_2^{\e}$ is
well-defined in $L^{\infty}(\Omega\times\s^1)$ and approaches
$f_2^{\e}(\infty)$ exponentially fast as $\eta\rt\infty$. By above
analysis, it is obvious to check $\p_n\bu_1\in C^{r-1}(\p\Omega)$.
Hence, by the standard elliptic estimate, there exists a unique
solution $\bu_1\in W^{r,p}(\Omega)$ for arbitrary $p\geq2$
satisfying
\begin{eqnarray}
\nm{\bu_1}_{W^{r,p}(\Omega)}\leq C(\Omega)\bigg(\nm{\frac{\p
\bu_1}{\p\vec
n}}_{W^{r-1-1/p,p}(\p\Omega)}+\nm{\nx\u_0}_{W^{r-2,p}(\Omega)}\bigg)
\end{eqnarray}
which implies $\nx\u_1\in W^{r-1,p}(\Omega)$, $\nx\u_1\in
W^{r-1-1/p,p}(\p\Omega)$ and $\u_1\in C^{r-1,1-2/p}(\Omega)$.\\
\ \\
Step 6: Generalization to arbitrary $k$.\\
Then this process can proceed to arbitrary $k$ as long as $g$ is
sufficiently smooth. We can always determine $\ub_k$ and $\u_{k-1}$
simultaneously based on the compatibility condition. Define the
$k^{th}$ order boundary layer solution as
\begin{eqnarray}
\left\{
\begin{array}{rcl}
\ub_k(\eta,\theta,\phi)&=&\psi(\e\eta)\bigg(f_k^{\e}(\eta,\theta,\phi)-f_k^{\e}(\infty,\theta)\bigg),\\
\sin\phi\dfrac{\p f_k^{\e}}{\p\eta}+F(\e;\eta)\cos\phi\dfrac{\p
f_k^{\e}}{\p\phi}+f_k^{\e}-\bar
f_k^{\e}&=&\dfrac{\psi}{1-\e\eta}\cos\phi\dfrac{\p
\ub_{k-1}}{\p\theta}-\psi\ub_{k-2},\\\rule{0ex}{1.0em}
f_k^{\e}(0,\theta,\phi)&=&\pp f_k^{\e}(0,\theta)+g_k(\theta,\phi)\ \
\text{for}\ \ \sin\phi>0,\\\rule{0ex}{1em}
\lim_{\eta\rt\infty}f_k^{\e}(\eta,\theta,\phi)&=&f_k^{\e}(\infty,\theta),
\end{array}
\right.
\end{eqnarray}
with
\begin{eqnarray}
\pp f_k^{\e}(0,\theta)=0,
\end{eqnarray}
where
\begin{eqnarray}
g_k&=&(\vw\cdot\nx\u_{k-1}(\vx_0)-\pp(\vw\cdot\nx\u_{k-1}(\vx_0)))+\u_{k-2}(\vx_0)-\pp\u_{k-2}(\vx_0).
\end{eqnarray}
Hence, we define the $(k-1)^{th}$ order interior solution as
\begin{eqnarray}
\\
\left\{
\begin{array}{rcl}
\u_{k-1}&=&\bu_{k-1}-\vw\cdot\nx\u_{k-2}-\u_{k-3},\\\rule{0ex}{2em}
\Delta_x\bu_{k-1}-\bu_{k-1}&=&-\displaystyle\int_{\s^1}(\vw\cdot\nx\u_{k-2})\ud{\vw}-\displaystyle\int_{\s^1}\u_{k-3}\ud{\vw}\
\ \text{in}\ \ \Omega,\\\rule{0ex}{2.0em} \dfrac{\p\bu_{k-1}}{\p\vec
n}&=&\dfrac{1}{\pi}\displaystyle\int_{-\pi}^{\pi}\bigg(\vw\cdot\nx(\vw\cdot\u_{k-2}+\u_{k-3})\bigg)\sin\phi\ud{\phi}\\
&&+
\dfrac{1}{\pi}\displaystyle\int_0^{\infty}\int_{-\pi}^{\pi}e^{-V(s)}\bigg(\dfrac{\psi(\e
s)}{1-\e s}\cos\phi\frac{\p \ub_{k-1}}{\p\theta}-\psi(\e
s)\ub_{k-2}\bigg)(s,\theta,\phi)\ud{\phi}\ud{s}\ \ \text{on}\ \
\p\Omega.\nonumber
\end{array}
\right.
\end{eqnarray}
In particular, for $g\in C^{k+1}(\Gamma^-)$, we can define the
interior solution up to $k^{th}$ order and the boundary layer
solution up to $(k+1)^{th}$ order, i.e. up to $\u_k$ and
$\ub_{k+1}$.

\subsection{Well-Posedness of Steady Neutron Transport Equation}

In this section, we consider the well-posedness of the steady
neutron transport equation
\begin{eqnarray}\label{neutron.}
\left\{
\begin{array}{rcl}\displaystyle
\epsilon\vec w\cdot\nabla_xu+(1+\e^2)u-\bar
u&=&f(\vx,\vw)\ \ \text{in}\ \ \Omega,\\
u(\vx_0,\vw)&=&\pp u(\vx_0)+\e g(\vx_0,\vw)\ \ \text{for}\ \
\vw\cdot\vec n<0\ \ \text{and}\ \ \vx_0\in\p\Omega.
\end{array}
\right.
\end{eqnarray}
We define the $L^{\infty}$ norms in $\Omega\times\s^1$ as usual:
\begin{eqnarray}
\nm{f}_{L^{\infty}(\Omega\times\s^1)}&=&\sup_{(\vx,\vw)\in\Omega\times\s^1}\abs{f(\vx,\vw)}.
\end{eqnarray}
\begin{theorem}\label{well-posedness lemma 2.}
Assume $f(\vx,\vw)\in L^{\infty}(\Omega\times\s^1)$ and
$g(x_0,\vw)\in L^{\infty}(\Gamma^-)$. Then for the transport
equation
\begin{eqnarray}
\label{well-posedness penalty equation.} \left\{
\begin{array}{rcl}\displaystyle
\epsilon\vec w\cdot\nabla_xu+(1+\e^2)u-\bar u&=&f(\vx,\vw)\ \ \text{in}\ \ \Omega,\\
u(\vec x_0,\vec w)&=&\pp u(\vx_0)+\e g(\vx_0,\vw)\ \ \text{for}\ \
\vec x_0\in\p\Omega\ \ \text{and}\ \vw\cdot\vec n<0,
\end{array}
\right.
\end{eqnarray}
there exists a unique solution $u(\vx,\vw)\in
L^{\infty}(\Omega\times\s^1)$ satisfying
\begin{eqnarray}
\im{u}{\Omega\times\s^1}\leq C\bigg(\frac{1}{\e^2}
\im{f}{\Omega\times\s^1}+\frac{1}{\e^2}\im{g}{\Gamma^-}\bigg).
\end{eqnarray}
\end{theorem}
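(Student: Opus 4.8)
The plan is to run the $L^2\!\to\!L^\infty$ programme of Section~2 for (\ref{well-posedness penalty equation.}), using two features special to this equation. First, the collision term $(1+\e^2)u-\bar u$ has velocity average equal to the genuine absorption $\e^2\bar u$ rather than the null term $\bar u-\bar u$; this will play the role that the auxiliary elliptic test function $\phi=-\vw\cdot\nx\zeta$ played in Lemma~\ref{well-posedness lemma 4}, so that no Poisson problem is needed. Second, the diffuse reflection $\pp$ is non-expansive: since $\pp\mathbf 1=\mathbf 1$ and $\pp\geq0$ one has $\|\pp u\|_{L^\infty(\p\Omega)}\leq\|u\|_{L^\infty(\Gamma^+)}$, and Cauchy--Schwarz against the probability measure $\tfrac{1}{2}(\vw\cdot\vec n)\,\ud{\vw}$ on $\{\vw\cdot\vec n>0\}$ gives $\|\pp u\|_{L^2(\Gamma^-)}\leq\|u\|_{L^2(\Gamma^+)}$.

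I would first penalize, replacing (\ref{well-posedness penalty equation.}) by $\l u_\l+\e\vw\cdot\nx u_\l+(1+\e^2)u_\l-\bar u_\l=f$ with $u_\l|_{\Gamma^-}=\pp u_\l+\e g$ for $\l>0$, and construct $u_\l\in L^\infty(\Omega\times\s^1)$ by the iteration of Lemma~\ref{well-posedness lemma 2}, now feeding \emph{both} $\bar u_\l^{k-1}$ and the reflected trace $\pp u_\l^{k-1}$ into the $k$-th step. Along backward characteristics the increment $v^k=u_\l^k-u_\l^{k-1}$ is damped by $e^{-(1+\l+\e^2)t_b}$, and using $\|\pp v^{k-1}\|_{L^\infty(\p\Omega)}\leq\|v^{k-1}\|_\infty$ together with $\|\bar v^{k-1}\|_\infty\leq\|v^{k-1}\|_\infty$, for $\l>0$ the series of reflection generations is summable, so the iteration converges and $\|u_\l\|_\infty\leq C\tfrac{1+\l}{\l}\big(\|f\|_\infty+\|g\|_\infty\big)$.

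Next I would establish a $\l$-uniform $L^2$ bound. Testing the equation against $u_\l$ and using Green's identity (Lemma~\ref{well-posedness lemma 3}) yields
\[ \tfrac{\e}{2}\big(\|u_\l\|_{L^2(\Gamma^+)}^2-\|u_\l\|_{L^2(\Gamma^-)}^2\big)+\l\|u_\l\|_{L^2}^2+\|u_\l-\bar u_\l\|_{L^2}^2+\e^2\|\bar u_\l\|_{L^2}^2=\iint_{\Omega\times\s^1}f\,u_\l . \]
Substituting $u_\l|_{\Gamma^-}=\pp u_\l+\e g$ and expanding, the quadratic reflection part is absorbed using $\|\pp u_\l\|_{L^2(\Gamma^-)}\leq\|u_\l\|_{L^2(\Gamma^+)}$, the $\e g$ cross and diagonal parts by Young's inequality, and the right side is split as $\iint f(u_\l-\bar u_\l)+\iint f\bar u_\l$ so as to be absorbed into $\|u_\l-\bar u_\l\|_{L^2}^2$ and $\e^2\|\bar u_\l\|_{L^2}^2$ respectively; combined with a transport trace estimate for $\|u_\l\|_{L^2(\Gamma^+)}$ closed through the contraction, this gives $\|u_\l\|_{L^2}\lesssim\e^{-\gamma}\big(\|f\|_{L^2}+\|g\|_{L^2(\Gamma^-)}\big)$ for an explicit exponent $\gamma$.

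Finally I would bootstrap to $L^\infty$ by the double-Duhamel argument of Theorem~\ref{LI estimate}: iterating the characteristic representation twice makes the last occurrence of $u_\l$ appear against a kernel carrying two velocity integrations, the change of variables whose Jacobian is bounded below off the grazing set then bounds that term by $C\delta^{-1/2}\e^{-\beta}\|u_\l\|_{L^2}$, the pieces produced by $f$ are $\leq\|f\|_\infty$, and the boundary pieces $\pp u_\l+\e g$ are handled via $\|\pp u_\l\|_{L^\infty(\p\Omega)}\leq\|u_\l\|_\infty$ and the $(1+\e^2)$-damping. Absorbing $\tfrac{1}{2}\|u_\l\|_\infty$, inserting the $L^2$ bound, letting $\l\to0$ with a difference estimate (as in Theorem~\ref{LT estimate}) for strong convergence, and using uniqueness from the energy identity, one arrives at the claimed bound, which should be arranged to come out as $\e^{-2}$. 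The main obstacle is the reflection bookkeeping: in contrast with the in-flow problem a backward characteristic may strike $\p\Omega$ arbitrarily often near the grazing set, where the absorption factor $e^{-(1+\e^2)t_b}$ is ineffective because the chords there are $O(\e)$; one must show the sum over reflection generations still converges --- using that the set of such near-grazing trajectories has small measure --- and verify that the whole procedure loses no more than the advertised powers of $\e$.
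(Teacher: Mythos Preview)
Your route is viable but far heavier than the paper's. The paper does \emph{not} pass through any $L^2$ theory, does not penalize with an extra $\l$, and does not use the double-Duhamel bootstrap of Theorem~\ref{LI estimate}. Instead it exploits directly the extra absorption $\e^2 u$ hidden in $(1+\e^2)u-\bar u$ to close a fixed-point argument entirely in $L^\infty$. Concretely, one sets $u^0=0$ and defines $u^k$ by
\[
\e\vw\cdot\nx u^k+(1+\e^2)u^k=\bar u^{k-1}+f,\qquad u^k\big|_{\Gamma^-}=\pp u^{k-1}+\e g,
\]
so that each step is just the explicit transport problem of Lemma~\ref{well-posedness lemma 1}. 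Along characteristics the increment $v^{k+1}=u^{k+1}-u^k$ satisfies, using $\lnm{\pp v^k}\leq\lnm{v^k}$ and $\lnm{\bar v^k}\leq\lnm{v^k}$,
\[
\abs{v^{k+1}(\vx,\vw)}\leq e^{-(1+\e^2)t_b}\lnm{v^k}+\frac{1-e^{-(1+\e^2)t_b}}{1+\e^2}\lnm{v^k}
=\frac{1+\e^2 e^{-(1+\e^2)t_b}}{1+\e^2}\lnm{v^k},
\]
which yields the contraction ratio $1/(1+\e^2)$ and hence a geometric sum $\sum_k(1+\e^2)^{-k}\lnm{u^1}\leq \tfrac{1+\e^2}{\e^2}(\lnm{f}+\lnm{g})$, precisely the announced $\e^{-2}$ bound. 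Uniqueness follows from the same contraction.

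What the two approaches buy: the paper's argument is a one-page computation that gives the sharp $\e^{-2}$ constant and completely sidesteps the ``reflection bookkeeping'' you flag as the main obstacle---the $\e^2$ absorption makes the iteration contractive uniformly over the grazing set, so no near-grazing small-measure analysis is needed. Your programme (energy identity $\to$ $L^2$ bound $\to$ double Duhamel $\to$ $\l\to0$) would work, but it imports machinery designed for the in-flow problem (\ref{neutron}) where there is \emph{no} intrinsic absorption and one must manufacture coercivity via the elliptic test function; here that machinery is redundant, and tracking $\e$-powers through it makes it harder, not easier, to land exactly on $\e^{-2}$.
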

\begin{proof}
We iteratively construct
an approximating sequence $\{u^k\}_{k=0}^{\infty}$ where $u^0=0$ and
\begin{eqnarray}\label{penalty temp 1.}
\\
\left\{
\begin{array}{rcl}\displaystyle
\epsilon\vec w\cdot\nabla_xu_{\l}^k+(1+\e^2)u^k-\bar u^{k-1}&=&f(\vx,\vw)\ \ \text{in}\ \ \Omega,\\
u^k(\vec x_0,\vec w)&=&\pp u^{k-1}(\vx_0)+\e g(\vx_0,\vw)\ \
\text{for}\ \ \vec x_0\in\p\Omega\ \ \text{and}\ \vw\cdot\vec n<0.
\end{array}\nonumber
\right.
\end{eqnarray}
By Lemma \ref{well-posedness lemma 1}, this sequence is
well-defined and $\im{u^k}{\Omega\times\s^1}<\infty$.
We rewrite equation (\ref{penalty temp 1.}) along the
characteristics as
\begin{eqnarray}
\\
u^k(\vx,\vw)&=&(\e g+\pp u^{k-1})(\vx-\e
t_b\vw,\vw)e^{-(1+\e^2)t_b}+\int_{0}^{t_b}(f+\bar
u^{k-1})(\vx-\e(t_b-s)\vw,\vw)e^{-(1+\e^2)(t_b-s)}\ud{s}\nonumber.
\end{eqnarray}
where the backward exit time $t_b$ is defined as (\ref{exit time}).
We define the difference $v^k=u^{k}-u^{k-1}$ for $k\geq1$. Then
$v^k$ satisfies
\begin{eqnarray}
v^{k+1}(\vx,\vw)&=&\pp v^{k}(\vx-\e
t_b\vw,\vw)e^{-(1+\e^2)t_b}+\int_{0}^{t_b}\bar
v^{k}(\vx-\e(t_b-s)\vw,\vw)e^{-(1+\e^2)(t_b-s)}\ud{s}.
\end{eqnarray}
Since $\im{\bar
v^k}{\Omega\times\s^1}\leq\im{v^k}{\Omega\times\s^1}$ and $\im{\pp
v^k}{\Omega\times\s^1}\leq\im{v^k}{\Omega\times\s^1}$, we can directly estimate
\begin{eqnarray}
\im{v^{k+1}}{\Omega\times\s^1}&\leq&e^{-(1+\e^2)t_b}\im{v^{k+1}}{\Omega\times\s^1}+\im{v^{k}}{\Omega\times\s^1}\int_0^{t_b}e^{-(1+\e^2)(t_b-s)}\ud{s}\\
&\leq&e^{-(1+\e^2)t_b}\im{v^{k+1}}{\Omega\times\s^1}+\frac{1}{1+\e^2}(1-e^{-(1+\e^2)t_b})\im{v^{k}}{\Omega\times\s^1}\nonumber.
\end{eqnarray}
Hence, we naturally have
\begin{eqnarray}
\im{v^{k+1}}{\Omega\times\s^1}&\leq&\frac{1}{1+\e^2}\im{v^{k}}{\Omega\times\s^1}.
\end{eqnarray}
Thus, this is a contraction iteration. Considering $v^1=u^1$, we
have
\begin{eqnarray}
\im{v^{k}}{\Omega\times\s^1}\leq\bigg(\frac{1}{1+\e^2}\bigg)^{k-1}\im{u^{1}}{\Omega\times\s^1}.
\end{eqnarray}
for $k\geq1$. Therefore, $u^k$ converges strongly in $L^{\infty}$ to
the limiting solution $u$ satisfying
\begin{eqnarray}\label{well-posedness temp 1.}
\im{u}{\Omega\times\s^1}\leq\sum_{k=1}^{\infty}\im{v^{k}}{\Omega\times\s^1}\leq\frac{1+\e^2}{\e^2}\im{u^1}{\Omega\times\s^1}
\leq\frac{2}{\e^2}\im{u^1}{\Omega\times\s^1}.
\end{eqnarray}
Since $u^1$ satisfies the equation
\begin{eqnarray}
u^1(\vx,\vw)&=&g(\vx-\e
t_b\vw,\vw)e^{-(1+\e^2)t_b}+\int_{0}^{t_b}f(\vx-\e(t_b-s)\vw,\vw)e^{-(1+\e^2)(t_b-s)}\ud{s}\nonumber.
\end{eqnarray}
Based on Lemma \ref{well-posedness lemma 1}, we can directly
estimate
\begin{eqnarray}\label{well-posedness temp 2.}
\im{u^1}{\Omega\times\s^1}\leq
\im{f}{\Omega\times\s^1}+\im{g}{\Gamma^-}.
\end{eqnarray}
Combining (\ref{well-posedness temp 1.}) and (\ref{well-posedness
temp 2.}), we can naturally obtain the existence and estimates.
Also, the uniqueness easily follows from the energy estimates.
\end{proof}
\begin{theorem}\label{well-posedness 2.}
Assume $g(x_0,\vw)\in L^{\infty}(\Gamma^-)$. Then for the steady
neutron transport equation (\ref{transport.}), there exists a unique
solution $u^{\e}(\vx,\vw)\in L^{\infty}(\Omega\times\s^1)$
satisfying
\begin{eqnarray}
\im{u^{\e}}{\Omega\times\s^1}\leq\frac{C}{\e^{2}}\im{g}{\Gamma^-}.
\end{eqnarray}
\end{theorem}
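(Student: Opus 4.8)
The plan is to derive Theorem \ref{well-posedness 2.} as an immediate corollary of Theorem \ref{well-posedness lemma 2.}. First I would observe that the steady neutron transport equation (\ref{transport.}) is literally equation (\ref{well-posedness penalty equation.}) (equivalently (\ref{neutron.})) with the source term taken to be $f\equiv 0$: the transport operator $\e\vw\cdot\nx u+(1+\e^2)u-\bar u$ and the diffusive-reflection inflow condition $u(\vx_0,\vw)=\pp u(\vx_0)+\e g(\vx_0,\vw)$ for $\vw\cdot\vec n<0$ occur verbatim in both problems, with the same $\e$-scaling on the boundary data $g$. Hence Theorem \ref{well-posedness lemma 2.} applies directly and produces a unique solution $u^{\e}\in L^{\infty}(\Omega\times\s^1)$.

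For the quantitative estimate I would simply specialize the bound of Theorem \ref{well-posedness lemma 2.}, namely $\im{u}{\Omega\times\s^1}\leq C\big(\e^{-2}\im{f}{\Omega\times\s^1}+\e^{-2}\im{g}{\Gamma^-}\big)$, to the case $f\equiv 0$, which gives $\im{u^{\e}}{\Omega\times\s^1}\leq C\e^{-2}\im{g}{\Gamma^-}$, exactly the claimed inequality. Since Theorem \ref{well-posedness lemma 2.} already establishes uniqueness (via the contraction of the iteration $u^k$ and the resulting energy-type argument), nothing further needs to be checked.

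There is essentially no obstacle here; this statement is purely a bookkeeping corollary. The one point worth a second look is that the factor $\e$ multiplying $g$ in (\ref{transport.}) is already incorporated into the hypotheses and the conclusion of Theorem \ref{well-posedness lemma 2.}, so passing from the general inhomogeneous problem to (\ref{transport.}) costs no additional power of $\e$; the $\e^{-2}$ loss is inherited entirely from the geometric-series summation $\sum_{k\geq1}(1+\e^2)^{-k+1}\sim \e^{-2}$ in that theorem, not from any boundary-data manipulation.
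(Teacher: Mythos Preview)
Your proposal is correct and matches the paper's approach exactly: the paper's proof is simply ``We can apply Theorem \ref{well-posedness lemma 2.} to the equation (\ref{transport.}). The result naturally follows.'' Your additional remarks about specializing to $f\equiv 0$ and the origin of the $\e^{-2}$ factor are accurate elaborations of this one-line reduction.
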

\begin{proof}
We can apply Theorem \ref{well-posedness lemma 2.} to the equation
(\ref{transport.}). The result naturally follows.
\end{proof}

\subsection{$\e$-Milne Problem}

We consider the $\e$-Milne problem for $f^{\e}(\eta,\theta,\phi)$ in
the domain $(\eta,\theta,\phi)\in[0,\infty)\times[-\pi,\pi)\times[-\pi,\pi)$
\begin{eqnarray}\label{Milne problem.}
\left\{ \begin{array}{rcl}\displaystyle \sin\phi\frac{\p
f^{\e}}{\p\eta}+F(\e;\eta)\cos\phi\frac{\p
f^{\e}}{\p\phi}+f^{\e}-\bar f^{\e}&=&S^{\e}(\eta,\theta,\phi),\\
f^{\e}(0,\theta,\phi)&=& h^{\e}(\theta,\phi)+\pp f^{\e}(0)\ \ for\
\ \sin\phi>0,\\
\lim_{\eta\rt\infty}f^{\e}(\eta,\theta,\phi)&=&f^{\e}_{\infty}(\theta),
\end{array}
\right.
\end{eqnarray}
where
\begin{eqnarray}
\pp
f^{\e}(\eta,\theta)=-\half\int_{\sin\phi<0}f^{\e}(\eta,\theta,\phi)\sin\phi\ud{\phi},
\end{eqnarray}
$F(\e;\eta)$ is defined as (\ref{force}),
\begin{eqnarray}
\abs{h^{\e}(\theta,\phi)}\leq M,
\end{eqnarray}
and
\begin{eqnarray}\label{Milne decay.}
\abs{S^{\e}(\eta,\theta,\phi)}\leq Me^{-K\eta},
\end{eqnarray}
for $M$ and $K$ uniform in $\e$ and $\theta$.

For notational simplicity, we omit $\e$ and $\theta$ dependence in
$f^{\e}$ in this section. The same convention also applies to
$F(\e;\eta)$, $V(\e;\eta)$, $S^{\e}(\eta,\theta,\phi)$ and
$h^{\e}(\theta,\phi)$. However, our estimates are independent of
$\e$ and $\theta$.

\subsubsection{Compatibility Condition}

\begin{lemma}\label{Milne lemma 1.}
In order for the equation (\ref{Milne problem.}) to have a solution
$f\in L^{\infty}([0,\infty)\times[-\pi,\pi))$, the boundary data $h$
and the source term $S$ must satisfy the compatibility condition
\begin{eqnarray}\label{Milne compatibility condition}
\int_{\sin\phi>0}h(\phi)\sin\phi\ud{\phi}
+\int_0^{\infty}\int_{-\pi}^{\pi}e^{-V(s)}S(s,\phi)\ud{\phi}\ud{s}=0.
\end{eqnarray}
In particular, if $S=0$, then the compatibility condition reduces to
\begin{eqnarray}\label{Milne reduced compatibility condition}
\int_{\sin\phi>0}h(\phi)\sin\phi\ud{\phi}=0.
\end{eqnarray}
\end{lemma}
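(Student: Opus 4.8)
The plan is to reduce the problem \eqref{Milne problem.} to a scalar first–order ODE for the momentum flux
\[
J(\eta):=\int_{-\pi}^{\pi}\sin\phi\, f(\eta,\phi)\,\ud{\phi},
\]
by integrating the kinetic equation against the velocity weight, and then to read off \eqref{Milne compatibility condition} by comparing the values of $J$ at $\eta=0$ and $\eta=\infty$. Throughout I assume that a solution $f\in L^{\infty}([0,\infty)\times[-\pi,\pi))$ is given; the content of the lemma is that the existence of such an $f$ forces the stated identity.

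\emph{Step 1: the flux ODE.} Integrating the equation in \eqref{Milne problem.} over $\phi\in[-\pi,\pi)$, I use $\int_{-\pi}^{\pi}(f-\bar f)\,\ud{\phi}=0$ and the $2\pi$-periodicity of $f$ in $\phi$ to integrate by parts in the transport term, $\int_{-\pi}^{\pi}\cos\phi\,\p_\phi f\,\ud{\phi}=\int_{-\pi}^{\pi}\sin\phi\, f\,\ud{\phi}$ with no boundary contribution, obtaining
\begin{eqnarray}
J'(\eta)+F(\e;\eta)J(\eta)=2\pi\bar S(\eta).
\end{eqnarray}
Recalling from \eqref{force} and Lemma \ref{Milne force} that $V(\e;0)=0$, $F(\e;\eta)=-\p_\eta V(\e;\eta)$, so $\frac{\ud{}}{\ud{\eta}}e^{-V(\e;\eta)}=F(\e;\eta)e^{-V(\e;\eta)}$ and $\int_0^{\eta}F(\e;s)\,\ud{s}=-V(\e;\eta)$, I multiply by the integrating factor $e^{-V(\e;\eta)}$ to get $\frac{\ud{}}{\ud{\eta}}\big(e^{-V(\e;\eta)}J(\eta)\big)=2\pi e^{-V(\e;\eta)}\bar S(\eta)$ and integrate over $[0,\infty)$; the right-hand integral converges since $0\le V\le\ln 4$ and $\bar S$ decays exponentially.

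\emph{Step 2: the endpoint values of $J$.} As $\eta\to\infty$ the solution tends to $f^{\e}_{\infty}(\theta)$, which is independent of $\phi$, so $e^{-V(\e;\eta)}J(\eta)\to e^{-V_\infty}f^{\e}_{\infty}\int_{-\pi}^{\pi}\sin\phi\,\ud{\phi}=0$ by dominated convergence using $\|f\|_{L^\infty}<\infty$. At $\eta=0$ I split $J(0)$ over $\sin\phi>0$ and $\sin\phi<0$; on $\sin\phi>0$ I insert $f(0,\phi)=h(\phi)+\pp f(0)$ from \eqref{Milne problem.} and use $\int_{\sin\phi>0}\sin\phi\,\ud{\phi}=2$ together with $\pp f(0)=-\tfrac12\int_{\sin\phi<0}f(0,\phi)\sin\phi\,\ud{\phi}$ from \eqref{diffusive}. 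The constant-in-$\phi$ factor $\pp f(0)$ then contributes $2\,\pp f(0)$, which cancels exactly against $\int_{\sin\phi<0}\sin\phi\, f(0,\phi)\,\ud{\phi}$, leaving
\begin{eqnarray}
J(0)=\int_{\sin\phi>0}h(\phi)\sin\phi\,\ud{\phi}.
\end{eqnarray}
Feeding these two evaluations into the integrated ODE of Step 1 gives $J(0)=-2\pi\int_0^{\infty}e^{-V(\e;s)}\bar S(s)\,\ud{s}=-\int_0^{\infty}\int_{-\pi}^{\pi}e^{-V(\e;s)}S(s,\phi)\,\ud{\phi}\,\ud{s}$, which is exactly \eqref{Milne compatibility condition}; specializing to $S\equiv 0$ yields \eqref{Milne reduced compatibility condition}.

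\emph{Main obstacle.} Everything above is an elementary ODE computation; the only delicate point is justifying the manipulations for a mere $L^{\infty}$ solution — interchanging $\p_\eta$ with $\int\,\ud{\phi}$, the integration by parts in $\phi$, and the well-definedness of the trace $f(0,\cdot)$ on $\{\sin\phi\ne0\}$. I would dispose of this by working first with the penalized/approximating solutions used in the existence construction (for which these identities are immediate) and passing to the limit, or equivalently by invoking Ukai's trace theorem as in Section 4; the $\eta\to\infty$ boundary term is then controlled directly by the limit built into \eqref{Milne problem.}, or alternatively by the exponential-decay estimate for the $\e$-Milne solution.
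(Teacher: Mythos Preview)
Your proof is correct and follows essentially the same approach as the paper's: integrate the equation over $\phi$ to obtain the first-order ODE $J'+FJ=\int S\,\ud\phi$ for the flux $J(\eta)=\int\sin\phi\,f\,\ud\phi$, compute $J(0)=\int_{\sin\phi>0}h\sin\phi\,\ud\phi$ using the diffusive boundary condition, solve with integrating factor $e^{-V}$, and use $J(\eta)\to0$ at infinity. The only differences are cosmetic (you integrate directly on $[0,\infty)$ rather than solving on $[0,\eta]$ and letting $\eta\to\infty$), and your final paragraph on justifying the formal manipulations is a welcome addition that the paper omits.
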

\begin{proof}
We just integrate over $\phi\in[-\pi,\pi)$ on both sides of
(\ref{Milne problem.}) and integrate by parts to achieve
\begin{eqnarray}\label{Milne temp 1.}
\frac{\ud{}}{\ud{\eta}}\int_{-\pi}^{\pi}
f(\eta,\phi)\sin\phi\ud{\phi}+F(\eta)\int_{-\pi}^{\pi}
f(\eta,\phi)\sin\phi\ud{\phi}=\int_{-\pi}^{\pi}S(\eta,\phi)\ud{\phi}.
\end{eqnarray}
This is a first order ordinary differential equation for
$\int_{-\pi}^{\pi} f(\eta,\phi)\sin\phi\ud{\phi}$. The initial data
is
\begin{eqnarray}
\int_{-\pi}^{\pi}
f(0,\phi)\sin\phi\ud{\phi}&=&\int_{\sin\phi>0}f(0,\phi)\sin\phi\ud{\phi}+\int_{\sin\phi<0}f(0,\phi)\sin\phi\ud{\phi}\\
&=&\int_{\sin\phi>0}\bigg(h(\phi)+\pp f(0)\bigg)\sin\phi\ud{\phi}+\int_{\sin\phi<0}f(0,\phi)\sin\phi\ud{\phi}\nonumber\\
&=&\int_{\sin\phi>0}h(\phi)\sin\phi\ud{\phi}+\int_{\sin\phi>0}\pp f(0)\sin\phi\ud{\phi}-2\pp f(0)\nonumber\\
&=&\int_{\sin\phi>0}h(\phi)\sin\phi\ud{\phi}\nonumber.
\end{eqnarray}
Hence, we can directly solve (\ref{Milne temp 1.}) as
\begin{eqnarray}
\int_{-\pi}^{\pi}
f(\eta,\phi)\sin\phi\ud{\phi}=e^{V(\eta)}\bigg(\int_{\sin\phi>0}h(\phi)\sin\phi\ud{\phi}
+\int_0^{\eta}\int_{-\pi}^{\pi}e^{-V(s)}S(s,\phi)\ud{\phi}\ud{s}\bigg).
\end{eqnarray}
The problem (\ref{Milne problem.}) requires $f(\eta,\phi)\rt
f_{\infty}$ as $\eta\rt\infty$. Hence, we must have
\begin{eqnarray}
\lim_{\eta\rt\infty}\int_{-\pi}^{\pi}
f(\eta,\phi)\sin\phi\ud{\phi}=0.
\end{eqnarray}
Therefore, the only possibility to justify above requirement is
\begin{eqnarray}
\int_{\sin\phi>0}h(\phi)\sin\phi\ud{\phi}
+\int_0^{\infty}\int_{-\pi}^{\pi}e^{-V(s)}S(s,\phi)\ud{\phi}\ud{s}=0.
\end{eqnarray}
This is the desired compatibility condition. If $S=0$, then above
condition reduces to
\begin{eqnarray}
\int_{\sin\phi>0}h(\phi)\sin\phi\ud{\phi}=0.
\end{eqnarray}
\end{proof}

\subsubsection{Reduction to In-flow $\e$-Milne Problem}

It is easy to see if $f$ is a solution to (\ref{Milne problem.}),
then $f+C$ is also a solution for any constant $C$. Hence, in order
to obtain a unique solution, we need a normalization condition
\begin{eqnarray}\label{Milne normalization}
\pp f(0)=0.
\end{eqnarray}
The following lemma tells us the problem (\ref{Milne problem.}) can
be reduced to the $\e$-Milne problem (\ref{Milne problem}) with in-flow
boundary.
\begin{lemma}\label{Milne lemma 2.}
If the boundary data $h$ and $S$ satisfy the compatibility condition
(\ref{Milne compatibility condition}), then the solution $f$ to the
$\e$-Milne problem (\ref{Milne problem}) with in-flow boundary as
$f=h$ on $\sin\phi>0$ is also a solution to the $\e$-Milne problem
(\ref{Milne problem.}) with diffusive boundary, which satisfies the
normalization condition (\ref{Milne normalization}). Furthermore,
this is the unique solution to (\ref{Milne problem.}) among the
functions satisfying (\ref{Milne normalization}) and
$\tnnm{f-f_{\infty}}<\infty$.
\end{lemma}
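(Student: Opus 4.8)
The plan is to identify the solution described in the lemma with the unique in-flow $\e$-Milne solution of (\ref{Milne problem}), to read off both the diffusive boundary condition and the normalization from the flux balance already isolated in Lemma \ref{Milne lemma 1.}, and then to transfer uniqueness from the in-flow problem.

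\emph{Existence and normalization.} Let $f$ be the unique solution of the in-flow $\e$-Milne problem (\ref{Milne problem}) with boundary value $f=h$ on $\sin\phi>0$ and source $S$, supplied by Theorem \ref{Milne theorem 1}; it satisfies $\tnnm{f-f_{\infty}}<\infty$ for some $f_{\infty}\in\r$, and by Theorem \ref{Milne theorem 2} the stronger bound $\tnnm{e^{K_0\eta}(f-f_{\infty})}<\infty$. I first claim $\pp f(0)=0$. Integrating (\ref{Milne problem}) over $\phi\in[-\pi,\pi)$, integrating by parts the term carrying $\p f/\p\phi$, and using that $f-\bar f$ integrates to zero, one obtains exactly the relation from the proof of Lemma \ref{Milne lemma 1.}:
\[
\frac{\ud{}}{\ud{\eta}}\br{\sin\phi,f}_{\phi}(\eta)+F(\eta)\br{\sin\phi,f}_{\phi}(\eta)=\int_{-\pi}^{\pi}S(\eta,\phi)\ud{\phi}.
\]
Since $F=-\p_{\eta}V$ and $V(0)=0$, multiplying by the integrating factor $e^{-V(\eta)}$ and integrating gives
\[
\br{\sin\phi,f}_{\phi}(\eta)=e^{V(\eta)}\bigg(\br{\sin\phi,f}_{\phi}(0)+\int_0^{\eta}e^{-V(s)}\int_{-\pi}^{\pi}S(s,\phi)\ud{\phi}\,\ud{s}\bigg).
\]
Letting $\eta\to\infty$, the exponential decay of $f-f_{\infty}$ forces $\br{\sin\phi,f}_{\phi}(\eta)\to0$, while $V(\eta)\to V_{\infty}\le\ln4$ by Lemma \ref{Milne force} and the inner integral converges because $S$ decays exponentially; hence $\br{\sin\phi,f}_{\phi}(0)=-\int_0^{\infty}e^{-V(s)}\int_{-\pi}^{\pi}S(s,\phi)\ud{\phi}\,\ud{s}$.

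\emph{Using the compatibility condition.} Splitting the flux at $\eta=0$ over $\sin\phi>0$ and $\sin\phi<0$, and using $f(0,\phi)=h(\phi)$ on $\sin\phi>0$ together with $\pp f(0)=-\tfrac12\int_{\sin\phi<0}f(0,\phi)\sin\phi\,\ud{\phi}$, one gets $\br{\sin\phi,f}_{\phi}(0)=\int_{\sin\phi>0}h(\phi)\sin\phi\,\ud{\phi}-2\pp f(0)$. Comparing this with the expression for $\br{\sin\phi,f}_{\phi}(0)$ above and inserting the compatibility condition (\ref{Milne compatibility condition}), the two source integrals cancel and one is left with $\pp f(0)=0$. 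Therefore $f(0,\phi)=h(\phi)+\pp f(0)$ on $\sin\phi>0$, so $f$ solves the diffusive $\e$-Milne problem (\ref{Milne problem.}) and satisfies the normalization (\ref{Milne normalization}); the interior equation and the limit at infinity are inherited from the in-flow problem. For uniqueness, suppose $f_1,f_2$ both solve (\ref{Milne problem.}) with $\pp f_i(0)=0$ and $\tnnm{f_i-f_{i,\infty}}<\infty$. On $\sin\phi>0$ the diffusive boundary condition then reduces to $f_i(0,\phi)=h(\phi)+\pp f_i(0)=h(\phi)$, so each $f_i$ solves the in-flow problem (\ref{Milne problem}) with the same data $h$, the same source $S$, and the same finiteness condition. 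The uniqueness in Theorem \ref{Milne theorem 1} (which ultimately rests on Lemma \ref{Milne infinite LT general}) then yields $f_1=f_2$, and in particular $f_{1,\infty}=f_{2,\infty}$.

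\emph{Main obstacle.} The only step requiring care is the limit $\br{\sin\phi,f}_{\phi}(\eta)\to0$ as $\eta\to\infty$: pointwise convergence $f\to f_{\infty}$ does not suffice, one needs convergence in an averaged sense strong enough to control the weighted flux integral, and this is precisely what the exponential-decay estimate of Theorem \ref{Milne theorem 2} provides. Everything else is bookkeeping with the flux identity already recorded in Lemma \ref{Milne lemma 1.} and with the compatibility condition.
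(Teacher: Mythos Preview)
Your proof is correct and follows essentially the same route as the paper: derive the flux ODE for $\br{\sin\phi,f}_{\phi}$, pass to the limit using exponential decay to pin down the value at $\eta=0$, split that value over $\sin\phi\gtrless0$ and invoke the compatibility condition to conclude $\pp f(0)=0$, then reduce uniqueness to the in-flow case. The only cosmetic difference is that the paper works with $z=f-f_{\infty}$ rather than $f$ directly, which is immaterial since $\br{\sin\phi,f}_{\phi}=\br{\sin\phi,z}_{\phi}$ and $\pp z(0)+f_{\infty}=\pp f(0)$.
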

\begin{proof}
Consider $f$ satisfies the $\e$-Milne problem with in-flow boundary as
follows:
\begin{eqnarray}\label{Milne temp 2.}
\left\{ \begin{array}{rcl}\displaystyle \sin\phi\frac{\p
f}{\p\eta}+F(\eta)\cos\phi\frac{\p
f}{\p\phi}+f-\bar f&=&S(\eta,\phi),\\
f(0,\phi)&=& h(\phi)\ \ \text{for}\ \ \sin\phi>0,\\
\lim_{\eta\rt\infty}f(\eta,\phi)&=&f_{\infty}.
\end{array}
\right.
\end{eqnarray}
Then there exists a $f_{\infty}$ such that
$\lnnm{f-f_{\infty}}<\infty$ and $f$ decays to $f_{\infty}$
exponentially. Therefore, $z=f-f_{\infty}$ satisfies the equation
\begin{eqnarray}\label{Milne temp 3.}
\left\{ \begin{array}{rcl}\displaystyle \sin\phi\frac{\p
z}{\p\eta}+F(\eta)\cos\phi\frac{\p
z}{\p\phi}+z-\bar z&=&S(\eta,\phi),\\
z(0,\phi)&=& h(\phi)-f_{\infty}\ \ \text{for}\ \ \sin\phi>0,\\
\lim_{\eta\rt\infty}z(\eta,\phi)&=&0.
\end{array}
\right.
\end{eqnarray}
Multiplying $e^{-V(\eta)}$ on both sides of (\ref{Milne temp 3.})
and integrating over $\phi\in[-\pi,\pi)$ imply
\begin{eqnarray}\label{Milne temp 4.}
\frac{\ud{}}{\ud{\eta}}\int_{-\pi}^{\pi}e^{-V(\eta)}
z(\eta,\phi)\sin\phi\ud{\phi}=\int_{-\pi}^{\pi}e^{-V(\eta)}S(\eta,\phi)\ud{\phi}.
\end{eqnarray}
Since $z$ decays exponentially with respect to $\eta$, we know $z\in
L^1([0,\infty)\times[-\pi,\pi))$. Hence, we can integrate
(\ref{Milne temp 4.}) over $\eta\in[0,\infty)$ to obtain
\begin{eqnarray}
0-\int_{-\pi}^{\pi}e^{-V(0)}
z(0,\phi)\sin\phi\ud{\phi}=\int_0^{\infty}\int_{-\pi}^{\pi}e^{-V(s)}S(s,\phi)\ud{\phi}\ud{s},
\end{eqnarray}
which further implies
\begin{eqnarray}\label{Milne temp 5.}
\int_{-\pi}^{\pi}
z(0,\phi)\sin\phi\ud{\phi}+\int_0^{\infty}\int_{-\pi}^{\pi}e^{-V(s)}S(s,\phi)\ud{\phi}\ud{s}=0.
\end{eqnarray}
Then we may compute
\begin{eqnarray}\label{Milne temp 6.}
\int_{-\pi}^{\pi} z(0,\phi)\sin\phi\ud{\phi}&=&\int_{\sin\phi>0}
z(0,\phi)\sin\phi\ud{\phi}+\int_{\sin\phi<0}
z(0,\phi)\sin\phi\ud{\phi}\\
&=&\int_{\sin\phi>0} (h(\phi)-f_{\infty})\sin\phi\ud{\phi}-2\pp
z(0)\nonumber\\
&=&\int_{\sin\phi>0} h(\phi)\sin\phi\ud{\phi}-2\bigg(\pp
z(0)+f_{\infty}\bigg)\nonumber.
\end{eqnarray}
Combining (\ref{Milne temp 5.}), (\ref{Milne temp 6.}) and the
compatibility condition (\ref{Milne compatibility condition}), we
have
\begin{eqnarray}
\pp z(0)+f_{\infty}=0.
\end{eqnarray}
Since
\begin{eqnarray}
\pp z(0)=\pp f(0)-\pp f_{\infty}(0)=\pp f(0)-f_{\infty}.
\end{eqnarray}
naturally, we have $\pp f(0)=0$. Hence, $f$ is a solution to the
$\e$-Milne problem (\ref{Milne problem.}) with the normalization
condition (\ref{Milne normalization}). By Cauchy's inequality, we
can deduce the fact that $\tnnm{f-f_{\infty}}<\infty$ implies
$\tnnm{\bar f-f_{\infty}}<\infty$. Then by a similar argument as
Step 3 in the proof of Lemma \ref{Milne infinite LT}, we can show
the uniqueness.
\end{proof}
In summary, based on above analysis, we can utilize the known result
for $\e$-Milne problem with in-flow boundary to obtain the
well-posedness, decay and maximum principle of the solution to the
$\e$-Milne problem (\ref{Milne problem.}).
\begin{theorem}\label{Milne theorem 1.}
There exists a unique solution $f(\eta,\phi)$ to the $\e$-Milne problem
(\ref{Milne problem.}) with the normalization condition (\ref{Milne
normalization}) satisfying
\begin{eqnarray}
\lnnm{f-f_{\infty}}\leq C\bigg(1+M+\frac{M}{K}\bigg).
\end{eqnarray}
\end{theorem}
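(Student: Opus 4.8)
The plan is to reduce the diffusive–boundary $\e$-Milne problem (\ref{Milne problem.}), whose boundary condition is \emph{implicit} through the reflection operator $\pp$, to the in-flow $\e$-Milne problem (\ref{Milne problem}) already analyzed earlier, and then simply transplant the estimate of Theorem \ref{Milne theorem 1}. No new analytic machinery should be required: the statement is essentially the concatenation of Lemma \ref{Milne lemma 1.}, Lemma \ref{Milne lemma 2.}, and Theorems \ref{Milne lemma 6}, \ref{Milne theorem 1}, \ref{Milne theorem 2}.

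First I would record that, by Lemma \ref{Milne lemma 1.}, the compatibility condition (\ref{Milne compatibility condition}) is \emph{necessary} for (\ref{Milne problem.}) to possess an $L^\infty$ solution, so it is a standing hypothesis here (and it is exactly what the interior Neumann data were chosen to enforce in the construction above). Granting (\ref{Milne compatibility condition}), I would apply Lemma \ref{Milne lemma 2.}: instead solve the in-flow $\e$-Milne problem (\ref{Milne problem}) with the same source $S$ and in-flow datum $f=h$ on $\{\sin\phi>0\}$, where one takes the constant in (\ref{Milne decay}) to be $M$ in view of (\ref{Milne decay.}). By Theorem \ref{Milne lemma 6} this problem has a unique solution $f$, with a limit $f_\infty$ and $\tnnm{f-f_\infty}<\infty$, and by Theorem \ref{Milne theorem 1} it obeys $\lnnm{f-f_\infty}\le C(1+M+M/K)$; by Theorem \ref{Milne theorem 2} the convergence to $f_\infty$ is exponential. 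Lemma \ref{Milne lemma 2.} then asserts that this very $f$ also solves (\ref{Milne problem.}) and satisfies the normalization $\pp f(0)=0$; since the equation, the value $f_\infty$, and the $L^\infty$ bound are unchanged, the asserted estimate follows at once.

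For uniqueness within the class $\{\,\pp f(0)=0,\ \tnnm{f-f_\infty}<\infty\,\}$ I would again invoke Lemma \ref{Milne lemma 2.}: the difference of two such solutions solves the homogeneous in-flow problem with zero data, which must vanish by the argument of Step 3 in the proof of Lemma \ref{Milne infinite LT}. Hence the solution furnished above is the unique one.

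The single non-routine point — everything else being bookkeeping — is the verification that the in-flow solution actually honours the implicit boundary condition $f(0,\phi)=h(\phi)+\pp f(0)$ on $\{\sin\phi>0\}$, i.e. that the a priori unknown outgoing trace $f(0,\cdot)|_{\sin\phi<0}$ does not spoil $\pp f(0)=0$. This is precisely the computation already carried out in Lemma \ref{Milne lemma 2.}: multiply the equation by $e^{-V(\eta)}$, integrate in $\phi$, integrate in $\eta$ over $[0,\infty)$ using the exponential decay of $f-f_\infty$, and combine the resulting identity with (\ref{Milne compatibility condition}) to get $\pp z(0)+f_\infty=0$, i.e. $\pp f(0)=0$. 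I expect no further obstacle beyond assembling these ingredients in the above order.
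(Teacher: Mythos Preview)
Your proposal is correct and follows exactly the route the paper takes: the paper simply remarks, after Lemma \ref{Milne lemma 2.}, that ``based on above analysis, we can utilize the known result for $\e$-Milne problem with in-flow boundary'' and then states Theorem \ref{Milne theorem 1.} without further proof. Your write-up just makes explicit the chain Lemma \ref{Milne lemma 1.} $\to$ Lemma \ref{Milne lemma 2.} $\to$ Theorems \ref{Milne lemma 6}/\ref{Milne theorem 1}, which is precisely what the paper intends.
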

\begin{theorem}\label{Milne theorem 2.}
For $K_0>0$ sufficiently small, the solution $f(\eta,\phi)$ to the
$\e$-Milne problem (\ref{Milne problem.}) with the normalization
condition (\ref{Milne normalization}) satisfies
\begin{eqnarray}
\lnnm{e^{K_0\eta}(f-f_{\infty})}\leq C\bigg(1+M+\frac{M}{K}\bigg).
\end{eqnarray}
\end{theorem}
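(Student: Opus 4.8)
The plan is to reduce the diffusive--boundary problem to the in--flow problem already analyzed in Section 3 and then quote Theorem \ref{Milne theorem 2}. First I would note that the very existence of a solution $f$ to (\ref{Milne problem.}) forces, by Lemma \ref{Milne lemma 1.}, the compatibility condition (\ref{Milne compatibility condition}) on the pair $(h,S)$; this is precisely the hypothesis under which Lemma \ref{Milne lemma 2.} applies. By that lemma, the unique solution of the in--flow $\e$-Milne problem (\ref{Milne problem}) with datum $f=h$ on $\{\sin\phi>0\}$ and source $S$ --- which exists and is unique by Theorem \ref{Milne theorem 1} --- is exactly the unique solution of (\ref{Milne problem.}) subject to the normalization (\ref{Milne normalization}), and the two share the same limiting constant $f_{\infty}$ (characterized in both formulations by $\lim_{\eta\rt\infty}f=f_{\infty}$). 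Hence the $f$ in the statement is genuinely an in--flow $\e$-Milne solution.

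Next I would check that the data bounds transfer without loss: the hypothesis $\abs{h^{\e}(\theta,\phi)}\le M$ gives $\lnm{h}\le M$ in the half--domain norm of Section 3, and $\abs{S^{\e}(\eta,\theta,\phi)}\le Me^{-K\eta}$ is (\ref{Milne decay}) with prefactor $M$; the case $\bar S\neq 0$ is harmless because it is already folded into Theorem \ref{Milne theorem 2} via the superposition construction of Lemma \ref{Milne infinite LT general}, all of whose auxiliary pieces obey the same weighted bounds. Therefore Theorem \ref{Milne theorem 2} applies verbatim to $f$ and yields, for $K_0>0$ sufficiently small,
\[
\lnnm{e^{K_0\eta}(f-f_{\infty})}\le C\bigg(1+M+\frac{M}{K}\bigg),
\]
which is the asserted estimate; uniformity in $\e$ and $\theta$ is inherited since the constant in Theorem \ref{Milne theorem 2} is universal.

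I do not expect a genuine analytic obstacle: the weighted $L^2$ energy identity against $e^{-V(\eta)}$, the $L^{\infty}$ bootstrap through the operators $\a$ and $\t$ in Lemmas \ref{Milne lemma 1}--\ref{Milne lemma 3}, and the absorption step requiring $K_0<\min\{1/2,K\}$ were all done upstream for the in--flow problem, and Lemma \ref{Milne lemma 2.} supplies the bridge. The one point I would be careful about is invoking Lemma \ref{Milne lemma 2.} with the correct normalization $\pp f(0)=0$, so that the $f_{\infty}$ appearing in Theorem \ref{Milne theorem 2.} is literally the in--flow limit; this is automatic from the identification of the two solutions. If one preferred an argument avoiding the reduction, one could instead rerun the proof of Theorem \ref{Milne theorem 2} directly, observing that $\pp f(0)=0$ makes the diffusive boundary contribute nothing extra to the boundary terms $\br{f,f\sin\phi}_{\phi}(0)$ and $\int_{\sin\phi>0}h^2\sin\phi\,\ud\phi$ that enter there --- but the reduction route is the economical one.
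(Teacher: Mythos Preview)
Your proposal is correct and matches the paper's own approach exactly: the paper states that, via Lemma \ref{Milne lemma 2.}, one may ``utilize the known result for $\e$-Milne problem with in-flow boundary to obtain the well-posedness, decay and maximum principle'' for (\ref{Milne problem.}), and Theorem \ref{Milne theorem 2.} is then simply the transcription of Theorem \ref{Milne theorem 2} through that identification. Your additional remarks on the compatibility condition via Lemma \ref{Milne lemma 1.}, the transfer of the data bounds, and the role of the normalization $\pp f(0)=0$ are all accurate and make explicit what the paper leaves to the summary sentence.
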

\begin{theorem}\label{Milne theorem 3.}
The solution to the $\e$-Milne problem (\ref{Milne problem.}) with $S=0$
and the normalization condition (\ref{Milne normalization})
satisfies the maximum principle, i.e.
\begin{eqnarray}
\min_{\sin\phi>0}h(\phi)\leq f(\eta,\phi)\leq
\max_{\sin\phi>0}h(\phi).
\end{eqnarray}
\end{theorem}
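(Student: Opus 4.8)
The plan is to deduce the diffusive maximum principle from the in-flow case already treated in Theorem \ref{Milne theorem 3}, using the reduction established in Lemma \ref{Milne lemma 2.}. First I would observe that since $S=0$, the very existence of a solution to (\ref{Milne problem.}) forces, by Lemma \ref{Milne lemma 1.}, the reduced compatibility condition (\ref{Milne reduced compatibility condition}), i.e. $\int_{\sin\phi>0}h(\phi)\sin\phi\ud{\phi}=0$. With this compatibility in hand and the normalization condition $\pp f(0)=0$ in force, the boundary condition of (\ref{Milne problem.}) collapses to $f(0,\phi)=h(\phi)$ for $\sin\phi>0$, so that the solution of the diffusive-boundary problem is precisely the solution of the in-flow $\e$-Milne problem (\ref{Milne problem}) with data $h$ and zero source. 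This identification, together with uniqueness among functions satisfying $\tnnm{f-f_{\infty}}<\infty$, is exactly the content of Lemma \ref{Milne lemma 2.}.

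Once the identification is made, I would simply invoke Theorem \ref{Milne theorem 3}, which asserts that the in-flow solution obeys $\min_{\sin\phi>0}h(\phi)\le f(\eta,\phi)\le\max_{\sin\phi>0}h(\phi)$ --- which is exactly the present claim. So the proof would run in three steps: (i) compatibility via Lemma \ref{Milne lemma 1.}; (ii) reduction to the in-flow $\e$-Milne problem via Lemma \ref{Milne lemma 2.}; (iii) application of Theorem \ref{Milne theorem 3}.

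The only point that needs a moment of care --- and hence the closest thing to an obstacle --- is that the diffusive boundary operator $\pp$ is non-local in $\vw$, so one cannot trace characteristics directly from the boundary as in the in-flow argument; the normalization $\pp f(0)=0$ is precisely what removes the $\pp f(0)$ contribution and makes the reduction legitimate. If one instead wished to argue directly, one would need that $\pp$ fixes constants, which in the $2$-D disk holds because $\half\int_{\vw\cdot\vec n>0}(\vw\cdot\vec n)\ud{\vw}=1$, so that the translates $f\mapsto f-\max_{\sin\phi>0}h$ and $f\mapsto \min_{\sin\phi>0}h-f$ remain solutions of a diffusive $\e$-Milne problem with nonpositive boundary data; but with Lemma \ref{Milne lemma 2.} available this detour is unnecessary, and no genuinely new estimate is required beyond those already proved.
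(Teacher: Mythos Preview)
Your proposal is correct and follows exactly the paper's approach: the paper states Theorems \ref{Milne theorem 1.}--\ref{Milne theorem 3.} without separate proofs, prefacing them with the remark that, via the reduction of Lemma \ref{Milne lemma 2.}, the in-flow results (including Theorem \ref{Milne theorem 3}) transfer directly to the diffusive problem with the normalization condition. Your three-step outline (compatibility from Lemma \ref{Milne lemma 1.}, reduction via Lemma \ref{Milne lemma 2.}, application of Theorem \ref{Milne theorem 3}) is precisely this argument made explicit.
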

\begin{remark}\label{Milne remark.}
Note that when $F=0$, then all the previous proofs can be recovered
and Theorem \ref{Milne theorem 1.}, Theorem \ref{Milne theorem 2.}
and Theorem \ref{Milne theorem 3.} still hold. Hence, we can obtain
the well-posedness, decay and maximum principle of the classical
Milne problem
\begin{eqnarray}\label{classical Milne problem.}
\left\{ \begin{array}{rcl}\displaystyle
\sin\phi\frac{\p f}{\p\eta}+f-\bar f&=&S(\eta,\phi),\\
f(0,\phi)&=&\pp f(0)+h(\phi)\ \ \text{for}\ \ \sin\phi>0,\\
\lim_{\eta\rt\infty}f(\eta,\phi)&=&f_{\infty},
\end{array}
\right.
\end{eqnarray}
with the normalization condition (\ref{Milne normalization}). Note
that now we always have $V(\eta)=0$.
\end{remark}

\subsection{Main Results}

\begin{theorem}\label{main 2}
Assume $g(\vx_0,\vw)\in C^3(\Gamma^-)$. Then for the steady neutron
transport equation (\ref{transport.}), the unique solution
$u^{\e}(\vx,\vw)\in L^{\infty}(\Omega\times\s^1)$ satisfies
\begin{eqnarray}\label{main theorem 1.}
\lnm{u^{\e}-\u_0-\ub_0}=O(\e),
\end{eqnarray}
Moreover, if $g(\theta,\phi)=\cos\phi$, then there exists a $C>0$
such that
\begin{eqnarray}\label{main theorem 2.}
\lnm{u^{\e}-\uc_0-\ubc_0-\e\uc_1-\e\ubc_1}\geq C\e>0,
\end{eqnarray}
when $\e$ is sufficiently small.
\end{theorem}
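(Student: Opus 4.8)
The plan is to reproduce the two-part structure of Theorem \ref{main 1}, adjusted for the absorption term $(1+\e^2)u^\e$ and the diffusive operator $\pp$. For $(\ref{main theorem 1.})$ I would set $R=u^\e-\q_2-\qb_3$ with $\q_2=\sum_{k=0}^{2}\e^k\u_k$ and $\qb_3=\sum_{k=0}^{3}\e^k\ub_k$ (legitimate since $g\in C^3$ reaches $\u_2$ and $\ub_3$), and apply $\ll u=\e\vw\cdot\nx u+(1+\e^2)u-\bar u$. By the interior equations $(\ref{interior 1.})$--$(\ref{interior 3.})$ and the elliptic regularity of the interior solutions one gets $\abs{\ll\q_2}\le C\e^{3}$; the boundary-layer computation $(\ref{remainder temp 1})$--$(\ref{pf 3})$ carries over, the point again being $\p_\theta F=0$, so that after cancellation only the exponentially small cut-off commutator $\sin\phi\,\psi_0'\v_k$ (Theorem \ref{Milne theorem 2.}), the $\theta$-derivative term $\e^{k+1}\p_\theta\v_k$, and the new term $\e^2\ub_k$ survive, all $O(\e^3)$. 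Because the construction was arranged so that $\u_k+\ub_k$ satisfies the boundary condition of $(\ref{transport.})$ order by order and $\psi_0\equiv1$ at $\eta=0$, the remainder obeys the homogeneous diffusive condition $R=\pp R$ on $\Gamma^-$; Theorem \ref{well-posedness lemma 2.} then gives $\im{R}{\Omega\times\s^1}\le C\e^{-2}\im{\ll R}{\Omega\times\s^1}\le C\e$, and $(\ref{main theorem 1.})$ follows after absorbing the remaining $O(\e)$ terms.

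For $(\ref{main theorem 2.})$ the decisive observation is that when $g(\theta,\phi)=\cos\phi$ the entire expansion is $\theta$-independent. The Robin data $\tfrac1\pi\int_{\sin\phi>0}\cos\phi\sin\phi\,\ud{\phi}=0$ force $\u_0=\uc_0=0$; $\ub_0$ solves a homogeneous $\e$-Milne problem and vanishes, $\ubc_0=0$ is the classical fact in the diffusive case, and $\p_\theta\ub_0=\p_\theta\ubc_0=0$ makes the next Robin data vanish too, so $\u_1=\uc_1=0$. Hence $(\ref{main theorem 2.})$ reduces to $\lnm{u^\e-\e\ubc_1}\ge C\e$. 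I would next prove the refined bound $\lnm{u^\e-\e\ub_1}=O(\e^{2})$: carrying the $\e$-expansion to order $3$ (permissible since $\cos\phi$ is smooth), the residual is $O(\e^{4})$ after the same cancellations as in part one, so the $\e^{-2}$ loss in Theorem \ref{well-posedness lemma 2.} still yields $o(\e)$. Here $\ub_2=0$ while $\ub_3,\u_2,\u_3$ are genuinely present, and it is the identity $\psi\psi_0=\psi_0$ together with the exponential decay of $\v_1$ that annihilate the dangerous $\e^3\psi_0\v_1$ term produced by $\e^2\ll\ub_1$.

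It then suffices to show $\lnm{\ub_1-\ubc_1}$ stays bounded away from $0$. By $(\ref{expansion temp 10.})$, $(\ref{classical temp 2.})$ and $\ub_0=\ubc_0=0$, $f_1^\e$ solves the $\e$-Milne problem and $\f_1$ the classical Milne problem, both with the \emph{same} in-flow datum $\cos\phi$ after the diffusive-to-in-flow reduction of Lemma \ref{Milne lemma 2.} (and Remark \ref{Milne remark.} for $F=0$); on $0\le\eta\le 1/(2\e)$, where $\psi_0\equiv1$, $\ub_1=\f_1^{\e}-f_1^{\e}(\infty)$ with $\f_1^\e$ the $\e$-Milne solution, and $\ubc_1=\f_1-f_1(\infty)$. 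Assuming for contradiction $\lnm{\ub_1-\ubc_1}\to0$, I would put $u=\f_1+2$, $U=f_1^\e+2$, $G=\cos\phi+2$ and rerun Steps 6--10 of the proof of Theorem \ref{main 1} verbatim: continuity of $\bar u,\bar U$ at $\eta=0$ via Ukai's trace theorem, the representation along characteristics evaluated at $(\eta,\phi)=(n\e,\e)$, and the asymptotics $u(n\e,\e)=\bar u(0)+e^{-n}(3-\bar u(0))+o(1)$, $U(n\e,\e)=\bar U(0)+e^{1-\sqrt{1+2n}}(3-\bar U(0))+o(1)$, with $\bar u(0),\bar U(0)\in[3/2,5/2]$ by the maximum principle (Theorem \ref{Milne theorem 3.}). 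Since $\bar u(0)=\bar U(0)+o(1)$, $3-\bar u(0)=O(1)$ and $e^{-n}\ne e^{1-\sqrt{1+2n}}$ for fixed $n>0$, $\abs{U(n\e,\e)-u(n\e,\e)}$ is bounded below, a contradiction that proves $(\ref{main theorem 2.})$.

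The step I expect to be the real obstacle is the refined remainder estimate $\lnm{u^\e-\e\ub_1}=o(\e)$. Since the diffusive $L^\infty$ well-posedness loses a full power $\e^{-2}$, one cannot simply quote the $O(\e)$ bound $(\ref{main theorem 1.})$; instead one must pin down exactly which higher-order terms of the expansion survive for $g=\cos\phi$ and use $\p_\theta F=0$, $\psi\psi_0=\psi_0$ and Theorem \ref{Milne theorem 2.} to push the residual down to $O(\e^4)$. After that the comparison at $(n\e,\e)$ is a routine transcription of Section 4.
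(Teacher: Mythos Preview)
Your overall strategy matches the paper's: estimate $\ll R_N$ term by term, invoke Theorem \ref{well-posedness lemma 2.}, and for (\ref{main theorem 2.}) reduce to the in-flow Milne comparison of Section 4 via Lemma \ref{Milne lemma 2.}. Two points deserve comment.

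\textbf{The asymmetric truncation in part one is a genuine slip.} With $R=u^\e-\q_2-\qb_3$ the boundary condition is \emph{not} homogeneous: the matching $(\u_k+\ub_k)-\pp(\u_k+\ub_k)=g\delta_{k1}$ only covers $k\le 2$, and the dangling $\e^3\ub_3$ gives
\[
R-\pp R=-\e^3\bigl(\ub_3-\pp\ub_3\bigr)=-\e^3 g_3\quad\text{on }\Gamma^-,
\]
since $f_3^\e(0,\phi)=g_3(\phi)$ and $\pp f_3^\e(0)=0$. In the format $R=\pp R+\e\tilde g$ this is $\tilde g=-\e^2 g_3=O(\e^2)$, and Theorem \ref{well-posedness lemma 2.} then yields only $\|R\|_{L^\infty}\le C\e^{-2}(O(\e^3)+O(\e^2))=O(1)$, not $O(\e)$. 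For general $g\in C^3$ you cannot add $\e^3\u_3$ to repair this, since the regularity only reaches $\u_2$. The paper avoids the issue by taking the symmetric remainder $R_2=u^\e-\q_2-\qb_2$: then the boundary condition is exactly $R_2=\pp R_2$, while $|\ll\q_2|+|\ll\qb_2|\le C\e^3$ (the extra $\e^2$ in $(1+\e^2)$ contributes $\e^{N+2}\psi_0\v_N+\e^{N+1}\psi_0\v_{N-1}$, still $O(\e^3)$ at $N=2$), and Theorem \ref{well-posedness lemma 2.} gives $\|R_2\|_{L^\infty}\le C\e$. So just drop $\ub_3$ in part one.

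\textbf{Your handling of part two is in fact more explicit than the paper's.} The paper jumps directly from ``assume (\ref{main theorem 2.}) fails'' to $\lim_{\e\to0}\lnm{(\uc_1+\ubc_1)-(\u_1+\ub_1)}=0$, which tacitly uses a bound $\lnm{u^\e-\e\ub_1}=o(\e)$ that Step 4 (with $N=2$) does not literally provide. Your proposal to push to $N=3$ for the specific $g=\cos\phi$ (smooth, so $\u_3$ is available) and check that $\u_0=\u_1=\ub_0=\ub_2=0$ makes this honest: with the symmetric $R_3$ one gets $|\ll R_3|\le C\e^4$ and exact boundary matching, hence $\|R_3\|_{L^\infty}\le C\e^2$. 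After that, the reduction to $\lnm{\ub_1-\ubc_1}\ge c>0$ and the characteristic comparison at $(n\e,\e)$ proceed exactly as in Steps 6--10 of Section 4, which is also how the paper closes the argument.
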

\begin{proof}
We can divide the proof into several steps:\\
\ \\
Step 1: Remainder definitions.\\
Note that the boundary layer solution depends on $\e$ due to the
force and the interior solution also depends on $\e$ due the
boundary condition. We may rewrite the asymptotic expansion as
follows:
\begin{eqnarray}
u^{\e}&\sim&\sum_{k=0}^{\infty}\e^k\u_k+\sum_{k=0}^{\infty}\e^k\ub_k.
\end{eqnarray}
The remainder can be defined as
\begin{eqnarray}\label{pf 1_}
R_N&=&u^{\e}-\sum_{k=0}^{N}\e^k\u_k-\sum_{k=0}^{N}\e^k\ub_k=u-\q_N-\qb_N,
\end{eqnarray}
where
\begin{eqnarray}
\q_N&=&\sum_{k=0}^{N}\e^k\u_k,\\
\qb_N&=&\sum_{k=0}^{N}\e^k\ub_k.
\end{eqnarray}
Noting the equation (\ref{transport temp.}) is equivalent to the
equation (\ref{transport.}), we write $\ll$ to denote the neutron
transport operator as follows:
\begin{eqnarray}
\ll u&=&\e\vw\cdot\nx u+(1+\e^2)u-\bar u\\
&=&\sin\phi\frac{\p
u}{\p\eta}-\frac{\e}{1-\e\eta}\cos\phi\bigg(\frac{\p
u}{\p\phi}+\frac{\p u}{\p\theta}\bigg)+(1+\e^2)u-\bar u\nonumber
\end{eqnarray}
\ \\
Step 2: Estimates of $\ll \q_N$.\\
The interior contribution can be estimated as
\begin{eqnarray}
\\
\ll\q_0=\e\vw\cdot\nx \q_0+(1+\e^2)\q_0-\bar
\q_0&=&(\q_0-\bar\q_0)+\e\vw\cdot\nx \u_0+\e^2\u_0=\e\vw\cdot\nx
\u_0+\e^2\u_0.\nonumber
\end{eqnarray}
We can directly estimate
\begin{eqnarray}
\abs{\e\vw\cdot\nx \u_0}&\leq& C\e\abs{\nx\u_0}\leq C\e,\\
\abs{\e^2\u_0}&\leq&C\e^2\abs{\u_0}\leq C\e^2.
\end{eqnarray}
This implies
\begin{eqnarray}
\abs{\ll \q_0}\leq C\e.
\end{eqnarray}
For higher order term, we can estimate
\begin{eqnarray}
\ll\q_N=\e\vw\cdot\nx \q_N+(1+\e^2)\q_N-\bar
\q_N&=&\e^{N+1}\vw\cdot\nx \u_N+\e^{N+2}\u_{N}+\e^{N+1}\u_{N-1}.
\end{eqnarray}
We have
\begin{eqnarray}
\abs{\e^{N+1}\vw\cdot\nx \u_N}&\leq& C\e^{N+1}\abs{\nx\u_N}\leq
C\e^{N+1},\\
\abs{\e^{N+2}\u_{N}+\e^{N+1}\u_{N-1}}&\leq&C\e^{N+2}\abs{\u_N}+C\e^{N+1}\abs{\u_{N-1}}\leq
C\e^{N+1}.
\end{eqnarray}
This implies
\begin{eqnarray}\label{pf 2_}
\abs{\ll \q_N}\leq C\e^{N+1}.
\end{eqnarray}
\ \\
Step 3: Estimates of $\ll \qb_N$.\\
The boundary layer solution is
$\ub_k=(f_k^{\e}-f_k^{\e}(\infty))\cdot\psi_0=\v_k\psi_0$ where
$f_k^{\e}(\eta,\theta,\phi)$ solves the $\e$-Milne problem and $\v_k=f_k^{\e}-f_k^{\e}(\infty)$. Notice
$\psi_0\psi=\psi_0$, so the boundary layer contribution can be
estimated as
\begin{eqnarray}\label{remainder temp 1.}
\\
\ll\qb_0&=&\sin\phi\frac{\p
\qb_0}{\p\eta}-\frac{\e}{1-\e\eta}\cos\phi\bigg(\frac{\p
\qb_0}{\p\phi}+\frac{\p\qb_0}{\p\theta}\bigg)+(1+\e^2)\qb_0-\bar
\qb_0\nonumber\\
&=&\sin\phi\bigg(\psi_0\frac{\p
\v_0}{\p\eta}+\v_0\frac{\p\psi_0}{\p\eta}\bigg)-\frac{\psi_0\e}{1-\e\eta}\cos\phi\bigg(\frac{\p
\v_0}{\p\phi}+\frac{\p \v_0}{\p\theta}\bigg)+(1+\e^2)\psi_0\v_0-\psi_0\bar\v_0\nonumber\\
&=&\sin\phi\bigg(\psi_0\frac{\p
\v_0}{\p\eta}+\v_0\frac{\p\psi_0}{\p\eta}\bigg)-\frac{\psi_0\psi\e}{1-\e\eta}\cos\phi\bigg(\frac{\p
\v_0}{\p\phi}+\frac{\p \v_0}{\p\theta}\bigg)+(1+\e^2)\psi_0 \v_0-\psi_0\bar\v_0\nonumber\\
&=&\psi_0\bigg(\sin\phi\frac{\p
\v_0}{\p\eta}-\frac{\e\psi}{1-\e\eta}\cos\phi\frac{\p
\v_0}{\p\phi}+\v_0-\bar\v_0\bigg)+\sin\phi
\frac{\p\psi_0}{\p\eta}\v_0-\frac{\psi_0\e}{1-\e\eta}\cos\phi\frac{\p
\v_0}{\p\theta}+\e^2\psi_0\v_0\nonumber\\
&=&\sin\phi
\frac{\p\psi_0}{\p\eta}\v_0-\frac{\psi_0\e}{1-\e\eta}\cos\phi\frac{\p
\v_0}{\p\theta}+\e^2\psi_0\v_0\nonumber.
\end{eqnarray}
Since $\psi_0=1$ when $\eta\leq 1/(4\e)$, the effective region
of $\px\psi_0$ is $\eta\geq1/(4\e)$ which is further and further
from the origin as $\e\rt0$. By Theorem \ref{Milne theorem 2.}, the
first term in (\ref{remainder temp 1.}) can be controlled as
\begin{eqnarray}
\abs{\sin\phi\frac{\p\psi_0}{\p\eta}\v_0}&\leq&
Ce^{-\frac{K_0}{\e}}\leq C\e.
\end{eqnarray}
For the second term in (\ref{remainder temp 1.}), we have
\begin{eqnarray}
\abs{-\frac{\psi_0\e}{1-\e\eta}\cos\phi\frac{\p
\v_0}{\p\theta}}&\leq&C\e\abs{\frac{\p \v_0}{\p\theta}}\leq C\e.
\end{eqnarray}
For the third term in (\ref{remainder temp 1.}), we have
\begin{eqnarray}
\abs{\e^2\psi_0\v_0}\leq C\e.
\end{eqnarray}
This implies
\begin{eqnarray}
\abs{\ll \qb_0}\leq C\e.
\end{eqnarray}
For higher order term, we can estimate
\begin{eqnarray}\label{remainder temp 2.}
\ll\qb_N&=&\sin\phi\frac{\p
\qb_N}{\p\eta}-\frac{\e}{1-\e\eta}\cos\phi\bigg(\frac{\p
\qb_N}{\p\phi}+\frac{\p \qb_N}{\p\theta}\bigg)+(1+\e^2)\qb_N-\bar
\qb_N\\
&=&\sum_{i=0}^N\e^i\sin\phi
\frac{\p\psi_0}{\p\eta}\v_i-\frac{\psi_0\e^{N+1}}{1-\e\eta}\cos\phi\frac{\p
\v_N}{\p\theta}+\e^{N+2}\psi_0\v_N+\e^{N+1}\psi_0\v_{N-1}\nonumber.
\end{eqnarray}
Away from the origin, the first term in (\ref{remainder temp 2.})
can be controlled as
\begin{eqnarray}
\abs{\sum_{i=0}^N\e^i\sin\phi \frac{\p\psi_0}{\p\eta}\v_i}&\leq&
Ce^{-\frac{K_0}{\e}}\leq C\e^{N+1}.
\end{eqnarray}
For the second term in (\ref{remainder temp 2.}), we have
\begin{eqnarray}
\abs{-\frac{\psi_0\e^{N+1}}{1-\e\eta}\cos\phi\frac{\p
\v_N}{\p\theta}}&\leq&C\e^{N+1}\abs{\frac{\p \v_N}{\p\theta}}\leq
C\e^{N+1}.
\end{eqnarray}
For the third term in (\ref{remainder temp 2.}), we have
\begin{eqnarray}
\abs{\e^{N+2}\psi_0\v_N+\e^{N+1}\psi_0\v_{N-1}}\leq C\e^{N+1}.
\end{eqnarray}
This implies
\begin{eqnarray}\label{pf 3_}
\abs{\ll \qb_N}\leq C\e^{N+1}.
\end{eqnarray}
\ \\
Step 4: Proof of (\ref{main theorem 1.}).\\
In summary, since $\ll u^{\e}=0$, collecting (\ref{pf 1_}), (\ref{pf 2_}) and (\ref{pf 3_}), we can prove
\begin{eqnarray}
\abs{\ll R_N}\leq C\e^{N+1}.
\end{eqnarray}
Consider the asymptotic expansion to $N=2$, then the remainder $R_2$
satisfies the equation
\begin{eqnarray}
\left\{
\begin{array}{rcl}
\e \vw\cdot\nabla_x R_2+R_2-\bar R_2&=&\ll R_2\ \ \text{for}\ \ \vx\in\Omega,\\
R_2&=&\pp R_2\ \ \text{for}\ \ \vw\cdot\vec n<0\ \ \text{and}\ \
\vx_0\in\p\Omega.
\end{array}
\right.
\end{eqnarray}
By Theorem \ref{well-posedness lemma 2.}, we have
\begin{eqnarray}
\im{R_2}{\Omega\times\s^1}\leq \frac{C}{\e^2}\im{\ll
R_2}{\Omega\times\s^1}\leq \frac{C\e^3}{\e^2}\leq C\e.
\end{eqnarray}
Hence, we have
\begin{eqnarray}
\lnm{u^{\e}-\sum_{k=0}^2\e^k\u_k-\sum_{k=0}^2\e^k\ub_k}=O(\e).
\end{eqnarray}
Since it is easy to see
\begin{eqnarray}
\lnm{\sum_{k=1}^2\e^k\u_k+\sum_{k=1}^2\e^k\ub_k}\leq C\e,
\end{eqnarray}
our result naturally follows. This completes the proof of (\ref{main theorem 1.}).\\
\ \\
Step 5: Proof of (\ref{main theorem 2.}).\\
By (\ref{classical temp 2.}), the solution $\f_1$ satisfies the
Milne problem
\begin{eqnarray}
\left\{
\begin{array}{rcl}
\sin(\theta+\xi)\dfrac{\p \f_1}{\p\eta}+\f_1-\bar \f_1&=&0,\\
\f_1(0,\theta,\xi)&=&\pp \f_1(0,\theta)+g_1(\theta,\xi)\ \
\text{for}\ \ \sin(\theta+\xi)>0,\\\rule{0ex}{1em}
\lim_{\eta\rt\infty}\f_1(\eta,\theta,\xi)&=& f_1(\infty,\theta).
\end{array}
\right.
\end{eqnarray}
For convenience of comparison, we make the substitution
$\phi=\theta+\xi$ to obtain
\begin{eqnarray}
\left\{
\begin{array}{rcl}
\sin\phi\dfrac{\p \f_1}{\p\eta}+\f_1-\bar \f_1&=&0,\\
\f_1(0,\phi)&=&\pp \f_1(0)+g_1(\theta,\phi)\ \ \text{for}\ \
\sin\phi>0,\\\rule{0ex}{1em}
\lim_{\eta\rt\infty}\f_1(\eta,\theta,\xi)&=& f_1(\infty,\theta).
\end{array}
\right.
\end{eqnarray}
Assume (\ref{main theorem 2.}) is incorrect. For our
$g(\phi)=\cos\phi$ which is independent of $\theta$, since
$\ubc_0=\ub_0=0$ and $\uc_0=\u_0=0$, we have
\begin{eqnarray}
\lim_{\e\rt0}\lnm{(\uc_1+\ubc_1)-(\u_1+\ub_1)}=0.
\end{eqnarray}
Since now $\ubc_1$ and $\ub_1$ are independent of $\theta$, by
(\ref{expansion temp 22.}) and (\ref{expansion temp 12.}), we can
directly estimate
\begin{eqnarray}
\frac{\p \bu_1}{\p\vec
n}&=&-\int_0^{\infty}\int_{-\pi}^{\pi}e^{-V(s)}\bigg(\frac{\psi(\e
s)}{1-\e\eta}\cos\phi\frac{\p \ub_1}{\p\theta}(s,\phi)-\psi(\e
s)\ub_0(s,\phi)\bigg)\ud{\phi}\ud{s}=0,
\end{eqnarray}
and also
\begin{eqnarray}
\frac{\p \bar \uc_1}{\p\vec n}
&=&-\int_0^{\infty}\int_{-\pi}^{\pi}e^{-V(s)}\bigg(\frac{\psi(\e
s)}{1-\e\eta}\cos\phi\frac{\p \ubc_1}{\p\theta}(s,\phi)-\psi(\e
s)\ubc_0(s,\phi)\bigg)\ud{\phi}\ud{s}=0.
\end{eqnarray}
Hence, we have $\bu_1=\bar \uc_1$ in the domain, which further
implies $\u_1=\uc_1$. Therefore, we can obtain
\begin{eqnarray}
\lim_{\e\rt0}\lnm{\ubc_1-\ub_1}=0.
\end{eqnarray}
Then on the boundary of $\sin\phi>0$, these two boundary layer
solutions satisfy
\begin{eqnarray}
\ubc_1&=&g-f_1(\infty),\\
\ub_1&=&g-f_1^{\e}(\infty).
\end{eqnarray}
Naturally, we have the estimate
$\lim_{\e\rt0}\lnm{f_1^{\e}(\infty)-f_1(\infty)}=0$ based on above
assumptions. Hence, we may further derive
\begin{eqnarray}
\lim_{\e\rt\infty}\lnm{(f_1(\infty)+\ubc_1)-(f_1^{\e}(\infty)+\ub_1)}=0.
\end{eqnarray}
For $0\leq\eta\leq 1/(2\e)$, we have $\psi_0=1$, which means
$\f_1=\ubc_1+f_1(\infty)$ and $f_1^{\e}=\ub_1+f_1^{\e}(\infty)$ on
$[0,1/(2\e)]$. Since we have $\pp\ub_1(0)=-f_1^{\e}(\infty)$ and
$\pp \ubc_1(0)=-f_1(\infty)$, we have recovered the normalization
condition, i.e. $\pp f_1(0)=\pp f_1^{\e}(0)=0$. Note that $g$
satisfies the compatibility condition (\ref{Milne reduced
compatibility condition}). Therefore, the $\e$-Milne problem
satisfied by $f_1$ and $f_1^{\e}$ can be reduced to the $\e$-Milne
problem with in-flow boundary. Hence, we can naturally obtain the
desired result through the proof of Theorem \ref{main 1}.
\end{proof}

\appendix

\makeatletter
\renewcommand \theequation {%
A.%
\ifnum\c@subsection>\z@\@arabic\c@subsection.%
\fi\@arabic\c@equation} \@addtoreset{equation}{section}
\@addtoreset{equation}{subsection} \makeatother

\section{Construction of the Counterexample with In-Flow Boundary}

\begin{lemma}\label{counter theorem 1}
For the Milne problem
\begin{eqnarray}\label{counter equation}
\left\{
\begin{array}{rcl}
\sin(\theta+\xi)\dfrac{\p f}{\p\eta}+f-\bar f&=&0,\\
f(0,\theta,\xi)&=&g(\theta,\xi)\ \ \text{for}\ \
\sin(\theta+\xi)>0,\\\rule{0ex}{1em}
\lim_{\eta\rt\infty}f(\eta,\theta,\xi)&=&f(\infty,\theta),
\end{array}
\right.
\end{eqnarray}
if $g(\theta,\xi)=\cos(3(\theta+\xi))$, then we have
\begin{eqnarray}
\frac{\p f}{\p\eta}\notin L^{\infty}([0,\infty)\times[-\pi,\pi)\times[-\pi,\pi)).
\end{eqnarray}
\end{lemma}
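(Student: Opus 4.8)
The plan is to read the obstruction directly off the equation. Since $g(\theta,\xi)=\cos(3(\theta+\xi))$ depends only on $\phi:=\theta+\xi$, the solution of (\ref{counter equation}) depends only on $(\eta,\phi)$ and solves the flat ($F\equiv0$) Milne problem with in-flow data $f(0,\phi)=\cos3\phi$ on $\{\sin\phi>0\}$, so it suffices to show $\nm{\p_\eta f}_{L^\infty([0,\infty)\times[-\pi,\pi))}=\infty$. Rewriting the equation as $\p_\eta f(\eta,\phi)=\bigl(\bar f(\eta)-f(\eta,\phi)\bigr)/\sin\phi$, one sees that blow-up of $\p_\eta f$ can only come from the grazing set $\sin\phi\to0$, and will in fact occur there at $\eta=0$ unless the numerator $\bar f(0)-\cos3\phi$ tends to $0$ as $\phi\to0^+$. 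Thus the whole proof reduces to showing $\bar f(0)\neq g(0)=1$.

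To pin down $\bar f(0)$ I would use the maximum principle for the classical Milne problem, i.e.\ Theorem \ref{Milne theorem 3} together with Remark \ref{Milne remark}, which asserts that it holds when $F\equiv0$. Since the in-flow data is $\cos3\phi$ on $\{\sin\phi>0\}$ and $\max_{\sin\phi>0}\cos3\phi=1$, this gives $f\le1$ everywhere, in particular $\int_{\sin\phi<0}f(0,\phi)\,d\phi\le\pi$. Combined with the elementary identity $\int_{\sin\phi>0}\cos3\phi\,d\phi=\int_0^\pi\cos3\phi\,d\phi=0$, we obtain $2\pi\bar f(0)\le\pi$, i.e.\ $\bar f(0)\le\tfrac12<1$.

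With $\bar f(0)\le\tfrac12$ in hand I would finish by contradiction: if $\nm{\p_\eta f}_{L^\infty}=C_0<\infty$, then along the vertical characteristics $\phi=\mathrm{const}$ the function $f(\cdot,\phi)$ is Lipschitz in $\eta$ with constant $C_0$, so $|f(\eta,\phi)-\cos3\phi|\le C_0\eta$ for $\sin\phi>0$ and $|\bar f(\eta)-\bar f(0)|\le C_0\eta$. Plugging these into $\sin\phi\,\p_\eta f(\eta,\phi)=\bar f(\eta)-f(\eta,\phi)$ and letting $\eta\to0^+$ yields $|\bar f(0)-\cos3\phi|\le C_0|\sin\phi|$ for every $\phi$ with $\sin\phi>0$; sending $\phi\to0^+$ forces $\bar f(0)=1$, contradicting the previous paragraph. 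Hence $\p_\eta f\notin L^\infty$. Equivalently, one can exhibit the blow-up along a sequence $(\eta_k,\phi_k)$ with $\phi_k\downarrow0$ and $\eta_k=o(\phi_k)$, using the explicit representation of $f$ along characteristics and the continuity of $\bar f$ at $\eta=0$ established as in Step 6 of Section 4.

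The one point requiring care is the order of the limits $\eta\to0^+$ then $\phi\to0^+$: one needs that the in-flow trace is attained continuously, $f(\eta,\phi)\to\cos3\phi$ as $\eta\to0^+$ for each $\phi$ with $\sin\phi>0$ — immediate from the characteristic representation of $f$ (or Ukai's trace theorem) — and that $\bar f$ does not oscillate as $\eta\to0^+$, which is automatic under the contradiction hypothesis since boundedness of $\p_\eta f$ makes $\bar f$ Lipschitz. Apart from this, the argument is a short computation and uses nothing beyond the maximum principle already available in Section 3.
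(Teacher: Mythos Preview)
Your proposal is correct and follows essentially the same route as the paper: both use the maximum principle to bound $f\le1$, compute $\int_{\sin\phi>0}\cos3\phi\,d\phi=0$ to conclude $\bar f(0)\le\tfrac12<1$, and then derive a contradiction from the equation $\sin\phi\,\p_\eta f=\bar f-f$ near the grazing set $\phi\to0^+$. Your Lipschitz argument is exactly the paper's Step~5, and the alternative you sketch via continuity of $\bar f$ at $\eta=0$ corresponds to its Step~4.
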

\begin{proof}
We divide the proof into several steps: we first assume $\px f\in
L^{\infty}([0,\infty)\times[-\pi,\pi)\times[-\pi,\pi))$
and then show it can lead to a contradiction.\\
\ \\
Step 1: Maximum principle\\
Theorem \ref{Milne theorem 3} implies the solution $f$ to the
problem (\ref{counter equation}) satisfies the maximum principle,
i.e. for any $(\eta,\theta,\phi)$
\begin{eqnarray}
\min_{\sin(\theta+\xi)>0}g(\theta,\xi)\leq f(\eta,\theta,\xi)\leq
\max_{\sin(\theta+\xi)>0}g(\theta,\xi).
\end{eqnarray}
We can see the data $g(\theta,\xi)=\cos(3(\theta+\xi))$ satisfying
$g(\theta,-\theta)=1$ and $\abs{g}\leq 1$. Based on the maximum
principle, we can derive for any $(\eta,\theta,\xi)$
\begin{eqnarray}
f(\eta,\theta,\xi)\leq1.
\end{eqnarray}
Hence, certainly we have $f(0,\theta,\xi)\leq1$ for
$\sin(\theta+\xi)<0$.\\
\ \\
Step 2: Estimates of $\bar f(0,\theta)$.\\
We can directly estimate
\begin{eqnarray}
\bar
f(0,\theta)&=&\frac{1}{2\pi}\int_{-\pi}^{\pi}f(0,\theta,\xi)\ud{\xi}\\
&=&\frac{1}{2\pi}\bigg(\int_{\sin(\theta+\xi)<0}f(0,\theta,\xi)\ud{\xi}
+\int_{\sin(\theta+\xi)>0}f(0,\theta,\xi)\ud{\xi}\bigg)\nonumber\\
&\leq&\frac{1}{2\pi}\int_{\sin(\theta+\phi)>0}f(0,\theta,\xi)\ud{\xi}+\half\nonumber.
\end{eqnarray}
By the choice of $g$, we naturally have
\begin{eqnarray}
\int_{\sin(\theta+\xi)>0}f(0,\theta,\xi)\ud{\xi}&=&\int_{\sin(\theta+\xi)>0}g(\theta,\xi)\ud{\xi}=0.
\end{eqnarray}
Then this implies
\begin{eqnarray}
\bar f(0,\theta)\leq \half.
\end{eqnarray}
\ \\
Step 3: Definition of trace.\\
It is easy to see $\px f$ satisfies the Milne problem
\begin{eqnarray}
\sin(\theta+\xi)\frac{\p (\px f)}{\p\eta}+\px f-\overline{\px f}&=&0.
\end{eqnarray}
Since we have $\px f\in
L^{\infty}([0,\infty)\times[-\pi,\pi)\times[-\pi,\pi))$ which
implies $\overline{\px f}\in L^{\infty}([0,L]\times[-\pi,\pi))$, by
Ukai's trace theorem, we may define the trace of $\px f$ on $\eta=0$
satisfying $\px f(0,\theta,\phi)\in
L^{\infty}[-\pi,\pi)\times[-\pi,\pi)$.

However, we can define the trace of $\px f$ in another fashion. For
any $\xi\neq-\theta$ and $\xi\neq\pi-\theta$, we have
$\sin(\theta+\xi)\neq0$. Since we have $f\in
L^{\infty}([0,\infty)\times[-\pi,\pi)\times[-\pi,\pi))$ as well as
$\bar f\in L^{\infty}[0,\infty)\times[-\pi,\pi)$, by the Milne
problem (\ref{counter equation}), it is naturally to define for
$\eta>0$
\begin{eqnarray}
\px f(\eta,\theta,\xi)=\frac{\bar
f(\eta,\theta)-f(\eta,\theta,\xi)}{\sin(\theta+\xi)}.
\end{eqnarray}
Since $\px f\in
L^{\infty}([0,\infty)\times[-\pi,\pi)\times[-\pi,\pi))$, we know $f$
is continuous with respect to $\eta$ for a.e. $(\theta,\xi)$. Taking
$\eta\rt0$ defines the trace for $\px f$ at$(0,\theta,\xi)$
\begin{eqnarray}
\px f(0,\theta,\xi)=\frac{\bar
f(0,\theta)-f(0,\theta,\xi)}{\sin(\theta+\xi)}.
\end{eqnarray}
Since the grazing set $\{(\theta,\xi): \theta+\xi=0\ \ \text{or}\ \
\theta+\xi=\pi\}$ is zero-measured on the boundary $\eta=0$, then we
have the trace of $\px f$ is a.e. well-defined.

By the uniqueness of trace of $\px f$, above two types of traces
must coincide with each other a.e.. Then we may combine them both
and obtain $\px f(0,\theta,\xi)\in
L^{\infty}[-\pi,\pi)\times[-\pi,\pi)$ is a.e. well-defined and
satisfies the formula
\begin{eqnarray}
\px f(0,\theta,\xi)=\frac{\bar
f(0,\theta)-f(0,\theta,\xi)}{\sin(\theta+\xi)}.
\end{eqnarray}
\ \\
Step 4: Contradiction.\\
Therefore, we may consider the limiting process
\begin{eqnarray}
\lim_{\xi\rt-\theta^+}\frac{\p
f}{\p\eta}(0,\theta,\xi)=\lim_{\xi\rt-\theta^+}\frac{\bar
f(0,\theta)-f(0,\theta,\xi)}{\sin(\theta+\xi)}.
\end{eqnarray}
Since we know as $\xi\rt-\theta^+$, it follows that
\begin{eqnarray}
\sin(\theta+\xi)&\rt& 0^+\\
\bar f(0,\theta)-f(0,\theta,\xi)&\rt&\bar
f(0,\theta)-g(\theta,-\theta)=\bar f(0,\theta)-1<0.
\end{eqnarray}
Then this leads to
\begin{eqnarray}
\lim_{\xi\rt-\theta^+}\frac{\p f}{\p\eta}(0,\theta,\xi)=-\infty.
\end{eqnarray}
which means $\px f(0,\theta,\xi)\notin
L^{\infty}[-\pi,\pi)\times[-\pi,\pi)$. This contradicts our result
in the previous step. Hence, our assumption that $\px f\in
L^{\infty}([0,\infty)\times[-\pi,\pi)\times[-\pi,\pi))$ cannot be
true.\\
\ \\
Step 5: Another contradiction.\\
There is another way to show this fact. Since $\px f\in
L^{\infty}([0,L]\times[-\pi,\pi)\times[-\pi,\pi))$. Also, we have
$f\in L^{\infty}([0,L]\times[-\pi,\pi)\times[-\pi,\pi))$. Then this
implies $f$ is Lipschitz continuous in $[0,\infty)$ with respect to
$\eta$ for a.e. $(\theta,\xi)$. Hence, this implies $\bar f$ is also
Lipschitz continuous in $\eta\in[0,\infty)$. Without loss of
generality, we may assume $\nm{\px f}_{\infty}\leq M$. Thus we have
for a.e. $(\theta,\xi)\in \sin(\theta+\phi)>0$
\begin{eqnarray}
\abs{f(\eta,\theta,\xi)-f(0,\theta,\xi)}&\leq& M\eta\\
\abs{\bar f(\eta,\theta)-\bar f(0,\theta)}&\leq& M\eta.
\end{eqnarray}
Then
\begin{eqnarray}
\abs{f(\eta,\theta,\xi)-\bar
f(\eta,\theta)}&\geq&\abs{f(0,\theta,\xi)-\bar
f(0,\theta)}-\abs{f(\eta,\theta,\xi)-f(0,\theta,\xi)}-\abs{\bar
f(\eta,\theta)-\bar f(0,\theta)}\\
&\geq&\abs{\bar f(0,\theta)-g(\theta,-\theta)}-2M\eta\nonumber.
\end{eqnarray}
Since we know $\abs{\bar f(0,\theta)-g(\theta,-\theta)}\geq C>0$ for
some constant $C$. Then as long as $\eta\leq C/(4M)$, we have
\begin{eqnarray}
\abs{f(\eta,\theta,\xi)-\bar f(\eta,\theta)}&\geq&\frac{C}{2}.
\end{eqnarray}
Since for $(\theta,\xi)$ not in the grazing set, we always have
\begin{eqnarray}
\px f(\eta,\theta,\xi)=\frac{\bar
f(\eta,\theta)-f(\eta,\theta,\xi)}{\sin(\theta+\xi)}.
\end{eqnarray}
then $\abs{\px f}$ can be arbitrarily large as long as
$\sin(\theta+\xi)$ is sufficiently small, and also it possesses a
positive measure. This implies $\px f\in
L^{\infty}([0,\infty)\times[-\pi,\pi)\times[-\pi,\pi))$ cannot be
true.
\end{proof}

\section{Construction of the Counterexample with Diffusive Boundary}

\begin{lemma}\label{counter theorem 1.}
For the Milne problem
\begin{eqnarray}
\left\{
\begin{array}{rcl}
\sin(\theta+\xi)\dfrac{\p f}{\p\eta}+f-\bar f&=&0,\\
f(0,\theta,\xi)&=&\pp f(0,\theta)+g_1(\theta,\xi)\ \ \text{for}\ \
\sin(\theta+\xi)>0,\\\rule{0ex}{1em}
\lim_{\eta\rt\infty}f(\eta,\theta,\xi)&=& f(\infty,\theta),
\end{array}
\right.
\end{eqnarray}
with
\begin{eqnarray}
\pp f(0,\theta)=0.
\end{eqnarray}
If $g(\theta,\xi)=\cos(3(\theta+\xi))$, then we have
\begin{eqnarray}
\frac{\p f}{\p\eta}\notin L^{\infty}([0,\infty)\times[-\pi,\pi)\times[-\pi,\pi)).
\end{eqnarray}
\end{lemma}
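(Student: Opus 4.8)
The plan is to reduce the diffusive-boundary Milne problem in the statement to the classical in-flow Milne problem (\ref{counter equation}) already handled in Lemma \ref{counter theorem 1}, and then inherit the blow-up of $\p_\eta f$ at the grazing set. Throughout we are in the case $F=0$, so $V(\eta)\equiv 0$ by Remark \ref{Milne remark.}, and the equation here is exactly that of (\ref{classical temp 2.}) with source $0$ (recall $\ubc_0\equiv 0$) and boundary datum $\pp f(0,\theta)+g_1$, where $g_1=\vw\cdot\nx\uc_0-\pp(\vw\cdot\nx\uc_0)+g$.

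First I would identify $g_1$ with $g$. For $g(\theta,\xi)=\cos(3(\theta+\xi))$ a direct computation gives $\int_{\sin(\theta+\xi)>0}\cos(3(\theta+\xi))\sin(\theta+\xi)\ud{\xi}=0$, so the Neumann datum in (\ref{classical temp 3.}) vanishes; testing $\Delta_x\buc_0-\buc_0=0$ against $\buc_0$ with $\frac{\p\buc_0}{\p\vec n}=0$ yields $\int_\Omega(\abs{\nx\buc_0}^2+\buc_0^2)=0$, hence $\uc_0=\buc_0\equiv 0$ and $\nx\uc_0\equiv 0$, so $g_1\equiv g=\cos(3(\theta+\xi))$. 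The same computation shows $\int_{\sin(\theta+\xi)>0}g_1\sin(\theta+\xi)\ud{\xi}=0$, which is the compatibility condition (\ref{Milne compatibility condition}) for $S=0$; combined with the imposed normalization $\pp f(0,\theta)=0$, Lemma \ref{Milne lemma 2.} shows that $f$ coincides with the solution of the in-flow Milne problem (\ref{counter equation}) for the datum $g_1=\cos(3(\theta+\xi))$. Lemma \ref{counter theorem 1} then gives $\p_\eta f\notin L^{\infty}([0,\infty)\times[-\pi,\pi)\times[-\pi,\pi))$ at once.

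For completeness I would also redisplay the mechanism as in Appendix A. The maximum principle (Theorem \ref{Milne theorem 3.}, valid since $F=0$) gives $f(\eta,\theta,\xi)\le\max_{\sin(\theta+\xi)>0}\cos(3(\theta+\xi))=1$ everywhere; splitting $\bar f(0,\theta)=\frac{1}{2\pi}\big(\int_{\sin(\theta+\xi)>0}g_1\ud{\xi}+\int_{\sin(\theta+\xi)<0}f(0,\theta,\xi)\ud{\xi}\big)$ and using $\int_{\sin(\theta+\xi)>0}\cos(3(\theta+\xi))\ud{\xi}=0$ together with $f\le 1$ yields $\bar f(0,\theta)\le\half$. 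Assuming towards a contradiction that $\p_\eta f\in L^{\infty}$, Ukai's trace theorem and the transport equation identify the trace by $\p_\eta f(0,\theta,\xi)=\big(\bar f(0,\theta)-f(0,\theta,\xi)\big)/\sin(\theta+\xi)$ a.e.; letting $\xi\to-\theta^{+}$, the numerator tends to $\bar f(0,\theta)-g_1(\theta,-\theta)\le\half-1<0$ while $\sin(\theta+\xi)\to 0^{+}$, so $\p_\eta f\to-\infty$ on a set of positive measure, contradicting $\p_\eta f\in L^{\infty}$.

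The main obstacle is essentially conceptual rather than computational: recognizing that Lemma \ref{Milne lemma 2.} collapses the diffusive problem onto the already-treated in-flow problem, and then verifying the two cheap facts that make this legitimate — the compatibility condition and $\uc_0\equiv 0$ (so that $g_1(\theta,-\theta)=1$ and the jump at the grazing set survives). If one prefers to bypass the reduction, the alternative is to rerun the Appendix A argument directly on the diffusive problem: there the incoming data on $\{\sin(\theta+\xi)>0\}$ is $\pp f(0,\theta)+g_1=g_1$, and the reflective part $\{\sin(\theta+\xi)<0\}$ enters only through the bound $f\le 1$, so controlling that part's contribution to $\bar f(0,\theta)$ is the one step requiring care.
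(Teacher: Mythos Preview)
Your proposal is correct and follows essentially the same route as the paper: verify $\uc_0\equiv 0$ so that $g_1=g$, check the compatibility condition, invoke Lemma \ref{Milne lemma 2.} to reduce to the in-flow problem, and then apply Lemma \ref{counter theorem 1}. You supply more detail than the paper (the energy argument for $\buc_0\equiv 0$ and the explicit integrals), and your optional direct rerun of the Appendix~A mechanism is a nice redundancy, but the strategy is identical.
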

\begin{proof}
For $g(\theta,\xi)=\cos(3(\theta+\xi))$, we can easily derive
$\uc_0=0$. Hence $g_1=g$. Note that $g$ satisfies the compatibility
condition (\ref{Milne reduced compatibility condition}). With the
normalization condition $\pp f(0,\theta)=0$, we can see this problem
reduces to the Milne problem with in-flow boundary
\begin{eqnarray}\label{counter equation.}
\left\{
\begin{array}{rcl}
\sin(\theta+\xi)\dfrac{\p f}{\p\eta}+f-\bar f&=&0,\\
f(0,\theta,\xi)&=&g(\theta,\xi)\ \ \text{for}\ \
\sin(\theta+\xi)>0,\\\rule{0ex}{1em}
\lim_{\eta\rt\infty}f(\eta,\theta,\xi)&=&f_1(\infty,\theta).
\end{array}
\right.
\end{eqnarray}
Therefore, we can complete the proof by Theorem \ref{counter theorem
1}.
\end{proof}

\section*{Acknowledgements}

The author thanks Raffaele Esposito, Claude Bardos and Xiongfeng Yang for stimulating
discussions. The research is supported in part by NSFC grant
10828103 and NSF grant DMS-0905255.

\end{document}